\numberwithin{equation}{section}
\newtheorem{theorem}{Theorem}[section]
\newtheorem{lemma}[theorem]{Lemma}
\newtheorem{proposition}[theorem]{Proposition}
\newtheorem{corollary}[theorem]{Corollary}
\newtheorem{question}[theorem]{Question}
\newtheorem*{theorem*}{Theorem}
\theoremstyle{remark}
\newtheorem{remark}[theorem]{Remark}
\newtheorem{example}[theorem]{Example}
\numberwithin{equation}{section}
\newcommand{\lif}[1]{\widetilde{#1}}  
\newcommand{\com}[2][\theta]{#2^{#1}}
\newcommand{\Pbd}[1]{\Phi_{bdd}(#1)}
\newcommand{\Pdt}[1]{\Phi_{2}(#1)}
\newcommand{\cP}[1]{\bar{\Phi}(#1)}
\newcommand{\cPbd}[1]{\bar{\Phi}_{bdd}(#1)}
\newcommand{\cPdt}[1]{\bar{\Phi}_{2}(#1)}
\newcommand{\p}{\phi}
\newcommand{\Pkt}[1]{\Pi_{#1}}
\newcommand{\cPkt}[1]{\bar{\Pi}_{#1}}
\renewcommand{\r}{\pi}
\newcommand{\sH}{\bar{\mathcal{H}}}
\newcommand{\cS}[1]{\bar{S}_{#1}}
\renewcommand{\S}[1]{\mathcal{S}_{#1}}
\newcommand{\+}{\oplus}
\newcommand{\x}{\omega}
\renewcommand{\L}[1]{{}^L#1}
\newcommand{\D}[1]{\widehat{#1}}
\newcommand{\Gal}[1]{\Gamma_{#1}}
\renewcommand{\a}{\alpha}
\renewcommand{\Im}{\text{Im}\,}
\newcommand{\Ind}{\text{Ind}}
\newcommand{\Cent}{\text{Cent}}
\newcommand{\Two}{\mathbb{Z}_{2}}
\newcommand{\C}{\mathbb{C}}
\newcommand{\Int}{\text{Int}}
\newcommand{\Hom}{\text{Hom}}
\newcommand{\e}{\varepsilon}
\newcommand{\Rep}{\text{Rep}}
\newcommand{\Jac}{\text{Jac}}
\begin{document}
\title{On the cuspidal support of discrete series for p-adic quasisplit $Sp(N)$ and $SO(N)$}

\author{Bin Xu}

\address{Department of Mathematics and Statistics \\  University of Calgary, Canada }
\email{bin.xu2@ucalgary.ca}

\subjclass[2010]{22E50 (primary); 11F70 (secondary)}
\keywords{symplectic and orthogonal group, discrete series, cuspidal support, endoscopy}


\maketitle

\begin{abstract}
Zelevinsky's classification theory of discrete series of $p$-adic general linear groups has been well known. M{\oe}glin and Tadi{\'c} gave the same kind of theory for $p$-adic classical groups, which is more complicated due to the occurrence of nontrivial structure of L-packets. Nonetheless, their work is independent of the endoscopic classification theory of Arthur (also Mok in the unitary case), which concerns the structure of L-packets in these cases. So our goal in this paper is to make more explicit the connection between these two very different types of theories. To do so, we reprove the results of M{\oe}glin and Tadi{\'c} in the case of quasisplit symplectic groups and orthogonal groups by using Arthur's theory. 
\end{abstract}

\section{Introduction}
\label{sec: introduction}

Let $F$ be a $p$-adic field and $G$ be a quasisplit connected reductive group over $F$. 
We consider pairs $(M, \r_{cusp})$ for $G$, where $M$ is a Levi subgroup of $G$ and $\r_{cusp}$ is an irreducible supercuspidal representation of $M(F)$. Such pairs carry an action of $G(F)$ by conjugation, i.e.,
\[
(M, \r_{cusp})^{g} = (M^{g}, \r_{cusp}^{g}),
\]
where $M^{g} = g^{-1} M g$, and $\r_{cusp}^{g}(m) = \r_{cusp}(g m g^{-1})$ for $m \in M(F)^{g}$. For any pair $(M, \r_{cusp})$, let $P = MN$ be any parabolic subgroup containing $M$, and we have the normalized parabolic induction $\Ind^{G}_{P}(\r_{cusp} \otimes 1_{N})$. For simplicity we always abbreviate this to $\Ind^{G}_{P}(\r_{cusp})$, and we have the following facts about the parabolic induction.

\begin{enumerate}
\item $\Ind^{G}_{P}(\r_{cusp})$ is a smooth representation of finite length, i.e., the semi-simplification $s.s.\Ind^{G}_{P}(\r_{cusp})$ of $\Ind^{G}_{P}(\r_{cusp})$ is a direct sum of finitely many irreducible smooth representations.
\item For $g \in G(F)$, $s.s.\Ind^{G}_{P}(\r_{cusp}) = s.s.\Ind^{G}_{P'}(\r_{cusp}^{g})$ for any parabolic subgroup $P'$ containing $M^{g}$.
\end{enumerate}

It is a theorem of Bernstein and Zelevinsky \cite{BernsteinZelevinsky:1977} that all irreducible smooth representations of $G(F)$ can be constructed by parabolic induction from supercuspidal representations.

\begin{theorem}[Bernstein-Zelevinsky]
\label{thm: B-Z}
For any irreducible smooth representation $\r$ of $G(F)$, there exists a unique pair $(M, \r_{cusp})$ up to conjugation by $G(F)$ such that 
\[
\r \subseteq s.s.\Ind^{\,G}_{P}(\r_{cusp}).
\] 
Moreover, one can always find $P'$ containing $M$ such that 
\[
\r \hookrightarrow \Ind^{\,G}_{P'}(\r_{cusp})
\] 
as a subrepresentation.
\end{theorem}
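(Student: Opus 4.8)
The plan is to deduce the statement from three structural facts of the local theory, all due to Jacquet, Casselman, and Bernstein--Zelevinsky \cite{BernsteinZelevinsky:1977}: (i) the normalized Jacquet functor $r^{G}_{P}$ attached to a parabolic $P=MN$ of $G$ is exact, transitive in $P$, and carries finitely generated admissible representations to finitely generated admissible representations; (ii) Frobenius reciprocity $\Hom_{G(F)}\bigl(\r,\Ind^{G}_{P}(\sigma)\bigr)\cong\Hom_{M(F)}\bigl(r^{G}_{P}(\r),\sigma\bigr)$; and (iii) the geometric lemma, which for parabolics $P=MN$, $Q=LU$ of $G$ equips $r^{G}_{Q}\bigl(\Ind^{G}_{P}(\sigma)\bigr)$ with a finite filtration whose successive subquotients are the representations $\Ind^{L}_{L\cap wPw^{-1}}\bigl(\mathrm{Ad}(w)\,r^{M}_{M\cap w^{-1}Qw}(\sigma)\bigr)$, as $w$ runs over representatives of the relevant (rational) double cosets. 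I also use that a supercuspidal representation of a Levi $L(F)$ never occurs as a subquotient of $\Ind^{L}_{Q}(\sigma)$ for a \emph{proper} parabolic $Q$ of $L$ --- equivalently, it splits off as a direct summand of any finite-length representation in which it occurs, so that its occurrence would force $r^{L}_{Q}$ of it to be nonzero, contradicting supercuspidality.

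For \emph{existence} together with the ``moreover'' clause, I would argue as follows. Among the parabolic subgroups $P=MN$ of $G$ with $r^{G}_{P}(\r)\neq 0$ --- a nonempty family, since $P=G$ qualifies --- pick one, $P_{0}=M_{0}N_{0}$, with $\dim M_{0}$ as small as possible. Every irreducible subquotient of $r^{G}_{P_{0}}(\r)$ is then supercuspidal: if some subquotient $\rho$ had $r^{M_{0}}_{Q_{0}}(\rho)\neq 0$ for a proper parabolic $Q_{0}=L_{0}V_{0}$ of $M_{0}$, then by transitivity and exactness $r^{G}_{P_{0}'}(\r)\neq 0$ for the parabolic $P_{0}'$ of $G$ with Levi $L_{0}$ lying inside $P_{0}$, contradicting minimality of $\dim M_{0}$. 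Since $\r$ is irreducible it is finitely generated, hence so is $r^{G}_{P_{0}}(\r)$, which is moreover nonzero; it therefore has an irreducible quotient $\r_{cusp}$, supercuspidal by the previous sentence. By Frobenius reciprocity $\Hom_{G(F)}\bigl(\r,\Ind^{G}_{P_{0}}(\r_{cusp})\bigr)\cong\Hom_{M_{0}(F)}\bigl(r^{G}_{P_{0}}(\r),\r_{cusp}\bigr)\neq 0$, and irreducibility of $\r$ forces the nonzero map to be an embedding $\r\hookrightarrow\Ind^{G}_{P_{0}}(\r_{cusp})$. This produces the pair $(M_{0},\r_{cusp})$ and, with $P'=P_{0}$, the last assertion.

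For \emph{uniqueness}, let $(M_{1},\r_{1,cusp})$ be any pair with $\r$ a subquotient of $\Ind^{G}_{P_{1}}(\r_{1,cusp})$; I will show it is $G(F)$-conjugate to $(M_{0},\r_{cusp})$, which suffices. Applying the exact functor $r^{G}_{P_{0}}$ shows $r^{G}_{P_{0}}(\r)$, hence its quotient $\r_{cusp}$, is a subquotient of $r^{G}_{P_{0}}\bigl(\Ind^{G}_{P_{1}}(\r_{1,cusp})\bigr)$. Feeding this into the geometric lemma and using that $\r_{1,cusp}$ is supercuspidal --- so the inner Jacquet module $r^{M_{1}}_{M_{1}\cap w^{-1}P_{0}w}(\r_{1,cusp})$ vanishes unless $wM_{1}w^{-1}\subseteq M_{0}$ --- one finds that $\r_{cusp}$ is a subquotient of $\Ind^{M_{0}}_{M_{0}\cap wP_{1}w^{-1}}\bigl(\r_{1,cusp}^{\,w^{-1}}\bigr)$ for some (rational) $w$ with $wM_{1}w^{-1}\subseteq M_{0}$. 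Since $\r_{cusp}$ is supercuspidal on $M_{0}(F)$, this parabolic induction cannot be along a proper parabolic of $M_{0}$, so $M_{0}\subseteq wM_{1}w^{-1}$ as well; hence $wM_{1}w^{-1}=M_{0}$ and $\r_{cusp}\cong\r_{1,cusp}^{\,w^{-1}}$, i.e.\ $(M_{1},\r_{1,cusp})^{w^{-1}}=(M_{0},\r_{cusp})$. As this holds for every admissible choice of pair, uniqueness follows.

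The main obstacle is not this combinatorial shuffle but the structural inputs themselves: the geometric lemma, which rests on the Bruhat decomposition and a filtration of $\Ind^{G}_{P}(\sigma)$ indexed by the $Q(F)$-orbits on $G(F)/P(F)$, and the ``isolatedness'' of supercuspidal representations, which ultimately comes from Harish-Chandra's characterization by compactly supported matrix coefficients. In a self-contained treatment these are where essentially all of the work lies; here I take them as known and cite \cite{BernsteinZelevinsky:1977} (together with Casselman's notes). One minor point to verify is that the Weyl element $w$ delivered by the geometric lemma may be chosen $F$-rational, so that the resulting identification is a genuine $G(F)$-conjugacy of pairs; this is built into the $p$-adic form of the geometric lemma, which involves only the rational double cosets $Q(F)\backslash G(F)/P(F)$.
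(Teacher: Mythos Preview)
Your proof is correct and is essentially the standard argument one finds in \cite{BernsteinZelevinsky:1977} or in Casselman's notes. However, you should note that the paper does not give its own proof of this theorem: it is stated as a foundational input, attributed to Bernstein--Zelevinsky with a citation, and then used freely thereafter. So there is no ``paper's own proof'' to compare against; what you have written is precisely the classical derivation that the cited reference contains, and it is sound as stated.
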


\begin{remark}
The $G(F)$-conjugacy class of pairs $(M, \r_{cusp})$ in this theorem is called the cuspidal support of $\r$. For our later purposes, we would like to fix a Borel subgroup $B$ of $G$ together with a maximal torus $T \subseteq B$, and we have the standard parabolic subgroups $P = MN$, i.e., $P \supseteq B, M \supseteq T$. Then this theorem implies that for any irreducible smooth representation $\r$ of $G$, one can always find a standard parabolic subgroup $P = MN$ with a supercuspidal representation $\r_{cusp}$ of $M$ such that $\r \hookrightarrow \Ind^{G}_{P}(\r_{cusp})$ as a subrepresentation.
\end{remark}

Based on this theorem, it is natural to ask the following questions.

\begin{question}
\label{que: cuspidal support}
How to determine the cuspidal support of any irreducible smooth representation of $G(F)$?
\end{question}

\begin{question}
\label{que: unitary dual}
How can one classify the irreducible {\bf unitary} representations of $G(F)$ in terms of their cuspidal supports?
\end{question}
 
\begin{question}
\label{que: discrete series}
How can one classify the irreducible {\bf discrete series} representations of $G(F)$ in terms of their cuspidal supports?
\end{question}

Question~\ref{que: cuspidal support} is properly the most difficult one, and we are not able to say much about it here. Question~\ref{que: unitary dual} is often referred to as the {\bf unitary dual problem}, and it has been solved for $GL(n)$ \cite {Tadic:1986}. For the classical groups, Tadi{\'c} and Mui\'c have done many works (see \cite{Tadic1:2009}, \cite{MuicTadic:2011}), and again we will not say anything about it here. Our main interest is in Question~\ref{que: discrete series}, and it has the most complete theories for both $GL(n)$ (see \cite{Zelevinsky:1980}) and classical groups (see \cite{Moeglin:2002}, \cite{MoeglinTadic:2002}). Our goal is to present the results for the quasisplit symplectic groups and special orthogonal groups. To be more precise about what we want to show, we need to introduce some notations.

If $G = GL(n)$, let us take $B$ to be the group of upper-triangular matrices and $T$ to be the group of diagonal matrices, then the standard Levi subgroup $M$ can be identified with $GL(n_{1}) \times \cdots \times GL(n_{r})$ through
\[
\begin{pmatrix}
GL(n_{1})&& \\
&\ddots & \\
&& GL(n_{r})\\
\end{pmatrix}
\]
\begin{align*}
(g_{1}, \cdots, g_{r}) \longrightarrow \text{diag}\{g_{1}, \cdots, g_{r}\}
\end{align*}
with respect to any partition of $n = n_{1} + \cdots + n_{r}$.
So an irreducible supercuspidal representation $\r_{cusp}$ of $M(F)$ can be written as $\r_{cusp} = \r_{1} \otimes \cdots \otimes \r_{r}$ where $\r_{i}$ is an irreducible supercuspidal representation of $GL(n_{i}, F)$ for $1 \leqslant i \leqslant r$. For simplicity, we denote the normalized parabolic induction $\Ind_{P}^{G} (\r_{cusp})$ by $\r_{1} \times \cdots \times \r_{r}$. An irreducible supercuspidal representation $\r$ of $GL(n, F)$ can always be written in a unique way as $\rho ||^{x} : = \rho \otimes |\det(\cdot)|^{x}$ for an irreducible unitary supercuspidal representation $\rho$ and a real number $x$. To fix notations, we will always denote by $\rho$ an irreducible unitary supercuspidal representation of $GL(d_{\rho}, F)$. Now for a finite length arithmetic progression of real numbers of common length $1$ or $-1$
\[
x, \cdots, y
\]
and an irreducible unitary supercuspidal representation $\rho$ of $GL(d_{\rho})$, it is a general fact that 
\[
\rho||^{x} \times \cdots \times \rho||^{y}
\]
has a unique irreducible subrepresentation, denoted by $<\rho; x, \cdots, y>$ or $<x, \cdots, y>$. If $x \geqslant y$, it is called a Steinberg representation; if $x < y$, it is called a Speh representation. Such sequence of ordered numbers is called a {\bf segment} (cf. Appendix~\ref{sec: reducibility for GL(n)}). 
In particular, when $x = -y > 0$, we can let $a = 2x + 1 \in \mathbb{Z}$ and write 
\[
St(\rho, a) := <\frac{a-1}{2}, \cdots, -\frac{a-1}{2}>,
\]
which is an irreducible smooth representation of $GL(ad_{\rho}, F)$. In fact it is a discrete series representation by Zelevinksy's classification theorem.

\begin{theorem}[Zelevinsky \cite{Zelevinsky:1980}]
\label{thm: GL(n) classification}
All irreducible discrete series representations of $GL(n, F)$ can be obtained in a unique way as $St(\rho, a)$ for certain irreducible unitary subpercuspidal representation of $GL(d_{\rho})$ and integer $a$ so that $n = ad_{\rho}$.
\end{theorem}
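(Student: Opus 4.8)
The plan is to deduce Theorem~\ref{thm: GL(n) classification} from the Bernstein--Zelevinsky description of the cuspidal support (Theorem~\ref{thm: B-Z}) together with Casselman's square-integrability criterion and the combinatorics of segments. Recall that an irreducible admissible representation of $GL(n,F)$ is a discrete series precisely when it is square-integrable modulo the center, and that Casselman's criterion reformulates this as a positivity condition on the exponents of its normalized Jacquet modules. Concretely, after twisting by an unramified character one may assume the central character is unitary; then, writing the cuspidal support of $\r$ as in Theorem~\ref{thm: B-Z} on a minimal Levi $M_{0} = GL(d_{1}) \times \cdots \times GL(d_{r})$ and recording an exponent of the normalized Jacquet module $r_{M_{0}}(\r)$ as the tuple $(e_{1}, \dots, e_{r})$ with the $GL(d_{j})$-block having central character $|\det|^{e_{j}}$, the criterion amounts to requiring that for \emph{every} such exponent all partial sums $d_{1}e_{1} + \cdots + d_{k}e_{k}$ are strictly positive for $1 \leqslant k \leqslant r-1$. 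There are then two things to establish: that each $St(\rho,a)$ is a discrete series, and that conversely every discrete series arises this way, for unique $(\rho, a)$.

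For the first point, $St(\rho, a) = \langle \tfrac{a-1}{2}, \cdots, -\tfrac{a-1}{2} \rangle$ is by definition the unique irreducible subrepresentation of $\rho||^{\frac{a-1}{2}} \times \cdots \times \rho||^{-\frac{a-1}{2}}$, so its cuspidal support is the balanced segment on $M_{0} = GL(d_{\rho})^{a}$ and its central character is unitary. I would invoke the standard fact that $r_{M_{0}}(St(\rho,a))$ is irreducible, consisting of the single exponent $(x_{1}, \dots, x_{a})$ with $x_{i} = \tfrac{a+1}{2} - i$ (the $p$-adic analogue of the one-dimensionality of the Jacquet module of the Steinberg representation with respect to a Borel), so that every exponent of every proper $r_{M'}(St(\rho,a))$ is a partial sum of this one. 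The criterion then reduces to the one-line computation $x_{1} + \cdots + x_{k} = \tfrac{k(a-k)}{2} > 0$ for $1 \leqslant k \leqslant a-1$, which holds; hence $St(\rho,a)$ is a discrete series.

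For the converse, let $\r$ be an arbitrary discrete series, with cuspidal support written $\r \hookrightarrow \rho_{1}||^{x_{1}} \times \cdots \times \rho_{r}||^{x_{r}}$, the $\rho_{i}$ unitary supercuspidal of $GL(d_{\rho_{i}})$, normalized so that $\sum_{i} d_{\rho_{i}} x_{i} = 0$. I would first reduce to the case where all the $\rho_{i}$ are inertially equivalent and all the $x_{i}$ lie in a single coset of $\mathbb{Z}$. Otherwise the family of length-one segments $\{\rho_{i}||^{x_{i}}\}$ partitions into two nonempty subfamilies $A$ and $B$, no member of $A$ being linked to any member of $B$; by the independence of unlinked data the induced representation is isomorphic both to $\sigma_{A} \times \sigma_{B}$ and to $\sigma_{B} \times \sigma_{A}$, where $\sigma_{A}, \sigma_{B}$ are the products over $A$ and $B$, so $\r$ embeds into each. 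Frobenius reciprocity applied to these two embeddings, with $M' = GL(n_{A}) \times GL(n_{B})$ and $n_{A} = \sum_{i\in A} d_{\rho_{i}}$, $n_{B} = \sum_{i\in B} d_{\rho_{i}}$, exhibits in $r_{M'}(\r)$ an exponent with leading partial sum $S_{A} := \sum_{i \in A} d_{\rho_{i}} x_{i}$ and another with leading partial sum $S_{B} := \sum_{i \in B} d_{\rho_{i}} x_{i}$; Casselman's criterion forces $S_{A} > 0$ and $S_{B} > 0$, contradicting $S_{A} + S_{B} = 0$. Hence $\rho_{i} \cong \rho$ for one fixed unitary supercuspidal $\rho$ of $GL(d_{\rho})$, so $r = a := n/d_{\rho}$, and the $x_{i}$ all lie on one line. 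By Zelevinsky's classification of the irreducible constituents of a product of segments, $\r = \langle \mathfrak{m} \rangle$ for a multisegment $\mathfrak{m}$ supported on the multiset $\{x_{1}, \dots, x_{a}\}$; a further analysis of the exponents of the Jacquet modules of $\langle \mathfrak{m} \rangle$ via Casselman's criterion forces $\mathfrak{m}$ to be a single segment, which, being of step $1$ and (by normalization) centered at $0$, equals $\{\tfrac{a-1}{2}, \dots, -\tfrac{a-1}{2}\}$. Therefore $\r$ is the unique irreducible subrepresentation of $\rho||^{\frac{a-1}{2}} \times \cdots \times \rho||^{-\frac{a-1}{2}}$, i.e. $\r = St(\rho,a)$. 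Uniqueness of $(\rho,a)$ is immediate, since $\rho$ and $a$ are read off from the cuspidal support, which by Theorem~\ref{thm: B-Z} depends only on $\r$.

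The main obstacle is that last combinatorial step: translating Casselman's criterion into inequalities on the endpoints of the segments in $\mathfrak{m}$ and showing that square-integrability excludes every multisegment except a single balanced segment of step one. This is exactly where the geometric lemma and Zelevinsky's theory of segments and derivatives \cite{Zelevinsky:1980} do the real work, since after the unlinked-splitting reduction the surviving multisegments need not decompose any further into mutually unlinked blocks, so one must genuinely compute a family of Jacquet-module exponents rather than iterate the easy reduction; the verification for $St(\rho,a)$ itself and the reduction to a single line are comparatively routine.
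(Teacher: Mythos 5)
The paper does not prove this statement: Theorem~\ref{thm: GL(n) classification} is quoted verbatim from Zelevinsky \cite{Zelevinsky:1980} and used as a black box, so there is no in-paper argument to compare yours against. Judged on its own, your outline follows the standard route (Casselman's square-integrability criterion plus the combinatorics of segments), and the steps you actually carry out are correct: the computation of the unique exponent of $r_{M_{0}}(St(\rho,a))$ and the partial sums $\tfrac{k(a-k)}{2}>0$ do establish that $St(\rho,a)$ is a discrete series, and the unlinked-splitting argument (two embeddings $\r\hookrightarrow\sigma_{A}\times\sigma_{B}$ and $\r\hookrightarrow\sigma_{B}\times\sigma_{A}$, hence two exponents with leading partial sums $S_{A}>0$ and $S_{B}>0$ contradicting $S_{A}+S_{B}=0$) correctly reduces the converse to a single inertial class and a single $\mathbb{Z}$-coset of exponents. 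The uniqueness claim via the cuspidal support is also fine.

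The genuine gap is the step you yourself flag: having reduced to $\r=\langle\mathfrak{m}\rangle$ for a multisegment $\mathfrak{m}$ supported on one line, you assert that Casselman's criterion forces $\mathfrak{m}$ to be a single balanced segment, and defer this to \cite{Zelevinsky:1980}. But that assertion is essentially the theorem itself: the easy reductions do not show that the multiset $\{x_{1},\dots,x_{a}\}$ is multiplicity-free and consecutive, nor do they rule out the other constituents of $\rho||^{x_{1}}\times\cdots\times\rho||^{x_{a}}$ (e.g.\ for support $\{1,0,0,-1\}$ one must exclude every constituent of $<1,0>\times<0,-1>$, whose segments are linked and hence do not commute, so the splitting trick gives nothing). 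Closing this requires genuinely computing, via the geometric lemma, enough exponents of $r_{M}(\langle\mathfrak{m}\rangle)$ for a general multisegment $\mathfrak{m}=\{\Delta_{1},\dots,\Delta_{k}\}$ with $k\geqslant 2$ to produce a nonpositive partial sum --- for instance by choosing an admissible ordering with $\Delta_{1}$ of minimal (nonpositive) center, embedding $\langle\mathfrak{m}\rangle\hookrightarrow\langle\Delta_{1}\rangle\times\cdots\times\langle\Delta_{k}\rangle$, and reading off the violating exponent from Frobenius reciprocity. Since you cite the very reference the theorem is attributed to for exactly this step, the proposal as written is an accurate account of how the proof goes rather than a complete proof.
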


If $G = Sp(2n)$, let us define it with respect to 
\[
\begin{pmatrix} 
      0 & -J_{n} \\
      J_{n} &  0 
\end{pmatrix},
\]
where 
\[
J_{n} = 
\begin{pmatrix}
        &&1\\
        &\iddots&\\
        1&&&
\end{pmatrix}.
\]
We take $B$ to be subgroup of upper-triangular matrices in $G$ and $T$ to be subgroup of diagonal matrices in $G$, then the standard Levi subgroup $M$ 
can be identified with $GL(n_{1}) \times \cdots \times GL(n_{r}) \times G_{-}$ for any partition $n = n_{1} + \cdots + n_{r} + n_{-}$ and $G_{-} = Sp(2n_{-})$ as follows
\[
\begin{pmatrix}
GL(n_{1})&&&&&&0 \\
&\ddots &&&&& \\
&& GL(n_{r})&&&&\\
&&&G_{-} &&&\\
&&&&GL(n_{r})&& \\
&&&&&\ddots&\\
0&&&&&&GL(n_{1})
\end{pmatrix}
\]
\begin{align}
\label{eq: embedding}
(g_{1}, \cdots g_{r}, g) \longrightarrow \text{diag}\{g_{1}, \cdots, g_{r}, g, {}_tg^{-1}_{r}, \cdots, {}_tg^{-1}_{1}\},
\end{align}
where ${}_tg_{i} = J_{n_{i}}{}^tg_{i}J^{-1}_{n_{i}}$ for $1 \leqslant i \leqslant r$. Note $n_{-}$ can be $0$, in which case we simply write $Sp(0) = 1$. So an irreducible supercuspidal representation $\r_{cusp}$ of $M(F)$ can be written as $\r_{cusp} = \r_{1} \otimes \cdots \otimes \r_{r} \otimes \sigma$ where $\r_{i}$ is an irreducible supercuspidal representation of $GL(n_{i}, F)$ for $1 \leqslant i \leqslant r$ and $\sigma$ is an irreducible supercuspidal representation of $G_{-}(F)$. For simplicity, we denote $\Ind_{P}^{G}(\r_{cusp})$ by $\r_{1} \times \cdots \times \r_{r} \rtimes \sigma$. Note that $\sigma$ is always unitary. The discussion here can be easily extended to special orthogonal groups. 

If $G = SO(N)$ split, we can define it with respect to $J_{N}$. When $N$ is odd, the situation is exactly the same as the symplectic case. When $N = 2n$, there are two distinctions. First, the standard Levi subgroups given through the embedding \eqref{eq: embedding} do not exhaust all standard Levi subgroups of $SO(2n)$. To get all of them, we need to take the $\theta_{0}$-conjugate of $M$ given in \eqref{eq: embedding}, where 
\begin{align}
\label{eq: permutation matrix}
\theta_{0} = 
\begin{pmatrix}
1 &&&&& \\
& \ddots &&&& \\
&&& 1 && \\
&& 1 &&& \\
&&&& \ddots & \\
&&&&& 1
\end{pmatrix}.
\end{align}
Note $M^{\theta_{0}} \neq M$ only when $n_{-} = 0$ and $n_{r} > 1$. In order to distinguish the $\theta_{0}$-conjugate standard Levi subgroups of $SO(2n)$, we will only identify those Levi subgroups $M$ in \eqref{eq: embedding} with $GL(n_{1}) \times \cdots \times GL(n_{r}) \times G_{-}$, and we denote the other simply by $M^{\theta_{0}}$. Second, if the partition $n = n_{1} + \cdots + n_{r} + n_{-}$ satisfies $n_{r} =1$ and $n_{-} = 0$, then we can rewrite it as $n = n_{1} + \cdots + n_{r - 1} + n'_{-}$ with $n'_{-} = 1$, and the corresponding Levi subgroup is the same. This is because $GL(1) \cong SO(2)$. 

In this paper, we will also consider $G = SO(2n, \eta)$, which is the outer form of the split $SO(2n)$ with respect to a quadratic extension $E/F$ and conjugation by $\theta_{0}$. Here $\eta$ is the associated quadratic character of $E/F$ by the local class field theory. Then the standard Levi subgroups of $SO(2n, \eta)$ will be the outer form of those $\theta_{0}$-stable standard Levi subgroups of $SO(2n)$. In particular, they can be identified with $GL(n_{1}) \times \cdots \times GL(n_{r}) \times SO(n_{-}, \eta)$ with $n_{-} \neq 0$. 


Finally for any irreducible discrete series representation $\r$ of a symplectic group or special orthogonal group $G(F)$, our goal is to find unitary supercuspidal representations $\rho_{i}$ of $GL(d_{\rho_{i}}, F)$ for $1 \leqslant i \leqslant r$ together with real numbers $x_{1}, \cdots, x_{r}$, and a supercuspidal representation $\sigma$ of $G_{-}(F)$ which is of the same type as $G$, such that 
\[
\r \text{ or } \r^{\theta_{0}} \hookrightarrow \rho_{1}||^{x_{1}} \times \cdots \times \rho_{r}||^{x_{r}} \rtimes \sigma
\]
as a subrepresentation. 

The approach that we are going to take will highly rely on Arthur's endoscopic classification theory for symplectic and orthogonal groups \cite{Arthur:2013}, especially the structure of tempered Arthur packets (or L-packets). It is different from the original approaches of M{\oe}glin and Tadi{\`c} (see \cite{Moeglin:2002}, \cite{MoeglinTadic:2002}), where although possibly motivated by the structure of L-packets, they do not need to use it in their arguments. There are two reasons for us to adopt the new approach. One is there are certain reducibility assumptions (see Proposition~\ref{prop: cuspidal reducibility}) taken in the works of M{\oe}glin and Tadi{\`c} that could be removed under Arthur's work (as suggested by them), so it would be very natural to start with Arthur's theory at the first place. The other reason is the endoscopic theory is ``hidden" in their works, but we want to see how it could play a role in this kind of classification theory, to be more precise, the interplay of endoscopy theory with the theory of Jacquet modules (see Section~\ref{sec: parameters of supercuspidal representations}).

{\bf Acknowledgements}: This work came out of the author's talks in the automorphic forms seminar at Fields Institute in 2014. 
The author wants to thank the organizer Chung Pang Mok for initially asking him to give those talks, otherwise this paper would not appear. The author also wants to thank all the participants in the seminar for their interests and supports. While writing up the current version of this paper, the author has enjoyed the hospitality of Institute for Advanced Study (IAS). He also gratefully acknowledges the support from University of Calgary and the Pacific Institute for the Mathematical Sciences (PIMS), when this work took its final form. At last, he wants to thank the referee for many helpful comments and suggestions. During his time at IAS, he was supported by the NSF under Grant No. DMS-1128155 and DMS-1252158.


\section{Tempered Arthur packet}
\label{sec: tempered Arthur packet}

Let $F$ be a $p$-adic field and $G$ be a quasisplit symplectic group or special orthogonal group. We define the local Langlands group as $L_{F} = W_{F} \times SL(2, \C)$, where $W_{F}$ is the usual Weil group. We write $\Gal{F} = \Gal{\bar{F}/F}$ for the absolute Galois group over $F$. Let $\D{G}$ be the complex dual group of $G$, and $\L{G}$ be the Langlands dual group of $G$. A tempered (or generic) Arthur parameter of $G$ is a $\D{G}$-conjugacy class of admissible homomorphisms
\(
\underline{\p}: L_{F} \longrightarrow \L{G},
\)
such that $\underline{\p}|_{W_{F}}$ is bounded. We denote by $\Pbd{G}$ the set of tempered Arthur parameters. Here we can simplify the Langlands dual groups as in the following table:
\begin{center}
\begin{spacing}{1.5}
\begin{tabular}{| c | c | }
     \hline
      $G$                        &      $\L{G}$              \\
     \hline
      $Sp(2n)$                &      $SO(2n+1, \C)$  \\
     \hline
      $SO(2n+1)$           &      $Sp(2n, \C)$ \\
     \hline
      $SO(2n, \eta)$       &      $SO(2n, \C) \rtimes \Gal{E/F}$     \\     
     \hline
\end{tabular}
\end{spacing}
\end{center}
In the last case, $\eta$ is a quadratic character associated with a quadratic extension $E/F$ and $\Gal{E/F}$ is the associated Galois group. If we define $O(2n, \C)$ with respect to $J_{2n}$, we can fix an isomorphism $SO(2n, \C) \rtimes \Gal{E/F} \cong O(2n, \C)$ by sending the nontrivial element of $\Gal{E/F}$ to the permutation matrix \eqref{eq: permutation matrix}.
So in either of these cases, there is a natural embedding $\xi_{N}$ of $\L{G}$ into $GL(N, \C)$ up to $GL(N, \C)$-conjugacy, where $N = 2n+1$ if $G = Sp(2n)$ or $N = 2n$ otherwise. Under such an embedding, we can view the parameter $\p$ as an equivalence class of $N$-dimensional self-dual representations of $L_{F}$, i.e., $\p^{\vee} = \p$. Let $\r_{\p}$ be the self-dual representation of $GL(N, F)$ associated with $\p$ under the local Langlands correspondence (cf. \cite{HarrisTaylor:2001}, \cite{Henniart:2000}, and \cite{Scholze:2013}). If we decompose $\p$ into equivalence classes of irreducible subrepresentations, we get 
\begin{align}
\label{eq: decomposition}
\p = \bigoplus^{q}_{i = 1} l_{i}\p_{i},
\end{align}
where $\p_{i}$ is an equivalence class of irreducible representations of $L_{F}$ and $l_{i}$ is the multiplicity. Since $L_{F}$ is a product of $W_{F}$ and $SL(2, \C)$, we can further decompose $\p_{i}$ as an tensor product
\[
\p_{i} = \p_{cusp, i} \otimes \nu_{a_{i}},
\]
where $\p_{cusp, i}$ is an equivalence class of irreducible unitary representations of $W_{F}$ and $\nu_{a_{i}}$ is the $(a_{i}-1)$-th symmetric power representation of $SL(2, \C)$. Now we have obtained all the combinatorial data needed from $\p$ in the paper. To put it in a nice way, we first identify the set of equivalence classes of irreducible unitary supercuspidal representations of $GL(d, F)$ with equivalence classes of $d$-dimensional irreducible unitary representations of $W_{F}$ through the local Langlands correspondence for $GL(d)$, and denote by $\rho_{i}$ the corresponding representation for $\p_{cusp, i}$. Also notice the representation $\nu_{a_{i}}$ is completely determined by its dimension. So altogether we can simply write $\p_{i} = \rho_{i} \otimes [a_{i}]$ formally. After this discussion we can define the multi-set of Jordan blocks for $\p$ as follows,
\[
Jord(\p) := \{(\rho_{i}, a_{i}) \text{ with multiplicity } l_{i}: 1 \leqslant i \leqslant q \},
\] 
and 
\[
Jord_{\rho}(\p) := \{ a_{i} \text{ with multiplicity } l_{i} : \rho = \rho_{i}\}.
\]

To parametrize the discrete series representations, we need to introduce a subset $\Pdt{G}$ of $\Pbd{G}$. Define
\[
\Pdt{G} := \{\p \in \Pbd{G} : \p = \bigoplus^{q}_{i=1} \p_{i}, \p_{i}^{\vee} = \p_{i}\}.
\]
It is clear that the defining condition for $\Pdt{G}$ is equivalent to requiring $Jord(\p)$ is multiplicity free and $Jord_{\rho}(\p)$ is empty unless $\rho$ is self-dual. Moreover, for certain parity reason (see Section~\ref{sec: parameters of supercuspidal representations}) the integers in $Jord_{\rho}(\p)$ must be all odd or all even when $\p \in \Pdt{G}$. Besides, there is another description of $\Pdt{G}$. For $\p \in \Pbd{G}$, we fix a representative $\underline{\p}$. Let us define
\[
S_{\underline{\p}} = \Cent(\Im \underline{\p}, \D{G}),
\]
\[
\cS{\underline{\p}} = S_{\underline{\p}} / Z(\D{G})^{\Gal{F}},
\]
\[
\S{\underline{\p}} = \cS{\underline{\p}} / \cS{\underline{\p}}^{0} = S_{\underline{\p}} / S_{\underline{\p}}^{0}Z(\D{G})^{\Gal{F}}.
\]
Then we have the following fact.
\begin{lemma}
For $\p \in \Pbd{G}$, $\p \in \Pdt{G}$ if and only if $\cS{\underline{\p}}$ is finite.
\end{lemma}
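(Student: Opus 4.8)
The plan is to translate both conditions into the same combinatorial statement about the decomposition $\p = \bigoplus_{i=1}^{q} l_{i}\p_{i}$ from \eqref{eq: decomposition}, by way of the explicit shape of the centralizer $S_{\underline{\p}}$ inside $\D{G}$. Fix the nondegenerate bilinear form $B$ on $\C^{N}$ preserved by $\D{G} \subseteq GL(N, \C)$; it is symmetric when $\D{G}$ is (special) orthogonal and alternating when $\D{G} = Sp(2n, \C)$. Since $\p$ is self-dual its irreducible constituents fall into three families: those $\p_{i}$ with $\p_{i}^{\vee} \not\cong \p_{i}$, which are permuted in pairs $\{\p_{i}, \p_{i}^{\vee}\}$ by $B$; self-dual $\p_{i}$ whose invariant form (unique up to scalar) is of the same type as $B$; and self-dual $\p_{i}$ whose invariant form is of the opposite type to $B$, in which case $l_{i}$ is necessarily even. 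Restricting $B$ to the $\p_{i}$-isotypic subspace and factoring it as a tensor of the invariant form on $\p_{i}$ with a form on the multiplicity space (in the non-self-dual case, $B$ instead pairs the $\p_{i}$- and $\p_{i}^{\vee}$-isotypic subspaces perfectly) gives the standard description
\[
S_{\underline{\p}} \;\cong\; \prod_{\{\p_{i}, \p_{i}^{\vee}\},\ \p_{i}^{\vee} \not\cong \p_{i}} GL(l_{i}, \C) \;\times\; \prod_{\p_{i}\text{ of type }B} O(l_{i}, \C) \;\times\; \prod_{\p_{i}\text{ of opposite type}} Sp(l_{i}, \C)
\]
(see \cite{Arthur:2013}, as well as the discussion in Section~\ref{sec: parameters of supercuspidal representations}).

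For every $G$ considered here $\D{G}$ is semisimple with finite center, so $Z(\D{G})^{\Gal{F}}$ is finite; hence $\cS{\underline{\p}} = S_{\underline{\p}}/Z(\D{G})^{\Gal{F}}$ is finite if and only if $S_{\underline{\p}}$ is finite, i.e. if and only if $S_{\underline{\p}}^{0} = 1$. The identity component $S_{\underline{\p}}^{0}$ is the same product with each factor $O(l_{i}, \C)$ replaced by $SO(l_{i}, \C)$, so it is trivial exactly when there is no constituent of the first family, no constituent of the third family (a factor $Sp(l_{i}, \C)$ with $l_{i} \geqslant 2$ is infinite), and every constituent of the second family satisfies $l_{i} = 1$. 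In other words, $\cS{\underline{\p}}$ is finite if and only if $\p$ is multiplicity free, every $\p_{i}$ is self-dual, and every $\p_{i}$ is of the same type as $B$.

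It remains to identify this last condition with membership in $\Pdt{G}$. By definition $\p \in \Pdt{G}$ means that in \eqref{eq: decomposition} all $l_{i} = 1$ and all $\p_{i}$ are self-dual, i.e. $\p$ is multiplicity free with all constituents self-dual; and this already forces each $\p_{i}$ to be of the same type as $B$, since a self-dual constituent of the opposite type can only occur with even multiplicity in a self-dual representation valued in $\D{G}$, contradicting multiplicity one. Thus the condition of the previous paragraph is exactly $\p \in \Pdt{G}$, which proves the lemma. The only point needing a little care is this last implication — that ``multiplicity free with all constituents self-dual'' automatically excludes constituents of the wrong type — which is precisely the parity phenomenon recorded in Section~\ref{sec: parameters of supercuspidal representations}; everything else is a direct reading of the description of $S_{\underline{\p}}$ above.
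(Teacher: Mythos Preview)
Your proof is correct and follows exactly the approach the paper indicates: the paper does not give a proof but simply says the lemma follows by computing $S_{\underline{\p}}$ explicitly (referring to \cite{Arthur:2013}, 1.4), and you have carried out precisely that computation. Your final observation that multiplicity-freeness with all constituents self-dual automatically forces the correct type is the standard parity fact the paper also records, so nothing is missing.
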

This lemma can be shown by computing the group $S_{\underline{\p}}$ explicitly (see \cite[Section 1.4]{Arthur:2013}). In particular, one can show $\S{\underline{\p}}$ is abelian.

To state Arthur's classification theory of tempered representations of quasisplit symplectic and orthogonal groups, we need to introduce some more notations. We will fix an outer automorphism $\theta_{0}$ of $G$ preserving an $F$-splitting. If $G$ is symplectic or special odd orthogonal, we let $\theta_{0} = id$. If $G$ is special even orthogonal, we let $\theta_{0}$ be induced from the conjugate action of the permutation matrix \eqref{eq: permutation matrix}. Let $\D{\theta}_{0}$ be the dual automorphism of $\theta_{0}$. We write $\Sigma_{0} = <\theta_{0}>$, $G^{\Sigma_{0}} = G \rtimes \Sigma_{0}$, and $\D{G}^{\Sigma_{0}} = \D{G} \rtimes <\D{\theta}>$. Let $\x_{0}$ be the character of $G^{\Sigma_{0}}/G$, which is nontrivial when $G$ is special even orthogonal. So in the special even orthogonal case, we can send $\theta_{0}$ to the permutation matrix \eqref{eq: permutation matrix} to get an isomorphism between $G^{\Sigma_{0}}$ and the full orthogonal group. If $G$ has $\bar{F}$-rank $n$, we write $G = G(n)$. Let $G(0) = G(0)^{\Sigma_{0}} = 1$. Also for the trivial representation of $G(0)$, we require formally $1^{\theta_{0}} \ncong 1$ if $G(0) = SO(0)$, and $1^{\theta_{0}} \cong 1$ otherwise.  

Let $\Pkt{temp}(G)$ (resp. $\Pkt{2}(G)$) be the set of equivalence classes of irreducible tempered representations (resp. discrete series representations) of $G(F)$. Note $\Sigma_{0}$ acts on these sets by conjugation, and we denote the set of $\Sigma_{0}$-orbits in $\Pkt{temp}(G)$ (resp. $\Pkt{2}(G)$) by $\cPkt{temp}(G)$ (resp. $\cPkt{2}(G)$). Also note $\Sigma_{0}$ acts on $\Pbd{G}$ (resp. $\Pdt{G}$) through $\D{\theta}_{0}$, and we denote the corresponding set of $\Sigma_{0}$-orbits by $\cPbd{G}$ (resp. $\cPdt{G}$). It is clear that for $\p \in \Pbd{G}$, $Jord(\p)$ only depends on its image in $\cPbd{G}$. It is because of this reason, we will also denote the elements in $\cPbd{G}$ by $\p$. Moreover, through the natural embedding $\xi_{N}$, we can view $\cPbd{G}$ as a subset of equivalence classes of $N$-dimensional self-dual representations of $L_{F}$.

\begin{theorem}[Arthur \cite{Arthur:2013}, Theorem 1.5.1]
\label{thm: L-packet}
\begin{enumerate}
\item For $\p \in \cPbd{G}$, one can associate a finite set $\cPkt{\p}$ of $\cPkt{temp}(G)$, determined by $\r_{\p}$ through the theory of twisted endoscopy (see Section~\ref{sec: endoscopy}). And for a fixed Whittaker datum, 
there is a canonical bijection between $\cPkt{\p}$ and characters $\D{\S{\underline{\p}}}$ of $\S{\underline{\p}}$.
\[
\xymatrix{\cPkt{\p} \ar[r] & \D{\S{\underline{\p}}} \\
                 [\r] \ar@{{|}->}[r]  & <\cdot, \r>_{\underline{\p}}}
\]

\item There are decompositions
\begin{align*}
\cPkt{temp}(G) &= \bigsqcup_{\p \in \cPbd{G}} \cPkt{\p}, \\
\cPkt{2}(G) &= \bigsqcup_{\p \in \cPdt{G}} \cPkt{\p}.
\end{align*}

\end{enumerate}
\end{theorem}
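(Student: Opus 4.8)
The plan is to deduce the theorem from a comparison of automorphic trace formulas, following Arthur's global strategy, since for classical groups the only handle on tempered $L$-packets is their appearance in the discrete automorphic spectrum. The first step is \emph{globalization}: realize the $p$-adic group $G/F$ and the local parameter $\p$ as the localization at one place $v_0$ of a quasisplit group $\dot{G}$ over a number field and a global generic parameter $\dot{\p}$, chosen so that $\dot{\p}$ is elliptic (centralizer finite modulo $Z(\D{G})^{\Gal{F}}$) and so that at every other place the local picture is already controlled — unramified, or of strictly smaller rank so that an induction hypothesis applies. The role of the globalization is that $\dot{\p}$ corresponds, via the self-dual automorphic representation $\r_{\dot{\p}}$ of $GL(N)$, to a well-defined piece of the discrete spectrum of $\dot{G}$.

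The core is the trace formula comparison. One uses the stabilization of the ordinary Arthur--Selberg trace formula for $\dot{G}$ — resting on the fundamental lemma and the Langlands--Shelstad transfer — together with the stabilization of the $\theta$-twisted trace formula for $GL(N)$, where $\theta(g) = J_N\,{}^tg^{-1}J_N^{-1}$, so that the twisted-elliptic endoscopic data of $GL(N)\rtimes\theta$ are exactly the quasisplit classical groups $G$ (and products thereof). Equating the stable twisted trace formula for $GL(N)$ with the sum over these endoscopic groups of stable trace formulas, and isolating the summand cut out by $\r_{\dot{\p}}$, yields an identity of sums of characters. Expanding both sides into products of local distributions and running Arthur's inductive ``standard model'' — a simultaneous induction on $N$ in which the local packets $\cPkt{\p}$, the local signs $<\cdot,\r>_{\underline{\p}}$, the endoscopic character identities, and the global multiplicity formula are all proved together — one extracts at $v_0$ the finite set $\cPkt{\p}$, its bijection with $\D{\S{\underline{\p}}}$ normalized by the fixed Whittaker datum (via Shelstad's Whittaker normalization of transfer factors and the local intertwining relation), and the disjointness/exhaustion in part (2).

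For the discrete-series refinement one combines part (1) with Casselman's square-integrability criterion and the compatibility of the packet map with parabolic induction: if $\p$ factors through a proper Levi then the members of $\cPkt{\p}$ are full induced representations, hence not square-integrable, whereas if $\p\in\cPdt{G}$ — equivalently $\cS{\underline{\p}}$ finite, by the preceding lemma — the members of $\cPkt{\p}$ are discrete series; the identification of $\cPkt{2}(G)$ with $\bigsqcup_{\p\in\cPdt{G}}\cPkt{\p}$ then follows from the tempered decomposition together with the Bernstein--Zelevinsky/Langlands data. Throughout, the even orthogonal case forces one to work with $O(N,\C)$-conjugacy (the identification $SO(2n,\C)\rtimes\Gal{E/F}\cong O(2n,\C)$) and hence with $\Sigma_0$-orbits of representations, which is why the statement is phrased over $\cPkt{temp}(G)$, $\cPbd{G}$ rather than the non-barred sets.

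I expect the main obstacle to be precisely this inductive bootstrapping: at the extremes of the induction the trace-formula comparison only determines the identities up to a global sign or a character of $\S{\underline{\p}}$, and pinning these down — the sign lemmas, the local intertwining relation, and the delicate passage to $O(N)$ and to $\Sigma_0$-orbits in the even orthogonal case — is where essentially all of the difficulty lies; in the present paper this theorem is imported wholesale from \cite{Arthur:2013}.
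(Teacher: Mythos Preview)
Your assessment is correct: the paper does not prove this theorem at all but imports it from \cite{Arthur:2013}, exactly as you note in your final sentence. Your sketch of Arthur's global strategy (globalization, stabilized twisted trace formula comparison for $GL(N)$ against its twisted endoscopic groups, the inductive standard model, and the $\Sigma_0$-orbit subtlety in the even orthogonal case) is an accurate high-level summary of how the result is established in that reference, so there is nothing to compare against here.
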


We will denote the characters of $\S{\underline{\p}}$ by $\bar{\e}$, and denote the corresponding $\Sigma_{0}$-orbit $[\r]$ of irreducible representations by $\r(\p, \bar{\e})$. 
Let us define $\Pkt{\p}^{\Sigma_{0}}$ to be set of irreducible representations of $G^{\Sigma_{0}}(F)$ whose restriction to $G(F)$ belong to $\cPkt{\p}$. We call an irreducible representation $\r^{\Sigma_{0}}$ of $G^{\Sigma_{0}}(F)$ is a discrete series if its restriction to $G(F)$ are discrete series representations. We also define $S^{\Sigma_{0}}_{\underline{\p}}$, $\cS{\underline{\p}}^{\Sigma_{0}}$ and $\S{\underline{\p}}^{\Sigma_{0}}$ as before simply by taking $\D{G}^{\Sigma_{0}}$ in place of $\D{G}$. The following theorem asserts $\Pkt{\p}^{\Sigma_{0}}$ can be parametrized by the characters of $\S{\underline{\p}}^{\Sigma_{0}}$. It is a consequence of \cite[Theorem 2.2.4]{Arthur:2013}.

\begin{theorem}[Arthur]
\label{thm: L-packet full orthogonal group}
Suppose $\p \in \cPbd{G}$, for a fixed $\Sigma_{0}$-stable Whittaker datum 
there is a canonical bijection between $\Pkt{\p}^{\Sigma_{0}}$ and characters $\D{\S{\underline{\p}}^{\Sigma_{0}}}$ of $\S{\underline{\p}}^{\Sigma_{0}}$
\[
\xymatrix{\Pkt{\p}^{\Sigma_{0}} \ar[r] & \D{\S{\underline{\p}}^{\Sigma_{0}}} \\
                 \r^{\Sigma_{0}} \ar@{{|}->}[r]  & <\cdot, \r^{\Sigma_{0}}>_{\underline{\p}},}
\]
such that 
\[
<\cdot, \r^{\Sigma_{0}}>_{\underline{\p}} |_{\S{\underline{\p}}} = <\cdot, \r>_{\underline{\p}},
\]
where $\r \subseteq \r^{\Sigma_{0}}|_{G}$.

\end{theorem}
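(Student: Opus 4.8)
The plan is to deduce Theorem~\ref{thm: L-packet full orthogonal group} from Theorem~\ref{thm: L-packet} by a Clifford-theory argument run in parallel on the representation side and the dual side, bridged by the $\theta_{0}$-equivariance of Arthur's correspondence. When $G$ is symplectic or special odd orthogonal we have $\theta_{0} = id$, $G^{\Sigma_{0}}(F) = G(F)$ and $\S{\underline{\p}}^{\Sigma_{0}} = \S{\underline{\p}}$, so the statement is just Theorem~\ref{thm: L-packet}; hence I assume $G$ is special even orthogonal, in which case $G^{\Sigma_{0}}(F)$ is the full orthogonal group and $[G^{\Sigma_{0}}(F) : G(F)] = 2$.

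First I would set up the two Clifford dichotomies. On the representation side, fix $\r \in [\r]$ for $[\r] \in \cPkt{\p}$: either $\r \cong \r^{\theta_{0}}$, in which case $\Ind_{G(F)}^{G^{\Sigma_{0}}(F)}\r = \r^{\Sigma_{0}} \+ (\r^{\Sigma_{0}} \otimes \x_{0})$ is the sum of the two extensions of $\r$ to $G^{\Sigma_{0}}(F)$, or $\r \ncong \r^{\theta_{0}}$, in which case $\r^{\Sigma_{0}} := \Ind_{G(F)}^{G^{\Sigma_{0}}(F)}\r$ is irreducible with $\r^{\Sigma_{0}}|_{G(F)} = \r \+ \r^{\theta_{0}}$; in either case these exhaust the elements of $\Pkt{\p}^{\Sigma_{0}}$ lying over $[\r]$. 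On the dual side, since $\S{\underline{\p}} = S_{\underline{\p}}/S_{\underline{\p}}^{0}Z(\D{G})^{\Gal{F}}$ and the same formula with $S_{\underline{\p}}^{\Sigma_{0}}$ in place of $S_{\underline{\p}}$ computes $\S{\underline{\p}}^{\Sigma_{0}}$ --- the identity components and the relevant central subgroup coinciding --- $\S{\underline{\p}}$ is a (finite abelian) normal subgroup of $\S{\underline{\p}}^{\Sigma_{0}}$ of index $1$ or $2$, the index being $1$ exactly when no element of $\D{G}^{\Sigma_{0}} \setminus \D{G}$ commutes with $\Im \underline{\p}$ --- equivalently, by the determinant computation for centralizers in $O(2n, \C)$ versus $SO(2n, \C)$, exactly when $\p$ has no self-dual constituent of orthogonal type and odd dimension. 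Clifford theory for $\S{\underline{\p}} \trianglelefteq \S{\underline{\p}}^{\Sigma_{0}}$ then describes the irreducible representations $\D{\S{\underline{\p}}^{\Sigma_{0}}}$ in the same two-case pattern: a $\D{\theta}_{0}$-fixed character of $\S{\underline{\p}}$ has exactly two extensions to $\S{\underline{\p}}^{\Sigma_{0}}$ (the obstruction lying in $H^{2}(\Two, \C^{\times}) = 0$), while a $\D{\theta}_{0}$-orbit of size two induces to a single irreducible representation of $\S{\underline{\p}}^{\Sigma_{0}}$, whose restriction is the sum of that orbit.

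The bridge between the two sides is the fact, from \cite{Arthur:2013}, that the bijection of Theorem~\ref{thm: L-packet} is $\theta_{0}$-equivariant for the fixed $\Sigma_{0}$-stable Whittaker datum: the conjugation action of $\theta_{0}$ on $\cPkt{\p}$ corresponds under $[\r] \mapsto \,<\cdot,\r>_{\underline{\p}}$ to the $\D{\theta}_{0}$-action on $\D{\S{\underline{\p}}}$, so in particular $\r \cong \r^{\theta_{0}}$ if and only if $<\cdot,\r>_{\underline{\p}}$ is $\D{\theta}_{0}$-fixed. Granting this, the $\theta_{0}$-fixed (resp. two-element) orbits in $\cPkt{\p}$ match the $\D{\theta}_{0}$-fixed characters (resp. size-two $\D{\theta}_{0}$-orbits) of $\S{\underline{\p}}$, and combining with the previous step produces a bijection $\Pkt{\p}^{\Sigma_{0}} \leftrightarrow \D{\S{\underline{\p}}^{\Sigma_{0}}}$; the compatibility $<\cdot,\r^{\Sigma_{0}}>_{\underline{\p}}|_{\S{\underline{\p}}} = \,<\cdot,\r>_{\underline{\p}}$ is then immediate, since restriction of an extended or induced character matches restriction of an extended or induced representation. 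What remains --- and what makes the bijection \emph{canonical} rather than merely existent --- is to specify, in the $\theta_{0}$-fixed case, which of $\r^{\Sigma_{0}}$, $\r^{\Sigma_{0}} \otimes \x_{0}$ is paired with which extension of $<\cdot,\r>_{\underline{\p}}$; this is pinned down, as in Arthur's book, by the twisted endoscopic character identities for the twisted group $G \rtimes \theta_{0}$ relative to the $\Sigma_{0}$-stable Whittaker datum.

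I expect the genuinely hard part to be the bridge step together with the canonicity normalization: the $\theta_{0}$-equivariance of the correspondence and the singling out of the distinguished extension are not formal, and rest on the twisted trace formula and twisted endoscopy machinery of \cite{Arthur:2013} rather than on soft representation theory. A secondary point needing care is bookkeeping: a single element of $\cPbd{G}$ (an $O(2n, \C)$-conjugacy class) may be represented by one or by two classes in $\Pbd{G}$ according to whether or not it carries an odd-dimensional self-dual orthogonal constituent, and one must check that this dichotomy aligns on the nose with the ``$\S{\underline{\p}}^{\Sigma_{0}} = \S{\underline{\p}}$ or not'' dichotomy used above --- which it does, via the same determinant computation.
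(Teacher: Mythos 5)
First, a point of comparison: the paper does not prove this statement at all --- it is quoted as a theorem of Arthur, taken as input from \cite{Arthur:2013} (together with the twisted character identity of Theorem~\ref{thm: character relation full orthogonal group}, which is the analytic content behind it). So there is no in-paper argument to measure your sketch against; what you have written is an attempt to reduce the statement to Theorem~\ref{thm: L-packet}, and it is worth assessing on its own terms.

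The reduction has a genuine flaw in the dual-side Clifford theory. The group $\S{\underline{\p}}^{\Sigma_{0}}$ is \emph{abelian} (a quotient of a $2$-group, as computed explicitly in Section~3), so the conjugation action of $\S{\underline{\p}}^{\Sigma_{0}}$ on $\S{\underline{\p}}$ is trivial: every character of $\S{\underline{\p}}$ is ``$\D{\theta}_{0}$-fixed,'' there are no size-two orbits of characters, and $\S{\underline{\p}}^{\Sigma_{0}}$ has no $2$-dimensional irreducible representations obtained by induction. The correct dual-side dichotomy is not ``fixed character versus two-element orbit'' but simply whether the index $[\S{\underline{\p}}^{\Sigma_{0}} : \S{\underline{\p}}]$ is $2$ (every character of $\S{\underline{\p}}$ has exactly two extensions) or $1$ (restriction is a bijection), and this index is a single invariant of $\p$, uniform over the whole packet. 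Your bridge, read literally, matches $\theta_{0}$-fixed members of $\cPkt{\p}$ with $\D{\theta}_{0}$-fixed characters; since all characters are fixed, it would force every member of every packet to be $\theta_{0}$-invariant, which is false precisely when $\S{\underline{\p}}^{\Sigma_{0}} = \S{\underline{\p}}$. The fact you actually need --- that $\r^{\theta_{0}} \cong \r$ for \emph{all} $[\r] \in \cPkt{\p}$ when $\S{\underline{\p}}^{\Sigma_{0}} \neq \S{\underline{\p}}$, and $\r^{\theta_{0}} \ncong \r$ for \emph{all} $[\r]$ otherwise (this is exactly the remark following the theorem in the paper) --- is not a formal equivariance statement deducible from Theorem~\ref{thm: L-packet}; it is part of the content of the theorem being proved, and in Arthur's development it comes out of the stable multiplicity formula and the twisted character identity \eqref{eq: character relation full orthogonal group} for $G \rtimes \theta_{0}$. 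Likewise the canonical assignment of the two extensions $\r^{\Sigma_{0}}$, $\r^{\Sigma_{0}} \otimes \x_{0}$ to the two extensions of $<\cdot,\r>_{\underline{\p}}$ (equation \eqref{eq: full orthogonal character}) is pinned down only by those identities. Since both the uniform dichotomy and the normalization are deferred to the machinery you cite rather than established, the proposal is a correct-in-spirit bookkeeping framework around a citation, not a proof; and the bookkeeping itself needs the correction above to be internally consistent.
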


We denote the characters of $\S{\underline{\p}}^{\Sigma_{0}}$ by $\e$, and denote the corresponding representations by $\r^{\Sigma_{0}}(\p, \e)$. We also denote the image of $\e$ in $\D{\S{\p}}$ by $\bar{\e}$. If $\e_{0}$ is the generator of the kernel of the projection $\D{\S{\underline{\p}}^{\Sigma_{0}}} \rightarrow \D{\S{\underline{\p}}}$, then this theorem implies
\begin{align}
\label{eq: full orthogonal character}
\r^{\Sigma_{0}}(\p, \e \e_{0}) \cong \r^{\Sigma_{0}}(\p, \e) \otimes \x_{0}.
\end{align}
Therefore, if $G$ is special even orthogonal and $\S{\p}^{\Sigma_{0}} \neq \S{\p}$, then $\r(\p, \bar{\e})$ is a representation of $G(F)$ satisfying $\r(\p, \bar{\e})^{\theta_{0}} \cong \r(\p, \bar{\e})$. The converse is also true, i.e., if $G$ is special even orthogonal and $\S{\p}^{\Sigma_{0}} = \S{\p}$, then $\r^{\theta_{0}} \ncong \r$ for $\r(\p, \bar{\e}) = [\r]$. In the rest of this paper, we will always fix a $\Sigma_{0}$-stable Whittaker datum 
of $G$.

\section{Parameters of supercuspidal representations}
\label{sec: parameters of supercuspidal representations}

We keep the notations from the previous section.

\begin{proposition}
\label{prop: Jord of supercuspidal}
Suppose $\r$ is a supercuspidal representation of $G(F)$, and $[\r] \in \cPkt{\p}$ for some $\p \in \cPdt{G}$. Then if $(\rho, a) \in Jord(\p)$, one must have $(\rho, a-2) \in Jord(\p)$ as long as $a - 2 > 0$.
\end{proposition}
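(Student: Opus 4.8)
The plan is to argue by contradiction using the interplay between the combinatorics of $Jord(\p)$ and the Jacquet-module/parabolic-induction description of members of $\cPkt{\p}$. Suppose $(\rho, a) \in Jord(\p)$ with $a - 2 > 0$ but $(\rho, a-2) \notin Jord(\p)$. Since $\p \in \cPdt{G}$, the multiset $Jord_{\rho}(\p)$ is multiplicity free and consists of integers all of the same parity; in particular $a - 2$ has the same parity as $a$ and is positive, so it is a genuine candidate for membership in $Jord_{\rho}(\p)$, and its absence is a real constraint. The idea is that this ``gap'' in the Jordan blocks forces the representation $\r$ (or $\r^{\theta_0}$) attached to $[\r] \in \cPkt{\p}$ to have a nonzero Jacquet module along a maximal parabolic with Levi $GL(d_\rho) \times G_-$, contradicting supercuspidality.

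The key steps, in order, would be: (1) Translate the parameter $\p$ into the self-dual representation $\r_\p$ of $GL(N,F)$ and record $Jord(\p)$ as the multiset of segments $(\rho_i, a_i)$; isolate the block $(\rho, a)$ and note that by the parity statement recalled after the definition of $\Pdt{G}$, all of $Jord_\rho(\p)$ has fixed parity. (2) Consider the parameter $\p'$ obtained from $\p$ by replacing $(\rho,a)$ with $(\rho, a-2)$ (formally removing $\rho \otimes [a]$ and adding $\rho \otimes [a-2]$; when $a = 2$ one simply removes the block). One checks $\p' \in \cPbd{G(n - d_\rho)}$ lies in the appropriate $\cPdt{}$, using that $(\rho,a-2) \notin Jord(\p)$ so no multiplicity collision occurs, and that self-duality and the rank bookkeeping work out ($N$ drops by $2 d_\rho$). (3) Invoke the compatibility of Arthur's construction with parabolic induction: for a suitable member $\r' \in \Pkt{\p'}$ and the ``extremal'' character, the induced representation $\rho\|^{(a-1)/2} \rtimes \r'$ contains a member of $\cPkt{\p}$ as a subrepresentation — this is the mechanism by which adding a Jordan block $(\rho,a)$ to a parameter enlarges the group and the packet. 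Then by Frobenius reciprocity the Jacquet module of that member along the parabolic with Levi $GL(d_\rho) \times G_-(F)$ is nonzero, so that member is not supercuspidal. (4) Finally, use the structure of $\cPkt{\p}$: when $(\rho,a)$ lies at the ``top'' of a chain but $(\rho,a-2)$ is missing, the same induction/Jacquet-module analysis applies to \emph{every} member of $\cPkt{\p}$ (the distinction between members being governed by the characters $\bar\e$, which does not affect the cuspidal-support obstruction coming from the missing block). Hence no member of $\cPkt{\p}$ can be supercuspidal, contradicting $[\r] \in \cPkt{\p}$ with $\r$ supercuspidal. Running this with $\r^{\theta_0}$ in the even orthogonal case causes no extra trouble since $Jord(\p)$ depends only on the class in $\cPbd{G}$.

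The main obstacle I expect is Step (3): making precise and correctly citing the statement that attaching the Jordan block $(\rho,a)$ with $(\rho,a-2)$ absent realizes members of $\cPkt{\p}$ as subrepresentations of $\rho\|^{(a-1)/2} \rtimes (\text{member of } \cPkt{\p'})$, with the exact matching of characters of $\S{\underline\p}$ versus $\S{\underline{\p'}}$. This requires the explicit description of $S_{\underline\p}$ as a product of orthogonal/symplectic groups indexed by $Jord(\p)$ (as in \cite{Arthur:2013}, 1.4) together with Arthur's endoscopic character relations controlling how packets behave under induction; the subtlety is precisely that $(\rho, a-2) \notin Jord(\p)$ is what makes $(\rho, a)$ the bottom of its chain and forces the relevant induced representation to be irreducible-reducible in the right way. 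A cleaner alternative, which I would pursue if the direct argument gets unwieldy, is to go through the known cuspidal-reducibility results (Proposition~\ref{prop: cuspidal reducibility}) and Mœglin's criterion characterizing which $\p \in \cPdt{G}$ admit a supercuspidal member — namely that $Jord_\rho(\p)$ must be an ``initial segment'' $\{1,3,\dots\}$ or $\{2,4,\dots\}$ (up to the reducibility shift) of consecutive values of the right parity — and then the proposition is just the contrapositive of the easy direction of that criterion; but since the paper's stated goal is to \emph{reprove} such facts from Arthur's theory, I would keep the endoscopic argument above as the primary route.
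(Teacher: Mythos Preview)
Your approach is essentially correct but takes a genuinely different route from the paper's proof of this proposition. The paper argues directly via intertwining operators and $L$-functions: for the supercuspidal $\r_{M_+} = \rho \otimes \r$ on the Levi $M_+ = GL(d_\rho) \times G$, one has (Arthur's normalization, plus Harish-Chandra's holomorphy of standard intertwining operators off the unitary axis)
\[
J_{P_+}(\dot w^{-1}, \dot w\, \r_{M_+,\lambda})\, J_{P_+}(\dot w, \r_{M_+,\lambda}) \sim \frac{L(-\lambda, \rho^\vee \times \r_\p)}{L(1-\lambda, \rho^\vee \times \r_\p)} \qquad (\lambda > 1/2),
\]
and the left side is holomorphic. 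Since $L(s,\rho^\vee\times \r_\p)$ has a pole at $s=-(a-1)/2$ exactly when $(\rho,a)\in Jord(\p)$, a pole in the numerator at $\lambda = (a-1)/2$ forces a matching pole in the denominator at $1-\lambda = -(a-3)/2$, i.e.\ $(\rho,a-2)\in Jord(\p)$.

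Your proposal is instead the Jacquet-module/endoscopic route: show that when $(\rho,a)\in Jord(\p)$ and $(\rho,a-2)\notin Jord(\p)$ with $a>2$, one has $\bar\Jac_{(a-1)/2}\,\r(\p,\bar\e)\neq 0$ for \emph{every} $\bar\e$, hence no member of $\cPkt{\p}$ is supercuspidal. This is exactly Lemma~\ref{lemma: Jac packet}(1), proved by applying the compatibility diagrams of Section~\ref{sec: compatibility of Jacquet modules with endoscopic transfer} to the character relations~\eqref{eq: character relation GL(N)} and \eqref{eq: character relation}. The paper itself remarks (after the proof of Theorem~\ref{thm: supercuspidal parametrization}) that this gives an independent proof of the proposition. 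What each approach buys: the $L$-function argument is short and self-contained for this one statement, needing only the normalization of intertwining operators and the pole computation; your approach requires setting up the Jacquet/transfer compatibility machinery, but once that is in place it yields the full supercuspidal parametrization (conditions (1)--(3) of Theorem~\ref{thm: supercuspidal parametrization}) in one stroke. One caution on your Step~(3)--(4): the passage from ``some member embeds'' to ``every member has nonzero Jacquet module'' is not automatic from induction alone---it genuinely needs the character-relation argument of Lemma~\ref{lemma: Jac packet} to pin down $\bar\Jac_x$ on each $\r(\p,\bar\e)$ individually, so you should not expect to finesse that step by soft induction/Frobenius reasoning.
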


\begin{proof}
Let $\rho$ be a unitary irreducible supercuspidal representation of $GL(d_{\rho}, F)$. We can view $GL(d_{\rho}) \times G$ as the Levi component $M_{+}$ of a standard maximal parabolic subgroup $P_{+}$ of $G_{+}$, where $G$ and $G_{+}$ are of the same type. Let $\r_{M_{+}} = \rho \otimes \r$, and $w$ is the unique non-trivial element in the relative Weyl group $W(M_{+}, G_{+})$, which acts on $GL(d_{\rho})$ as an outer automorphism. Let $\r_{M_{+}, \lambda} = \rho||^{\lambda} \otimes \r$ for $\lambda \in \C$. It is a result of Arthur (see \cite[Section 2.3]{Arthur:2013}) that for certain choice of representative $\dot{w}$ of $w$, the standard intertwining operator between $\Ind^{G_{+}}_{P_{+}}(\r_{M_{+}, \lambda})$ and $\Ind^{G_{+}}_{P_{+}}(\dot{w} \, \r_{M_{+}, \lambda})$, i.e.,
\[
J_{P_{+}}(\dot{w}, \r_{M, \lambda}) h(g)= \int_{N_{P_{+}} \cap wN_{P_{+}}w^{-1}\backslash N_{P_{+}}} h(\dot{w}^{-1}ng)dn, \,\,\,\,\,\,\, \,\, h \in \Ind^{G_{+}}_{P_{+}}(\r_{M_{+}, \lambda}),
\]
and the standard intertwining operator $J_{P_{+}}(\dot{w}^{-1}, \dot{w} \, \r_{M_{+}, \lambda})$ between $\Ind^{G_{+}}_{P_{+}}(\dot{w} \, \r_{M_{+}, \lambda})$ and $\Ind^{G_{+}}_{P_{+}}(\r_{M_{+}, \lambda})$ can be normalized by meromorphic functions $r_{P_{+}}(w, \p_{M_{+}, \lambda})$ and $r_{P_{+}}(w^{-1}, w \, \p_{M_{+}, \lambda})$ respectively, i.e.,
\begin{align*}
R_{P_{+}}(\dot{w}, \r_{M_{+}, \lambda}) &:= r_{P_{+}}(w, \p_{M_{+}, \lambda})^{-1}J_{P_{+}}(\dot{w}, \r_{M_{+}, \lambda}), \\
R_{P_{+}}(\dot{w}^{-1}, \dot{w} \, \r_{M_{+}, \lambda}) &:= r_{P_{+}}(w^{-1}, w \, \p_{M_{+}, \lambda})^{-1}J_{P_{+}}(\dot{w}^{-1}, \dot{w} \, \r_{M_{+}, \lambda}),
\end{align*}
so that 
\begin{align}
\label{eq: normalized intertwining operator}
R_{P_{+}}(\dot{w}^{-1}, \dot{w} \, \r_{M_{+}, \lambda}) R_{P_{+}}(\dot{w}, \r_{M_{+}, \lambda}) = Id.
\end{align}
Here $\p_{M_{+}, \lambda}$ denotes the Langlands parameter for $\r_{M_{+}, \lambda}$, and 
\[
r_{P_{+}}(w, \p_{M_{+}, \lambda}) \sim \frac{L(\lambda, \rho \times \r_{\p})L(2\lambda, \rho, R)}{L(1+ \lambda, \rho \times \r_{\p})L(1+2\lambda, \rho, R)}
\]
where $R$ is either a symmetric square ($S^{2}$) or a skew-symmetric square ($\wedge^{2}$) representation of $GL(d_{\rho}, \C)$ and ``$\sim$" means equal up to a nowhere vanishing holomorphic function of $\lambda$ (that is given by the $\epsilon$-factors here). Note $R = \wedge^{2}$ if $G$ is $Sp(2n), SO(2n, \eta)$ or $R = S^{2}$ if $G = SO(2n+1)$. Similarly we have
\[
r_{P_{+}}(w^{-1}, w \, \p_{M_{+}, \lambda}) \sim \frac{L(-\lambda, \rho^{\vee} \times \r_{\p})L(-2\lambda, \rho^{\vee}, R)}{L(1- \lambda, \rho^{\vee} \times \r_{\p})L(1-2\lambda, \rho^{\vee}, R)}.
\]
Then we can rewrite \eqref{eq: normalized intertwining operator} as 
\begin{align}
\label{eq: normalized intertwining operator 1}
J_{P_{+}}(\dot{w}^{-1}, \dot{w} \, \r_{M_{+}, \lambda}) J_{P_{+}}(\dot{w}, \r_{M_{+}, \lambda}) \sim \frac{L(\lambda, \rho \times \r_{\p})L(2\lambda, \rho, R)L(-\lambda, \rho^{\vee} \times \r_{\p})L(-2\lambda, \rho^{\vee}, R)}{L(1+ \lambda, \rho \times \r_{\p})L(1+2\lambda, \rho, R)L(1- \lambda, \rho^{\vee} \times \r_{\p})L(1-2\lambda, \rho^{\vee}, R)}.
\end{align}
Since $\r_{M_{+}}$ is supercuspidal, it follows from a theorem of Harish-Chandra (\cite[Theorem 5.4.2.1]{Silberger:1979} and \cite[Lemma 2.2.5]{Shahidi:1981}) that both $J_{P_{+}}(\dot{w}, \r_{M_{+}, \lambda})$ and $J_{P_{+}}(\dot{w}^{-1}, \dot{w} \, \r_{M_{+}, \lambda})$ are holomorphic for $\text{Re } \lambda \neq 0$. So now we will assume $\lambda \in \mathbb{R}$ and $\lambda > 1/2$. Since $L(s, \rho \times \r_{\p})$ does not have a pole (non-vanishing is clear) for real $s > 0$, and $L(s, \rho, R)$ does not have a pole (non-vanishing is clear) for real $s \neq 0$, we have
\begin{align}
\label{eq: normalized intertwining operator 2}
J_{P_{+}}(\dot{w}^{-1}, \dot{w} \, \r_{M_{+}, \lambda}) J_{P_{+}}(\dot{w}, \r_{M_{+}, \lambda}) \sim \frac{L(-\lambda, \rho^{\vee} \times \r_{\p})}{L(1- \lambda, \rho^{\vee} \times \r_{\p})} \,\,\,\, \,\,\,\,\,(\lambda > 1/2).
\end{align}
Finally, we learn from the definition of $L(s, \rho^{\vee} \times \r_{\p})$ that it has a pole at $s = -(a-1)/2$ if and only if $\rho \cong \rho^{\vee}$ and $(\rho, a) \in Jord(\p)$ (see Appendix~\ref{sec: local L-function}). So if we know $(\rho, a) \in Jord(\p)$ for $a > 2$, then $L(-\lambda, \rho^{\vee} \times \r_{\p})$ has a pole at $\lambda = (a-1)/2 > 1/2$. By the holomorphy of standard intertwining operators on the left hand side of \eqref{eq: normalized intertwining operator 2}, $L(1- \lambda, \rho^{\vee} \times \r_{\p})$ must also have a pole at $\lambda = (a-1)/2$, i.e., $1- \lambda = 1- (a-1)/2 = -(a-3)/2$ is a pole of $L(s, \rho^{\vee} \times \r_{\p})$. So $(\rho, a-2) \in Jord(\p)$.

\end{proof}

If $\rho$ is a self-dual unitary irreducible supercuspidal representation of $GL(d_{\rho}, F)$, we know from Appendix~\ref{sec: local L-function} that
\[
L(s, \rho \times \rho) = L(s, \rho, \wedge^{2})L(s, \rho, S^{2})
\]
has a pole at $s= 0$. We call $\rho$ is of {\bf symplectic type} if $L(s, \rho, \wedge^{2})$ has a pole at $s=0$, and we call $\rho$ is of {\bf orthogonal type} if $L(s, \rho, S^{2})$ has a pole at $s=0$. Moreover, for any positive integer $a$, the pair $(\rho, a)$ is called having orthogonal type if $\rho$ is of orthogonal type and $a$ is odd, or $\rho$ is of symplectic type and $a$ is even. Otherwise $(\rho, a)$ is called having symplectic type. Next we are going to prove a very important reducibility result, which is named ``Basic Assumption" in \cite{Moeglin:2002}, \cite{MoeglinTadic:2002}. Those careful readers may notice there is a slight difference between our statement below and the original one. The reason is they consider the group $G^{\Sigma_{0}}$ rather than $G$, nonetheless one can translate between these two statements without difficulty (see Corollary~\ref{cor: cuspidal reducibility}).

\begin{proposition}
\label{prop: cuspidal reducibility}
Suppose $\r$ is a supercuspidal representation of $G(F)$, and $[\r] \in \cPkt{\p}$ for some $\p \in \cPdt{G}$. Then for any unitary irreducible supercuspidal representation $\rho$ of $GL(d_{\rho}, F)$ and real number $a_{\rho}$, the parabolic induction 
\[
\rho||^{\pm(a_{\rho} +1)/2} \rtimes \r
\]
reduces if and only if $\rho$ is self-dual and 
\begin{align}
\label{eq: cuspidal reducibility}
a_{\rho} = \begin{cases}
                 \text{ max } Jord_{\rho}(\p), & \text{ if } Jord_{\rho}(\p) \neq \emptyset, \\
                                    0, & \text{ if $Jord_{\rho}(\p) = \emptyset$, $\rho$ is of opposite type to $\D{G}$}, \\
                                    -1, & \text{ otherwise, provided $d_{\rho}$ is even or $\r \cong \r^{\theta_{0}}$. }
                 \end{cases}
\end{align}
\end{proposition}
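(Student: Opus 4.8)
The strategy is to compute the reducibility point of $\rho\|^{s}\rtimes\r$ via the same Harish-Chandra/Shahidi $R$-group machinery used in Proposition~\ref{prop: Jord of supercuspidal}, but now extract the precise reducibility locus rather than just the propagation of Jordan blocks. Since $\r$ is supercuspidal and $\rho$ is unitary supercuspidal, the representation $\rho\|^{s}\rtimes\r = \Ind^{G_{+}}_{P_{+}}(\rho\|^{s}\otimes\r)$ is irreducible for $\Re s \neq 0$ by Harish-Chandra, and on the unitary axis the reducibility is governed by the location of the poles/zeros of the normalizing factor $r_{P_{+}}(w,\p_{M_{+},\lambda})\sim \dfrac{L(\lambda,\rho\times\r_{\p})L(2\lambda,\rho,R)}{L(1+\lambda,\rho\times\r_{\p})L(1+2\lambda,\rho,R)}$ together with the Plancherel measure $\mu(\rho\|^{\lambda}\otimes\r)\sim \big(r_{P_{+}}(w,\p_{M_{+},\lambda})r_{P_{+}}(w^{-1},w\,\p_{M_{+},\lambda})\big)^{-1}$. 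Concretely, $\rho\|^{s}\rtimes\r$ reduces at $s=s_{0}>0$ exactly when $\mu$ has a zero at $\lambda = s_{0}$ of the appropriate order; I would first reduce to the case $\rho$ self-dual, since for $\rho\not\cong\rho^{\vee}$ neither $L(s,\rho\times\r_{\p})$ nor $L(s,\rho,R)$ has any pole on $\Re s>0$, forcing irreducibility everywhere on $\Re s>0$ — this already handles the ``$-1$'' branch when $\rho$ is not self-dual, and (after the Plancherel measure computation on the axis $\Re s = 0$) the point $s=0$, giving irreducibility there too under the stated $d_{\rho}$ even or $\r\cong\r^{\theta_{0}}$ hypothesis.

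So assume $\rho$ self-dual, say of type $R'\in\{S^{2},\wedge^{2}\}$. The key input is the factorization $L(s,\rho\times\r_{\p})=\prod_{(\rho,a)\in Jord(\p)}L(s,\rho\times\rho\otimes[a])$ together with the fact (Appendix~\ref{sec: local L-function}) that $L(s,\rho\times\r_{\p})$ has a pole at $s=-(a-1)/2$ iff $\rho\cong\rho^{\vee}$ and $(\rho,a)\in Jord(\p)$; combined with $L(s,\rho,R)$ being holomorphic and nonvanishing off the imaginary axis, one reads off from the explicit $r_{P_{+}}$ that the only candidate reducibility points in $s>1/2$ occur at $s=(a+1)/2$ where $a=\max Jord_{\rho}(\p)$: the numerator $L(\lambda,\rho\times\r_{\p})$ contributes a pole at $\lambda = (a-1)/2$ for each $(\rho,a)$, but by Proposition~\ref{prop: Jord of supercuspidal} the Jordan blocks $(\rho,a)$ form an interval $\{a_{0},a_{0}+2,\dots,a_{\max}\}$, so all but the top pole is cancelled by the corresponding pole of $L(1+\lambda,\rho\times\r_{\p})$ at $\lambda = (a-1)/2$ coming from the block $(\rho,a+2)$; only $a=\max Jord_{\rho}(\p)$ survives, giving the first case $a_{\rho}=\max Jord_{\rho}(\p)$. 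For $0<s\le 1/2$ one has to argue separately using the Plancherel measure and the $L$-factor $L(2\lambda,\rho,R)/L(1+2\lambda,\rho,R)$: when $Jord_{\rho}(\p)=\emptyset$ the ratio of Rankin–Selberg $L$-factors is regular and nonvanishing on $\Re\lambda\ge 0$, so reducibility can only come from $L(2\lambda,\rho,R')$ having a pole at $\lambda=0$, i.e. at $s=1/2=(0+1)/2$ — this forces $a_{\rho}=0$ — and this happens precisely when $R'$ (the type of $\rho$) is opposite to the type cut out by $R$, i.e. $\rho$ is of opposite type to $\D G$, which is the second case; otherwise no reducibility on $s>0$, giving $a_{\rho}=-1$.

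The remaining task, and the one I expect to be the main obstacle, is the point $s=0$ itself — whether $\rho\rtimes\r$ is reducible — since there the normalizing factors $r_{P_{+}}$ are regular and the answer is governed by the structure of the $R$-group, which for rank-one twists is the $\Two$ coming from whether $w$ fixes the L-packet member $\r$. The clean way to settle it is to invoke Arthur's Theorem~\ref{thm: L-packet}: $\rho\rtimes\r$ reduces iff the induced parameter $\p_{+}=\rho\otimes[1]\oplus\p\oplus\rho^{\vee}\otimes[1]$ lies in $\cPbd{G_{+}}$ with $\S{\underline{\p}_{+}}^{\Sigma_{0}}$ strictly larger than $\S{\underline{\p}}^{\Sigma_{0}}$ — equivalently, whether $(\rho,1)$ contributes a new $\Two$ factor to the component group, which happens iff $\rho$ is self-dual of the same type as $\D G$ and $(\rho,1)\notin Jord(\p)$, and (in the even orthogonal case) the relevant Whittaker-normalized character distinguishes $\r$ from $\r^{\theta_{0}}$; translating this into the stated trichotomy — in particular checking that the parity/type bookkeeping matches ``$(\rho,a_{\rho}+1)$ of the right type'' and reconciling the $d_{\rho}$-even-or-$\r\cong\r^{\theta_{0}}$ proviso with the failure of the component group to grow when $\p_{+}\notin\cPbd{G_{+}}$ — is where the real care is needed, and I would lean on \cite[\S2.3, \S2.4]{Arthur:2013} for the precise statement about when rank-one induction from an L-packet member is reducible. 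Finally I would record Corollary~\ref{cor: cuspidal reducibility} translating between $G$ and $G^{\Sigma_{0}}$, noting $\rho\|^{s}\rtimes\r$ and $\rho\|^{s}\rtimes\r^{\theta_{0}}$ have the same reducibility and that inducing to $G^{\Sigma_{0}}$ does not change the reducibility point.
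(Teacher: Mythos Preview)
Your overall strategy matches the paper's: analyze the composite intertwining operator $J_{P_{+}}(\dot w^{-1},\cdot)J_{P_{+}}(\dot w,\cdot)$ via the normalizing $L$-factors to locate possible reducibility points for $\lambda>0$, and handle $\lambda=0$ via the $R$-group (which Arthur identifies with the parameter-side $R$-group). However, several steps in your execution are incorrect as written, and one structural point differs from the paper.

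First, the sentence ``$\rho\|^{s}\rtimes\r$ is irreducible for $\Re s\neq 0$ by Harish-Chandra'' is false; Harish-Chandra's result (\cite{Silberger:1979}, Theorem 5.4.2.1) gives \emph{holomorphy of the intertwining operators} off the unitary axis, not irreducibility of the induced representation. The whole point is that reducibility can and does occur at real $\lambda>0$. Second, your pole bookkeeping is on the wrong factors. For $\lambda>0$ the function $L(\lambda,\rho\times\r_{\p})$ is holomorphic and nonvanishing; the relevant ratio (after cancelling the holomorphic pieces for $\lambda>1/2$) is
\[
J\!J\;\sim\;\frac{L(-\lambda,\rho^{\vee}\times\r_{\p})}{L(1-\lambda,\rho^{\vee}\times\r_{\p})},
\]
and one asks when this \emph{vanishes}: that requires a pole of $L(1-\lambda,\rho^{\vee}\times\r_{\p})$ (forcing $\lambda=(a+1)/2$ for some $(\rho,a)\in Jord(\p)$) with no pole of $L(-\lambda,\rho^{\vee}\times\r_{\p})$ (forcing $(\rho,a+2)\notin Jord(\p)$). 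Your ``cancellation via Proposition~\ref{prop: Jord of supercuspidal}'' idea is exactly right, but it lives in this dual pair of factors, not in $L(\lambda,\cdot)/L(1+\lambda,\cdot)$. Similarly, for $0<\lambda\le 1/2$ the surviving ratio is $L(-\lambda,\rho^{\vee}\times\r_{\p})/L(1-2\lambda,\rho^{\vee},R)$; the pole giving $\lambda=1/2$ comes from $L(0,\rho,R)$, not from ``$L(2\lambda,\rho,R')$ at $\lambda=0$''. Also, reducibility corresponds to $J\!J=0$, hence to a \emph{pole} of $\mu$, not a zero.

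The structural difference: this intertwining-operator analysis, as the paper presents it, only gives \emph{necessity} of the condition \eqref{eq: cuspidal reducibility} (for $\lambda\neq 0$). The paper explicitly defers \emph{sufficiency} to the end of Section~\ref{sec: cuspidal support of discrete series}, where it is deduced from Proposition~\ref{prop: parabolic reduction}: one exhibits an explicit discrete-series constituent $\r(\p_{+},\bar\e_{+})\hookrightarrow\rho\|^{(a_{\rho}+1)/2}\rtimes\r(\p,\bar\e)$ and checks via \eqref{eq: Jac vanishing} that $\bar{\Jac}_{-(a_{\rho}+1)/2}$ kills it while not killing the full induced module. You instead assert that the Plancherel/$\mu$ criterion gives both directions; this is defensible for rank-one supercuspidal induction, but you should state the precise two-sided criterion you are invoking (and with the correct pole/zero convention) rather than leave it implicit.
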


\begin{proof}
We will follow the proof of the previous proposition and let $\lambda \in \mathbb{R}$. Suppose $\lambda > 0$. Then the image of $J_{P_{+}}(\dot{w}, \r_{M_{+}, \lambda})$ is nonzero and irreducible by \cite[Proposition 2.6]{BorelWallach:2000}. It follows $\rho||^{\lambda} \rtimes \r$ is irreducible if and only if the kernel of $J_{P_{+}}(\dot{w}, \r_{M_{+}, \lambda})$ is trivial. Since $\Ind^{G_{+}}_{P_{+}}(\dot{w} \, \r_{M_{+}, \lambda})$ and $\Ind^{G_{+}}_{P_{+}}(\r_{M_{+}, \lambda})$ have the same irreducible constituents (see \cite[Theorem 2.9]{BernsteinZelevinsky:1977}), the kernel of $J_{P_{+}}(\dot{w}, \r_{M_{+}, \lambda})$ is trivial if and only if $J_{P_{+}}(\dot{w}, \r_{M_{+}, \lambda})$ is an isomorphism. As we have seen previously, $J_{P_{+}}(\dot{w}, \r_{M_{+}, \lambda})$ and $J_{P_{+}}(\dot{w}^{-1}, \dot{w} \, \r_{M_{+}, \lambda})$ are holomorphic. In fact, they are also nonzero (see \cite[Section 4.1]{Waldspurger:2003}). So $J_{P_{+}}(\dot{w}, \r_{M_{+}, \lambda})$ is an isomorphism if and only if
\[
J_{P_{+}}(\dot{w}^{-1}, \dot{w} \, \r_{M_{+}, \lambda}) J_{P_{+}}(\dot{w}, \r_{M_{+}, \lambda}) \neq 0.
\]
As a consequence, $\rho||^{\lambda} \rtimes \r$ is reducible if and only if
\[
J_{P_{+}}(\dot{w}^{-1}, \dot{w} \, \r_{M_{+}, \lambda}) J_{P_{+}}(\dot{w}, \r_{M_{+}, \lambda}) = 0.
\]
Let us first assume $\lambda > 1/2$, then from \eqref{eq: normalized intertwining operator 2} it is enough to see when
\begin{align}
\label{eq: reducibility condition}
\frac{L(-\lambda, \rho^{\vee} \times \r_{\p})}{L(1- \lambda, \rho^{\vee} \times \r_{\p})} = 0,
\end{align}
i.e., $L(1- \lambda, \rho^{\vee} \times \r_{\p})$ has a pole, but $L(-\lambda, \rho^{\vee} \times \r_{\p})$ does not. From our discussion in the previous proof we know this can only happen when $\rho = \rho^{\vee}$ and $\lambda = (a_{\rho} + 1)/2$, where $a_{\rho}$ is max $Jord_{\rho}(\p)$. 
Next we assume $0< \lambda \leqslant 1/2$, it follows from \eqref{eq: normalized intertwining operator 1} that 
\[
J_{P_{+}}(\dot{w}^{-1}, \dot{w} \, \r_{M_{+}, \lambda}) J_{P_{+}}(\dot{w}, \r_{M_{+}, \lambda}) \sim \frac{L( - \lambda, \rho^{\vee} \times \r_{\p})}{L(1-2\lambda, \rho^{\vee}, R)}.
\]
And the right hand side can be zero only when $L(1-2\lambda, \rho^{\vee}, R)$ has a pole, but $L( - \lambda, \rho^{\vee} \times \r_{\p})$ does not. So necessarily $\rho = \rho^{\vee}$ and $\lambda = 1/2$. By our assumption on the representation $R$, we know $L(s, \rho, R)$ has a pole at $s = 0$ if and only if $\rho$ is of opposite type to $\D{G}$. And the requirement that $L(s, \rho \times \r_{\p})$ does not have a pole at $-1/2$ implies $Jord_{\rho}(\p) = \emptyset$. 

For $\lambda < 0$, one just needs to notice 
\(
s.s.(\rho||^{s} \rtimes \r) = s.s.(\rho^{\vee}||^{-s} \rtimes \r)^{\theta}
\) 
for some $\theta \in \Sigma_{0}$, so one can apply the same argument to $\rho^{\vee}||^{-\lambda} \rtimes \r$. 


Finally, we consider $\lambda = 0$, where our previous criterion does not work. However the reducibility of $\rho \rtimes \r$ follows from the standard theory of representation theoretic $R$-groups. In Arthur's theory these groups have been shown to be isomorphic to $R$-groups defined by parameters, which can be computed explicitly (see \cite[Section 2.4 and Section 6.6]{Arthur:2013}). So our reducibility condition in this case will follow from there.

\end{proof}

Suppose $\r$ is an irreducible supercuspidal representation of $G(F)$ and $[\r] \in \cPkt{\p}$ for some $\p \in \cPdt{G}$. We know from Proposition~\ref{prop: Jord of supercuspidal} that $Jord(\p)$ should be in a certain shape, and in view of  Theorem~\ref{thm: L-packet} we would also like to know what kind of character $\bar{\e}$ of $\S{\underline{\p}}$ will parametrize $[\r]$. To give a description of such characters, we have to first make an identification between $\D{\S{\underline{\p}}^{\Sigma_{0}}}$ with $\Two$-valued functions over $Jord(\p)$. To be more precise, let us assume 
\begin{align}
\label{eq: discrete parameter}
\underline{\p} = \underline{\p}_{1} \+ \cdots \+ \underline{\p}_{r}
\end{align}
where $\underline{\p}_{i}$ are self-dual irreducible representations of dimension $n_{i}$. By Schur's Lemma, 
\[
\Cent(\underline{\p}, GL(N, \C)) \cong \underbrace{\C^{\times} \times \cdots \times \C^{\times}}_{r}
\] 
where each $\C^{\times}$ acts on the corresponding representation space of $\underline{\p}_{i}$. So 
\[
\Cent(\underline{\p}, \D{G}) \cong \{ s  = (s_{i}) \in \Two^{r} : \prod_{i}(s_{i})^{n_{i}} = 1\}.
\]
Note $\S{\underline{\p}}^{\Sigma_{0}} = \cS{\underline{\p}}^{\Sigma_{0}}$ in this case. Then 
\(
\S{\underline{\p}}^{\Sigma_{0}} \cong  \Two^{r} / <(-1, \cdots, -1)>. 
\)
Sine the right hand side does not depend on the choice of representative $\underline{\p}$, we can denote it by $\S{\p}^{\Sigma_{0}}$. If $G$ is special even orthogonal, 
\[
\S{\underline{\p}} \cong \{ s  = (s_{i}) \in \Two^{r} : \prod_{i}(s_{i})^{n_{i}} = 1\} / <(-1, \cdots, -1)>
\] 
which is a subgroup of $\S{\underline{\p}}^{\Sigma_{0}}$ of index $1$ or $2$. Similarly, we denote the right hand side by $\S{\p}$. 

Let us define the characters of $\Two^{r} / <(-1, \cdots, -1)>$ to be $\Two$-valued functions $\e = (\e_{i}) \in \Two^{r}$ such that $\prod_{i}\e_{i} = 1$. Moreover, for $s \in \Two^{r} / <(-1, \cdots, -1)>$, we define $\e(s) = \prod_{i} (\e_{i} \ast s_{i})$, where 
\[
\e_{i} \ast s_{i} = \begin{cases}
                           -1, & \text{ if } \e_{i} = s_{i} = -1 \\
                           1,  & \text{ otherwise. } 
                           \end{cases}
\]
So 
\[
\D{\S{\p}^{\Sigma_{0}}} = \{\e = (\e_{i}) \in \Two^{r} : \prod_{i}\e_{i} = 1\}.
\] 
In particular, when $G$ is special even orthogonal, we define $\e_{0} = (\e_{0,i}) \in \D{\S{\p}^{\Sigma_{0}}}$ satisfying $\e_{0,i} = 1$ if $n_{i}$ is even, and $\e_{0,i} = -1$ if $n_{i}$ is odd, then 
\[
\D{\S{\p}} = \{\e = (\e_{i}) \in \Two^{r} : \prod_{i}\e_{i} = 1\} / <\e_{0}>.
\]
In general, let $\e_{0} = 1$ if $G$ is not special even orthogonal.

Now we can formulate the theorem for parametrizing supercuspidal representations inside tempered Arthur packets.

\begin{theorem}[Moeglin \cite{Moeglin1:2011}, Theorem 1.5.1]
\label{thm: supercuspidal parametrization}
The $\Sigma_{0}$-orbits of irreducible supercuspidal representations of $G(F)$ can be parametrized by $\p \in \cPdt{G}$ and $\bar{\e} \in \D{\S{\p}}$ satisfying the following properties:

\begin{enumerate}

\item if $(\rho, a) \in Jord(\p)$, then $(\rho, a-2) \in Jord(\p)$ as long as $a - 2 > 0$;

\item if $(\rho, a), (\rho, a-2) \in Jord(\p)$, then $\e(\rho, a) \e(\rho, a - 2) = -1$;

\item if $(\rho, 2) \in Jord(\p)$, then $\e(\rho, 2) = -1$.

\end{enumerate}

\end{theorem}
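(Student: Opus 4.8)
The plan is to combine the two reducibility results already established for supercuspidal representations -- Proposition~\ref{prop: Jord of supercuspidal} and Proposition~\ref{prop: cuspidal reducibility} -- with Arthur's endoscopic character identities (Theorem~\ref{thm: L-packet}) and the compatibility of parabolic induction with the packet structure. Property (1) is immediate from Proposition~\ref{prop: Jord of supercuspidal}, so the content is properties (2) and (3), which are constraints on the character $\bar{\e} = <\cdot, \r>_{\underline{\p}}$ that parametrizes $[\r]$.

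First I would set up the key inductive mechanism relating a supercuspidal $\r$ in $\cPkt{\p}$ to the reducibility points computed in Proposition~\ref{prop: cuspidal reducibility}. For a Jordan block $(\rho, a) \in Jord(\p)$ with $a - 2 > 0$, form the parameter $\p'$ obtained from $\p$ by replacing $(\rho, a)$ and $(\rho, a-2)$ by... more precisely, one relates $\p$ to the parameter $\p^{-}$ with $Jord(\p^{-}) = Jord(\p) \setminus \{(\rho,a)\}$ (which need not be discrete, but lives in some $\cPbd{\cdot}$ of a smaller group) via the fact that $\r$ embeds into $\rho||^{(a-1)/2} \rtimes \sigma'$ for a suitable tempered representation $\sigma'$ in $\cPkt{\p^{-}}$, together with the reducibility of $\rho||^{(a_\rho+1)/2}\rtimes\r$ at $a_\rho = \max Jord_\rho(\p)$ from Proposition~\ref{prop: cuspidal reducibility}. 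The decisive tool is Arthur's description (loc. cit., Chapter~2; also the normalized intertwining operators already written down in the proof of Proposition~\ref{prop: Jord of supercuspidal}) of how the sign $<\cdot,\r>_{\underline{\p}}$ on the relevant element $s \in \S{\underline{\p}}$ is governed by the sign of the normalizing factor $r_{P_+}(w, \p_{M_+,\lambda})$ at $\lambda = (a-1)/2$, i.e.\ by the order of vanishing/pole of the $L$-functions $L(s,\rho^{\vee}\times\r_{\p})$ and $L(s,\rho,R)$ there. Comparing these signs at consecutive blocks $(\rho,a)$ and $(\rho,a-2)$ yields the alternating relation $\e(\rho,a)\e(\rho,a-2) = -1$ of property (2), because the holomorphy of the product of intertwining operators in \eqref{eq: normalized intertwining operator 2} forces the pole/zero pattern to toggle the local sign each time $a$ decreases by $2$. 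For property (3), one looks at the bottom of the chain: if $(\rho,2)\in Jord(\p)$, then $\rho$ is of symplectic type (since $2$ is even forces $a$ even for all blocks, by the parity remark after the definition of $\Pdt{G}$), and the value $\e(\rho,2)$ is pinned down by the reducibility of $\rho||^{1/2}\rtimes\r'$ at $\lambda = 1/2$ together with the case $a_\rho = 0$ of \eqref{eq: cuspidal reducibility}; the analysis of the $L(1-2\lambda,\rho^{\vee},R)$ factor in the proof of Proposition~\ref{prop: cuspidal reducibility} shows this sign must be $-1$.

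Concretely the steps, in order, would be: (i) record that (1) is Proposition~\ref{prop: Jord of supercuspidal}; (ii) fix $\rho$ self-dual with $Jord_\rho(\p)\neq\emptyset$, list $Jord_\rho(\p) = \{a^{(1)} > a^{(2)} > \cdots\}$ (an arithmetic-progression-like chain with common difference $2$ by (1)), and set up the sequence of tempered parameters $\p = \p^{(0)}, \p^{(1)}, \dots$ peeling off the top block each time, with corresponding tempered $\r^{(j)}\in\cPkt{\p^{(j)}}$ such that $\r^{(j-1)}\hookrightarrow \rho||^{(a^{(j)}-1)/2}\rtimes \r^{(j)}$ (existence from Theorem~\ref{thm: B-Z} plus Arthur's intertwining-operator computation of Jacquet modules / reducibility); (iii) invoke Arthur's endoscopic character relation to express $<\cdot,\r^{(j-1)}>_{\underline{\p^{(j-1)}}}$ restricted to the subgroup $\S{\underline{\p^{(j)}}}\hookrightarrow \S{\underline{\p^{(j-1)}}}$ in terms of $<\cdot,\r^{(j)}>_{\underline{\p^{(j)}}}$ twisted by the sign of the normalized intertwining operator $R_{P_+}(\dot w,\r_{M_+,\lambda})$ at $\lambda = (a^{(j)}-1)/2$; (iv) compute that sign from the pole orders of the $L$-factors as in the proof of Proposition~\ref{prop: Jord of supercuspidal}, getting $-1$ each step, which gives (2); (v) at the last block, use the $0<\lambda\leqslant 1/2$ analysis of Proposition~\ref{prop: cuspidal reducibility} to get (3); (vi) conversely, given $(\p,\bar\e)$ satisfying (1)--(3), build a supercuspidal $\r$ with $[\r]\in\cPkt{\p}$ by reversing the construction -- at each stage $\rho||^{(a^{(j)}+1)/2}\rtimes\r^{(j)}$ is irreducible (by Proposition~\ref{prop: cuspidal reducibility}, since we are not at the reducibility point until the full chain is present) so no spurious reducibility is introduced, and one checks the resulting representation is supercuspidal by a Jacquet-module argument -- then cite M\oe glin's injectivity to conclude the parametrization is a bijection.

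The main obstacle I expect is step (iii): making precise how Arthur's normalization of the local intertwining operators translates into the \emph{sign} of the character $<\cdot,\r>_{\underline{\p}}$ on the specific element $s\in\S{\underline{\p}}$ corresponding to the Jordan block $(\rho,a)$, and keeping careful track of this as one passes between the groups $G, G_+$ and their dual groups (the element $s$ one must evaluate on lives in a rank-one enlargement of $\S{\underline{\p^{(j)}}}$). This is exactly the ``interplay of endoscopy with the theory of Jacquet modules'' advertised in the introduction, and it rests on Arthur's results in Chapter~2 of \cite{Arthur:2013} on the analytic/combinatorial description of $R$-groups and the sign characters attached to intertwining operators; citing those precisely, and handling the even-orthogonal subtlety $\S{\underline{\p}} \subsetneq \S{\underline{\p}}^{\Sigma_0}$ (where one works with $G^{\Sigma_0}$ and passes to $\bar\e$ at the end via \eqref{eq: full orthogonal character}), is where the real care is needed.
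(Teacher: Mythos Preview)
Your step (ii) breaks down immediately: you propose to write $\r = \r^{(0)} \hookrightarrow \rho||^{(a^{(1)}-1)/2} \rtimes \r^{(1)}$, but $\r$ is \emph{supercuspidal}, so by definition $\Jac_{P}\r = 0$ for every proper parabolic $P$, and by Frobenius reciprocity $\r$ admits no embedding into a properly induced representation. The sequence $\r^{(j)}$ you want simply does not exist in the direction you describe. (Theorem~\ref{thm: B-Z} does not help here: the cuspidal support of a supercuspidal $\r$ is $\r$ itself.) Consequently the intertwining-operator sign computation of steps (iii)--(iv) never gets off the ground; and in any case, the local intertwining relation you have in mind governs characters for parameters that factor through a proper Levi, whereas here $\p$ is discrete and the members of $\cPkt{\p}$ are not realized as subquotients of a single induced representation in that framework.

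The paper's argument turns this obstruction into the proof itself. Supercuspidality of $\r(\p,\bar\e)$ is equivalent to $\bar{\Jac}_{x}\r(\p,\bar\e) = 0$ for every $\rho$ and every real $x$. One first shows, by applying the commutative diagram relating $\bar{\Jac}_{x}$ to $\Jac^{\theta}_{x}$ on $GL(N)$ together with the twisted character identity \eqref{eq: character relation GL(N)}, that $\bar{\Jac}_{x}\cPkt{\p} = \cPkt{\p_{-}}$ as a packet identity, and in particular $\bar{\Jac}_{x}\cPkt{\p} = 0$ unless $(\rho,2x+1)\in Jord(\p)$. Then for $(\rho,2x+1)\in Jord(\p)$ one applies the ordinary endoscopic character identity \eqref{eq: character relation} for each $s\in\S{\p}$, pushes $\bar{\Jac}_{x}$ through via the compatibility diagrams of Section~\ref{sec: compatibility of Jacquet modules with endoscopic transfer}, and compares with the identity for $(\p_{-},s_{-})$; linear independence of characters then pins down exactly which $\bar\e$ have $\bar{\Jac}_{x}\r(\p,\bar\e)\neq 0$ (this is Lemma~\ref{lemma: Jac packet}). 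The three conditions (1)--(3) are precisely the conditions that rule out each of the three nonvanishing cases of that lemma. No peeling, no intertwining-operator signs, and Proposition~\ref{prop: Jord of supercuspidal} is not even needed.
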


The proof that we are going to give makes use of the (twisted) endoscopic character identities and explicit computation of Jacquet modules. So we will first review these two subjects in the next two sections.

\section{Endoscopy}
\label{sec: endoscopy}

The endoscopy theory can be stated for any connected reductive groups, but here we will mainly consider the case when $G$ is a quasisplit symplectic group or special orthogonal group. 

Suppose $\p \in \Pdt{G}$ and $s \in \cS{\underline{\p}} = \S{\underline{\p}}$. In our case, there is a quasisplit reductive group $H$ with the property that 
\[
\D{H} \cong \Cent(s, \D{G})^{0},
\] and the isomorphism extends to an embedding 
\[
\xi: \L{H} \rightarrow \L{G}
\] 
such that $\xi(\L{H}) \subseteq \Cent(s, \L{G})$ and $\underline{\p}$ factors through $\L{H}$. Hence we get a parameter $\p_{H} \in \Pbd{H}$. In fact it is easy to show $\cS{\underline{\p}_{H}}$ is also finite, so $\p_{H} \in \Pdt{H}$. We say $(H, \underline{\p}_{H})$ corresponds to $(\underline{\p}, s)$ through $\xi$, and denote this relation by $(H, \underline{\p}_{H}) \rightarrow (\underline{\p}, s)$. Such $H$ is called an {\bf elliptic endoscopic group} of $G$. 

\begin{example}
\label{eg: endoscopy}
\begin{enumerate}

\item If $G = Sp(2n)$, then $\L{G} = SO(2n + 1, \C)$. 
For $\p \in \cPdt{G}$, let us write 
\[
\p = \p_{1} \+ \cdots \+ \p_{r}
\] 
as in \eqref{eq: decomposition}. Then $\S{\p} = \Two^{r} / <-1, \cdots, -1>$, and for any $s = (s_{i}) \in \S{\p}$, it gives a partition on $Jord(\p)$, i.e., 
\[
\p = (\+_{s_{i} = 1}\p_{i}) \+ (\+_{s_{j} = -1}\p_{j}).
\]
Without loss of generality, let us assume 
\[
\sum_{s_{i} = 1} n_{i} = 2n_{I} + 1 = N_{I} \text{ and } \sum_{s_{j} = -1} n_{j} = 2n_{II} = N_{II}.
\] 
Define 
\[
\eta_{I} = \eta_{II} = \prod_{s_{j} = -1}\eta_{j},
\] 
where $\eta_{j}$ is a quadratic character given by the central character of $\r_{\p_{j}}$. Let 
\[
G_{I} = Sp(2n_{I}) \text{ and } G_{II} = SO(2n_{II}, \eta_{II}).
\] 
Then we have
\[
H = G_{I} \times G_{II} \text{ and } \L{H} = (\D{G}_{I} \times \D{G}_{II}) \rtimes \Gal{E_{II}/F},
\] 
where $E_{II}$ is a quadratic extension of $F$ associated with $\eta_{II}$. Let
\[
\xi_{i}: \L{G}_{i} \hookrightarrow GL(N_{i}, \C)
\]
be the natural embedding for $i = I, II$. Then 
\[
\xi := (\xi_{I} \otimes \eta_{I}) \+ \xi_{II}
\] 
factors through $\L{G}$ and defines an embedding $\L{H} \hookrightarrow \L{G}$. Let 
\[
\p_{I} := (\+_{s_{i} = 1}\p_{i}) \otimes \eta_{I} \in \cPdt{G_{I}}
\]
and
\[
\p_{II} := \+_{s_{j} = -1}\p_{j} \in \cPdt{G_{II}}.
\] 
Then 
\[
\p_{H} = \p_{I} \times \p_{II}.
\]

\item If $G = SO(2n+1)$, then $\L{G} = Sp(2n, \C)$. For $\p \in \cPdt{G}$, let us write 
\[
\p = \p_{1} \+ \cdots \+ \p_{r}
\] 
as in \eqref{eq: decomposition}. Then $\S{\p} = \Two^{r} / <-1, \cdots, -1>$, and for any $s = (s_{i}) \in \S{\p}$, it gives a partition on $Jord(\p)$, i.e., 
\[
\p = (\+_{s_{i} = 1}\p_{i}) \+ (\+_{s_{j} = -1}\p_{j}).
\]
We can assume 
\[
\sum_{s_{i} = 1} n_{i} = 2n_{I} = N_{I} \text{ and } \sum_{s_{j} = -1} n_{j} = 2n_{II} = N_{II}.
\] 
Let 
\[
G_{I} = SO(2n_{I} + 1) \text{ and } G_{II} = SO(2n_{II} + 1).
\] 
Then we have 
\[
H = G_{I} \times G_{II} \text{ and } \L{H} = \D{G}_{I} \times \D{G}_{II}
\]
Let
\[
\xi_{i}: \L{G}_{i} \hookrightarrow GL(N_{i}, \C)
\]
be the natural embedding for $i = I, II$. Then 
\[
\xi := \xi_{I} \+ \xi_{II}
\] 
factors through $\L{G}$ and defines an embedding $\L{H} \hookrightarrow \L{G}$. Let 
\[
\p_{I} := (\+_{s_{i} = 1}\p_{i}) \in \cPdt{G_{I}}
\]
and
\[
\p_{II} := \+_{s_{j} = -1}\p_{j} \in \cPdt{G_{II}}.
\] 
Then 
\[
\p_{H} = \p_{I} \times \p_{II}.
\]

\item If $G = SO(2n, \eta)$, then $\L{G} = SO(2n, \C) \rtimes \Gal{E/F}$. For $\p \in \cPdt{G}$, let us write 
\[
\p = \p_{1} \+ \cdots \+ \p_{r}
\]
as in \eqref{eq: decomposition}. Then $\S{\p}^{\Sigma_{0}} = \Two^{r} / <-1, \cdots, -1>$, and for any $s = (s_{i}) \in \S{\p} \subseteq \S{\p}^{\Sigma_{0}}$, it gives a partition on $Jord(\p)$, i.e., 
\[
\p = (\+_{s_{i} = 1}\p_{i}) \+ (\+_{s_{j} = -1}\p_{j}).
\]
By our description of $\S{\p}$, we can assume 
\[
\sum_{s_{i} = 1} n_{i} = 2n_{I} = N_{I} \text{ and } \sum_{s_{j} = -1} n_{j} = 2n_{II} = N_{II}.
\] 
Define 
\[
\eta_{I} = \eta_{II} \eta \text{ and } \eta_{II} = \prod_{s_{j} = -1}\eta_{j},
\] 
where $\eta_{j}$ is a quadratic character given by the central character of $\r_{\p_{j}}$. We also denote by $E_{i}$ the quadratic extension of $F$ associated with $\eta_{i}$ for $i = I, II$.
Let 
\[
G_{I} = SO(2n_{I}, \eta_{I}) \text{ and } G_{II} = SO(2n_{II}, \eta_{II}),
\]
Then we have 
\[
H = G_{I} \times G_{II} \text{ and } \L{H}  = (\D{G}_{I} \times \D{G}_{II}) \rtimes \Gal{L/F} 
\]
where $L = E_{I} E_{II}$. Let
\[
\xi_{i}: \L{G}_{i} \hookrightarrow GL(N_{i}, \C)
\]
be the natural embedding for $i = I, II$. Then 
\[
\xi := \xi_{I} \+ \xi_{II}
\] 
factors through $\L{G}$ and defines an embedding $\L{H} \hookrightarrow \L{G}$. Let 
\[
\p_{I} := \+_{s_{i} = 1}\p_{i} \in \cPdt{G_{I}}
\] 
and 
\[
\p_{II} := \+_{s_{j} = -1}\p_{j} \in \cPdt{G_{II}}
\]
Then
\[
\p_{H} = \p_{I} \times \p_{II}.
\]

\end{enumerate}
\end{example}

In the examples above, we can define $\cPdt{H} = \cPdt{G_{I}} \times \cPdt{G_{II}}$ (resp. $\cPbd{H} = \cPbd{G_{I}} \times \cPbd{G_{II}}$), then $\p_{H} \in \cPdt{H}$. For $s \in \S{\p}$, we still say $(H, \p_{H})$ corresponds to $(\p, s)$, and denote this relation again by $(H, \p_{H}) \rightarrow (\p, s)$.

In part (3), it is possible to also choose $s \in \S{\p}^{\Sigma_{0}}$ but not in $\S{\p}$, and then we get a partition on $Jord(\p)$, i.e., 
\[
\p = (\+_{s_{i} = 1}\p_{i}) \+ (\+_{s_{j} = -1}\p_{j}),
\]
so that 
\[
\sum_{s_{i} = 1} n_{i} = 2n_{I} + 1 \text{ and } \sum_{s_{j} = -1} n_{j} = 2n_{II} + 1.
\] 
Define 
\[
\eta_{I} = \eta_{II} \eta \text{ and } \eta_{II} = \prod_{s_{j} = -1}\eta_{j},
\] 
where $\eta_{j}$ is a quadratic character given by the central character of $\r_{\p_{j}}$.
Let 
\[
G_{I} = Sp(2n_{I}) \text{ and } G_{II} = Sp(2n_{II})
\]
Then 
\[
\p_{I} := (\+_{s_{i} = 1}\p_{i}) \otimes \eta_{I} \in \cPdt{G_{I}}
\] 
and 
\[
\p_{II} := (\+_{s_{j} = -1}\p_{j}) \otimes \eta_{II} \in \cPdt{G_{II}}.
\] 
We can take 
\[
H = G_{I} \times G_{II} \text{ and } \L{H} = \D{G}_{I} \times \D{G}_{II}.
\]
In this case, $H$ is called a {\bf twisted elliptic endoscopic group} of $G$. Let 
\[
\xi_{i}: \L{G}_{i} \hookrightarrow GL(N_{i}, \C)
\]
be the natural embedding for $i = I, II$. Then 
\[
\xi := (\xi_{I} \otimes \eta_{I}) \+ (\xi_{II} \otimes \eta_{II})
\] 
factors through $\L{G}$ and defines an embedding $\L{H} \hookrightarrow \L{G}$. Let 
\[
\p_{H} = \p_{I} \times \p_{II}.
\] 
We say $(H, \p_{H})$ corresponds to $(\p, s)$ through $\xi$, and write $(H, \p_{H}) \rightarrow (\p, s)$.

In this paper, we also want to consider the twisted elliptic endoscopic groups of $GL(N)$, but we will only need the simplest case here. Recall for $\p \in \Pbd{G}$, we can view $\underline{\p}$ as a self-dual $N$-dimensional representation through the natural embedding 
\[
\xi_{N}: \L{G} \rightarrow GL(N, \C),
\] 
and in this way we get a self-dual parameter for $GL(N)$. We fix an outer automorphism $\theta_{N}$ of $GL(N)$ preserving an $F$-splitting, and let $\D{\theta}_{N}$ be the dual automorphism on $GL(N, \C)$, then 
\[
\xi_{N}(\L{G}) \subseteq \Cent(s, GL(N, \C)) \text{ and } \D{G} = \Cent(s, GL(N, \C))^{0}
\] 
for some $s \in GL(N, \C) \rtimes \D{\theta}_{N}$. So we call $G$ a twisted elliptic endoscopic group of $GL(N)$.

What lies in the heart of the endoscopy theory is a transfer map on the spaces of smooth compactly supported functions from $G$ to its (twisted) elliptic endoscopic group $H$ (similarly from $GL(N)$ to its twisted elliptic endoscopic group $G$). The existence of the transfer map is quite deep, and it was conjectured by Langlands, Shelstad and Kottwitz. In a series of papers Waldspurger \cite{Waldspurger:1995} \cite{Waldspurger:1997} \cite{Waldspurger:2006} \cite{Waldspurger:2008} is able to reduce it to the {\bf Fundamental Lemma} for Lie algebras over function fields. Finally it is in this particular form of the fundamental lemma, Ngo \cite{Ngo:2010} gave his celebrated proof. Let us denote such transfers by 

\begin{align}
\label{eq: endoscopic transfer}
\xymatrix{C^{\infty}_{c}(G(F)) \ar[r]  & C^{\infty}_{c}(H(F)) \\
                f \ar[r] & f^{H}}
\end{align}
and similarly 
\begin{align}
\label{eq: twisted endoscopic transfer}
\xymatrix{C^{\infty}_{c}(GL(N, F)) \ar[r]  & C^{\infty}_{c}(G(F)) \\
                f \ar[r] & f^{G}}
\end{align}
We should point out these transfer maps are only well defined after we pass to the space of (twisted) {\bf orbital integrals} on the source and the space of {\bf stable orbital integrals} on the target. Note the space of (twisted) (resp. stable) orbital integrals are dual to the space of (twisted) (resp. stable) invariant distributions on $G(F)$, i.e. one can view the (twisted) (resp. stable) invariant distributions of $G(F)$ as linear functionals of the space of (twisted) (resp. stable) orbital integrals. So dual to these transfer maps, the stable invariant distributions on $H(F)$ (resp. $G(F)$) will map to the (twisted) invariant distributions on $G(F)$ (resp. $GL(N, F)$). We call this map the (twisted) {\bf spectral endoscopic transfer}.

If $\r$ is an irreducible smooth representation of $G(F)$, then it defines an invariant distribution on $G(F)$ by the trace of 
\[
\r(f) = \int_{G(F)}f(g)\r(g)dg
\] 
for $f \in C^{\infty}_{c}(G(F))$. We call this the character of $\r$ and denote it by $f_{G}(\r)$. For any irreducible representation $\r^{\Sigma_{0}}$ of $G^{\Sigma_{0}}(F)$, which contains $\r$ in its restriction to $G(F)$, we define a twisted invariant distribution on $G(F)$ by the trace of 
\[
\r^{\Sigma_{0}}(f) = \int_{G(F) \rtimes \theta_{0}} f(g)\r^{\Sigma_{0}}(g)dg
\] 
for $f \in C^{\infty}_{c}(G(F) \rtimes \theta_{0})$. We call this the twisted character of $\r$, and denote it by $f_{G}(\r^{\Sigma_{0}})$. We can also define the twisted characters for $GL(N)$ similarly, but we will write it in a slightly different way. Let $\r$ be a self-dual irreducible smooth representation of $GL(N, F)$, we can define a twisted invariant distribution on $GL(N, F)$ by taking the trace of 
\[
\r(f) \circ A_{\r}(\theta_{N})
\] 
for $f \in C^{\infty}_{c}(GL(N, F))$, where $A_{\r}(\theta_{N})$ is an intertwining operator between $\r$ and $\r^{\theta_{N}}$. We call this the twisted character of $\r$ and denote it by $f_{N^{\theta}}(\r)$. 

Since the (twisted) elliptic endoscopic groups $H$ in our case are all products of quasisplit symplectic and special orthogonal groups, we can define a group of automorphisms of $H$ by taking the product of $\Sigma_{0}$ on each factor, and we denote this group again by $\Sigma_{0}$. Let $\sH(G)$ (resp. $\sH(H)$) be the subspace of $\Sigma_{0}$-invariant functions in $C_{c}^{\infty}(G(F))$ (resp. $C_{c}^{\infty}(H(F))$). Then it follows from a simple property of the transfer map (which we will not explain here) that we can restrict both \eqref{eq: endoscopic transfer} and \eqref{eq: twisted endoscopic transfer} to $\sH(G)$ and $\sH(H)$. Now we are ready to state a more precise version of Theorem~\ref{thm: L-packet}. 

\begin{theorem}[Arthur]
\label{thm: character relation}

\begin{enumerate}

\item Suppose $\p \in \cPdt{G}$, the sum of characters in $\cPkt{\p}$ 
\[
f(\p) = \sum_{[\r] \in \cPkt{\p}}f_{G}(\r) 
\]
defines a stable invariant distribution for $f \in \sH(G)$. Moreover it is {\bf uniquely} determined by $\r_{\p}$ through 
\begin{align}
\label{eq: character relation GL(N)}
f^{G}(\p) = f_{N^{\theta}}(\r_{\p}), \,\,\,\,\,\,\,\,\,\,\,\,\,\,   f \in C^{\infty}_{c}(GL(N))
\end{align}
after we normalize the Haar measures on $G(F)$ and $GL(N, F)$ in a compatible way.

\item Suppose $\p \in \cPdt{G}$, and $(H, \p_{H}) \rightarrow (\p, s)$ for $s \in \S{\p}$. If we define a stable invariant distribution $f(\p_{H})$ for $\sH(H)$ as in (1), then after we normalize the Haar measures on $G(F)$ and $H(F)$ in a compatible way the following identity holds

\begin{align}
\label{eq: character relation}
f^{H}(\p_{H}) = \sum_{[\r] \in \cPkt{\p}} <s, \r>f_{G}(\r) \,\,\,\,\,\,\,\,\,\,\, f \in \sH(G)
\end{align}
where
\[
<\cdot, \r> := <\cdot, \r>_{\underline{\p}}
\]
under the isomorphism $\S{\p} \cong \S{\underline{\p}}$.
\end{enumerate}

\end{theorem}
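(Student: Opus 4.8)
The plan is to deduce Theorem~\ref{thm: character relation} from Arthur's local classification theorems in \cite{Arthur:2013}, of which Theorem~\ref{thm: L-packet} is the qualitative shadow: the content of Theorem~\ref{thm: character relation} is precisely the pair of character identities that \emph{characterize} the packet $\cPkt{\p}$ together with its pairing $\r \mapsto <\cdot, \r>_{\underline{\p}}$, so the substance of the argument is matching our normalizations --- the $\Sigma_0$-stable Whittaker datum, the embeddings $\xi_N$ and $\xi$, the transfer factors, and the Haar measures --- to Arthur's, and, in the even orthogonal case, descending his statements from $G^{\Sigma_0}(F)$ to $G(F)$.

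For part (1): to each $\p \in \cPdt{G}$ Arthur attaches a stable linear form on $\sH(G)$, namely the local component of the stable distribution produced by stabilizing the $\theta_N$-twisted trace formula of $GL(N)$; its defining property is exactly the transfer identity $f^{G}(\p) = f_{N^{\theta}}(\r_{\p})$ of \eqref{eq: character relation GL(N)}, once the Haar measures on $G(F)$ and $GL(N,F)$ are the compatible ones built into the Langlands--Shelstad--Kottwitz transfer \eqref{eq: twisted endoscopic transfer}. That this stable form equals $\sum_{[\r] \in \cPkt{\p}} f_{G}(\r)$, and that the summands exhaust the constituents, is then Arthur's local theorem (\cite{Arthur:2013}, Theorem 1.5.1), which I would cite after fixing notation; here one also uses the finiteness of $\cS{\underline{\p}}$ to know that $\cPkt{\p}$ consists of discrete series and that these packets partition $\cPkt{2}(G)$.

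For part (2): the endoscopic character identity \eqref{eq: character relation} says that, with the pairing from Theorem~\ref{thm: L-packet} fixed, the stable form $f^{H}(\p_{H})$ attached to an endoscopic datum $(H, \p_{H}) \to (\p, s)$ pulls back along the transfer \eqref{eq: endoscopic transfer} to $\sum_{[\r]} <s, \r> f_{G}(\r)$. For a single $s$ this is again Arthur's local theorem; the delicate point, which I would address by recording carefully the transfer factor normalization relative to the chosen Whittaker datum, is that the \emph{same} function $\r \mapsto <\cdot, \r>_{\underline{\p}}$ serves for every $s \in \S{\p}$ simultaneously --- this rigidity is what pins the pairing down, and it is governed by the local intertwining relation.

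The step I expect to cost the most is the even orthogonal descent. There Arthur's packets and identities are naturally stated for $G^{\Sigma_0}(F)$, so I would invoke Theorem~\ref{thm: L-packet full orthogonal group}: restrict his identity for $\r^{\Sigma_0}$ to $G(F)$, average over the $\Sigma_0$-orbit, and use $<\cdot, \r^{\Sigma_0}>_{\underline{\p}}|_{\S{\underline{\p}}} = <\cdot, \r>_{\underline{\p}}$ to see that the coefficient of $[\r]$ collapses to $<s, \r>$ for $s \in \S{\p} \subseteq \S{\p}^{\Sigma_0}$ --- here it matters that the endoscopic $s$ of Example~\ref{eg: endoscopy} lie in $\S{\p}$, not merely $\S{\p}^{\Sigma_0}$, since the restricted pairing is only defined on $\S{\underline{\p}}$. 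One must then check that a $\Sigma_0$-orbit enters with the correct multiplicity (an orbit of size two when $\r \not\cong \r^{\theta_0}$, a single fixed point otherwise), that functions in $\sH(G)$ do not distinguish members of an orbit, and that the resulting decomposition $\cPkt{temp}(G) = \bigsqcup_{\p} \cPkt{\p}$ is consistent with \eqref{eq: character relation GL(N)}; this bookkeeping, rather than any new analytic input, is the main obstacle.
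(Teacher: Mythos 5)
The paper gives no proof of this theorem: it is quoted verbatim from Arthur's endoscopic classification \cite{Arthur:2013} (his Theorems 1.5.1 and 2.2.1), with only Remark~\ref{rk: character relation} recording the Whittaker normalizations of $A_{\r_{\p}}(\theta_{N})$ and of the transfer factors. Your proposal is consistent with that treatment --- the substance is indeed deferred to Arthur, and your bookkeeping of normalizations is the right thing to record. One small correction: the step you expect to be costliest, the ``even orthogonal descent,'' is actually free, because Arthur's local theorems are natively stated for the $\Sigma_{0}$-invariant Hecke algebra $\sH(G)$ and for $\Sigma_{0}$-orbits $[\r]$ --- exactly the form of \eqref{eq: character relation GL(N)} and \eqref{eq: character relation} --- so no averaging over orbits is needed; it is the \emph{refinement} to genuine representations of $G^{\Sigma_{0}}(F)$ (Theorems~\ref{thm: L-packet full orthogonal group} and~\ref{thm: character relation full orthogonal group}) that requires extra input, not the statement under review.
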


\begin{remark}
\label{rk: character relation}
Although we only state the theorem for discrete parameters, these statements are also true for tempered parameters (once we extend the definition $(H, \p_{H}) \rightarrow (\p, s)$ appropriately). 
The two identities \eqref{eq: character relation GL(N)} and \eqref{eq: character relation} are the ones we call (twisted) endoscopic character identities in the end of Section~\ref{sec: parameters of supercuspidal representations}, and they are also often referred to as (twisted) character relations. There are some ambiguities that we need to clarify in such identities. On one hand, in the definition of $f_{N^{\theta}}(\r_{\p})$ we need to choose a normalization of the intertwining operator $A_{\r_{\p}}(\theta_{N})$. In this theorem, we require $A_{\r_{\p}}(\theta_{N})$ to fix some Whittaker functional for $\r_{\p}$. On the other hand, in the definition of the transfer maps there is also a normalization issue. To resolve that, we need to fix certain (resp. $\theta_{N}$-stable) Whittaker datum for $G$ (resp. $GL(N)$), and we will take the so-called Whittaker normalization on the transfer maps. Finally, the stable invariant distribution $f(\p)$ for $f \in \sH(G)$ is uniquely determined by $\r_{\p}$ for the transfer map \eqref{eq: twisted endoscopic transfer} is surjective onto the space of $\Sigma_{0}$-invariant stable orbital integrals of $G(F)$.
\end{remark}

When $G$ is special even orthogonal, we have an additional character identity. To state it, we need to identify $C^{\infty}_{c}(G(F) \rtimes \theta_{0})$ with $C^{\infty}_{c}(G(F))$ by sending $g \rtimes \theta_{0}$ to $g$, so the twisted transfer map on $C^{\infty}_{c}(G(F))$ can also be translated to $C^{\infty}_{c}(G(F) \rtimes \theta_{0})$.

\begin{theorem}[Arthur]
\label{thm: character relation full orthogonal group}
Suppose $\p \in \cPdt{G}$, and $(H, \p_{H}) \rightarrow (\p, s)$ for $s \in \S{\p}^{\Sigma_{0}}$ but not in $\S{\p}$. Then after we normalize the Haar measures on $G(F)$ and $H(F)$ in a compatible way the following identity holds

\begin{align}
\label{eq: character relation full orthogonal group}
f^{H}(\p_{H}) = \sum_{[\r] \in \cPkt{\p}} <s, \r^{\Sigma_{0}}>f_{G}(\r^{\Sigma_{0}}), \,\,\,\,\,\,\,\,\,\,\, f \in C^{\infty}_{c}(G(F) \rtimes \theta_{0})
\end{align}
where $\r^{\Sigma_{0}}|_{G} = \r$ and
\[
<\cdot, \r^{\Sigma_{0}}> := <\cdot, \r^{\Sigma_{0}}>_{\underline{\p}}
\]
under the isomorphism $\S{\p}^{\Sigma_{0}} \cong \S{\underline{\p}}^{\Sigma_{0}}$.
\end{theorem}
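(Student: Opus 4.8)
The plan is to regard the asserted identity as the $\theta_0$-twisted counterpart of Theorem~\ref{thm: character relation}, attached to the twisted group $\widetilde{G}:=G\rtimes\theta_0$ (equivalently to the twisted space $G(F)\theta_0\subset G^{\Sigma_0}(F)$), and to deduce it from Arthur's endoscopic classification for $\widetilde{G}$ in \cite{Arthur:2013} together with Theorem~\ref{thm: L-packet full orthogonal group}. First I would set up the endoscopic picture exactly as in the paragraph following Example~\ref{eg: endoscopy}: for $s\in\S{\p}^{\Sigma_0}\setminus\S{\p}$ the group $H=Sp(2n_I)\times Sp(2n_{II})$ with $N_I=2n_I+1$, $N_{II}=2n_{II}+1$ odd and $N_I+N_{II}=N$ is a twisted elliptic endoscopic datum of $GL(N)$ via $\xi=(\xi_I\otimes\eta_I)\oplus(\xi_{II}\otimes\eta_{II})$, through which $\underline{\p}$ factors, giving $\p_H=\p_I\times\p_{II}\in\cPdt{H}$; correspondingly $\r_{\p_H}=\r_{\p_I}\times\r_{\p_{II}}$ is a self-dual representation of $GL(N,F)$.

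Before computing, I would record that the right-hand side of \eqref{eq: character relation full orthogonal group} is insensitive to the choice of extension $\r^{\Sigma_0}$ of $\r$ to $G^{\Sigma_0}$: replacing $\r^{\Sigma_0}$ by $\r^{\Sigma_0}\otimes\x_0$ multiplies $f_G(\r^{\Sigma_0})$ by $\x_0(\theta_0)=-1$ and, by \eqref{eq: full orthogonal character}, multiplies $\langle s,\r^{\Sigma_0}\rangle$ by $\e_0(s)$, and $\e_0(s)=\prod_i s_i^{n_i}=-1$ precisely because $s\notin\S{\p}$. So the statement is well posed, and it suffices to prove it for one chosen family of extensions.

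Next, I would feed in the two substantive inputs. On the $GL(N)$ side, applying \eqref{eq: character relation GL(N)} to $G_I$ and $G_{II}$ and using the standard behaviour of twisted characters under Langlands (isobaric) induction identifies the transfer of the stable distribution $f(\p_H)$ along \eqref{eq: twisted endoscopic transfer} with the $\theta_N$-twisted character of $\r_{\p_H}$, normalized (as in Remark~\ref{rk: character relation}) by requiring $A_{\r_{\p_H}}(\theta_N)$ to fix a $\theta_N$-stable Whittaker functional. On the $G$ side, Arthur's twisted local theorems for $\widetilde{G}$ say that the $\theta_0$-twisted characters $f_G(\r^{\Sigma_0})$, $\r^{\Sigma_0}\in\Pkt{\p}^{\Sigma_0}$, are linearly independent and exhaust the relevant twisted-tempered spectrum, and that the spectral transfer dual to \eqref{eq: twisted endoscopic transfer} (factored through the $\sH$-side transfers) expresses $f^H(\p_H)$ as a combination $\sum_{[\r]}c(s,\r^{\Sigma_0})\,f_G(\r^{\Sigma_0})$. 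The coefficients are read off from Arthur's normalized twisted transfer factors, and with the fixed $\Sigma_0$-stable Whittaker datum and the Whittaker normalization of all transfers they come out exactly as $\langle s,\r^{\Sigma_0}\rangle_{\underline{\p}}$. Restricting $s$ to $\S{\underline{\p}}$ and invoking $\langle\cdot,\r^{\Sigma_0}\rangle|_{\S{\underline{\p}}}=\langle\cdot,\r\rangle$ from Theorem~\ref{thm: L-packet full orthogonal group} recovers \eqref{eq: character relation}, which serves as a consistency check.

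The one genuinely delicate point I expect is this last identification of constants: one must verify that Arthur's twisted transfer factor for the datum $(H,s)$ of $\widetilde{G}$, in the Whittaker normalization, reproduces the very pairing $\langle s,\cdot\rangle_{\underline{\p}}$ that enters Theorems~\ref{thm: L-packet} and \ref{thm: L-packet full orthogonal group}, i.e.\ that all the normalization conventions are mutually compatible. Everything else is formal once the stable multiplicity and spectral transfer formalism for $\widetilde{G}$ is in place, so the most efficient write-up is to quote the corresponding twisted character identity from \cite{Arthur:2013} and translate it into the notation of Section~\ref{sec: tempered Arthur packet}, which is the route I would take.
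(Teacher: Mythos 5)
The paper does not prove this statement: it is quoted verbatim as a theorem of Arthur (the $\theta_0$-twisted local character identity from \cite{Arthur:2013}), with only the Whittaker normalization conventions recorded in the surrounding text. Your proposal — set up $(H,\p_H)\rightarrow(\p,s)$ for $s\in\S{\p}^{\Sigma_0}\setminus\S{\p}$ as in Section~\ref{sec: endoscopy}, check via \eqref{eq: full orthogonal character} and $\e_0(s)=-1$ that the right-hand side is independent of the choice of extension $\r^{\Sigma_0}$, and then import Arthur's twisted spectral transfer identity with the Whittaker-normalized transfer factors — is correct and is essentially the same route, namely a translation of Arthur's result into the notation of Section~\ref{sec: tempered Arthur packet}.
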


Again this theorem also holds for $\p \in \cP{G}$ (once we extend the definition $(H, \p_{H}) \rightarrow (\p, s)$ appropriately), and we have taken the Whittaker normalization on the transfer maps with respect to the fixed $\Sigma_{0}$-stable Whittaker datum in Theorem~\ref{thm: L-packet full orthogonal group}. We will only need this theorem in Section~\ref{sec: even orthogonal group}.

\section{Jacquet modules}
\label{sec: Jacquet modules}

First let us assume $G$ is any connected reductive group over $F$, and let $\Rep(G)$ be the category of finite-length smooth representations of $G$. If $M$ is the Levi component of a parabolic subgroup $P$ of $G$, then the normalized parabolic induction defines a functor from $\Rep(M)$ to $\Rep(G)$. The normalized Jacquet module is its left adjoint functor, i.e.,
\begin{align}
\label{eq: Frobenius reciprocity}
\Hom_{M}(\Jac_{P}\r, \sigma) \cong \Hom_{G}(\r, \Ind^{G}_{P}\sigma),
\end{align}
for $\r \in \Rep(G)$ and $\sigma \in \Rep(M)$. This relation \eqref{eq: Frobenius reciprocity} is usually referred to as {\bf Frobenius reciprocity}. One can see easily from \eqref{eq: Frobenius reciprocity} and Theorem~\ref{thm: B-Z} that $\r \in \Rep(G)$ is supercuspidal if and only if $\Jac_{P}\r = 0$ for all standard parabolic subgroups $P$ of $G$. In fact this is one of the equivalent definitions of supercuspidal representations. The next two lemmas state some general facts about Jacquet modules, and we refer the interested readers to \cite[Section 3]{MoeglinTadic:2002} for their proofs.

\begin{lemma}
\label{lemma: A}
Suppose $\r \in \Rep(G)$ is irreducible, and $\sigma$ is an irreducible supercuspidal constituent of $\Jac_{P}\r$, then there is an inclusion 
\[
\r \hookrightarrow \Ind_{P}^{\, G}\sigma.
\]
\end{lemma}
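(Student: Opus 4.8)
The plan is to prove the inclusion directly from Frobenius reciprocity, using the fact that $\sigma$ is supercuspidal to replace a priori surjectivity onto $\sigma$ by an honest embedding. First I would observe that since $\sigma$ is a constituent of $\Jac_P\r$, and the category $\Rep(M)$ is noetherian/artinian of finite length, $\sigma$ appears as a subquotient; since $\sigma$ is supercuspidal (hence both injective and projective in the appropriate subcategory of representations with the same supercuspidal support, or more simply: a supercuspidal representation occurring in a finite-length module is always both a sub and a quotient of that module), $\sigma$ is in fact a quotient of $\Jac_P\r$. This gives a nonzero map $\Jac_P\r \twoheadrightarrow \sigma$, i.e., a nonzero element of $\Hom_M(\Jac_P\r,\sigma)$.

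Next I would apply Frobenius reciprocity \eqref{eq: Frobenius reciprocity}: the nonzero homomorphism $\Jac_P\r \to \sigma$ corresponds to a nonzero homomorphism $\r \to \Ind^G_P\sigma$. Since $\r$ is irreducible, any nonzero homomorphism out of $\r$ is injective, so this yields the desired inclusion $\r \hookrightarrow \Ind^G_P\sigma$.

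The only real point requiring care — and the step I expect to be the main obstacle — is justifying that $\sigma$ is a \emph{quotient} of $\Jac_P\r$ rather than merely a subquotient. The cleanest argument is: $\Jac_P\r$ has finite length, so it has a finite filtration with irreducible quotients; because $\sigma$ is supercuspidal, the exactness of the Jacquet functor (or of further Jacquet modules applied to each graded piece) forces $\sigma$ to occur among the cosocle constituents — concretely, if $\sigma$ appeared only strictly inside the filtration, one could pass to a subrepresentation $\tau \subseteq \Jac_P\r$ having $\sigma$ as a quotient, and then since $\Ind^G_P$ is exact and $\Jac_P$ is its adjoint one still extracts a nonzero map $\Jac_P\r \to \sigma$ after noting $\Hom_M(-,\sigma)$ is exact on the relevant short exact sequences when $\sigma$ is supercuspidal (supercuspidals are injective objects in $\Rep(M)$). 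I would simply cite (\cite{MoeglinTadic:2002}, Section 3) for this standard fact about supercuspidal constituents, as the lemma statement already defers its proof there, and then the Frobenius reciprocity step closes the argument in one line.
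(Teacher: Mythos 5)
Your argument is correct and is essentially the standard proof that the paper itself defers to (\cite{MoeglinTadic:2002}, Section 3): the entire content of the lemma is that an irreducible supercuspidal constituent of the finite-length representation $\Jac_{P}\r$ is automatically a \emph{quotient} of it, after which Frobenius reciprocity \eqref{eq: Frobenius reciprocity} and the irreducibility of $\r$ give the embedding. The only imprecision is the parenthetical claim that supercuspidals are injective objects in all of $\Rep(M)$ — they are not, since they admit nontrivial self-extensions through the lattice of unramified twists — but your primary formulation (injectivity and projectivity after fixing the central character, equivalently the Bernstein block decomposition, under which the summand of $\Jac_{P}\r$ whose constituents are all isomorphic to $\sigma$ splits off and has $\sigma$ in both its socle and its cosocle) is the correct one and suffices.
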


\begin{lemma}
\label{lemma: B}
Suppose $\r \in \Rep(G)$ is irreducible, and $M = M_{1} \times M_{2}$. Let $\tau_{1} \in \Rep(M_{1})$ be irreducible and $\tau_{2} \in \Rep(M_{2})$. If 
\[
\r \hookrightarrow \Ind_{P}^{\, G}(\tau_{1} \otimes \tau_{2}), 
\]
then there exists an irreducible constituent $\tau'_{2}$ in $\tau_{2}$ such that 
\[
\r \hookrightarrow \Ind_{P}^{\, G}(\tau_{1} \otimes \tau'_{2}).
\]
\end{lemma}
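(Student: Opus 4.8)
The plan is to avoid passing directly to an irreducible submodule of $\tau_2$ (which need not induce to something containing $\r$) and instead to run a minimal-index argument on a filtration of $\Ind_P^G(\tau_1 \otimes \tau_2)$ coming from a composition series of $\tau_2$. The only tools needed are the exactness of normalized parabolic induction $\Ind_P^G$, the exactness of the external tensor product functor $\tau_1 \otimes (-)$, Frobenius reciprocity \eqref{eq: Frobenius reciprocity} (optionally), and the elementary fact that a nonzero submodule of an irreducible representation is the whole thing.

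First I would fix a composition series $0 = \tau_2^{(0)} \subsetneq \tau_2^{(1)} \subsetneq \cdots \subsetneq \tau_2^{(k)} = \tau_2$ in $\Rep(M_2)$, with each $\tau_2^{(i)}/\tau_2^{(i-1)}$ irreducible. Applying $\tau_1 \otimes (-)$ and then $\Ind_P^G$ to the short exact sequences $0 \to \tau_2^{(i-1)} \to \tau_2^{(i)} \to \tau_2^{(i)}/\tau_2^{(i-1)} \to 0$ yields a filtration
\[
0 = V^{(0)} \subseteq V^{(1)} \subseteq \cdots \subseteq V^{(k)} = \Ind_P^G(\tau_1 \otimes \tau_2), \qquad V^{(i)} := \Ind_P^G(\tau_1 \otimes \tau_2^{(i)}),
\]
with successive quotients $V^{(i)}/V^{(i-1)} \cong \Ind_P^G\bigl(\tau_1 \otimes (\tau_2^{(i)}/\tau_2^{(i-1)})\bigr)$.

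Next, I would use the given embedding to identify $\r$ with a submodule of $V^{(k)} = \Ind_P^G(\tau_1 \otimes \tau_2)$, and let $j$ be the least index with $\r \cap V^{(j)} \neq 0$; such $j$ exists and satisfies $j \geq 1$ since $\r \subseteq V^{(k)}$ while $V^{(0)} = 0$. Because $\r$ is irreducible, $\r \cap V^{(j)} \neq 0$ forces $\r \subseteq V^{(j)}$, and minimality of $j$ gives $\r \cap V^{(j-1)} = 0$; hence the composite $\r \hookrightarrow V^{(j)} \twoheadrightarrow V^{(j)}/V^{(j-1)} \cong \Ind_P^G(\tau_1 \otimes \tau_2')$, where $\tau_2' := \tau_2^{(j)}/\tau_2^{(j-1)}$, is injective. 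Since $\tau_2'$ is an irreducible constituent of $\tau_2$, this is exactly the asserted inclusion $\r \hookrightarrow \Ind_P^G(\tau_1 \otimes \tau_2')$.

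The main point requiring care — and essentially the only nonformal step — is precisely that a submodule of an induced representation is in general not induced from a submodule, so one cannot naively reduce to an irreducible submodule of $\tau_2$; the minimal-index device is what repairs this, trading an irreducible sub\emph{module} of $\tau_2$ for an irreducible sub\emph{quotient}. Everything else is bookkeeping with exact functors. (Alternatively, one could transcribe the same argument via Frobenius reciprocity: a nonzero $\r \hookrightarrow \Ind_P^G(\tau_1\otimes\tau_2)$ corresponds to a nonzero $\Jac_P \r \to \tau_1 \otimes \tau_2$, which one then factors through an appropriate subquotient $\tau_1 \otimes \tau_2'$ of $\tau_1 \otimes \tau_2$; but the filtration formulation above seems the most transparent.)
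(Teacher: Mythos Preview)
Your argument is correct. The paper does not actually supply its own proof of this lemma; it simply refers the reader to \cite{MoeglinTadic:2002}, Section~3. Your filtration/minimal-index argument is the standard proof and is essentially what one finds there, so there is nothing to compare. (As a small remark, your proof never uses the irreducibility of $\tau_{1}$, so the statement holds more generally; the hypothesis on $\tau_{1}$ is harmless but unnecessary for this particular approach.)
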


Now let us restrict to the case when $G$ is a quasisplit symplectic or special orthogonal group. We would like to define a modified Jacquet functor. For this we first fix a unitary irreducible supercuspidal representation $\rho$ of $GL(d_{\rho}, F)$, and we assume $M = GL(d_{\rho}) \times G_{-}$ is the Levi component of a standard maximal parabolic subgroup $P$ of $G$. In case $G_{-} = 1$ and $G$ is special even orthogonal, we require $P$ to be contained in the standard parabolic subgroup of $GL(2n)$. Then for $\r \in \Rep(G)$, 
\[
s.s. \Jac_{P}(\r) = \bigoplus_{i} \tau_{i} \otimes \sigma_{i},
\] 
where $\tau_{i} \in \Rep(GL(d_{\rho}))$ and $\sigma_{i} \in \Rep(G_{-})$, both of which are irreducible. We define $\Jac_{x}\r$ for any real number $x$ to be 
\[
\Jac_{x}(\r) = \bigoplus_{\tau_{i} = \rho||^{x}} \sigma_{i}.
\]
Note unlike $\Jac_{P}\r$, in our definition $\Jac_{x}\r$ is always semisimple. If we have an ordered sequence of real numbers $\{x_{1}, \cdots, x_{s}\}$, we can define
\[
\Jac_{x_{1}, \cdots, x_{s}}\r = \Jac_{x_{s}} \circ \cdots \circ \Jac_{x_{1}} \r.
\]
It is not hard to see $\Jac_{x}$ can be defined for $GL(n)$ in a similar way by replacing $G_{-}$ by $GL(n_{-})$. Furthermore, we can define $\Jac^{op}_{x}$ analogous to $\Jac_{x}$ but with respect to $\rho^{\vee}$ and the standard Levi subgroup $GL(n_{-}) \times GL(d_{\rho^{\vee}})$. So let us define $\Jac^{\theta}_{x} = \Jac_{x} \circ \Jac^{op}_{-x}$ for $GL(n)$. Next we want to give some properties of this modified Jacquet functor.

\begin{lemma}
\label{lemma: existence of inclusion}
If $\r \in \Rep(G)$ is irreducible, and $\Jac_{x, \cdots, y} \r =\sigma$ for $\sigma \in \Rep(G_{-})$. Then there exists an irreducible constituent $\sigma'$ in $\sigma$ so that we get an inclusion
\[
\r \hookrightarrow \rho||^{x} \times \cdots \times \rho||^{y} \rtimes \sigma'.
\] 
\end{lemma}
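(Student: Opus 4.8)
The plan is to induct on the length $s$ of the sequence $x, \cdots, y$ (say it has $s$ terms, written $x_1, \cdots, x_s$ with $x_1 = x$, $x_s = y$), combining Frobenius reciprocity with Lemma \ref{lemma: B}. For the base case $s = 1$, we are given $\Jac_{x}\r = \sigma \neq 0$, which by definition means that $\rho||^{x} \otimes \sigma$ occurs in $s.s.\,\Jac_{P}(\r)$ for the maximal parabolic $P$ with Levi $GL(d_\rho) \times G_{-}$; picking an irreducible supercuspidal constituent is not needed here, but picking an irreducible summand $\sigma'$ of $\sigma$ and appealing to Frobenius reciprocity \eqref{eq: Frobenius reciprocity} gives a nonzero map $\r \to \Ind^{G}_{P}(\rho||^{x} \otimes \sigma')$. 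Since $\r$ is irreducible, this map is an injection, i.e. $\r \hookrightarrow \rho||^{x} \rtimes \sigma'$, as desired.

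For the inductive step, write $\Jac_{x_1, \cdots, x_s}\r = \Jac_{x_s}\big(\Jac_{x_1, \cdots, x_{s-1}}\r\big)$. By definition $\Jac_{x_s}$ is applied to the semisimple representation $\tau := \Jac_{x_1, \cdots, x_{s-1}}\r = \bigoplus_j \tau_j$ (sum of irreducibles), so $\sigma = \Jac_{x_1,\cdots,x_s}\r$ being nonzero forces some irreducible summand $\tau_{j_0}$ of $\tau$ with $\Jac_{x_s}\tau_{j_0} \neq 0$; note $\tau_{j_0}$ already contributes a nonzero summand to $\Jac_{x_1,\cdots,x_{s-1}}\r$. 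Applying the inductive hypothesis to the single irreducible representation $\tau_{j_0}$ (regarding $\Jac_{x_1,\cdots,x_{s-1}}\r \supseteq \tau_{j_0}$; strictly, one applies the hypothesis with the role of $\sigma$ played by a representation containing $\tau_{j_0}$, then uses that $\tau_{j_0}$ itself is an irreducible constituent) yields an inclusion
\[
\r \hookrightarrow \rho||^{x_1} \times \cdots \times \rho||^{x_{s-1}} \rtimes \tau_{j_0}.
\]
Here I must be slightly careful: the inductive hypothesis as stated produces $\r \hookrightarrow \rho||^{x_1}\times\cdots\times\rho||^{x_{s-1}} \rtimes \sigma''$ for \emph{some} irreducible constituent $\sigma''$ of $\Jac_{x_1,\cdots,x_{s-1}}\r$, and one needs that this $\sigma''$ may be taken to be one on which $\Jac_{x_s}$ is nonzero. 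This is exactly the place where a small refinement of the argument is required: rather than inducting on the bare statement, I would induct on the strengthened statement ``for every irreducible summand $\sigma_0$ of $\Jac_{x_1,\cdots,x_{s-1}}\r$ with $\sigma_0 = \tau_{j_0}$, there is an inclusion $\r \hookrightarrow \rho||^{x_1}\times\cdots\times\rho||^{x_{s-1}}\rtimes\sigma_0$,'' which follows from the same Frobenius-reciprocity-plus-Lemma \ref{lemma: B} argument since the Jacquet module in question is genuinely semisimple by construction.

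Granting the inclusion $\r \hookrightarrow \rho||^{x_1}\times\cdots\times\rho||^{x_{s-1}}\rtimes\tau_{j_0}$ with $\Jac_{x_s}\tau_{j_0}\neq 0$, I apply the base case to $\tau_{j_0}$: there is an irreducible constituent $\sigma'$ of $\Jac_{x_s}\tau_{j_0}$ (hence of $\sigma$) with $\tau_{j_0} \hookrightarrow \rho||^{x_s}\rtimes\sigma'$. Inducing this inclusion up and composing (induction is exact, so it preserves injections), I get
\[
\r \hookrightarrow \rho||^{x_1}\times\cdots\times\rho||^{x_{s-1}}\rtimes\big(\rho||^{x_s}\rtimes\sigma'\big) = \rho||^{x_1}\times\cdots\times\rho||^{x_s}\rtimes\sigma',
\]
using transitivity of parabolic induction to identify the iterated induction, which completes the induction. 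The main obstacle is the bookkeeping in the previous paragraph — ensuring the inductive hypothesis delivers an intermediate representation on which the \emph{next} Jacquet functor does not vanish — and this is handled by strengthening the inductive statement to range over all irreducible summands of the (semisimple) partial Jacquet module, so that one always has the freedom to select the summand dictated by the definition of $\Jac_{x_1,\cdots,x_s}$.
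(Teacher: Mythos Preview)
Your inductive approach has a genuine gap at the point where you invoke Frobenius reciprocity. The identity $\Hom_{G}(\r, \Ind^{G}_{P}(\rho||^{x}\otimes\sigma')) \cong \Hom_{M}(\Jac_{P}\r, \rho||^{x}\otimes\sigma')$ produces a nonzero map only when $\rho||^{x}\otimes\sigma'$ is a \emph{quotient} of the actual Jacquet module $\Jac_{P}\r$, not merely a constituent of its semisimplification. Since $\Jac_{x}$ is defined via the semisimplification, an arbitrary irreducible summand $\sigma'$ of $\Jac_{x}\r$ need not yield such a quotient. Your strengthened inductive hypothesis --- that \emph{every} irreducible summand $\sigma_{0}$ of $\Jac_{x_{1},\cdots,x_{s-1}}\r$ admits an embedding $\r\hookrightarrow \rho||^{x_{1}}\times\cdots\times\rho||^{x_{s-1}}\rtimes\sigma_{0}$ --- amounts to asking that the relevant isotypic piece of the (non-semisimplified) Jacquet module be semisimple, which fails in general. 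So already in the base case the argument does not go through for an arbitrary summand, and in the inductive step you genuinely need the freedom to pick a \emph{specific} summand (one with $\Jac_{x_{s}}$ nonvanishing), which the weak hypothesis does not provide and the strong hypothesis cannot guarantee.

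The paper circumvents this by descending to the supercuspidal level and invoking Lemma~\ref{lemma: A}. One chooses a supercuspidal $\r_{M_{-}}$ in the cuspidal support of some constituent of $\sigma$, forms the supercuspidal representation $\r_{M}=\rho||^{x}\otimes\cdots\otimes\rho||^{y}\otimes\r_{M_{-}}$ of the Levi $M=GL(d_{\rho})\times\cdots\times GL(d_{\rho})\times M_{-}$, and observes that $\r_{M}$ lies in $s.s.\,\Jac_{P}\r$. Lemma~\ref{lemma: A} then applies --- it is precisely the statement that \emph{supercuspidal} constituents of the Jacquet module always produce embeddings --- giving $\r\hookrightarrow \rho||^{x}\times\cdots\times\rho||^{y}\rtimes\Ind^{G_{-}}_{P_{-}}\r_{M_{-}}$. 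A single application of Lemma~\ref{lemma: B} replaces $\Ind^{G_{-}}_{P_{-}}\r_{M_{-}}$ by an irreducible constituent $\sigma'$, and Frobenius reciprocity (now used in the legitimate direction, from embedding to quotient of the Jacquet module) shows $\sigma'$ lies in $\Jac_{x,\cdots,y}\r=\sigma$. The passage through supercuspidal support is not cosmetic: it is exactly what makes Lemma~\ref{lemma: A} available and sidesteps the quotient-versus-constituent issue that your induction cannot resolve.
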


\begin{proof}
By Theorem~\ref{thm: B-Z}, there exists a standard parabolic subgroup $P_{-}$ of $G_{-}$ with an irreducible supercuspidal representation $\r_{M_{-}}$ on the Levi component $M_{-}$ such that there is a nontrivial equivariant homomorphism from $\sigma$ to $\Ind_{P_{-}}^{G_{-}}\r_{M_{-}}$. Then by Frobenius reciprocity, $\r_{M_{-}}$ is in $s.s.\Jac_{P_{-}} \sigma$. In particular, we can take $M = GL(d_{\rho}) \times \cdots \times GL(d_{\rho}) \times M_{-}$ with $P$ being the corresponding standard parabolic subgroup of $G$, and take $\r_{M} = \rho||^{x} \otimes \cdots \otimes\rho||^{y} \otimes \r_{M_{-}}$ to be an irreducible supercuspidal representation of $M$. Then $\r_{M}$ is in $s.s.\Jac_{P}\r$. By Lemma~\ref{lemma: A}, we know 
\[
\r \hookrightarrow \rho||^{x} \times \cdots \times \rho||^{y} \rtimes \Ind^{G_{-}}_{P_{-}}(\r_{M_{-}}). 
\]
So by Lemma~\ref{lemma: B} there exists an irreducible constituent $\sigma'$ in $\Ind^{G_{-}}_{M_{-}}(\r_{M_{-}})$ such that 
\[
\r \hookrightarrow \rho||^{x} \times \cdots \times \rho||^{y} \rtimes \sigma'.
\]
Finally by Frobenius reciprocity again, we know $\sigma'$ is in $\Jac_{x, \cdots, y}\r = \sigma$. This finishes the proof.

\end{proof}

As a special case of this lemma, we have the following corollary.

\begin{corollary}
\label{cor: existence of inclusion}
If $\r \in \Rep(G)$ is irreducible, and $\Jac_{x, \cdots, y} \r =\sigma$ for $\sigma \in \Rep(G_{-})$, which is also irreducible. Then there is an inclusion
\[
\r \hookrightarrow \rho||^{x} \times \cdots \times \rho||^{y} \rtimes \sigma.
\] 
\end{corollary}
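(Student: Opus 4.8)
The plan is to obtain the corollary as an immediate specialization of Lemma~\ref{lemma: existence of inclusion}. First I would apply that lemma to the given irreducible $\r$ and the hypothesis $\Jac_{x, \cdots, y}\r = \sigma$: it furnishes an irreducible constituent $\sigma'$ of $\sigma$ together with an embedding
\[
\r \hookrightarrow \rho||^{x} \times \cdots \times \rho||^{y} \rtimes \sigma'.
\]
Second, I would invoke the extra hypothesis of the corollary, namely that $\sigma$ is itself irreducible. Then its only composition factor (constituent) is $\sigma$ up to isomorphism, so necessarily $\sigma' \cong \sigma$. Substituting this back into the displayed inclusion yields exactly
\[
\r \hookrightarrow \rho||^{x} \times \cdots \times \rho||^{y} \rtimes \sigma,
\]
which is the claim.

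I do not anticipate any real obstacle: the statement is labelled a corollary precisely because it is a trivial consequence of the preceding lemma. The only point deserving a word of care is the interpretation of ``irreducible constituent'' in Lemma~\ref{lemma: existence of inclusion} (i.e.\ as an irreducible subquotient in a composition series of $\sigma$), for it is this reading that makes the passage from $\sigma'$ to $\sigma$ automatic when $\sigma$ is irreducible; one could alternatively bypass the general lemma entirely and rederive the conclusion directly from Lemma~\ref{lemma: A} and Lemma~\ref{lemma: B} together with Frobenius reciprocity, but that merely repeats the argument already given.
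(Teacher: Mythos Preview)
Your proposal is correct and matches the paper's approach exactly: the paper presents this corollary as ``a special case of this lemma'' (Lemma~\ref{lemma: existence of inclusion}) without further proof, since when $\sigma$ is irreducible the constituent $\sigma'$ produced by the lemma must be $\sigma$ itself.
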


\begin{remark}
\label{rk: existence of inclusion}
Lemma~\ref{lemma: existence of inclusion} and Corollary~\ref{cor: existence of inclusion} are also valid in the case of general linear groups, and the proofs are the same. 
\end{remark}

\begin{lemma}
\label{lemma: switching}
If $\r \in \Rep(G)$ and $|x - y| \neq 1$, then $\Jac_{x, y} \r = \Jac_{y, x} \r$.
\end{lemma}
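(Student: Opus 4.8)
\textbf{Proof proposal for Lemma~\ref{lemma: switching}.}

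The plan is to reduce the statement to a fact about the composition series of parabolically induced representations of general linear groups, specifically to the commutativity of two Jacquet functors on $GL$ when the relevant segments do not link. First I would unwind the definitions: $\Jac_{x,y}\r$ is obtained by taking the Jacquet module of $\r$ along the standard maximal parabolic with Levi $GL(d_\rho)\times G'$, extracting the $\rho\|^{x}$-isotypic piece in the $GL(d_\rho)$-variable, and then repeating with $\rho\|^{y}$; similarly for $\Jac_{y,x}\r$. So it suffices to work inside the Jacquet module of $\r$ along the parabolic with Levi $GL(d_\rho)\times GL(d_\rho)\times G''$ (with $G''$ of rank $n - 2d_\rho$), i.e. to compare, for each irreducible constituent $\tau_1\otimes\tau_2\otimes\sigma$ of that two-step Jacquet module, the contribution with $\tau_1 = \rho\|^{x}, \tau_2 = \rho\|^{y}$ against the contribution with $\tau_1 = \rho\|^{y}, \tau_2 = \rho\|^{x}$. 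The transitivity of Jacquet functors and exactness of $s.s.$ make this reduction routine.

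The heart of the matter is then the following: for an irreducible representation $\pi$ of $GL(2d_\rho, F)$ appearing as a constituent of $\Jac$ from $\r$ down to the $GL(d_\rho)\times GL(d_\rho)$ level, the $(\rho\|^{x})\otimes(\rho\|^{y})$-isotypic and $(\rho\|^{y})\otimes(\rho\|^{x})$-isotypic components of $\Jac_{GL(d_\rho)\times GL(d_\rho)}\pi$ agree. This is a purely $GL$ statement. I would invoke the classification of constituents of Jacquet modules of representations of $GL$ in terms of segments (Zelevinsky), together with the key combinatorial input that when $|x-y|\neq 1$ the one-element segments $\{x\}$ and $\{y\}$ (for $\rho$) do not link. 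For unlinked segments, the two parabolic inductions $\rho\|^{x}\times\rho\|^{y}$ and $\rho\|^{y}\times\rho\|^{x}$ are irreducible and isomorphic, and correspondingly in the Jacquet module the multiplicity of $\rho\|^{x}\otimes\rho\|^{y}$ equals that of $\rho\|^{y}\otimes\rho\|^{x}$. Concretely one computes: an irreducible $\pi$ contributes to $\Jac_{\rho\|^{x},\rho\|^{y}}$ exactly when $\pi\hookrightarrow \rho\|^{x}\times\rho\|^{y}\rtimes(\text{something})$-type data forces the two exponents to be ``separable'' in its segment support, and the separability is symmetric in $x$ and $y$ precisely because $|x-y|\neq 1$. (When $|x-y|=1$ the segments link, $\rho\|^{x}\times\rho\|^{y}$ is reducible with the two orderings giving genuinely different subquotient structure, and the lemma genuinely fails — this is the content of the hypothesis.)

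I expect the main obstacle to be bookkeeping: making the reduction to $GL$ clean when $G''$ is a symplectic or orthogonal group rather than itself a $GL$, since one must be careful that the $G''$-factor $\sigma$ is simply carried along and plays no role in the swap. The way around this is Lemma~\ref{lemma: B}-style reasoning, or more simply the observation that $\Jac_{x}$ and $\Jac_{y}$ only touch the $GL$-part of the Levi, so one can first restrict attention to the $GL(d_\rho)\times GL(d_\rho)$ block and apply the $GL$ commutation result constituent by constituent. A secondary technical point is verifying the exchange at the level of the \emph{semisimplified} Jacquet module is enough — but since both $\Jac_{x}\r$ and $\Jac_{y}\r$ are defined to be semisimple by construction, equality of semisimplifications is exactly the assertion, so no subtlety about non-split extensions arises. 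Thus the proof should be short once the $GL$ non-linking input is invoked; I would cite (\cite{MoeglinTadic:2002}, Section 3) or the Zelevinsky combinatorics of segments for that input rather than reprove it.
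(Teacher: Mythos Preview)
Your proposal is correct and follows essentially the same route as the paper: reduce via transitivity of Jacquet modules to the Levi $GL(2d_{\rho})\times G_{-}$, so that the question becomes one about irreducible constituents $\tau_i$ of $GL(2d_{\rho})$, and then use that when $|x-y|\neq 1$ the induction $\rho\|^{x}\times\rho\|^{y}$ is irreducible and isomorphic to $\rho\|^{y}\times\rho\|^{x}$. The paper makes the final step slightly more concrete by observing (via Corollary~\ref{cor: existence of inclusion}) that any $\tau_i$ with $\Jac_{x,y}\tau_i\neq 0$ must actually \emph{equal} $\rho\|^{x}\times\rho\|^{y}$, so both $\Jac_{x,y}\r$ and $\Jac_{y,x}\r$ are indexed by exactly the same set of $\tau_i$'s; your multiplicity-matching phrasing amounts to the same thing.
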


\begin{proof}
We take the standard parabolic subgroup $P=MN$ of $G$ with $M = GL(2d_{\rho}) \times G_{-}$. If
\[
s.s.\Jac_{P} \r = \bigoplus_{i} \tau_{i} \otimes \sigma_{i},
\]
then $\sigma_{i}$ is in $\Jac_{x, y}\r$ if and only if $\Jac_{x,y}\tau_{i} \neq 0$. Let us assume $\Jac_{x, y}\tau_{i} \neq 0$, by Corollary~\ref{cor: existence of inclusion} (also see Remark~\ref{rk: existence of inclusion}) we have $\tau_{i} \hookrightarrow \rho||^{x} \times \rho||^{y}$. Since $|x - y| \neq 1$, $\rho||^{x} \times \rho||^{y} \cong \rho||^{y} \times \rho||^{x}$ is irreducible (see Appendix~\ref{sec: reducibility for GL(n)}), so we must have $\tau_{i} \cong \rho||^{x} \times \rho||^{y}$. Hence 
\[
\Jac_{x, y} \r = \bigoplus_{\tau_{i} \cong \rho||^{x} \times \rho||^{y}} (\Jac_{x, y} \tau_{i}) \otimes \sigma_{i}.
\]
By the same argument, we have 
\[
\Jac_{y, x} \r = \bigoplus_{\tau_{i} \cong \rho||^{y} \times \rho||^{x}} (\Jac_{y, x} \tau_{i}) \otimes \sigma_{i}.
\]
Therefore, $\Jac_{x, y} \r = \Jac_{y, x} \r$.

\end{proof}

\begin{lemma}
\label{lemma: subrep GL(n)}
Suppose $\r \in \Rep(GL(d_{\rho}(|a-b|+1)))$ is an irreducible constituent of 
\[
\rho||^{a} \times \cdots \times \rho||^{b}
\]
for a segment $\{a, \cdots, b\}$, and $\Jac_{x}\r = 0$ unless $x = a$, then 
\(
\r = <a, \cdots, b>.
\)
\end{lemma}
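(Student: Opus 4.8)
The plan is to prove this by induction on the length of the segment $\{a, \cdots, b\}$, exploiting the Jacquet-module hypothesis together with the structure theory of representations of $GL(n)$ recalled in the appendix. Write $d = d_\rho$ and recall that $<a, \cdots, b>$ is by definition the unique irreducible subrepresentation of $\rho||^{a} \times \cdots \times \rho||^{b}$. The base case, where the segment has length one, is trivial since then $\r = \rho||^{a}$ itself.

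For the inductive step, first I would extract information from the hypothesis $\Jac_{x}\r = 0$ for $x \neq a$. Applying $\Jac_{x}$ to the induced representation $\rho||^{a} \times \cdots \times \rho||^{b}$ via the geometric lemma (Bernstein--Zelevinsky), every irreducible constituent $\r'$ of it satisfies $\Jac_{x}\r' \neq 0$ only for $x \in \{a, \cdots, b\}$; the condition singling out $x = a$ is exactly what distinguishes the Langlands subrepresentation $<a, \cdots, b>$ among the constituents. Concretely, since $\r$ is a constituent of $\rho||^{a} \times \cdots \times \rho||^{b}$ and $\Jac_{a}\r \neq 0$ (it must be nonzero, otherwise all $\Jac_x \r$ vanish and $\r$ would be supercuspidal, impossible as $b \ne a$ forces the segment to have length $\ge 2$), Frobenius reciprocity gives an embedding $\r \hookrightarrow \rho||^{a} \rtimes \tau$ — here $\rtimes$ meaning ordinary parabolic induction for $GL$ — where $\tau$ is an irreducible constituent of $\Jac_{a}\r$, hence a constituent of $\rho||^{a+1} \times \cdots \times \rho||^{b}$ (using that $\{a, \cdots, b\}$ is a segment so $|a - a'| \ne 1$ lets one move $\rho||^a$ past the others except the adjacent one). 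Then I would check that $\tau$ satisfies the same hypotheses for the shorter segment $\{a+1, \cdots, b\}$: namely $\Jac_{x}\tau = 0$ unless $x = a+1$. This last point is the crux — one needs that applying $\Jac_{x}$ to $\r$ and then relating it to $\Jac_{x}$ applied to $\tau$, via Lemma~\ref{lemma: switching} to commute $\Jac$ operators at non-adjacent indices, forces the vanishing; the only index requiring care is $x = a$, and there the segment structure (distinct consecutive values) prevents $\rho||^{a}$ from reappearing inside $\tau$.

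By the inductive hypothesis, $\tau = <a+1, \cdots, b>$, the unique irreducible subrepresentation of $\rho||^{a+1} \times \cdots \times \rho||^{b}$. Therefore $\r \hookrightarrow \rho||^{a} \times <a+1, \cdots, b> \hookrightarrow \rho||^{a} \times \rho||^{a+1} \times \cdots \times \rho||^{b}$, which exhibits $\r$ as an irreducible subrepresentation of $\rho||^{a} \times \cdots \times \rho||^{b}$. Since that induced representation has a \emph{unique} irreducible subrepresentation, namely $<a, \cdots, b>$, we conclude $\r = <a, \cdots, b>$.

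I expect the main obstacle to be the bookkeeping in the inductive step: verifying precisely that the auxiliary representation $\tau$ inherits the Jacquet-vanishing hypothesis for the truncated segment, and in particular ruling out $\Jac_{a}\tau \neq 0$. This requires combining Lemma~\ref{lemma: switching} (to interchange $\Jac$ at indices differing by more than $1$), Frobenius reciprocity, and the irreducibility/reducibility facts for products $\rho||^{x} \times \rho||^{y}$ from the appendix on $GL(n)$ reducibility. Everything else — the embedding statements, the uniqueness of the Langlands subrepresentation — is either standard or already recorded in the excerpt (Lemmas~\ref{lemma: A}, \ref{lemma: B}, Corollary~\ref{cor: existence of inclusion}).
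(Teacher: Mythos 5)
Your proposal is correct and is essentially the paper's argument in inductive clothing: the paper takes the longest initial subsegment $\{a,\cdots,y\}$ with $\Jac_{a,\cdots,y}\r\neq 0$ and uses Lemma~\ref{lemma: switching} to force $y=b$ before embedding via Corollary~\ref{cor: existence of inclusion}, which is the same combination of switching, cuspidal-support bookkeeping, and uniqueness of the irreducible subrepresentation that drives your inductive step. The only (cosmetic) caveat is that the segment may be decreasing, so the peeled-off exponent is $a\pm 1$ rather than $a+1$.
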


\begin{proof}
It is clear that $\Jac_{x}\r = 0$ unless $x \in \{a, \cdots, b\}$. Suppose $\{a, \cdots, y\} \subseteq \{a, \cdots, b\}$ is the longest segments such that 
\[
\Jac_{a, \cdots, y}\r \neq 0.
\]
If $y \neq b$, then we can find $z \in \{a, \cdots, b\} \backslash \{a\}$ such that $|x-z| > 1$ for all $x \in \{a, \cdots, y\}$ and $\Jac_{a, \cdots, y, z}\r \neq 0$. By Lemma~\ref{lemma: switching}, 
\[
\Jac_{z, a, \cdots, y}\r = \Jac_{a, \cdots, y, z}\r \neq 0.
\]
This means $\Jac_{z}\r \neq 0$, and we get a contradiction. So we can only have $y = b$, and by Corollary~\ref{cor: existence of inclusion} we have
\[
\r \hookrightarrow \rho||^{a} \times \cdots \times \rho||^{b}.
\]
Hence $\r = <a, \cdots, b>$.

\end{proof}

There are some explicit formulas for computing the Jacquet modules in the case of classical groups and general linear groups (see \cite[Section 1]{MoeglinTadic:2002}), and we want to recall some of them here. We will use $``\overset{s.s.}{=}"$ for equality after semisimplification.

For $GL(n)$, we know the irreducible discrete series representations are given by 
\[
St(\rho', a) = <\rho'; \frac{a-1}{2}, \cdots, -\frac{a-1}{2}>.
\] 
More generally we have irreducible representations $<\rho'; \zeta a, \cdots, \zeta b>$ attached to any decreasing segment $\{a, \cdots, b\}$ (cf. Section~\ref{sec: introduction}) for $\zeta = \pm1$. If we fix $\rho$ as before, then we have the following formulas for their Jacquet modules.
\begin{align}
\label{eq: Jac general linear group 1}
\Jac_{x} <\rho'; \zeta a, \cdots, \zeta b> = \begin{cases}
                                       <\rho'; \zeta (a-1), \cdots, \zeta b> , & \text{ if $x = \zeta a$ and $\rho' \cong \rho$,}\\
                                       0,  & \text{ otherwise; }
                                       \end{cases}
\end{align}
and
\begin{align}
\label{eq: Jac general linear group 2}
\Jac^{op}_{x} <\rho'; \zeta a, \cdots, \zeta b> = \begin{cases}
                                       <\rho'; \zeta a, \cdots, \zeta (b+1)> , & \text{ if $x = \zeta b$ and $\rho' \cong \rho^{\vee}$,}\\
                                       0,  & \text{ otherwise. }
                                       \end{cases}
\end{align}
If $\r_{i} \in \Rep(GL(n_{i}))$ for $i = 1$ or $2$, we have
\[
\Jac_{x} (\r_{1} \times \r_{2}) \overset{s.s.}{=} (\Jac_{x} \r_{1}) \times \r_{2} \+ \r_{1} \times (\Jac_{x} \r_{2}),
\] 
and 
\[
\Jac^{op}_{x} (\r_{1} \times \r_{2}) \overset{s.s.}{=} (\Jac^{op}_{x} \r_{1}) \times \r_{2} \+ \r_{1} \times (\Jac^{op}_{x} \r_{2}).
\]
Suppose $\r \in \Rep(G)$ and $\tau \in \Rep(GL(d))$. If $G$ is symplectic or special odd orthogonal, then

\begin{align*}
\label{eq: Jac semiproduct}
\Jac_{x} (\tau \rtimes \r) \overset{s.s.}{=} (\Jac_{x} \tau) \rtimes \r \+ (\Jac^{op}_{-x} \tau) \rtimes \r \+ \tau \rtimes \Jac_{x} \r.
\end{align*}
If $G = SO(2n, \eta)$, the situation is more complicated, and we would like to divide it into three cases. 

\begin{enumerate}

\item When $n \neq d_{\rho}$ or $0$,  
\[
\Jac_{x} (\tau \rtimes \r) \overset{s.s.}{=} \tau \rtimes \Jac_{x}\r \+
                                            \begin{cases}
                                           (\Jac_{x} \tau) \rtimes \r \+ (\Jac^{op}_{-x} \tau) \rtimes \r               &\text{ if $d_{\rho}$ is even } \\
                                           (\Jac_{x} \tau) \rtimes \r \+ (\Jac^{op}_{-x} \tau) \rtimes \r^{\theta_{0}}  &\text{ if $d_{\rho}$ is odd }
                                           \end{cases} 
\]

\item When $n = d_{\rho}$,
\[
\Jac_{x} (\tau \rtimes \r) \overset{s.s.}{=} \tau \rtimes \Jac_{x}\r \+ (\tau \rtimes \Jac_{x}\r^{\theta_{0}})^{\theta_{0}} \+
                                            \begin{cases}
                                           (\Jac_{x} \tau) \rtimes \r \+ (\Jac^{op}_{-x} \tau) \rtimes \r               &\text{ if $d_{\rho}$ is even } \\
                                           (\Jac_{x} \tau) \rtimes \r \+ (\Jac^{op}_{-x} \tau) \rtimes \r^{\theta_{0}}  &\text{ if $d_{\rho}$ is odd }
                                           \end{cases} 
\]

\item When $n=0$ and $d \neq d_{\rho}$,
\[
\Jac_{x} (\tau \rtimes 1) \overset{s.s.}{=} \begin{cases}
                                           (\Jac_{x} \tau) \rtimes 1 \+ (\Jac^{op}_{-x} \tau) \rtimes 1               &\text{ if $d_{\rho}$ is even } \\
                                           (\Jac_{x} \tau) \rtimes 1 \+ (\Jac^{op}_{-x} \tau \rtimes 1)^{\theta_{0}}  &\text{ if $d_{\rho}$ is odd }
                                           \end{cases} 
\]           

\item When $n=0$ and $d = d_{\rho}$,
\[
\Jac_{x} (\tau \rtimes 1) \overset{s.s.}{=} \begin{cases}
                                           (\Jac_{x} \tau) \rtimes 1 \+ (\Jac^{op}_{-x} \tau) \rtimes 1               &\text{ if $d_{\rho}$ is even } \\
                                           (\Jac_{x} \tau) \rtimes 1   &\text{ if $d_{\rho}$ is odd }
                                           \end{cases} 
\]                                            

\end{enumerate}
The formulas for special even orthogonal groups here are deduced from \cite[Theorem 3.4]{Jantzen:2006}. At last we define 
\[
\bar{\Jac}_{x} = \begin{cases}
                         \Jac_{x} + \Jac_{x} \circ \theta_{0},  & \text{ if $G = SO(2n)$ and $n = d_{\rho} \neq 1$, } \\
                         \Jac_{x},         & \text{ otherwise. }
                         \end{cases}
\]

Let $\bar{\Rep}(G)$ be the category of finite-length representations of $G(F)$ viewed as $\sH(G)$-modules. We denote the elements in $\bar{\Rep}(G)$ by $[\r]$ for $\r \in \Rep(G)$, and we call $[\r]$ is irreducible if $\r$ is irreducible. For $[\r] \in \bar{\Rep}(G)$, let us define 
\[
\tau \rtimes [\r] := [\tau \rtimes \r] \text{ and } \bar{\Jac}_{x} [\r] := [\bar{\Jac}_{x} \r].
\]
Then we can combine all cases into the following single formula
\begin{align}
\label{eq: Jac semiproduct}
\bar{\Jac}_{x} (\tau \rtimes [\r]) \overset{s.s.}{=} (\Jac_{x} \tau) \rtimes [\r] \+ (\Jac^{op}_{-x} \tau) \rtimes [\r] \+ \tau \rtimes \bar{\Jac}_{x} [\r].
\end{align}

Finally, we would like to extend the discussion of this section to the category $\Rep(G^{\Sigma_{0}})$ of finite-length representations of $G^{\Sigma_{0}}(F)$. Let $P = MN$ be a standard parabolic subgroup of $G$. If $M$ is $\theta_{0}$-stable, we write $M^{\Sigma_{0}} := M \rtimes \Sigma_{0}$ and $P^{\Sigma_{0}} := P \rtimes \Sigma_{0}$. Otherwise, we let $M^{\Sigma_{0}} = M$ and $P^{\Sigma_{0}} = P$. Note when $G^{\Sigma_{0}}$ is even orthogonal group, one can define the normalized parabolic induction and normalized Jacquet module in a similar way (see \cite[Section 6]{Ban:1999}).
Suppose $\sigma^{\Sigma_{0}} \in \Rep(M^{\Sigma_{0}})$, $\r^{\Sigma_{0}} \in \Rep(G^{\Sigma_{0}})$. It follows from the definition that
\[
(\Jac_{P^{\Sigma_{0}}} \r^{\Sigma_{0}})|_{M} = \Jac_{P}(\r^{\Sigma_{0}}|_{G}).
\]
And 
\[
(\Ind^{G^{\Sigma_{0}}}_{P^{\Sigma_{0}}} \sigma^{\Sigma_{0}})|_{G} = \Ind^{G}_{P}(\sigma^{\Sigma_{0}}|_{M}),
\] 
unless $G$ is special even orthogonal and $M^{\Sigma_{0}} = M$, in which case 
\[
(\Ind^{G^{\Sigma_{0}}}_{P^{\Sigma_{0}}} \sigma^{\Sigma_{0}})|_{G} = \Ind^{G}_{P}(\sigma^{\Sigma_{0}}|_{M}) \+ (\Ind^{G}_{P}(\sigma^{\Sigma_{0}}|_{M}))^{\theta_{0}}.
\] 
Let us define
\[
\bar{\Jac}_{P} = \begin{cases}
                         \Jac_{P} + \Jac_{P} \circ \theta_{0},  & \text{ if $G = SO(2n)$ and $M^{\theta_{0}} \neq M$,} \\
                         \Jac_{P},         & \text{ otherwise. }
                         \end{cases}
\]
And
\[
\Ind^{G}_{P} [\sigma] := [\Ind^{G}_{P} \sigma] \text{ and } \bar{\Jac}_{P} [\r] := [\bar{\Jac}_{P} \r].
\]
Then we have
\[
(\Jac_{P^{\Sigma_{0}}} \, \Ind^{G^{\Sigma_{0}}}_{P^{\Sigma_{0}}} \sigma^{\Sigma_{0}})|_{M} =  \bar{\Jac}_{P} \, \Ind^{G}_{P} (\sigma^{\Sigma_{0}} |_{M}), 
\]
and
\[
[(\Ind^{G^{\Sigma_{0}}}_{P^{\Sigma_{0}}} \, \Jac_{P^{\Sigma_{0}}} \r^{\Sigma_{0}})|_{G}] = \Ind^{G}_{P} \, \bar{\Jac}_{P} [\r^{\Sigma_{0}}|_{G}].
\]
The Frobenius reciprocity still holds in this case, i.e.,
\[
\Hom_{M^{\Sigma_{0}}}(\Jac_{P^{\Sigma_{0}}} \r^{\Sigma_{0}}, \sigma^{\Sigma_{0}}) \cong \Hom_{G^{\Sigma_{0}}}(\r^{\Sigma_{0}}, \Ind^{G^{\Sigma_{0}}}_{P^{\Sigma_{0}}} \sigma^{\Sigma_{0}}).
\]
Moreover, the results of this section can be stated similarly for representations of $G^{\Sigma_{0}}(F)$. In particular, for $\tau \in \Rep(GL(d))$, we have 
\begin{align}
\label{eq: Jac semiproduct full orthogonal group}
\Jac_{x} (\tau \rtimes \r^{\Sigma_{0}}) \overset{s.s.}{=} (\Jac_{x} \tau) \rtimes \r^{\Sigma_{0}} \+ (\Jac^{op}_{-x} \tau) \rtimes \r^{\Sigma_{0}} \+ \tau \rtimes \Jac_{x} \r^{\Sigma_{0}}.
\end{align}

\section{Compatibility of Jacquet modules with endoscopic transfer}
\label{sec: compatibility of Jacquet modules with endoscopic transfer}

As normalized parabolic induction is compatible with endoscopic transfer, the normalized Jacquet module is also compatible with endoscopic transfer. Since the Jacquet module is originally defined on representations, we need to first extend it to the space of finite linear combinations of (twisted) characters, and then to the space of (twisted) invariant distributions (see \eqref{eq: Jacquet on distribution}). In particular, this extended Jacquet functor will preserve stability (see \eqref{eq: Jacquet preserve stability}). If $G$ is any quasisplit connected reductive group over $F$ and $\theta$ is an $F$-automorphism of $G$ preserving an $F$-splitting, we will denote the space of finite linear combinations of twisted characters of $G(F)$ by $R(G^{\theta})$ and denote the space of stable finite linear combinations of characters on $G(F)$ by $R(G)^{st}$. Moreover, let $\D{I}(G^{\theta})$) be the space of twisted invariant distributions on $G(F)$ and $\D{SI}(G)$ be the space of stable invariant distributions on $G(F)$. When $G = GL(N)$, we will simply write $R(N^{\theta})$ and $\D{I}(N^{\theta})$ for the corresponding spaces. In the following discussion we will assume $G$ is a quasisplit symplectic or special orthogonal group.

Suppose $H$ is an elliptic endoscopic group of $G$, we know from Section~\ref{sec: endoscopy} that $H = G_{I} \times G_{II}$, and there is an embedding 
\[
\xi: \L{H} \hookrightarrow \L{G},
\]
where $\L{H} = \D{H} \rtimes \Gal{L/F}$ for $L = F, E_{II}$ or $E_{I}E_{II}$ accordingly. We fix $\Gal{}$-splittings $(\mathcal{B}_{H}, \mathcal{T}_{H}, \{\mathcal{X}_{\alpha_{H}}\})$ and $(\mathcal{B}_{G}, \mathcal{T}_{G}, \{\mathcal{X}_{\alpha_{G}}\})$ for $\D{H}$ and $\D{G}$ respectively. By taking certain $\D{G}$-conjugate of $\xi$, we can assume  $\xi(\mathcal{T}_{H}) = \mathcal{T}_{G}$ and $\xi(\mathcal{B}_{H}) \subseteq \mathcal{B}_{G}$. So we can view the Weyl group $W_{H} = W(\D{H}, \mathcal{T}_{H})$ as a subgroup of $W_{G} = W(\D{G}, \mathcal{T}_{G})$. We also view $\L{H}$ as a subgroup of $\L{G}$ through $\xi$. 

We fix a standard parabolic subgroup $P = MN$ of $G$ with standard embedding $\L{P} \hookrightarrow \L{G}$. Then there exists a torus $S \subseteq \mathcal{T}_{G}$ such that $\L{M} = \Cent(S, \L{G})$. Let $W_{M} =  W(\D{M}, \mathcal{T}_{G})$. We define
\[
W_{G}(H, M) := \{w \in W_{G} | \, \Cent(w(S), \L{H}) \rightarrow \Gal{L/F} \text{ surjective }\}.
\]
For any $w \in W_{G}(H, M)$, let us take $g \in \D{G}$ such that $\Int(g)$ induces $w$. Since $\Cent(w(S), \L{H}) \rightarrow \Gal{L/F}$ is surjective, $g \L{P} g^{-1} \cap \L{H}$ defines a parabolic subgroup of $\L{H}$ with Levi component $g \L{M} g^{-1} \cap \L{H}$. So we can choose a standard parabolic subgroup $P'_{w} = M'_{w}N'_{w}$ of $H$ with standard embedding $\L{P'_{w}} \hookrightarrow \L{H}$ such that $\L{P'_{w}}$ (resp. $\L{M'_{w}}$) is $\D{H}$-conjugate to $g \L{P} g^{-1} \cap \L{H}$ (resp. $g \L{M} g^{-1} \cap \L{H}$). In fact, $M'_{w}$ is an endoscopic group of $M$ (not necessarily elliptic, see \cite{KottwitzShelstad:1999}), and we have an embedding $\xi_{M'_{w}}: \L{M'_{w}} \rightarrow \L{M}$ given by the following diagram:
\[
\xymatrix{\L{P'_{w}} \ar@{^{(}->}[d] & \L{M'_{w}} \ar[l] \ar[r]^{\xi_{M'_{w}}} & \L{M} \ar[r] & \L{P} \ar@{^{(}->}[d] \\
\L{H} \ar[r]^{\Int(h)} & \L{H} \ar[r]^{\xi} & \L{G} & \L{G} \ar[l]_{\Int(g)} 
}
\]
where $h \in \D{H}$ induces an element in $W_{H}$. Note the choice of $h$ is unique up to $\D{M}'_{w}$-conjugation, and so is $\xi_{M'_{w}}$. If we change $g$ to $h'gm$, where $h' \in \D{H}$ induces an element in $W_{H}$ and $m \in \D{M}$ induces an element in $W_{M}$, then we still get $P'_{w}$, but $\xi_{M'_{w}}$ changes to $\Int(m^{-1}) \circ \xi_{M'_{w}}$ up to $\D{M}'_{w}$-conjugation. Therefore, for any element $w$ in 
\[
W_{H} \backslash W_{G}(H, M)/W_{M},
\]
we can associate a standard parabolic subgroup $P'_{w} = M'_{w} N'_{w}$ of $H$ and a $\D{M}$-conjugacy class of embedding $\xi_{M'_{w}}: \L{M'_{w}} \rightarrow \L{M}$. Moreover, we have the following commutative diagram:
\begin{align}
\label{eq: compatible with endoscopic transfer 0} 
\xymatrix{\D{SI}(H) \ar[d]_{\+_{w} \Jac_{P'_{w}}}  \ar[r] & \D{I}(G) \ar[d]^{\Jac_{P}} \\
                \bigoplus_{w} \D{SI}(M'_{w}) \ar[r]  & \D{I}(M), }
\end{align}
where the sum is over $W_{H} \backslash W_{G}(H, M)/W_{M}$, and the horizontal maps correspond to the spectral endoscopic transfers with respect to $\xi$ on the top and $\xi_{M'_{w}}$ on the bottom.

Suppose $M = GL(m) \times G_{-}$, then the Levi subgroups $M'_{w}$ of $H$ appearing in \eqref{eq: compatible with endoscopic transfer 0} are of the form $M_{I} \times M_{II}$, where $M_{I} \cong GL(m_{I}) \times G_{I -}$ is a Levi subgroup of $G_{I}$ and $M_{II} \cong GL(m_{II}) \times G_{II -}$ is a Levi subgroup of $G_{II}$ with $m = m_{I} + m_{II}$. The spectral endoscopic transfer maps $\D{SI}(G_{I -} \times G_{II -})$ to $\D{I}(G_{-})$, and it also maps $\D{SI}(GL(m_{I}) \times GL(m_{II}))$ to $\D{I}(GL(m))$, which is given by parabolic induction. Now we fix a unitary irreducible supercuspidal representation $\rho$ of $GL(d_{\rho}, F)$, and let $m = d_{\rho}$. We would like to restrict \eqref{eq: compatible with endoscopic transfer 0} to invariant distributions of $M(F)$ such that on $GL(d_{\rho}, F)$ they are given by the character of $\rho||^{x}$, then the relevant Levi subgroups of $H$ will satisfy $m_{I} = 0$ or $m_{II} = 0$. Since there is no canonical projection from $\D{I}(M)$ to such distributions, we will have to restrict \eqref{eq: compatible with endoscopic transfer 0} to spaces of finite linear combinations of characters first.

Let us write
\[
M'_{w_{I}} = GL(d_{\rho}) \times H_{I -} := GL(d_{\rho}) \times G_{I -} \times G_{II}
\]
and 
\[
M'_{w_{II}} = GL(d_{\rho}) \times H_{II -} := GL(d_{\rho}) \times G_{I} \times G_{II-}.
\]
We also keep the notations in Example~\ref{eg: endoscopy}, in particular when $G$ is symplectic, $G_{I}$ is symplectic and $G_{II}$ is special even orthogonal. Let $\theta_{i} = \theta_{0}$ with respect to $G_{i}$ for $i = I, II$. Then we have the following cases.

\begin{enumerate}

\item If $G$ is symplectic, then $M'_{w_{I}}$, $M'_{w_{II}}$ and $(M'_{w_{II}})^{\theta_{II}}$ are the relevant standard Levi subgroups of $H$. Note $M'_{w_{II}} = (M'_{w_{II}})^{\theta_{II}}$ if and only if $G_{II-} \neq 1$. We get a modified diagram of \eqref{eq: compatible with endoscopic transfer 0} as follows.
\begin{align}
\label{eq: compatible with endoscopic transfer 1} 
\xymatrix{R(H)^{st} \ar[d]_{\Jac^{I}_{x} \+ \bar{\Jac}_{x}}  \ar[r] & R(G) \ar[d]^{\Jac_{x}} \\
                R(H_{I -})^{st} \+ R(H_{II-})^{st} \ar[r]  & R(G_{-}),  }
\end{align}
where $\Jac^{I}_{x}$ is with respect to $\rho \otimes \eta_{I}$.

\item If $G$ is special odd orthogonal, then $M'_{w_{I}}$, $M'_{w_{II}}$ are the only relevant standard Levi subgroups of $H$, and we get a modified diagram of \eqref{eq: compatible with endoscopic transfer 0} as follows.
\begin{align}
\label{eq: compatible with endoscopic transfer 2} 
\xymatrix{R(H)^{st} \ar[d]_{\Jac_{x} \+ \Jac_{x}}  \ar[r] & R(G) \ar[d]^{\Jac_{x}} \\
                R(H_{I -})^{st} \+ R(H_{II-})^{st} \ar[r]  & R(G_{-})  }
\end{align}

\item If $G$ is special even orthogonal, then $M'_{w_{I}}$, $(M'_{w_{I}})^{\theta_{I}}$, $M'_{w_{II}}$ and $(M'_{w_{II}})^{\theta_{II}}$ are the relevant standard Levi subgroups of $H$. Note $M'_{w_{i}} = (M'_{w_{i}})^{\theta_{i}}$ if and only if $G_{i-} \neq 1$ for $i = I, II$. We get a modified diagram of \eqref{eq: compatible with endoscopic transfer 0} as follows.
\begin{align}
\label{eq: compatible with endoscopic transfer 3} 
\xymatrix{R(H)^{st} \ar[d]_{\bar{\Jac}_{x} \+ \bar{\Jac}_{x}}  \ar[r] & R(G) \ar[d]^{\Jac_{x}} \\
                R(H_{I -})^{st} \+ R(H_{II-})^{st} \ar[r]  & R(G_{-})  
                }
\end{align}
\end{enumerate}

Next we view $G$ as a twisted elliptic endoscopic group of $GL(N)$, and there is an embedding 
\[
\xi_{N}: \L{G} \hookrightarrow GL(N, \C)
\]
where $\L{G} = \D{G} \rtimes \Gal{L/F}$ for $L = F$ or $E$ accordingly. We also fix a $\D{\theta}_{N}$-stable $\Gal{}$-splitting $(\mathcal{B}_{N}, \mathcal{T}_{N}, \{\mathcal{X}_{\alpha_{N}}\})$ of $GL(N, \C)$. By taking certain $GL(N, \C)$-conjugate of $\xi_{N}$, we can assume $\xi_{N}(\mathcal{T}_{G}) = (\mathcal{T}_{N}^{\D{\theta}_{N}})^{0}, \xi_{N}(\mathcal{B}_{G}) \subseteq \mathcal{B}_{N}$. So we can view the Weyl group $W_{G} = W(\D{G}, \mathcal{T}_{G})$ as a subgroup of $W_{N^{\theta}} := W(GL(N, \C), \mathcal{T}_{N})^{\D{\theta}_{N}}$. We also view $\L{G}$ as a subgroup of $GL(N, \C)$ through $\xi_{N}$. 

We fix a standard $\theta_{N}$-stable parabolic subgroup $P = MN$ of $GL(N)$ with standard embedding $\L{P} \hookrightarrow GL(N, \C)$. Then there exists a torus $S \subseteq (\mathcal{T}_{N}^{\D{\theta}_{N}})^{0}$ such that $\L{M} = \Cent(S, GL(N, \C))$. Let $W_{M^{\theta}} = W(\D{M}, \mathcal{T}_{N})^{\D{\theta}_{N}}$. We define
\[
W_{N^{\theta}}(G, M) := \{w \in W_{N^{\theta}} | \, \Cent(w(S), \L{G}) \rightarrow \Gal{L/F} \text{ surjective }\}.
\]
For any $w \in W_{N^{\theta}}(G, M)$, let us take $g_{N} \in GL(N, \C)$ such that $\Int(g_{N})$ induces $w$. Since $\Cent(w(S), \L{G}) \rightarrow \Gal{L/F}$ is surjective, $g_{N} \L{P} g_{N}^{-1} \cap \L{G}$ defines a parabolic subgroup of $\L{G}$ with Levi component $g_{N} \L{M} g_{N}^{-1} \cap \L{G}$. So we can choose a standard parabolic subgroup $P'_{w} = M'_{w}N'_{w}$ of $H$ with standard embedding $\L{P'_{w}} \hookrightarrow \L{G}$ such that $\L{P'_{w}}$ (resp. $\L{M'_{w}}$) is $\D{G}$-conjugate to $g_{N} \L{P} g_{N}^{-1} \cap \L{G}$ (resp. $g_{N} \L{M} g_{N}^{-1} \cap \L{G}$). As before, $M'_{w}$ can be viewed as a twisted endoscopic groups of $M$ (not necessarily elliptic, see \cite{KottwitzShelstad:1999}), and we have an embedding $\xi_{M'_{w}}: \L{M'_{w}} \rightarrow \L{M}$ given by the following diagram:
\[
\xymatrix{\L{P'_{w}} \ar@{^{(}->}[d]  & \L{M'_{w}} \ar[l] \ar[r]^{\xi_{M'_{w}}} & \L{M} \ar[r] & \L{P} \ar@{^{(}->}[d]  \\
\L{G} \ar[r]^{\Int(g)} & \L{G} \ar[r]^{\xi_{N} \quad}  & GL(N, \C) & GL(N, \C) \ar[l]_{\Int(g_{N})} \\
}
\]
where $g \in \D{G}$ induces an element in $W_{G}$. Note the choice of $g$ is unique up to $\D{M}'_{w}$-conjugation, and so is $\xi_{M'_{w}}$. If we change $g_{N}$ to $g'g_{N}m$, where $g' \in \D{G}$ induces an element in $W_{G}$ and $m \in \D{M}$ induces an element in $W_{M^{\theta}}$, then we still get $P'_{w}$, but $\xi_{M'_{w}}$ changes to $\Int(m^{-1}) \circ \xi_{M'_{w}}$ up to $\D{M}'_{w}$-conjugation. Therefore, for any element $w$ in 
\[
W_{G} \backslash W_{N^{\theta}}(G, M)/W_{M^{\theta}}
\]
we can associate a standard parabolic subgroup $P'_{w} = M'_{w} N'_{w}$ of $G$ and a $\D{M}$-conjugacy class of embedding $\xi_{M'_{w}}: \L{M'_{w}} \rightarrow \L{M}$. Moreover, we have the following commutative diagram:
\begin{align}
\label{eq: compatible with twisted endoscopic transfer 0} 
\xymatrix{\D{SI}(G) \ar[d]_{\+_{w} \Jac_{P'_{w}}}  \ar[r] & \D{I}(N^{\theta}) \ar[d]^{\Jac_{P}} \\
                \bigoplus_{w} \D{SI}(M'_{w}) \ar[r]  & \D{I}(M^{\theta}). }
\end{align}
where the sum is over $W_{G} \backslash W_{N^{\theta}}(G, M)/W_{M^{\theta}}$, and the horizontal maps correspond to the twisted spectral endoscopic transfers with respect to $\xi$ on the top and $\xi_{M'_{w}}$ on the bottom.

We again fix a unitary irreducible supercuspidal representation $\rho$ of $GL(d_{\rho}, F)$, and suppose
\[
M = GL(d_{\rho}) \times GL(N_{-}) \times GL(d_{\rho^{\vee}}).
\]
Then the standard Levi subgroups of $G$ appearing in \eqref{eq: compatible with twisted endoscopic transfer 0} are
\[
M'_{w} = GL(d_{\rho}) \times G_{-}
\]
and $(M'_{w})^{\theta_{0}}$. Note $M'_{w} = (M'_{w})^{\theta_{0}}$ unless $G$ is special even orthogonal and $N_{-} = 0$. For the purpose of restricting \eqref{eq: compatible with twisted endoscopic transfer 0} to the twisted invariant distributions of $M(F)$ such that on $GL(d_{\rho}, F) \times GL(d_{\rho^{\vee}}, F)$ they are given by the twisted character of $\rho||^{x} \otimes \rho^{\vee}||^{-x}$, we will have to first restrict the diagram to spaces of finite linear combinations of (twisted) characters. Then we can get a modified diagram as follows
\begin{align}
\label{eq: compatible with twisted endoscopic transfer} 
\xymatrix{R(G)^{st} \ar[d]_{\bar{\Jac}_{x}}  \ar[r] & R(N^{\theta}) \ar[d]^{\Jac^{\theta}_{x}} \\
                R(G_{-})^{st} \ar[r]  & R(N_{-}^{\theta}).  }
\end{align}

At last, when $G$ is special even orthogonal, there is a twisted version of the diagram \eqref{eq: compatible with endoscopic transfer 3}, which can be derived as in the case of $GL(N)$ (also see Appendix~\ref{sec: Casselman's formula} for the general case). Here we will only state the result using the notations from Section~\ref{sec: endoscopy} and \eqref{eq: compatible with endoscopic transfer 3}. We assume $G_{-} \neq 1$.
\begin{align}
\label{eq: compatible with endoscopic transfer full orthogonal group} 
\xymatrix{R(H)^{st} \ar[d]_{\Jac^{I}_{x} \+ \Jac^{II}_{x}}  \ar[r] & R(G^{\theta_{0}}) \ar[d]^{\Jac_{x}} \\
                R(H_{I -})^{st} \+ R(H_{II-})^{st} \ar[r]  & R(G^{\theta_{0}}_{-}),  }
\end{align}
where $\Jac^{i}_{x}$ is with respect to $\rho \otimes \eta_{i}$ for $i = I, II$.

Both diagrams~\eqref{eq: compatible with endoscopic transfer 0} and \eqref{eq: compatible with twisted endoscopic transfer 0} can be established by using Casselman's formula \cite{Casselman:1977} and its twisted version for relating the (twisted) characters of representations with that of their {\bf unnormalized} Jacquet modules (see \cite{MW:2006} and \cite{Hiraga:2004}). For the convenience of the reader, we will give the proof of the general case in Appendix~\ref{sec: Casselman's formula}. In the next section, we are going to prove Theorem~\ref{thm: supercuspidal parametrization} by applying \eqref{eq: compatible with twisted endoscopic transfer}  (resp. \eqref{eq: compatible with endoscopic transfer 1}, \eqref{eq: compatible with endoscopic transfer 2} and \eqref{eq: compatible with endoscopic transfer 3}) to the (twisted) endosopic character identity \eqref{eq: character relation GL(N)} (resp. \eqref{eq: character relation}). We will only need \eqref{eq: compatible with endoscopic transfer full orthogonal group} in Section~\ref{sec: even orthogonal group}.

\section{Proof of Theorem~\ref{thm: supercuspidal parametrization}}
\label{sec: proof}

In the following sections we will always assume $G$ is a quasisplit symplectic group or special orthogonal group. Before we start the proof, we would like to make explicit the effects of Jacquet modules on the (twisted) endoscopic character identities \eqref{eq: character relation GL(N)} and \eqref{eq: character relation}. So let us fix a self-dual irreducible unitary supercuspidal representation $\rho$ of $GL(d_{\rho}, F)$ and a real number $x$. Let $\p \in \cPdt{G}$ and we define $\p_{-} \in \cPbd{G_{-}}$ by its $Jord(\p_{-})$ as follows. 
\[
Jord(\p_{-}) = Jord(\p) \cup \{(\rho, 2x-1)\} \backslash \{(\rho, 2x+1)\},
\] 
if $(\rho, 2x+1) \in Jord(\p)$ and $x > 0$, or $\emptyset$ otherwise. We also set $\r_{\p_{-}} = 0$ if $Jord(\p_{-}) = \emptyset$. Note $\p_{-}$ depends on both $\rho$ and $x$. The following lemma is clear by our explicit formulas \eqref{eq: Jac general linear group 1} and \eqref{eq: Jac general linear group 2}.

\begin{lemma}
\(
\r_{\p_{-}} = \Jac^{\theta}_{x} \r_{\p}.
\) 
\end{lemma}

So after applying \eqref{eq: compatible with twisted endoscopic transfer} to the twisted endoscopic identity \eqref{eq: character relation GL(N)}, we have 
\begin{align}
\label{eq: Jac character relation GL(N)}
f^{G_{-}}(\sum_{[\r] \in \cPkt{\p}} \bar{\Jac}_{x}\r) = f_{N_{-}^{\theta}}(\r_{\p_{-}})
\end{align}
for $f \in C^{\infty}_{c}(GL(N_{-}), F)$.
Since Theorem~\ref{thm: character relation} is also valid for all tempered parameters (see Remark~\ref{rk: character relation}), the left hand side of \eqref{eq: Jac character relation GL(N)} has to be $f^{G_{-}}(\p_{-})$. Then we have the following result.

\begin{lemma}

\begin{align}
\label{eq: Jac packet}
\cPkt{\p_{-}} = \bar{\Jac}_{x} \cPkt{\p}.
\end{align}
In particular,
\begin{align}
\label{eq: Jac vanishing}
\bar{\Jac}_{x} \cPkt{\p} = 0 \text{ if $(\rho, 2x+1) \notin Jord(\p)$}.
\end{align}

\end{lemma}

\begin{proof}
Since the transfer map \eqref{eq: twisted endoscopic transfer} is surjective onto the space of $\Sigma_{0}$-invariant stable orbital integrals of $G(F)$, we have
\[
f(\p_{-}) = f(\sum_{[\r] \in \cPkt{\p}} \bar{\Jac}_{x}\r) = \sum_{[\r] \in \cPkt{\p}} f_{G}(\bar{\Jac}_{x}\r)
\]
for $f \in \sH(G)$. Then the lemma follows from the linear independence of characters of irreducible smooth representations of $G(F)$.
\end{proof}

Suppose $Jord(\p_{-}) \neq \emptyset$, then $x > 0$ and there are two possibilites, i.e., $\p_{-} \in \cPdt{G_{-}}$, or
\[
\p_{-} = 2\p_{1} \+ \p_{2} \+ \cdots \+ \p_{r} \in \cPbd{G_{-}}
\]  
as in \eqref{eq: decomposition}, where $\p_{1} = \rho \otimes [2x-1]$. In the first case we have $(\rho, 2x-1) \notin Jord(\p)$. If $x \neq 1/2$, then there is a canonical isomorphism $\S{\p} \cong \S{\p_{-}}$ 
after identifying $Jord(\p)$ with $Jord(\p_{-})$ by sending $(\rho, 2x+1)$ to $(\rho, 2x-1)$. If $x =1/2$, we have a projection from $\S{\p}$ to $\S{\p_{-}}$ by restricting $\Two$-valued functions on $Jord(\p)$ to $Jord(\p_{-})$. Hence we get an exact sequence

\begin{align}
\xymatrix{1 \ar[r] & <s> \ar[r] & \S{\p} \ar[r] & \S{\p_{-}} \ar[r] & 1},
\end{align}
where $s(\cdot) = 1$ over $Jord(\p)$ except for $s(\rho, 1/2) = -1$.

In the second case, let $\underline{\p}_{-}: L_{F} \rightarrow \L{G}_{-}$ be a representative of $\p_{-}$. We can also identify $\S{\underline{\p}_{-}}$ and its characters $\D{\S{\underline{\p}}}_{-}$ with certain quotient spaces of $\Two$-valued functions on $Jord(\p_{-})$ (forgetting multiplicities), as in the case of discrete parameters. Note
\[
\Cent(\underline{\p}_{-}, GL(N_{-}, \C)) \cong \underbrace{GL(2, \C) \times \C^{\times} \times \cdots \times \C^{\times}}_{r},
\]
and then
\[
\Cent(\underline{\p}_{-}, \D{G}_{-}) \cong \{s = (s_{i}) \in O(2, \C) \times \Two^{r-1}: \det(s_{1})^{n_{1}} \cdot \prod_{i \neq 1} (s_{i})^{n_{i}} = 1\}.
\]
We write $z$ for the nontrivial central element of $O(2, \C)$. 
Then $\cS{\underline{\p}_{-}}^{\Sigma_{0}} \cong O(2, \C) \times \Two^{r-1} / <z, -1, \cdots, -1>$, and hence $\S{\underline{\p}_{-}}^{\Sigma_{0}} \cong \Two^{r} / <1, -1, \cdots, -1>$. If $G$ is special even orthogonal, 
\[
\S{\underline{\p}_{-}} \cong \{s = (s_{i}) \in \Two^{r} : \prod_{i} (s_{i})^{n_{i}} = 1\} / <1, -1, \cdots, -1>
\]
which is a subgroup of $\S{\underline{\p}_{-}}^{\Sigma_{0}}$ of index 1 or 2. Let us denote by $\S{\p_{-}}$ (resp. $\S{\p_{-}}^{\Sigma_{0}}$) the corresponding quotient space of $\Two$-valued functions on $Jord(\p_{-})$  (forgetting multiplicities) such that $\S{\p_{-}} \cong \S{\underline{\p}_{-}}$ (resp. $\S{\p_{-}}^{\Sigma_{0}} \cong \S{\underline{\p}_{-}}^{\Sigma_{0}}$) under these isomorphisms. There is a projection from $\S{\p} \rightarrow \S{\p_{-}}$ (resp. $\S{\p}^{\Sigma_{0}} \rightarrow \S{\p_{-}}^{\Sigma_{0}}$) by sending $s$ to $s_{-}$ such that $s_{-}(\cdot) = s(\cdot)$ over $Jord(\p) \backslash \{ (\rho, 2x+1), (\rho, 2x-1) \}$ and $s_{-}(\rho, 2x-1) = s(\rho, 2x+1)s(\rho, 2x-1)$. Hence there is a short exact sequence 

\begin{align}
\xymatrix{1 \ar[r] & <s> \ar[r] & \S{\p} \ar[r] & \S{\p_{-}} \ar[r] & 1}
\end{align}

\begin{align*}
\xymatrix{(\text{ resp. }  1 \ar[r] & <s> \ar[r] & \S{\p}^{\Sigma_{0}} \ar[r] & \S{\p_{-}}^{\Sigma_{0}} \ar[r] & 1 )}
\end{align*}
where $s(\cdot) = 1$ over $Jord(\p)$ except for $s(\rho, 2x+1) = s(\rho, 2x-1) = -1$. For the characters of $\S{\p_{-}}$, we have
\[
\D{\S{\p_{-}}^{\Sigma_{0}}} = \{\e = (\e_{i} \in \Two^{r}) : \prod_{i \neq 1} \e_{i} = 1\},
\]
and if $G$ is special even orthogonal,
\[
\D{\S{\p}}_{-} = \{\e = (\e_{i} \in \Two^{r}) : \prod_{i \neq 1} \e_{i} = 1\} / <\e_{0}>,
\]
where $\e_{0} = (\e_{0,i}) \in \D{\S{\p_{-}}^{\Sigma_{0}}}$ satisfies $\e_{0,i} = 1$ if $n_{i}$ is even, and $\e_{0,i} = -1$ if $n_{i}$ is odd. So $\e_{0}$ is trivial when restricted to $\S{\p_{-}}$. In general, let $\e_{0} = 1$ if $G$ is not special even orthogonal. 

At last we want to point out in this case $\p_{-}$ factors through $\p_{M_{-}} \in \cPdt{M_{-}}$, for a Levi subgroup $M_{-} \cong GL(n_{1}) \times G'$ and $\p_{M_{-}} = \p_{1} \times \p'$ such that 
\[
\p' = \p_{2} \+ \cdots \+ \p_{r}.
\] 
Since 
\[
\S{\underline{\p}_{M_{-}}}^{\Sigma_{0}} \hookrightarrow \S{\underline{\p}_{-}}^{\Sigma_{0}} \text{ and } \S{\underline{\p}_{M_{-}}}^{\Sigma_{0}} \cong \S{\underline{\p}'}^{\Sigma_{0}},
\] 
we can get an inclusion $\S{\p'}^{\Sigma_{0}} \hookrightarrow \S{\p_{-}}^{\Sigma_{0}}$, which in fact just extends $s'(\cdot) \in \Two^{Jord(\p')}$ trivially to $Jord(\p_{-})$ (forgetting multiplicities). So on the dual side, there is a projection 
\[
\D{\S{\p_{-}}^{\Sigma_{0}}} \rightarrow \D{\S{\p'}^{\Sigma_{0}}}
\] 
given by restricting $\e(\cdot)$ to $Jord(\p')$. Taking quotient by $<\e_{0}>$, we get $\D{\S{\p_{-}}} \rightarrow \D{\S{\p'}}$. It follows from Arthur's theory (i.e., Theorem~\ref{thm: L-packet}, \ref{thm: L-packet full orthogonal group}, \ref{thm: character relation} and \ref{thm: character relation full orthogonal group}) that
\begin{align}
\label{eq: tempered L-packet}
\cPkt{\p_{-}} = \r_{\p_{1}} \rtimes \cPkt{\p'} \,\,\,\,\,\,\,\,\,\, ( \text{ resp. } \Pkt{\p_{-}}^{\Sigma_{0}} = \r_{\p_{1}} \rtimes \Pkt{\p'}^{\Sigma_{0}} )
\end{align}
Moreover, 
\begin{align}
\label{eq: elliptic representaion}
\r_{\p_{1}} \rtimes \r(\p', \bar{\e}') = \+_{\bar{\e}' \leftarrow \bar{\e} \in \D{\S{\p}}_{-}} \, \r(\p_{-}, \bar{\e})
\end{align}
\[
\Big( \text{ resp. }  \r_{\p_{1}} \rtimes \r^{\Sigma_{0}}(\p', \e') = \+_{\e' \leftarrow \e \in \D{\S{\p_{-}}^{\Sigma_{0}}}} \, \r^{\Sigma_{0}}(\p_{-}, \e) \Big).
\]
We will need this description of $\cPkt{\p_{-}}$ (resp. $\Pkt{\p_{-}}^{\Sigma_{0}}$) in Section~\ref{sec: cuspidal support of discrete series} (resp. Section~\ref{sec: even orthogonal group}).

In all the above cases, we can canonically identify $\D{\S{\p}}_{-}$ (resp. $\D{\S{\p_{-}}^{\Sigma_{0}}}$) with a subgroup of $\D{\S{\p}}$ (resp. $\D{\S{\p}^{\Sigma_{0}}}$) of index 1 or 2, so later on we will always view $\e \in \D{\S{\p_{-}}^{\Sigma_{0}}}$ as functions on $Jord(\p)$.

\begin{lemma}
\label{lemma: Jac packet}
Suppose $\p \in \cPdt{G}$, and $(\rho, 2x+1) \in Jord(\p)$.

\begin{enumerate}

\item If $x > 1/2$ and $(\rho, 2x-1) \notin Jord(\p)$, then $\r(\p_{-}, \bar{\e}) = \bar{\Jac}_{x} \r(\p, \bar{\e})$ for all $\bar{\e} \in \D{\S{\p}}_{-} \cong \D{\S{\p}}$.

\item If $x > 1/2$ and $(\rho, 2x-1) \in Jord(\p)$, then $\bar{\Jac}_{x} \r(\p, \bar{\e}) = 0$ unless $\bar{\e} \in \D{\S{\p}}_{-}$, i.e., 
\[
\e(\rho, 2x+1) \e(\rho, 2x-1) = 1,
\]
in which case $\r(\p_{-}, \bar{\e}) = \bar{\Jac}_{x} \r(\p, \bar{\e})$.   

\item If $x = 1/2$, then $\bar{\Jac}_{1/2} \r(\p, \bar{\e}) = 0$ unless $\bar{\e} \in \D{\S{\p}}_{-}$, i.e.,
\[
\e(\rho, 2) = 1,
\]
in which case $\r(\p_{-}, \bar{\e}) = \bar{\Jac}_{x} \r(\p, \bar{\e})$.

\end{enumerate}

\end{lemma}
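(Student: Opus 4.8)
The plan is to promote the set-level identity \eqref{eq: Jac packet} to the level of individual characters by running the endoscopic character identity \eqref{eq: character relation} through the Jacquet functor. Fix $s\in\S{\p}$ and an elliptic endoscopic datum $(H,\p_{H})\to(\p,s)$, $H=G_{I}\times G_{II}$, $\p_{H}=\p_{I}\times\p_{II}$, as in Example~\ref{eg: endoscopy}, and apply $\bar{\Jac}_{x}$, viewed as an operator on invariant distributions, to both sides of \eqref{eq: character relation}. The right-hand side becomes $\sum_{[\r]\in\cPkt{\p}}<s, \r>\,f_{G_{-}}(\bar{\Jac}_{x}\r)$. For the left-hand side one uses the commutative diagram \eqref{eq: compatible with endoscopic transfer 1}, \eqref{eq: compatible with endoscopic transfer 2}, or \eqref{eq: compatible with endoscopic transfer 3} according as $G$ is symplectic, special odd orthogonal, or special even orthogonal: this expresses the image under $\bar{\Jac}_{x}$ of the stable distribution attached to $\p_{H}$ as a sum of two terms, one transferred to $G_{-}$ from each of the two proper Levi subgroups $H_{I-},H_{II-}$ of $H$ occurring there, with the Jacquet operators on the $H$-side and the twists $\eta_{I},\eta_{II}$ exactly as prescribed in those diagrams. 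Since the block $(\rho,2x+1)$ lies in precisely one of the two parts of $Jord(\p)$ cut out by $s$, the vanishing statement \eqref{eq: Jac vanishing} (applied to $G_{I}$ and to $G_{II}$) kills exactly one of the two terms, and by \eqref{eq: Jac packet} and \eqref{eq: tempered L-packet} the surviving one equals $f^{H_{-}}(\p_{H_{-}})$ for a concrete endoscopic datum of $G_{-}$. The first real task is to verify that this datum is $(H_{-},\p_{H_{-}})\to(\p_{-},s_{-})$, where $s_{-}$ is the image of $s$ under the projection $\S{\p}\to\S{\p_{-}}$ recorded just before the lemma; this reduces to bookkeeping with the discriminant characters $\eta_{I},\eta_{II}$, using in particular that a Jordan block $(\rho,2)$ has trivial central character and so contributes trivially to them.

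Granting that, for every $s\in\S{\p}$ one obtains
\[
\sum_{\bar{\e}\in\D{\S{\p}}}\bar{\e}(s)\,f_{G_{-}}(\bar{\Jac}_{x}\r(\p,\bar{\e}))=\sum_{\bar{\e}'\in\D{\S{\p_{-}}}}\bar{\e}'(s_{-})\,f_{G_{-}}(\r(\p_{-},\bar{\e}')),
\]
the right-hand side being \eqref{eq: character relation} for $\p_{-}$ (its tempered version, valid by Remark~\ref{rk: character relation}, together with \eqref{eq: elliptic representaion} when $\p_{-}\notin\cPdt{G_{-}}$). By \eqref{eq: Jac packet} we may write $\bar{\Jac}_{x}\r(\p,\bar{\e})=\sum_{\r'\in\cPkt{\p_{-}}}m(\bar{\e},\r')\,\r'$ with $m(\bar{\e},\r')\in\Z_{\geqslant 0}$. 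The characters $\{f_{G_{-}}(\r'):\r'\in\cPkt{\p_{-}}\}$ are linearly independent, since the members of the tempered packet $\cPkt{\p_{-}}$ are distinct irreducible representations by Theorem~\ref{thm: L-packet}; so comparing the coefficient of a fixed $\r'=\r(\p_{-},\bar{\e}')$ yields $\sum_{\bar{\e}}m(\bar{\e},\r')\,\bar{\e}=\widetilde{\bar{\e}'}$ as a function on $\S{\p}$, where $\widetilde{\bar{\e}'}$ denotes the pullback of $\bar{\e}'$ along $\S{\p}\to\S{\p_{-}}$. A non-negative integral combination of distinct characters of $\S{\p}$ that is itself a single character must equal that character, so $m(\bar{\e},\r')=\delta_{\bar{\e},\widetilde{\bar{\e}'}}$. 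Hence $\bar{\Jac}_{x}\r(\p,\bar{\e})$ equals the unique $\r(\p_{-},\bar{\e}')$ with $\widetilde{\bar{\e}'}=\bar{\e}$ whenever $\bar{\e}$ is trivial on $K:=\Ker(\S{\p}\to\S{\p_{-}})$, and is $0$ otherwise.

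It remains to unwind the condition ``$\bar{\e}|_{K}=1$''. In case (1) the projection $\S{\p}\to\S{\p_{-}}$ is the canonical isomorphism, so $K=1$, no constraint is imposed, and $\bar{\Jac}_{x}\r(\p,\bar{\e})=\r(\p_{-},\bar{\e})$ for all $\bar{\e}\in\D{\S{\p}}$. In case (2), $K$ is generated by the element $s$ with $s(\rho,2x+1)=s(\rho,2x-1)=-1$ and $s\equiv1$ elsewhere, and the pairing recalled in Section~\ref{sec: parameters of supercuspidal representations} gives $\bar{\e}(s)=\e(\rho,2x+1)\,\e(\rho,2x-1)$; thus $\bar{\e}|_{K}=1$ is exactly $\e(\rho,2x+1)\e(\rho,2x-1)=1$, i.e.\ $\bar{\e}\in\D{\S{\p}}_{-}$. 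In case (3), $K$ is generated by the element $s$ with $s(\rho,2)=-1$ and $s\equiv1$ elsewhere, and $\bar{\e}(s)=\e(\rho,2)$, so the condition reads $\e(\rho,2)=1$. Under the identification $\D{\S{\p}}_{-}\cong\D{\S{\p_{-}}}$ this is in each case exactly the asserted statement.

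I expect the main obstacle to be the identification $(H_{-},\p_{H_{-}})\to(\p_{-},s_{-})$ in the first step: one has to trace the Jacquet image through every configuration of where the blocks $(\rho,2x+1)$ and $(\rho,2x-1)$ sit relative to the partition of $Jord(\p)$ given by $s$, keeping control of the twists $\eta_{I},\eta_{II}$, and, when $G$ is special even orthogonal, of the $\theta_{0}$-subtleties encoded in the distinction between $\bar{\Jac}_{x}$ and $\Jac_{x}$ in \eqref{eq: compatible with endoscopic transfer 1}--\eqref{eq: compatible with endoscopic transfer 3}. Once this compatibility is secured, the representation-theoretic conclusion is forced purely by the two linear-independence inputs (for the characters of $\cPkt{\p_{-}}$ and for the characters of $\S{\p}$).
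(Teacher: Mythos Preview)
Your proposal is correct and follows essentially the same route as the paper's proof: apply $\bar{\Jac}_{x}$ to the endoscopic character identity \eqref{eq: character relation} via the compatibility diagrams \eqref{eq: compatible with endoscopic transfer 1}--\eqref{eq: compatible with endoscopic transfer 3}, use \eqref{eq: Jac vanishing} on the factor of $H$ not containing $(\rho,2x+1)$, identify the surviving term with the character relation for $(\p_{-},s_{-})$, and then separate the individual representations by linear independence. Your endgame---writing the non-negative multiplicity matrix $m(\bar{\e},\r')$ and observing that a non-negative integral combination of distinct characters of $\S{\p}$ equal to a single character forces $m(\bar{\e},\r')=\delta_{\bar{\e},\widetilde{\bar{\e}'}}$---is a clean way to make explicit what the paper dispatches with ``after a little thought,'' and your bookkeeping remark that the block $(\rho,2)$ contributes trivially to $\eta_{I},\eta_{II}$ (since $\det(\rho\otimes[2])=\eta_{\rho}^{2}=1$) is exactly the point needed to confirm $(H_{-},\p_{H_{-}})\to(\p_{-},s_{-})$ in case~(3).
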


\begin{proof}
First we know from \eqref{eq: Jac packet} that $\cPkt{\p_{-}} = \Jac_{x}\cPkt{\p}$, so in particular $\bar{\Jac}_{x}\r$ do not have common irreducible constituents with each other for $[\r] \in \cPkt{\p}$. Next for $s \in \S{\p}$, suppose $(H, \p_{H}) \rightarrow (\p, s)$, then we have 
\begin{align}
\label{eq: Jac packet 0}
f^{H}(\p_{H}) = \sum_{[\r] \in \cPkt{\p}} <s, \r> f_{G}(\r) = \sum_{\e \in \D{\S{\p}}} \e(s) f_{G}(\r(\p, \e)), 
\end{align}
for $f \in \sH(G)$. In the notation of Section~\ref{sec: endoscopy}, we can write 
\[
H = G_{I} \times G_{II} \text{ and } \p_{H} = \p_{I} \times \p_{II}
\] 
Let us first assume $(\rho, 2x+1) \notin Jord(\p_{II})$. Then $(\rho', 2x+1) \in Jord(\p_{I})$ for $\rho' = \rho \otimes \eta_{I}$ if $G$ is symplectic, and $\rho' = \rho$ otherwise. By \eqref{eq: Jac vanishing}, 
\[
\bar{\Jac}_{x}\cPkt{\p_{II}} = 0.
\] 
So we let $H_{-} = H_{I -}$ (see Section~\ref{sec: compatibility of Jacquet modules with endoscopic transfer}), and define $\p_{H_{-}} = \p_{I-} \times \p_{II}$, where 
\[
Jord(\p_{I-}) = Jord(\p_{I}) \cup \{(\rho', 2x-1)\} \backslash \{(\rho', 2x+1)\}.
\] 
After applying \eqref{eq: compatible with endoscopic transfer 1}, \eqref{eq: compatible with endoscopic transfer 2} and \eqref{eq: compatible with endoscopic transfer 3} accordingly to \eqref{eq: Jac packet 0}, we get 

\begin{align}
\label{eq: Jac character relation}
f^{H_{-}}(\p_{H_{-}}) = \sum_{[\r] \in \cPkt{\p}} <s, \r> f_{G_{-}}(\bar{\Jac}_{x}\r) = \sum_{\bar{\e} \in \D{\S{\p}}} \bar{\e}(s) f_{G_{-}}(\bar{\Jac}_{x}\r(\p, \bar{\e})), 
\end{align}
for $f \in \sH(G_{-})$. On the other hand note $(H_{-}, \p_{H_{-}}) \rightarrow (\p_{-}, s_{-})$, where $s_{-}$ is the image of $s$ under the projection $\S{\p} \rightarrow \S{\p_{-}}$, so we have 
\[
f^{H_{-}}(\p_{H_{-}}) = \sum_{[\r_{-}] \in \cPkt{\p_{-}}} <s_{-}, \r_{-}> f_{G_{-}}(\r_{-}) = \sum_{\bar{\e}' \in \D{\S{\p}}_{-}} \bar{\e}'(s_{-}) f_{G_{-}}(\r(\p_{-}, \bar{\e}')),
\]
for $f \in \sH(G_{-})$. Combining this identity with \eqref{eq: Jac character relation}, we get

\begin{align}
\label{eq: Jac packet 1}
\sum_{\bar{\e} \in \D{\S{\p}}} \bar{\e}(s) f_{G_{-}}(\bar{\Jac}_{x}\r(\p, \bar{\e})) = \sum_{\bar{\e}' \in \D{\S{\p}}_{-}} \bar{\e}'(s_{-}) f_{G_{-}}(\r(\p_{-}, \bar{\e}')). 
\end{align}
In fact \eqref{eq: Jac packet 1} also holds when $(\rho, 2x+1) \in Jord(\p_{II})$, and the argument is similar. 

By the linear independence of characters of irreducible smooth representations of $G(F)$, $\r(\p_{-}, \bar{\e}')$ is in $\bar{\Jac}_{x}\r(\p, \bar{\e})$ only when 
\[
\bar{\e}(s) = \bar{\e}'(s_{-})
\] 
for all $s \in \S{\p}$, i.e., $\bar{\e}' = \bar{\e}$. This implies $\bar{\Jac}_{x}\r(\p, \bar{\e}) = 0$ for $\bar{\e} \notin \D{\S{\p}}_{-}$. Then after a little thought, one can see 
\[
\r(\p_{-}, \bar{\e}) = \bar{\Jac}_{x} \r(\p, \bar{\e})
\] 
for all $\bar{\e} \in \D{\S{\p}}_{-}$.

\end{proof}

Now we are in the position to prove Theorem~\ref{thm: supercuspidal parametrization}. For the convenience of readers we will restate the theorem here.

\begin{theorem}[M{\oe}glin]
The $\Sigma_{0}$-orbits of irreducible supercuspidal representations of $G(F)$ can be parametrized by $\p \in \cPdt{G}$ and $\bar{\e} \in \S{\p}$ satisfying the following properties:

\begin{enumerate}

\item if $(\rho, a) \in Jord(\p)$, then $(\rho, a-2) \in Jord(\p)$ as long as $a - 2 > 0$;

\item if $(\rho, a), (\rho, a-2) \in Jord(\p)$, then $\e(\rho, a) \e(\rho, a - 2) = -1$;

\item if $(\rho, 2) \in Jord(\p)$, then $\e(\rho, 2) = -1$.

\end{enumerate}

\end{theorem}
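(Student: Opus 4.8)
The plan is to prove Theorem~\ref{thm: supercuspidal parametrization} by an inductive argument on the size of $Jord(\p)$, using the reduction $\r \mapsto \bar{\Jac}_{x}\r$ provided by Lemma~\ref{lemma: Jac packet}, with the base case handled by Proposition~\ref{prop: cuspidal reducibility}. First, I would observe that condition (1) on $Jord(\p)$ is exactly Proposition~\ref{prop: Jord of supercuspidal}, so it is free. For the necessity of (2) and (3) on $\bar{\e}$, I would argue as follows. Suppose $\r$ is supercuspidal with $[\r] \in \cPkt{\p} = \r(\p,\bar{\e})$, and suppose toward a contradiction that $(\rho,a),(\rho,a-2)\in Jord(\p)$ with $\e(\rho,a)\e(\rho,a-2)=1$; write $a = 2x+1$, so $x>1/2$. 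Then $\bar{\e}\in\D{\S{\p}}_{-}$, so by Lemma~\ref{lemma: Jac packet}(2) we get $\r(\p_{-},\bar{\e}) = \bar{\Jac}_{x}\r(\p,\bar{\e})\neq 0$, because $\cPkt{\p_{-}} = \bar{\Jac}_{x}\cPkt{\p}$ is nonempty (as $(\rho,2x+1)\in Jord(\p)$). But $\bar{\Jac}_{x}\r\neq 0$ forces a nonzero Jacquet module of $\r$ along a proper parabolic, contradicting that $\r$ is supercuspidal. The same argument with $x = 1/2$ and Lemma~\ref{lemma: Jac packet}(3) shows that if $(\rho,2)\in Jord(\p)$ then $\e(\rho,2)=-1$: otherwise $\bar{\e}\in\D{\S{\p}}_{-}$ and $\bar{\Jac}_{1/2}\r\neq 0$, again impossible. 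So every supercuspidal $\r$ satisfies (1)--(3).

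Next I would prove sufficiency, i.e.\ that every pair $(\p,\bar{\e})$ satisfying (1)--(3) actually parametrizes an $\Sigma_0$-orbit of supercuspidal representations, and that distinct such pairs give distinct (non-isomorphic) orbits. Distinctness is immediate from Theorem~\ref{thm: L-packet}, since the $\cPkt{\p}$ are disjoint and the map $[\r]\mapsto\bar{\e}$ is a bijection within each packet. For sufficiency, I would show the contrapositive of the previous paragraph's logic combined with an exhaustion count: given $(\p,\bar{\e})$ satisfying (1)--(3), I claim $\r(\p,\bar{\e})$ has no nonzero $\bar{\Jac}_{x}$ for any $\rho$ and any real $x$, hence is supercuspidal by the characterization recalled at the start of Section~\ref{sec: Jacquet modules}. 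Indeed, for $x\geqslant 1/2$, $\bar{\Jac}_{x}\r(\p,\bar{\e})\neq 0$ requires (by \eqref{eq: Jac vanishing}) that $(\rho,2x+1)\in Jord(\p)$, hence $x>1/2$ or ($x=1/2$ and $(\rho,2)\in Jord(\p)$); in the first case condition (1) forces $(\rho,2x-1)\in Jord(\p)$ as well (since $2x-1>0$), and then Lemma~\ref{lemma: Jac packet}(2) together with (2) gives $\bar{\e}\notin\D{\S{\p}}_{-}$, so $\bar{\Jac}_{x}\r(\p,\bar{\e})=0$; the leftover case $(\rho,3)\in Jord(\p)$, $(\rho,1)\notin Jord(\p)$ cannot occur by (1) applied with $a=3$. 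For $x=1/2$ with $(\rho,2)\in Jord(\p)$, condition (3) gives $\e(\rho,2)=-1$, so $\bar{\e}\notin\D{\S{\p}}_{-}$ and Lemma~\ref{lemma: Jac packet}(3) gives $\bar{\Jac}_{1/2}\r(\p,\bar{\e})=0$. For $x=0$ one invokes the $R$-group description at $\lambda = 0$ from Proposition~\ref{prop: cuspidal reducibility}. For $x<0$ one uses $s.s.(\rho||^{x}\rtimes\r) = s.s.(\rho^{\vee}||^{-x}\rtimes\r)^{\theta}$ and reduces to $-x>0$. So $\r(\p,\bar{\e})$ is supercuspidal.

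I would also record the converse direction of the bijection needed to make this a clean parametrization: one must check that \emph{every} supercuspidal $\r$ lands in some $\cPkt{\p}$ with $\p\in\cPdt{G}$, rather than in a tempered packet attached to a nondiscrete $\p$; this is because supercuspidal representations are tempered, hence by Theorem~\ref{thm: L-packet}(2) lie in $\cPkt{\p}$ for some $\p\in\cPbd{G}$, and if $\p$ were not discrete then $\r$ would be a subrepresentation of a proper parabolic induction $\r_{\p_1}\rtimes(\cdots)$ by \eqref{eq: tempered L-packet}, contradicting supercuspidality (a nonunitary or GL-factor twist would give a nonzero Jacquet module). So $\p\in\cPdt{G}$ automatically.

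The main obstacle is the repeated delicate bookkeeping in the reduction step: one must be sure that $\bar{\Jac}_{x}\r(\p,\bar{\e})\neq 0$ really does contradict supercuspidality in \emph{all} cases, including the special even orthogonal case where $\bar{\Jac}_x$ involves the extra $\theta_0$-twisted term and where $\D{\S{\p}}_{-}$ sits inside $\D{\S{\p}}$ with index $1$ or $2$, and to handle correctly the interaction with the $\Sigma_0$-orbit formalism (i.e.\ that a supercuspidal $\Sigma_0$-orbit $[\r]$ is genuinely supercuspidal as a representation of $G(F)$, and that $\bar{\Jac}_x[\r] = [\bar{\Jac}_x\r] \ne 0$ does detect a proper Jacquet module). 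The borderline values $x = 1/2$ and $x = 0$ are where Lemma~\ref{lemma: Jac packet}(3) and the $R$-group input are both essential, and making sure conditions (2) and (3) are \emph{exactly} what is needed — no more, no less — to kill all nonzero $\bar{\Jac}_x$ is the crux of the argument.
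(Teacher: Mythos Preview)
Your approach is essentially the paper's: reduce supercuspidality of $\r(\p,\bar{\e})$ to the vanishing of $\bar{\Jac}_{x}\r(\p,\bar{\e})$ for all $\rho$ and $x$, then use \eqref{eq: Jac vanishing} together with Lemma~\ref{lemma: Jac packet} to see that conditions (1)--(3) are exactly what kills every possible nonvanishing case. There are, however, two points where your write-up goes astray.

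First, your treatment of $x\leqslant 0$ is both unnecessary and, at $x=0$, circular. You propose to ``invoke the $R$-group description at $\lambda=0$ from Proposition~\ref{prop: cuspidal reducibility}'', but that proposition assumes $\r$ is already supercuspidal, which is precisely what you are trying to establish; and its content concerns reducibility of $\rho\rtimes\r$, not the vanishing of $\bar{\Jac}_{0}\r$. Likewise the semisimplification identity $s.s.(\rho||^{x}\rtimes\r)=s.s.(\rho^{\vee}||^{-x}\rtimes\r)^{\theta}$ you cite for $x<0$ is a fact about induced representations, not about Jacquet modules. The correct argument is much simpler and already contained in the discussion preceding Lemma~\ref{lemma: Jac packet}: for $x<0$ one has $(\rho,2x+1)\notin Jord(\p)$ automatically (Jordan blocks have $a\geqslant 1$), so \eqref{eq: Jac vanishing} applies; and for $x=0$ the very definition of $\p_{-}$ gives $Jord(\p_{-})=\emptyset$ (the clause ``$x>0$'' fails), whence $\r_{\p_{-}}=0$ and \eqref{eq: Jac packet} yields $\bar{\Jac}_{0}\cPkt{\p}=0$. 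Thus Lemma~\ref{lemma: Jac packet} really does exhaust all cases where $\bar{\Jac}_{x}$ can be nonzero.

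Second, you invoke Proposition~\ref{prop: Jord of supercuspidal} separately for condition (1). This is harmless but redundant: as the paper remarks, condition (1) is handled by the same mechanism. If (1) failed, say $(\rho,a)\in Jord(\p)$ with $a>2$ but $(\rho,a-2)\notin Jord(\p)$, then with $x=(a-1)/2>1/2$ we land in case (1) of Lemma~\ref{lemma: Jac packet}, which gives $\bar{\Jac}_{x}\r(\p,\bar{\e})=\r(\p_{-},\bar{\e})\neq 0$ unconditionally, contradicting supercuspidality. So all three conditions are obtained uniformly from Lemma~\ref{lemma: Jac packet} without appealing to the intertwining-operator computation.
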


\begin{proof}

Let $\r$ be an irreducible discrete series representation of $G(F)$, and we can assume $[\r] = \r(\p, \bar{\e})$ for some $\p \in \cPdt{G}$ and $\bar{\e} \in \D{\S{\p}}$. It is not hard to see that $\r$ is supercuspidal if and only if $\bar{\Jac}_{x}\r(\p, \bar{\e}) = 0$ for any unitary irreducible supercuspidal representation $\rho$ of $GL(d_{\rho}, F)$ and any real number $x$. Then by \eqref{eq: Jac vanishing}, it is enough to consider the cases when $(\rho, 2x+1) \in Jord(\p)$. Note each of the conditions in this theorem excludes exactly one situation in Lemma~\ref{lemma: Jac packet} for $\bar{\Jac}_{x}\r(\p, \bar{\e}) \neq 0$. So it is clear that these conditions are both necessary and sufficient for $\r$ being supercuspidal.

\end{proof}

\begin{remark}
The necessity of condition (1) has already been established by Proposition~\ref{prop: Jord of supercuspidal}, but in this proof we do not need to know that result.
\end{remark}

\section{Cuspidal support of discrete series}
\label{sec: cuspidal support of discrete series}

In this section we are going to characterize the cuspidal supports of discrete series representations of 
$G(F)$. Let $\p \in \cPdt{G}$, for any $(\rho, a) \in Jord(\p)$, we denote by $a_{-}$ the biggest positive integer smaller than $a$ in $Jord_{\rho}(\p)$. And we would also like to write $a_{min}$ for the minimum of $Jord_{\rho}(\p)$. If $a = a_{min}$, we let $a_{-} = 0$ if $a$ is even, and $-1$ otherwise. In this case, we always assume $\e(\rho, a)\e(\rho, a_{-}) = -1$.

\begin{proposition}
\label{prop: parabolic reduction}
Suppose $\p \in \cPdt{G}$, and $\e \in \D{\S{\p}^{\Sigma_{0}}}$.

\begin{enumerate}

\item If $\e(\rho, a)\e(\rho, a_{-}) = -1$ and $a_{-} < a - 2$, then 
\begin{align}
\label{eq: parabolic reduction 1}
\r(\p, \bar{\e}) \hookrightarrow <(a-1)/2, \cdots, (a_{-} + 3)/2> \rtimes \r(\p', \bar{\e}')
\end{align}
is the unique irreducible element in $\bar{\Rep}(G)$ as an $\sH(G)$-submodule, where 
\[
Jord(\p') = Jord(\p) \cup \{(\rho, a_{-} + 2)\} \backslash \{(\rho, a)\},
\]
and 
\[
\e'(\cdot)= \e(\cdot) \text{ over } Jord(\p) \backslash \{(\rho, a)\}, \quad \quad \e'(\rho, a_{-}+2) = \e(\rho, a).
\]

\item If $\e(\rho, a)\e(\rho, a_{-}) = 1$, then 
\begin{align}
\label{eq: parabolic reduction 2}
\r(\p, \bar{\e}) \hookrightarrow <(a-1)/2, \cdots, -(a_{-} -1)/2> \rtimes \r(\p', \bar{\e}'),
\end{align}
where 
\[
Jord(\p') = Jord(\p) \backslash \{(\rho, a), (\rho, a_{-})\},
\]
and $\e'(\cdot)$ is the restriction of $\e(\cdot)$. In particular, suppose $\e_{1} \in \D{\S{\p}^{\Sigma_{0}}}$ satisfying $\e_{1}(\cdot) = \e(\cdot)$ over $Jord(\p')$ and
\[
\e_{1}(\rho, a) = -\e(\rho, a), \quad \quad \e_{1}(\rho, a_{-}) = -\e(\rho, a_{-}).
\]
If $\bar{\e}_{1} = \bar{\e}$, then the induced $\sH(G)$-module in \eqref{eq: parabolic reduction 2} has a unique irreducible element in $\bar{\Rep}(G)$ as an $\sH(G)$-submodule. Otherwise, it has two irreducible elements in $\bar{\Rep}(G)$ as $\sH(G)$-submodules, namely
\[
\r(\p, \bar{\e}) \+ \r(\p, \bar{\e}_{1}).
\]

\item If $\e(\rho, a_{min}) = 1$ and $a_{min}$ is even, then 
\begin{align}
\label{eq: parabolic reduction 3}
\r(\p, \bar{\e}) \hookrightarrow <(a_{min}-1)/2, \cdots, 1/2> \rtimes \r(\p', \bar{\e}')
\end{align}
is the unique irreducible element in $\bar{\Rep}(G)$ as an $\sH(G)$-submodule, where 
\[
Jord(\p') = Jord(\p) \backslash \{(\rho, a_{min})\},
\] 
and $\e'(\cdot)$ is the restriction of $\e(\cdot)$.

\end{enumerate}

\end{proposition}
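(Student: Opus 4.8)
The plan is to prove all three parts in parallel by the same mechanism: use Lemma~\ref{lemma: Jac packet} to locate a string of Jacquet-module descents that carries $\r(\p,\bar{\e})$ down to (a twist of) $\r(\p',\bar{\e}')$, then invoke Corollary~\ref{cor: existence of inclusion} (in its $\sH(G)$-module form, so that $\bar{\Jac}_x$ replaces $\Jac_x$) to produce the claimed embedding, and finally use a Jacquet-module computation on the right-hand side together with Lemma~\ref{lemma: subrep GL(n)} and \eqref{eq: Jac semiproduct} to identify the socle. First I would set up notation: write $a = 2x+1$, so that the relevant supercuspidal reduction point is $\rho\|^{x}$, and likewise $a_{-} = 2x_{-}+1$. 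For part (1), where $\e(\rho,a)\e(\rho,a_{-}) = -1$, the idea is that applying $\bar{\Jac}_{x}$ repeatedly — at the exponents $x, x-1, \dots, (a_{-}+3)/2$ (these are the numbers $(a-1)/2$ down to $(a_{-}+3)/2$) — is legal at each stage by Lemma~\ref{lemma: Jac packet}(2) for the first step (since the product of the two $\e$-values is $-1$, we are in the "vanishing unless'' case, but here the hypothesis puts us in the surviving case only after noting that removing $(\rho,a)$ and inserting $(\rho,a-2)$ keeps $(\rho,a-2)\notin Jord$ until we hit $a_{-}+2$), and by part (1) of that lemma for the intermediate steps where the gap is still open. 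At the bottom we reach $\r(\p',\bar{\e}')$ with $Jord(\p') = Jord(\p)\cup\{(\rho,a_{-}+2)\}\setminus\{(\rho,a)\}$, which matches the statement; the $\e$-bookkeeping $\e'(\rho,a_{-}+2) = \e(\rho,a)$ is exactly what Lemma~\ref{lemma: Jac packet} records under the canonical identification $\D{\S{\p}}_{-}\cong\D{\S{\p}}$.

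Once the chain of nonvanishing is established, Corollary~\ref{cor: existence of inclusion} gives
\[
\r(\p,\bar{\e}) \hookrightarrow \rho\|^{(a-1)/2}\times\cdots\times\rho\|^{(a_{-}+3)/2}\rtimes\r(\p',\bar{\e}')
\]
as an $\sH(G)$-submodule. To replace the iterated product by the Steinberg-type representation $<(a-1)/2,\cdots,(a_{-}+3)/2>$, I would argue that $\bar{\Jac}_{y}\,\r(\p,\bar{\e}) = 0$ for every $y$ in the range $\{(a-1)/2,\dots,(a_{-}+3)/2\}$ except $y = (a-1)/2$: this follows from \eqref{eq: Jac vanishing} together with the fact that $(\rho,2y+1)\notin Jord(\p)$ for those intermediate $y$ (the gap between $a_{-}$ and $a$ in $Jord_\rho(\p)$ is empty by definition of $a_{-}$). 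Feeding this into the GL-side combinatorics of Lemma~\ref{lemma: subrep GL(n)} forces the GL-constituent to be $<(a-1)/2,\cdots,(a_{-}+3)/2>$, and then a standard Frobenius-reciprocity/Jacquet-module argument with \eqref{eq: Jac semiproduct} shows the induced module has a unique irreducible submodule, which must be $\r(\p,\bar{\e})$ by what we just proved. For part (3) the same scheme applies with the chain $\bar{\Jac}_{x},\dots,\bar{\Jac}_{1/2}$ descending to remove $(\rho,a_{min})$ entirely; the only new input is Lemma~\ref{lemma: Jac packet}(3), which is exactly why the hypothesis reads $\e(\rho,a_{min}) = 1$ with $a_{min}$ even (so $x = a_{min}/2 - 1/2$ eventually reaches the half-integer reduction point), and the uniqueness of the submodule follows as before.

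Part (2), where $\e(\rho,a)\e(\rho,a_{-}) = 1$, is the subtle one and I expect it to be the main obstacle. Here the first descent $\bar{\Jac}_{x}$ is \emph{nonzero} — Lemma~\ref{lemma: Jac packet}(2) puts us precisely in its surviving case — but now we must continue \emph{past} $a_{-}$, descending through the exponents $(a-1)/2, (a-3)/2, \dots, -(a_{-}-1)/2$, which means pulling off the segment $<(a-1)/2,\cdots,-(a_{-}-1)/2>$ and \emph{removing both} $(\rho,a)$ and $(\rho,a_{-})$ from $Jord(\p)$. The delicate point is that once we pass the point where $(\rho,a_{-})$ sits, the Jacquet module need no longer be irreducible, and in fact picks up the two parameters $\bar{\e}$ and $\bar{\e}_1$ that differ by flipping both $\e(\rho,a)$ and $\e(\rho,a_{-})$; whether these give one or two irreducible submodules of the induced module depends on whether $\bar{\e}_1 = \bar{\e}$ in $\D{\S{\p}}$ — i.e., whether $\e_0$ (the special-even-orthogonal correction) absorbs the sign flip. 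I would handle this by tracking, at each stage of the descent, the exact $\sH(G)$-module $\bar{\Jac}_{(a-1)/2,\dots}\,\r(\p,\bar{\e})$ using the character-identity bookkeeping in \eqref{eq: Jac packet 1} applied successively (or equivalently, applying Lemma~\ref{lemma: Jac packet} to a sequence of auxiliary parameters), checking that the final descended module is $\r(\p',\bar{\e}')$ (irreducible) but that the \emph{reverse} induction $<(a-1)/2,\cdots,-(a_{-}-1)/2>\rtimes\r(\p',\bar{\e}')$ has socle $\r(\p,\bar{\e})\oplus\r(\p,\bar{\e}_1)$ — the two summands collapsing to one exactly when $\bar{\e}_1 = \bar{\e}$. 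The cleanest way to see the socle is two-dimensional in general is a Frobenius-reciprocity computation: $\Hom_{G}(\r(\p,\bar{\e}_1),\ \mathrm{Ind}\,<\cdots>\rtimes\r(\p',\bar{\e}'))\neq 0$ because $\bar{\Jac}$ of $\r(\p,\bar{\e}_1)$ down the \emph{same} string of exponents also lands on $\r(\p',\bar{\e}')$ (one checks this using Lemma~\ref{lemma: Jac packet} with the flipped signs, noting both flips are needed to keep the product condition $\e(\rho,a)\e(\rho,a_-)=1$), and then a multiplicity-one argument on the GL-segment (via Lemma~\ref{lemma: switching} and Lemma~\ref{lemma: subrep GL(n)} again) shows there are no further submodules.
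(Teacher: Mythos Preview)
Your treatment of parts (1) and (3) is essentially the paper's proof: iterate Lemma~\ref{lemma: Jac packet}, invoke Corollary~\ref{cor: existence of inclusion}, pin down the $GL$-constituent via Lemma~\ref{lemma: subrep GL(n)} and \eqref{eq: Jac vanishing}, and then compute $\bar{\Jac}_{(a-1)/2,\dots}$ of the induced module with \eqref{eq: Jac semiproduct} to see the socle is simple. (A minor slip: at the first step of part (1) you cite Lemma~\ref{lemma: Jac packet}(2), but since $a_{-}$ is the largest element of $Jord_{\rho}(\p)$ below $a$ and the segment is nonempty, $(\rho,a-2)\notin Jord(\p)$, so it is case (1) that applies; your own parenthetical already says this.)

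Part (2) has a genuine gap. Your plan is to run $\bar{\Jac}$ all the way along $(a-1)/2,(a-3)/2,\dots,-(a_{-}-1)/2$, but Lemma~\ref{lemma: Jac packet} is stated only for $\p\in\cPdt{G}$ and for $x\geqslant 1/2$. The descent works up to $x=(a_{-}+1)/2$; at that step the resulting parameter $\p_{-}$ acquires $(\rho,a_{-})$ with multiplicity two and is no longer discrete, so the lemma no longer applies, and the remaining exponents run through zero and negative values which the lemma does not cover either. The paper does \emph{not} try to continue the Jacquet descent. Instead it stops at $\p_{-}$, uses the tempered-packet description \eqref{eq: tempered L-packet} to write $\r(\p_{-},\bar{\e}_{-})\hookrightarrow St(\rho,a_{-})\rtimes\r(\p',\bar{\e}')$, and thus obtains an embedding into
\[
\langle(a-1)/2,\dots,(a_{-}+1)/2\rangle\times\langle(a_{-}-1)/2,\dots,-(a_{-}-1)/2\rangle\rtimes\r(\p',\bar{\e}').
\]
To show the $GL$-constituent $\tau$ is the full segment, Lemma~\ref{lemma: subrep GL(n)} is not enough (there are two linked segments, not one); the paper appeals to Zelevinsky's Theorem~\ref{thm: irreducibility for induced representation of GL(n)}: if $\tau$ were the other constituent it would embed in the reversed product, forcing $\bar{\Jac}_{(a_{-}-1)/2,\dots,-(a_{-}-1)/2}\r(\p,\bar{\e})\neq 0$, which is ruled out by checking $\Jac^{\theta}_{(a_{-}-1)/2,\dots,-(a_{-}-1)/2}\r_{\p}=0$. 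For the socle, the paper again uses the intermediate embedding: it shows \eqref{eq: parabolic reduction 2-1} has unique irreducible submodule (same argument as part (1)), applies this to both $\bar{\e}$ and $\bar{\e}_{1}$, and then invokes \eqref{eq: elliptic representaion} to identify $St(\rho,a_{-})\rtimes\r(\p',\bar{\e}')$ with $\r(\p_{-},\bar{\e}_{-})\oplus\r(\p_{-},\bar{\e}_{1,-})$ (or a single summand if $\bar{\e}=\bar{\e}_{1}$). Your proposed ``multiplicity-one on the $GL$-segment'' does not substitute for this; the exhaustion of the socle really comes from \eqref{eq: elliptic representaion}, not from segment combinatorics.
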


\begin{proof}
The proofs of part (1) and part (3) are almost the same, so here we will only give the proof of part (1). We start by considering the Jacquet module $\bar{\Jac}_{(a-1)/2, \cdots, (a_{-} + 3)/2} \r(\p, \bar{\e})$, and by applying Lemma~\ref{lemma: Jac packet} multiple times we have 
\[
\bar{\Jac}_{(a-1)/2, \cdots, (a_{-} + 3)/2} \r(\p, \bar\e) = \r(\p', \bar{\e}').
\]
It follows from Corollary~\ref{cor: existence of inclusion} that
\[
\r(\p, \bar{\e}) \hookrightarrow \rho||^{\frac{a-1}{2}} \times \cdots \times \rho||^{\frac{a_{-} + 3}{2}} \rtimes \r(\p', \bar{\e}').
\]
By Lemma~\ref{lemma: B}, we can take an irreducible constituent $\tau$ in $\rho||^{\frac{a-1}{2}} \times \cdots \times \rho||^{\frac{a_{-} + 3}{2}}$, such that 
\[
\r(\p, \bar{\e}) \hookrightarrow \tau \rtimes \r(\p', \bar{\e}').
\]
So it is enough to show $\tau = <\frac{a-1}{2}, \cdots, \frac{a_{-} + 3}{2}>$. If this is not the case, we know from Lemma~\ref{lemma: subrep GL(n)} that $\Jac_{x}\tau \neq 0$ for some $(a_{-} + 3)/2 \leqslant x < (a-1)/2$. So $\tau \hookrightarrow \rho||^{x} \times \tau'$ for some irreducible representation $\tau'$, and 
\[
\r(\p, \bar{\e}) \hookrightarrow \rho||^{x} \times \tau' \rtimes \r(\p', \bar{\e}').
\]
By Frobenius reciprocity, $\bar{\Jac}_{x}\r(\p, \bar{\e}) \neq 0$. However, $(\rho, 2x+1) \notin Jord(\p)$ under our assumption, so we get a contradiction (see \eqref{eq: Jac vanishing}). 

To see the induced $\sH(G)$-module in \eqref{eq: parabolic reduction 1} has a unique irreducible element in $\bar{\Rep}(G)$ as an $\sH(G)$-submodule, we can compute its Jacquet module under $\bar{\Jac}_{(a-1)/2, \cdots, (a_{-} + 3)/2}$. By applying the formula~\eqref{eq: Jac semiproduct}, we find the Jacquet module consists of 
\[
\Jac_{X_{1}} <(a-1)/2, \cdots, (a_{-} + 3)/2> \times \Jac^{op}_{-X_{2}} <(a-1)/2, \cdots, (a_{-} + 3)/2> \rtimes \bar{\Jac}_{X_{3}} \r(\p', \bar{\e}'),
\]
where 
\[
\{ (a-1)/2, \cdots, (a_{-} + 3)/2 \} = X_{1} \sqcup X_{2} \sqcup X_{3},
\]
and $X_{i}$ inherits the order from $\{ (a-1)/2, \cdots, (a_{-} + 3)/2 \}$. Note $\Jac_{X_{1}} <(a-1)/2, \cdots, (a_{-} + 3)/2> \neq 0$ only if $X_{1}$ is a segment $\{(a-1)/2, \cdots, x_{1}\}$. Similarly, $\Jac^{op}_{-X_{2}} <(a-1)/2, \cdots, (a_{-} + 3)/2> \neq 0$ only if $X_{2}$ is a segment $\{-(a_{-} + 3)/2, \cdots, x_{2}\}$. Since $-(a_{-} + 3)/2 \notin \{ (a-1)/2, \cdots, (a_{-} + 3)/2 \}$, $X_{2}$ has to be empty. Therefore the Jacquet module can only contain terms like
\[
\Jac_{(a-1)/2, \cdots, x_{1}} <(a-1)/2, \cdots, (a_{-} + 3)/2> \rtimes \bar{\Jac}_{x_{1} - 1, \cdots, (a_{-} + 3)/2} \r(\p', \bar{\e}').
\]
But from our definition of $Jord(\p')$, we see $\{a, \cdots, (a_{-}+4)\}$ has no intersection with $Jord_{\rho}(\p')$, so 
\[
\bar{\Jac}_{x_{1} - 1, \cdots, (a_{-} + 3)/2} \r(\p', \bar{\e}') = 0
\] 
by \eqref{eq: Jac vanishing}. Hence we can only have
\[
\bar{\Jac}_{(a-1)/2, \cdots, (a_{-} + 3)/2} \Big( <(a-1)/2, \cdots, (a_{-} + 3)/2> \rtimes \r(\p', \bar{\e}') \Big) = \r(\p', \bar{\e}').
\]
Note this implies $<(a-1)/2, \cdots, (a_{-} + 3)/2> \rtimes \r(\p', \bar{\e}')$ has a unique irreducible element in $\bar{\Rep}(G)$ as an $\sH(G)$-submodule. 
 
For part (2), we will first consider $\bar{\Jac}_{(a-1)/2, \cdots, (a_{-} + 1)/2} \r(\p, \bar{\e})$, and again by applying Lemma~\ref{lemma: Jac packet} multiple times we have 
\[
\bar{\Jac}_{(a-1)/2, \cdots, (a_{-} + 1)/2} \r(\p, \bar{\e}) = \r(\p_{-}, \bar{\e}_{-}),
\]
where $Jord(\p_{-}) = Jord(\p) \cup \{(\rho, a_{-})\} \backslash \{(\rho, a)\}$, and $\e_{-}(\cdot)$ is the restriction of $\e(\cdot)$ to $Jord(\p_{-})$ (forgetting multiplicities). As in part (1), we can show from here that

\begin{align}
\label{eq: parabolic reduction 2-1}
\r(\p, \bar{\e}) \hookrightarrow <(a-1)/2, \cdots, (a_{-} + 1)/2> \rtimes \r(\p_{-}, \bar{\e}_{-}).
\end{align}
Note $\cPkt{\p_{-}} = St(\rho, a_{-}) \rtimes \cPkt{\p'}$ (see \eqref{eq: tempered L-packet}), so
\[
\r(\p_{-}, \bar{\e}_{-}) \hookrightarrow St(\rho, a_{-}) \rtimes \r(\p', \bar{\e}') = <(a_{-} -1)2, \cdots, -(a_{-} -1)/2> \rtimes \r(\p', \bar{\e}'),
\]
and hence

\begin{align}
\label{eq: parabolic reduction 2-2}
\r(\p, \bar{\e}) \hookrightarrow <(a-1)/2, \cdots, (a_{-} + 1)/2> \times <(a_{-} -1)/2, \cdots, -(a_{-} -1)/2> \rtimes \r(\p', \bar{\e}').
\end{align}
By Lemma~\ref{lemma: B}, we can take an irreducible constituent $\tau$ in 
\[
<(a -1)/2, \cdots, (a_{-} +1)/2> \times <(a_{-} -1)/2, \cdots, -(a_{-} -1)/2> ,
\]
such that 
\[
\r(\p, \bar{\e}) \hookrightarrow \tau \rtimes \r(\p', \bar{\e}').
\]
Therefore it suffices to show $\tau = <(a -1)/2, \cdots, -(a_{-} -1)/2>$. If this is not the case, then by Theorem~\ref{thm: irreducibility for induced representation of GL(n)}
\[
\tau \hookrightarrow <(a_{-} -1)/2, \cdots, -(a_{-} -1)/2> \times <(a -1)/2, \cdots, (a_{-} +1)/2>.
\]
And by Frobenius reciprocity, we have 
\[
\bar{\Jac}_{(a_{-} -1)/2, \cdots, -(a_{-} -1)/2} \r \neq 0.
\]
But this is impossible, because one can check 
\[
\Jac^{\theta}_{(a_{-} -1)/2, \cdots, -(a_{-} -1)/2} \r_{\p} = 0.
\]
At last, we still need to show the irreducible elements in $\bar{\Rep}(G)$ as submdules of the induced $\sH(G)$-module in \eqref{eq: parabolic reduction 2} are either $\r(\p, \bar{\e})$ or $\r(\p, \bar{\e}) \+ \r(\p, \bar{\e}_{1})$ depending on whether $\bar{\e}$ and $\bar{\e}_{1}$ are equal or not. Note we can show in the same way as in part (1) that $\r(\p, \bar{\e})$ is the unique irreducible element in $\bar{\Rep}(G)$ as an submodule of the induced $\sH(G)$-module in \eqref{eq: parabolic reduction 2-1}. And the same is true for $\r(\p, \bar{\e}_{1})$. Since $\bar{\e} = \bar{\e}_{1}$ if and only if $\bar{\e}_{-} = \bar{\e}_{1, -}$, where $\e_{1, -}(\cdot)$ is again the restriction of $\e_{1}(\cdot)$ to $Jord(\p_{-})$ (forgetting multiplicities), let us assume $\bar{\e} \neq \bar{\e}_{1}$ first. Then by \eqref{eq: elliptic representaion}
\[
\r(\p_{-}, \bar{\e}_{-}) \+ \r(\p_{-}, \bar{\e}_{1, -}) = St(\rho, a_{-}) \rtimes \r(\p', \bar{\e}'), 
\]
and hence the irreducible elements in $\bar{\Rep}(G)$ as submodules of the induced $\sH(G)$-module in \eqref{eq: parabolic reduction 2-2} are exactly $\r(\p, \bar{\e}) \+ \r(\p, \bar{\e}_{1})$. So we only need to show the induced $\sH(G)$-modules in \eqref{eq: parabolic reduction 2} and \eqref{eq: parabolic reduction 2-2} have the same irreducible elements in $\bar{\Rep}(G)$ as submodules. One direction is clear, i.e., the irreducible elements in $\bar{\Rep}(G)$ as submodules of 
\[
<(a-1)/2, \cdots, (a_{-} + 1)/2> \times <(a_{-} -1)/2, \cdots, -(a_{-} -1)/2> \rtimes \r(\p', \bar{\e}')
\]
contain that of 
\[
<(a-1)/2, \cdots, -(a_{-} -1)/2> \rtimes \r(\p', \bar{\e}')
\]
And from what we have shown, it is clear that $\r(\p, \bar{\e}) \+ \r(\p, \bar{\e}_{1})$ are in $<(a-1)/2, \cdots, -(a_{-} -1)/2> \rtimes \r(\p', \bar{\e}')$, so they have to contain the same irreducible elements in $\bar{\Rep}(G)$ as $\sH(G)$-submodules. Now if $\bar{\e} = \bar{\e}_{1}$, we have by \eqref{eq: elliptic representaion}
\[
\r(\p_{-}, \bar{\e}_{-}) = St(\rho, a_{-}) \rtimes \r(\p', \bar{\e}'),
\] 
and the rest of the argument is the same.

\end{proof}

\section{Remarks on even orthogonal groups}
\label{sec: even orthogonal group}

The previous results of this paper can also be extended to representations of $G^{\Sigma_{0}}(F)$. Note the only nontrivial case here is when $G$ is special even orthogonal. First, we will extend Proposition~\ref{prop: cuspidal reducibility}.

\begin{corollary}
\label{cor: cuspidal reducibility}
Suppose $\r$ is a supercuspidal representation of $G(F)$ and $[\r] \in \cPkt{\p}$ for some $\p \in \cPdt{G}$. Let $\r^{\Sigma_{0}}$ be any irreducible representation of $G^{\Sigma_{0}}(F)$, whose restriction to $G$ contains $\r$. Then for any unitary irreducible supercuspidal representation $\rho$ of $GL(d_{\rho}, F)$ and real number $a_{\rho}$, the parabolic induction 
\[
\rho||^{\pm(a_{\rho} +1)/2} \rtimes \r^{\Sigma_{0}}
\]
reduces if and only if $\rho$ is self-dual and 
\begin{align}
\label{eq: cuspidal reducibility full orthogonal group}
a_{\rho} = \begin{cases}
                 \text{ max } Jord_{\rho}(\p), & \text{ if } Jord_{\rho}(\p) \neq \emptyset, \\
                                    0, & \text{ if $Jord_{\rho}(\p) = \emptyset$, $\rho$ is of opposite type to $\D{G}$}, \\
                                    -1, & \text{ otherwise. }
                 \end{cases}
\end{align}

\end{corollary}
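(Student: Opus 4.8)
The plan is to deduce Corollary~\ref{cor: cuspidal reducibility} from Proposition~\ref{prop: cuspidal reducibility} by Clifford theory for the index-two inclusions $G \subseteq G^{\Sigma_0}$ and $G_+ \subseteq G_+^{\Sigma_0}$, where $G_+$, $M_+ = GL(d_\rho)\times G$, $P_+$ are as in the proof of Proposition~\ref{prop: Jord of supercuspidal}. If $G$ is symplectic or special odd orthogonal then $\theta_0 = \mathrm{id}$, so $\r^{\Sigma_0} = \r$ and the statement is Proposition~\ref{prop: cuspidal reducibility} with the proviso ``$\r \cong \r^{\theta_0}$'' now automatic; hence I may assume $G$ special even orthogonal and split into the cases $\r^{\theta_0} \cong \r$ and $\r^{\theta_0} \ncong \r$.

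Suppose $\r^{\theta_0} \cong \r$. Then $\r^{\Sigma_0}|_G = \r$, and since $M_+$ is $\theta_0$-stable with $\theta_0$ acting trivially on the $GL(d_\rho)$-factor (using $n_- \geq 1$; the case $n_- = 0$ is noted below), $(\rho||^x \rtimes \r^{\Sigma_0})|_{G_+} = \rho||^x \rtimes \r$. Here the proviso of Proposition~\ref{prop: cuspidal reducibility} holds, so the reducibility of $\rho||^x \rtimes \r$ is known precisely. When $\rho||^x \rtimes \r$ is irreducible, so is $\rho||^x \rtimes \r^{\Sigma_0}$, since the length of a representation of $G_+^{\Sigma_0}$ is at most that of its restriction to $G_+$. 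At the reducibility point $x = (a_\rho+1)/2$ with $a_\rho = \max Jord_\rho(\p)$ or $a_\rho = 0$, I would run the argument completing the proof of Proposition~\ref{prop: cuspidal reducibility} in Section~\ref{sec: cuspidal support of discrete series} in the $G^{\Sigma_0}$-setting: via the $\Sigma_0$-version of the Jacquet-module formalism of Section~\ref{sec: Jacquet modules}, the full-orthogonal character identity (Theorem~\ref{thm: character relation full orthogonal group}) and the compatibility diagram~\eqref{eq: compatible with endoscopic transfer full orthogonal group}, one gets $\r^{\Sigma_0}(\p_+, \e_+) \hookrightarrow \rho||^{(a_\rho+1)/2} \rtimes \r^{\Sigma_0}$ as a proper submodule (proper because $\bar{\Jac}_{-(a_\rho+1)/2}$ annihilates $\r^{\Sigma_0}(\p_+, \e_+)$ but not the induced representation), hence reducibility. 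For $a_\rho = -1$ (so $\rho$ self-dual of the same type as $\D{G}$ and $Jord_\rho(\p) = \emptyset$), $\rho \rtimes \r$ already reduces over $G$; its two tempered constituents are indexed by characters of $\S{\p_0}$ for $\p_0 = \p \oplus (\rho\otimes[1]) \oplus (\rho\otimes[1])$, and $\r^{\theta_0} \cong \r$ forces $\S{\p}^{\Sigma_0} \neq \S{\p}$, hence $\S{\p_0}^{\Sigma_0} \neq \S{\p_0}$, so both constituents are $\theta_0$-stable and $\rho \rtimes \r^{\Sigma_0}$ has length two.

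Suppose $\r^{\theta_0} \ncong \r$. Then $\r^{\Sigma_0} = \Ind_G^{G^{\Sigma_0}} \r$, and induction in stages gives $\rho||^x \rtimes \r^{\Sigma_0} \cong \Ind_{G_+}^{G_+^{\Sigma_0}}(\rho||^x \rtimes \r)$, which by Clifford theory is reducible iff $\rho||^x \rtimes \r$ is reducible, or is irreducible and $\theta_0$-stable. Now $\r^{\theta_0} \ncong \r$ forces every summand of $\underline{\p}$ to be even-dimensional; so whenever $\rho$ is self-dual and lies in $Jord(\p)$ or is of opposite type to $\D{G}$ we have $d_\rho$ even, the proviso holds, and reducibility of $\rho||^x \rtimes \r$ is known; and for $d_\rho$ even the nontrivial element of the relative Weyl group of $M_+$ in $G_+$ acts trivially on the $G$-factor, so $(\rho||^x \rtimes \r)^{\theta_0} = \rho||^x \rtimes \r^{\theta_0}$ has cuspidal support not $G_+$-conjugate to that of $\rho||^x \rtimes \r$ (as $\r^{\theta_0} \ncong \r$), whence an irreducible $\rho||^x \rtimes \r$ is never $\theta_0$-stable and $\rho||^x \rtimes \r^{\Sigma_0}$ reduces exactly when $\rho||^x \rtimes \r$ does. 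The new situation is $d_\rho$ odd, which by the parity constraint forces $\rho$ self-dual of orthogonal type with $Jord_\rho(\p) = \emptyset$, i.e.\ $a_\rho = -1$; the key point is that in $SO(2(d_\rho+n_-))$ with $d_\rho$ odd the nontrivial element of the relative Weyl group of $GL(d_\rho)\times G$ acts by $(g,h) \mapsto ({}^{t}g^{-1}, \theta_0(h))$ --- a dualization of the odd-size block being implementable only together with $\theta_0$ --- so that $\rho||^x \rtimes \r \cong \rho^{\vee}||^{-x} \rtimes \r^{\theta_0}$. This representation is irreducible for all $x$ (this is precisely the reducibility excluded by the failed proviso in Proposition~\ref{prop: cuspidal reducibility}), while $(\rho \rtimes \r)^{\theta_0} = \rho \rtimes \r^{\theta_0} \cong \rho \rtimes \r$ and $(\rho||^x \rtimes \r)^{\theta_0} \ncong \rho||^x \rtimes \r$ for $x \neq 0$; hence $\rho \rtimes \r^{\Sigma_0}$ splits as two inequivalent irreducibles while $\rho||^x \rtimes \r^{\Sigma_0}$ stays irreducible for $x \neq 0$, which is the assertion with $a_\rho = -1$.

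The degenerate case $n_- = 0$, where $\r$ is the trivial representation of $SO(0)$ (with the convention $1^{\theta_0} \ncong 1$) and $M_+ = GL(d_\rho)$ is the Siegel Levi on which $\theta_0$ acts by the outer automorphism, is small enough to be settled directly from the $GL$-reducibility formulas recalled in Appendix~\ref{sec: reducibility for GL(n)} together with the same bookkeeping. The main obstacle is the $\r^{\theta_0} \ncong \r$, $d_\rho$ odd case: everything there rests on the Weyl-group observation above, which manufactures the $\theta_0$-stability of the otherwise irreducible $\rho \rtimes \r$ and hence the reducibility of $\rho \rtimes \r^{\Sigma_0}$ that is absent from Proposition~\ref{prop: cuspidal reducibility}; making that statement precise, and setting up the $\Sigma_0$-extensions of the machinery of Sections~\ref{sec: Jacquet modules}--\ref{sec: cuspidal support of discrete series}, is the technical core.
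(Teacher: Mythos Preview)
Your proposal is correct and follows essentially the same Clifford-theoretic strategy as the paper: reduce to $G$ special even orthogonal, split according to whether $\r^{\theta_0}\cong\r$, and resolve the delicate cases via the packet structure (comparing $\S{\p}^{\Sigma_0}$ with $\S{\p}$) together with the Weyl-group observation that for $d_\rho$ odd the nontrivial element of $W(M_+,G_+)$ intertwines $\theta_0$ on the $G$-factor. The paper's proof begins by recording exactly the two Clifford bullets you use and then runs the same case analysis.

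The one substantive divergence is in the sufficiency step when $\r\cong\r^{\theta_0}$ and $a_\rho\geqslant 0$. You propose to rebuild the end-of-Section~\ref{sec: cuspidal support of discrete series} argument in the $\Sigma_0$-setting, using Theorem~\ref{thm: character relation full orthogonal group} and diagram~\eqref{eq: compatible with endoscopic transfer full orthogonal group} to produce $\r^{\Sigma_0}(\p_+,\e_+)$ as a proper submodule. This works, and indeed the paper carries out precisely this $\Sigma_0$-extension of the Jacquet machinery immediately afterwards as Lemma~\ref{lemma: Jac packet full orthogonal group} and Proposition~\ref{prop: parabolic reduction full orthogonal group}. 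But for the corollary itself the paper takes a shortcut: assuming $\tau\rtimes\r^{\Sigma_0}$ irreducible while $\tau\rtimes\r$ is reducible forces (by Clifford) $(\tau\rtimes\r^{\Sigma_0})\otimes\x_0\cong\tau\rtimes\r^{\Sigma_0}$, hence $\tau\rtimes\r=\r_+\oplus\r_+^{\theta_0}$ with $\r_+\ncong\r_+^{\theta_0}$; the uniqueness of the Langlands quotient then forces $\lambda=0$, so $\tau=\rho$, and one finishes with the same $\S{\p_+}^{\Sigma_0}\neq\S{\p_+}$ contradiction you use for $a_\rho=-1$. This avoids invoking any $\Sigma_0$-version of Lemma~\ref{lemma: Jac packet} at this stage. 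Your route is more constructive but heavier; the paper's is slicker and keeps the corollary logically prior to Lemma~\ref{lemma: Jac packet full orthogonal group}.

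One small imprecision: when you write ``this is precisely the reducibility excluded by the failed proviso in Proposition~\ref{prop: cuspidal reducibility}'', note that the failed proviso means Proposition~\ref{prop: cuspidal reducibility} makes \emph{no} assertion, not that it asserts irreducibility; the irreducibility of $\rho||^x\rtimes\r$ for all $x$ in that case needs the separate R-group/Weyl-group argument you in fact supply. Also, in the $\r\cong\r^{\theta_0}$, $a_\rho=-1$ branch you silently restrict to $\rho$ self-dual of the same type as $\D{G}$; the remaining ``otherwise'' subcase ($\rho$ not self-dual) is trivial but should be noted.
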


\begin{proof}
We can assume $G$ is special even orthogonal. First we would like to give the relation of irreducibility between an irreducible representation $\r$ of $G(F)$ and an irreducible representation $\r^{\Sigma_{0}}$ of $G^{\Sigma_{0}}(F)$ which contains $\r$ in its restriction to $G(F)$. For any irreducible representation $\tau$ of $GL(d, F)$, it is easy to show the following fact:

\begin{itemize}

\item If $\r \ncong \r^{\theta_{0}}$, $\tau \rtimes \r^{\Sigma_{0}}$ is irreducible if and only if $\tau \rtimes \r$ is irreducible and $(\tau \rtimes \r)^{\theta_{0}} \ncong \tau \rtimes \r$.

\item If $\r \cong \r^{\theta_{0}}$, $\tau \rtimes \r$ is irreducible if and only if $\tau \rtimes \r^{\Sigma_{0}}$ is irreducible and $\tau \rtimes \r^{\Sigma_{0}} \ncong (\tau \rtimes \r^{\Sigma_{0}}) \otimes \x_{0}$.

\end{itemize}

Let $\tau = \rho||^{(a_{\rho} +1)/2}$ and $\r$ be supercuspidal. We assume $\tau \rtimes \r$ is a representation of $G_{+}(F)$. Note the condition \eqref{eq: cuspidal reducibility} implies \eqref{eq: cuspidal reducibility full orthogonal group}. To see the necessity of the condition \eqref{eq: cuspidal reducibility full orthogonal group}, we need to show if it is not satisfied, then $\tau \rtimes \r^{\Sigma_{0}}$ is irreducible. Since $\tau \rtimes \r$ is irreducible in this case, it suffices to consider $\r \ncong \r^{\theta_{0}}$, and we would like to show $(\tau \rtimes \r)^{\theta_{0}} \ncong \tau \rtimes \r$. Since $\tau$ and $\r$ are both supercuspidal, this is also equivalent to show there does not exist a Weyl group element of $G(F)$ sending $\tau \rtimes \r^{\theta_{0}}$ to $\tau \rtimes \r$, i.e., $\tau \ncong \tau^{\vee}$ or $d$ is even. Suppose $\tau \cong \tau^{\vee}$ and $d$ is odd, then $a_{\rho} = -1$ and $\rho$ is necessarily of orthogonal type, hence one can only have $Jord_{\rho}(\p) \neq \emptyset$ in view of \eqref{eq: cuspidal reducibility full orthogonal group}. This implies $\S{\p}^{\Sigma_{0}} \neq \S{\p}$, so $\r \cong \r^{\theta_{0}}$ (see the remarks after Theorem~\ref{thm: L-packet full orthogonal group}) and we get a contradiction. 

To see the reducibility condition \eqref{eq: cuspidal reducibility full orthogonal group} is also sufficient, we first consider the case $\r \cong \r^{\theta_{0}}$, then the condition \eqref{eq: cuspidal reducibility full orthogonal group} becomes the same as \eqref{eq: cuspidal reducibility}. If \eqref{eq: cuspidal reducibility full orthogonal group} is satisfied, then $\tau \rtimes \r$ reduces. 
Suppose $\tau \rtimes \r^{\Sigma_{0}}$ is irreducible, then $\tau \rtimes \r^{\Sigma_{0}} \cong (\tau \rtimes \r^{\Sigma_{0}}) \otimes \x_{0}$, and hence 
\[
(\tau \rtimes \r^{\Sigma_{0}})|_{G_{+}} \cong \tau \rtimes (\r^{\Sigma_{0}}|_{G}) \cong \tau \rtimes \r \cong \r_{+} \+ \r_{+}^{\theta_{0}},
\] 
where $\r_{+} \ncong \r_{+}^{\theta_{0}}$. By the theory of Langlands quotient, one must have $\tau \cong \rho$. Define $\p_{+}$ by 
\[
Jord(\p_{+}) := Jord(\p) \cup \{(\rho, 1) \text{ with multiplicity $2$ }\}.
\] 
Then $[\rho \rtimes \r] \subseteq \cPkt{\p_{+}}$. Since $\r \cong \r^{\theta_{0}}$, we have $\S{\p}^{\Sigma_{0}} \neq \S{\p}$ , and it follows $\S{\p_{+}}^{\Sigma_{0}} \neq \S{\p_{+}}$. So $\r_{+}^{\theta_{0}} \cong \r_{+}$. This is a contradiction.

At last, we can assume $\r \ncong \r^{\theta_{0}}$, and it suffices for us to show if $\tau \rtimes \r^{\Sigma_{0}}$ is irreducible, then \eqref{eq: cuspidal reducibility full orthogonal group} is not satisfied. In this case $\tau \rtimes \r$ is irreducible and $(\tau \rtimes \r)^{\theta_{0}} \ncong \tau \rtimes \r$. In particular, \eqref{eq: cuspidal reducibility} is not satisfied. So we only need to exclude the case that $\rho$ is of orthogonal type, $Jord_{\rho}(\p) = \emptyset$, $a_{\rho} = -1$ and $d$ is odd. In this case $[\tau \rtimes \r] = [\rho \rtimes \r] \in \cPkt{\p_{+}}$ and $\S{\p_{+}}^{\Sigma_{0}} \neq \S{\p_{+}}$, so $(\tau \rtimes \r)^{\theta_{0}} \cong \tau \rtimes \r$, which again leads to a contradiction. This finishes the proof.

\end{proof}

Next, we would like to extend Lemma~\ref{lemma: Jac packet}.

\begin{lemma}
\label{lemma: Jac packet full orthogonal group}
Suppose $\p \in \cPdt{G}$, and $(\rho, 2x+1) \in Jord(\p)$ with $x > 0$. Let $\p_{-} \in \cPbd{G_{-}}$ such that 
\[
Jord(\p_{-}) = Jord(\p) \cup \{(\rho, 2x-1)\} \backslash \{(\rho, 2x+1)\}.
\]
Then we have the following facts:
\begin{enumerate}

\item If $x > 1/2$ and $(\rho, 2x-1) \notin Jord(\p)$, then $\r^{\Sigma_{0}}(\p_{-}, \e) = \Jac_{x} \r^{\Sigma_{0}}(\p, \e)$ for all $\e \in \D{\S{\p_{-}}^{\Sigma_{0}}} \cong \D{\S{\p}^{\Sigma_{0}}}$.

\item If $x > 1/2$ and $(\rho, 2x-1) \in Jord(\p)$, then $\Jac_{x} \r^{\Sigma_{0}}(\p, \e) = 0$ unless $\e \in \D{\S{\p_{-}}^{\Sigma_{0}}}$, i.e., 
\[
\e(\rho, 2x+1) \e(\rho, 2x-1) = 1,
\]
in which case $\r^{\Sigma_{0}}(\p_{-}, \e) = \Jac_{x} \r^{\Sigma_{0}}(\p, \e)$.   

\item If $x = 1/2$, then $\Jac_{1/2} \r^{\Sigma_{0}}(\p, \e) = 0$ unless $\e \in \D{\S{\p_{-}}^{\Sigma_{0}}}$, i.e.,
\[
\e(\rho, 2) = 1,
\]
in which case $\r^{\Sigma_{0}}(\p_{-}, \e) = \Jac_{x} \r^{\Sigma_{0}}(\p, \e)$.

\end{enumerate}

\end{lemma}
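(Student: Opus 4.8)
The plan is to deduce Lemma~\ref{lemma: Jac packet full orthogonal group} from its non-twisted counterpart Lemma~\ref{lemma: Jac packet} together with the relation \eqref{eq: full orthogonal character} between characters of $\S{\p}^{\Sigma_{0}}$ and the restriction map to $\S{\p}$. The key structural input is that the twisted and non-twisted Jacquet functors are compatible, namely $(\Jac_{x}\r^{\Sigma_{0}})|_{G_{-}} = \bar{\Jac}_{x}(\r^{\Sigma_{0}}|_{G})$ (this follows from the definitions in Section~\ref{sec: Jacquet modules}, using that $\bar{\Jac}_{x}$ is precisely $\Jac_{x}$ or $\Jac_{x}+\Jac_{x}\circ\theta_{0}$ according to the relevant Levi), so the statement about $\r^{\Sigma_{0}}(\p_{-},\e) = \Jac_{x}\r^{\Sigma_{0}}(\p,\e)$ can be checked on restriction to $G_{-}(F)$ provided we also track the $\Sigma_{0}$-action/twist.

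First I would establish the analogue of \eqref{eq: Jac packet} and \eqref{eq: Jac vanishing} at the level of $\Sigma_{0}$-extended packets: applying \eqref{eq: compatible with twisted endoscopic transfer} (and \eqref{eq: compatible with endoscopic transfer full orthogonal group} when needed) to the full-orthogonal character identity \eqref{eq: character relation full orthogonal group}, together with \eqref{eq: tempered L-packet} in its twisted form $\Pkt{\p_{-}}^{\Sigma_{0}} = \r_{\p_{1}}\rtimes\Pkt{\p'}^{\Sigma_{0}}$, gives $\Pkt{\p_{-}}^{\Sigma_{0}} = \Jac_{x}\Pkt{\p}^{\Sigma_{0}}$ and $\Jac_{x}\Pkt{\p}^{\Sigma_{0}} = 0$ when $(\rho,2x+1)\notin Jord(\p)$. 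In particular the members $\Jac_{x}\r^{\Sigma_{0}}$ have no common irreducible constituents, exactly as in the proof of Lemma~\ref{lemma: Jac packet}. Then I would run the character-identity argument of that proof verbatim but with $\S{\p}^{\Sigma_{0}}$ in place of $\S{\p}$, $\D{\S{\p}^{\Sigma_{0}}}$ in place of $\D{\S{\p}}$, and $f \in C^{\infty}_{c}(G(F)\rtimes\theta_{0})$: for $s \in \S{\p}^{\Sigma_{0}}$ with $(H,\p_{H})\to(\p,s)$, Theorem~\ref{thm: character relation full orthogonal group} and the compatibility diagrams yield
\[
\sum_{\e\in\D{\S{\p}^{\Sigma_{0}}}}\e(s)\,f_{G_{-}}(\Jac_{x}\r^{\Sigma_{0}}(\p,\e)) = \sum_{\e'\in\D{\S{\p_{-}}^{\Sigma_{0}}}}\e'(s_{-})\,f_{G_{-}}(\r^{\Sigma_{0}}(\p_{-},\e')),
\]
and linear independence of twisted characters forces $\Jac_{x}\r^{\Sigma_{0}}(\p,\e) = 0$ unless $\e$ lies in the subgroup $\D{\S{\p_{-}}^{\Sigma_{0}}}\subseteq\D{\S{\p}^{\Sigma_{0}}}$, with $\r^{\Sigma_{0}}(\p_{-},\e) = \Jac_{x}\r^{\Sigma_{0}}(\p,\e)$ otherwise. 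Unwinding the short exact sequences for $\S{\p}^{\Sigma_{0}}\to\S{\p_{-}}^{\Sigma_{0}}$ translates ``$\e\in\D{\S{\p_{-}}^{\Sigma_{0}}}$'' into the explicit conditions $\e(\rho,2x+1)\e(\rho,2x-1)=1$ in cases (1)--(2) (in case (1) the source and target groups are canonically isomorphic so there is no constraint) and $\e(\rho,2)=1$ in case (3), giving the three stated cases.

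The main subtlety I expect is bookkeeping for the case $G_{-} = 1$, where $\r^{\Sigma_{0}}$ lives over $G^{\Sigma_{0}}$ with no room for a nontrivial Jacquet module, and more generally ensuring the correct matching of $\Sigma_{0}$-extensions: one must check that the twisted transfer/compatibility diagram \eqref{eq: compatible with twisted endoscopic transfer} really does intertwine $\Jac_{x}$ on the $GL(N)$-side with the operator on $\D{I}(G^{\theta_{0}})$ whose effect on $\r^{\Sigma_{0}}$-characters is $\r^{\Sigma_{0}}\mapsto[\Jac_{x}\r^{\Sigma_{0}}]$, rather than the untwisted $\bar{\Jac}_{x}$. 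Here the hypothesis $x>0$ (hence $(\rho,2x+1)$ with $2x+1\geqslant 2$, so $G_{-}\neq G$ and the parabolic is proper) is what keeps us away from the degenerate situation, and the identification of $\e$ with its image $\bar{\e}$ under $\S{\p}^{\Sigma_{0}}\to\S{\p}$ via \eqref{eq: full orthogonal character} lets one transport the already-proven Lemma~\ref{lemma: Jac packet} instead of redoing the endoscopic computation — so an alternative, shorter route is: restrict to $G(F)$, invoke $(\Jac_{x}\r^{\Sigma_{0}})|_{G} = \bar{\Jac}_{x}\r$ and Lemma~\ref{lemma: Jac packet}, and then promote the equality of $G_{-}$-representations to an equality of $G_{-}^{\Sigma_{0}}$-representations by checking compatibility with the $\x_{0}$-twist using \eqref{eq: full orthogonal character}. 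I would present whichever of these is cleaner, most likely the restriction argument, falling back on the direct endoscopic computation only where the restriction does not determine the $\Sigma_{0}$-extension.
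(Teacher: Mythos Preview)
Your proposal is essentially the paper's approach: restrict to $G(F)$, invoke Lemma~\ref{lemma: Jac packet} to reduce $\Jac_{x}\r^{\Sigma_{0}}(\p,\e)$ to the binary choice $\{\r^{\Sigma_{0}}(\p_{-},\e),\ \r^{\Sigma_{0}}(\p_{-},\e\e_{0})\}$ (this ambiguity being nontrivial exactly when $\e_{0}\neq 1$, i.e.\ $\S{\p}^{\Sigma_{0}}\neq\S{\p}$), and then resolve it by one application of the twisted identity \eqref{eq: character relation full orthogonal group} for a single $s^{*}\in\S{\p}^{\Sigma_{0}}\setminus\S{\p}$ together with \eqref{eq: compatible with endoscopic transfer full orthogonal group} and linear independence of twisted characters.

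Two clarifications are in order. First, the ``promote via the $\x_{0}$-twist'' step you suggest does not by itself resolve the ambiguity: relation \eqref{eq: full orthogonal character} is symmetric under $\e\leftrightarrow\e\e_{0}$, so it is consistent with either possibility for $\Jac_{x}\r^{\Sigma_{0}}(\p,\e)$. The twisted endoscopic input is therefore essential, not merely a fallback. Second, your displayed identity is indexed over $\D{\S{\p}^{\Sigma_{0}}}$, but \eqref{eq: character relation full orthogonal group} sums over $\bar{\e}\in\D{\S{\p}}$ (one $\r^{\Sigma_{0}}$ per $\Sigma_{0}$-orbit, with a chosen lift $\e$) and is only available for $s\notin\S{\p}$; for $s\in\S{\p}$ one has \eqref{eq: character relation}, which involves ordinary characters of $G$, not twisted ones. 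So you cannot run Fourier inversion over all of $\S{\p}^{\Sigma_{0}}$ with twisted test functions as written. The paper avoids this by using the restriction argument first and then a single $s^{*}$, which suffices once there are only two candidates. Finally, the paper treats the boundary case $n=d_{\rho}$ (forcing $Jord(\p)=\{(\rho,2)\}$) separately by hand, which is precisely the ``$G_{-}=1$'' bookkeeping you anticipate.
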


\begin{proof}
Let $G = G(n)$ be special even orthogonal. If $n = d_{\rho}$, then it suffices to assume $Jord(\p) = \{(\rho, 2)\}$. In this case, we necessarily have $\e = 1$ and
\[
[(\Jac_{1/2} \r^{\Sigma_{0}}(\p, \e))|_{G}] = [\Jac_{1/2} (\r^{\Sigma_{0}}(\p, \e) |_{G})] =  \bar{\Jac}_{1/2} (\r(\p, \bar{\e})) = 1.
\]
So now we can assume $n \neq d_{\rho}$. We claim $\Jac_{x} \r^{\Sigma_{0}}(\p, \e) = 0$ unless $\e \in \D{\S{\p_{-}}^{\Sigma_{0}}}$, in which case,
\[
\Jac_{x} \r^{\Sigma_{0}}(\p, \e) = \r^{\Sigma_{0}}(\p_{-}, \e) \text{ or } \r^{\Sigma_{0}}(\p_{-}, \e \e_{0}). 
\]
Suppose $\S{\p}^{\Sigma_{0}} \neq \S{\p}$, then 
\[
[(\Jac_{x} \r^{\Sigma_{0}}(\p, \e))|_{G}] = [\Jac_{x} (\r^{\Sigma_{0}}(\p, \e) |_{G})] =  \bar{\Jac}_{x} \r(\p, \bar{\e}) = 0 \text{ or } \r(\p_{-} ,\bar{\e}),
\]
and it is nonzero only when $\e \in \D{\S{\p_{-}}^{\Sigma_{0}}}$. 
Suppose $\S{\p}^{\Sigma_{0}} = \S{\p}$, then
\[
[(\Jac_{x} \r^{\Sigma_{0}}(\p, \e))|_{G}] = [\Jac_{x} (\r^{\Sigma_{0}}(\p, \e) |_{G})] =  2\bar{\Jac}_{x} (\r(\p, \bar{\e})) = 0 \text{ or } 2\r(\p_{-} ,\bar{\e}),
\]
and it is again nonzero only when $\e \in \D{\S{\p_{-}}^{\Sigma_{0}}}$. So the claim is clear and it also suffices to show the lemma when $\S{\p}^{\Sigma_{0}} \neq \S{\p}$, i.e., $\e_{0} \neq 1$. Let us choose $s^{*} \in \S{\p}^{\Sigma_{0}}$ such that $\e_{0}(s^{*}) = -1$. Then $s^{*} \notin \S{\p}$. Suppose $(H, \p_{H}) \rightarrow (\p, s^{*})$, then we have from \eqref{eq: character relation full orthogonal group}

\begin{align}
\label{eq: Jac packet full orthogonal group}
f^{H}(\p_{H}) = \sum_{[\r] \in \cPkt{\p}} <s^{*}, \r^{\Sigma_{0}}> f_{G}(\r^{\Sigma_{0}}) = \sum_{\bar{\e} \in \D{\S{\p}}} \e(s^{*}) f_{G}(\r^{\Sigma_{0}}(\p, \e)), 
\end{align}
for $f \in C^{\infty}_{c}(G(F) \rtimes \theta_{0})$. In the notation of Section~\ref{sec: endoscopy}, we can write 
\[
H = G_{I} \times G_{II} \text{ and } \p_{H} = \p_{I} \times \p_{II}
\] 
Without loss of generality we can assume $(\rho \otimes \eta_{II}, 2x+1) \notin Jord(\p_{II})$ and $(\rho \otimes \eta_{I}, 2x+1) \in Jord(\p_{I})$. Then by \eqref{eq: Jac vanishing}, 
\[
\bar{\Jac}^{II}_{x}\cPkt{\p_{II}} = 0.
\] 
We let $H_{-} = H_{I -}$ (see Section~\ref{sec: compatibility of Jacquet modules with endoscopic transfer}), and define $\p_{H_{-}} = \p_{I-} \times \p_{II}$, where 
\[
Jord(\p_{I-}) = Jord(\p_{I}) \cup \{(\rho \otimes \eta_{I}, 2x-1)\} \backslash \{(\rho \otimes \eta_{I}, 2x+1)\}.
\] 
So after applying \eqref{eq: compatible with endoscopic transfer full orthogonal group} to \eqref{eq: Jac packet full orthogonal group}, we get 

\begin{align}
\label{eq: Jac character relation full orthogonal group}
f^{H_{-}}(\p_{H_{-}}) 
= \sum_{\bar{\e} \in \D{\S{\p}}} \e(s^{*}) f_{G_{-}}(\Jac_{x}\r^{\Sigma_{0}}(\p, \e)) = \sum_{\bar{\e} \in \D{\S{\p}}_{-}} \e(s^{*}_{-}) f_{G_{-}}(\r^{\Sigma_{0}}(\p_{-}, \e')), 
\end{align}
for $f \in  C^{\infty}_{c}(G_{-}(F) \rtimes \theta_{0})$, where $\e' = \e$ or $\e' = \e \e_{0}$. Since $(H_{-}, \p_{H_{-}}) \rightarrow (\p_{-}, s^{*}_{-})$, where $s^{*}_{-}$ is the image of $s^{*}$ under the projection $\S{\p}^{\Sigma_{0}} \rightarrow \S{\p_{-}}^{\Sigma_{0}}$ and $s^{*}_{-} \notin \S{\p_{-}}$, we also have 
\[
f^{H_{-}}(\p_{H_{-}}) 
= \sum_{\bar{\e} \in \D{\S{\p}}_{-}} \e(s^{*}_{-}) f_{G_{-}}(\r^{\Sigma_{0}}(\p_{-}, \e)),
\]
for $f \in C^{\infty}_{c}(G_{-}(F) \rtimes \theta_{0})$. Combining this identity with \eqref{eq: Jac character relation full orthogonal group}, we get

\begin{align}
\label{eq: Jac packet full orthogonal group}
\sum_{\bar{\e} \in \D{\S{\p}}_{-}} \e(s^{*}_{-}) f_{G_{-}}(\r^{\Sigma_{0}}(\p_{-}, \e')) = \sum_{\bar{\e} \in \D{\S{\p}}_{-}} \e(s^{*}_{-}) f_{G_{-}}(\r^{\Sigma_{0}}(\p_{-}, \e)). 
\end{align}
By the linear independence of twisted characters of irreducible smooth representations of $G(F)$, we have 
\[
\e(s^{*}_{-}) f_{G_{-}}(\r^{\Sigma_{0}}(\p_{-}, \e')) =  \e(s^{*}_{-}) f_{G_{-}}(\r^{\Sigma_{0}}(\p_{-}, \e))
\] 
and hence 
\[
f_{G_{-}}(\r^{\Sigma_{0}}(\p_{-}, \e')) = f_{G_{-}}(\r^{\Sigma_{0}}(\p_{-}, \e)).
\]
This implies $\r^{\Sigma_{0}}(\p_{-}, \e') = \r^{\Sigma_{0}}(\p_{-}, \e)$, so $\e = \e'$.


\end{proof}

As a consequence of this lemma, we can extend Proposition~\ref{prop: parabolic reduction}. We will follow the same setup in the beginning of Section~\ref{sec: cuspidal support of discrete series}.

\begin{proposition}
\label{prop: parabolic reduction full orthogonal group}
Suppose $\p \in \cPdt{G}$, and $\e \in \D{\S{\p}^{\Sigma_{0}}}$.

\begin{enumerate}

\item If $\e(\rho, a)\e(\rho, a_{-}) = -1$ and $a_{-} < a - 2$, then 
\begin{align}
\label{eq: parabolic reduction 1 full orthogonal group}
\r^{\Sigma_{0}}(\p, \e) \hookrightarrow <(a-1)/2, \cdots, (a_{-} + 3)/2> \rtimes \r^{\Sigma_{0}}(\p', \e')
\end{align}
as the unique irreducible subrepresentation, where 
\[
Jord(\p') = Jord(\p) \cup \{(\rho, a_{-} + 2)\} \backslash \{(\rho, a)\},
\]
and 
\[
\e'(\cdot)= \e(\cdot) \text{ over } Jord(\p) \backslash \{(\rho, a)\}, \quad \quad \e'(\rho, a_{-}+2) = \e(\rho, a).
\]

\item If $\e(\rho, a)\e(\rho, a_{-}) = 1$, then 
\begin{align}
\label{eq: parabolic reduction 2 full orthogonal group}
\r^{\Sigma_{0}}(\p, \e) \hookrightarrow <(a-1)/2, \cdots, -(a_{-} -1)/2> \rtimes \r^{\Sigma_{0}}(\p', \e'),
\end{align}
where 
\[
Jord(\p') = Jord(\p) \backslash \{(\rho, a), (\rho, a_{-})\},
\]
and $\e'(\cdot)$ is the restriction of $\e(\cdot)$. In particular, suppose $\e_{1} \in \D{\S{\p}^{\Sigma_{0}}}$ satisfying $\e_{1}(\cdot) = \e(\cdot)$ over $Jord(\p')$ and
\[
\e_{1}(\rho, a) = -\e(\rho, a), \quad \quad \e_{1}(\rho, a_{-}) = -\e(\rho, a_{-}).
\]
Then the induced representation in \eqref{eq: parabolic reduction 2 full orthogonal group} has two irreducible subrepresentations, namely
\[
\r^{\Sigma_{0}}(\p, \e) \+ \r^{\Sigma_{0}}(\p, \e_{1}).
\]

\item If $\e(\rho, a_{min}) = 1$ and $a_{min}$ is even, then 
\begin{align}
\label{eq: parabolic reduction 3 full orthogonal group}
\r^{\Sigma_{0}}(\p, \e) \hookrightarrow <(a_{min}-1)/2, \cdots, 1/2> \rtimes \r^{\Sigma_{0}}(\p', \e')
\end{align}
as the unique irreducible subrepresentation, where 
\[
Jord(\p') = Jord(\p) \backslash \{(\rho, a_{min})\},
\] 
and $\e'(\cdot)$ is the restriction of $\e(\cdot)$.

\end{enumerate}

\end{proposition}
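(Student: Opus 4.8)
The plan is to deduce Proposition~\ref{prop: parabolic reduction full orthogonal group} from Proposition~\ref{prop: parabolic reduction} by restriction/induction along $G(F) \subseteq G^{\Sigma_{0}}(F)$, exactly as Corollary~\ref{cor: cuspidal reducibility} was deduced from Proposition~\ref{prop: cuspidal reducibility}. The only genuinely new input needed is Lemma~\ref{lemma: Jac packet full orthogonal group}, which is the $G^{\Sigma_{0}}$-analogue of Lemma~\ref{lemma: Jac packet}; once we have it, the arguments of Proposition~\ref{prop: parabolic reduction} can be replayed verbatim at the level of $G^{\Sigma_{0}}(F)$-representations, using the Frobenius reciprocity and the formula \eqref{eq: Jac semiproduct full orthogonal group} for $G^{\Sigma_{0}}$ in place of \eqref{eq: Jac semiproduct}, and using \eqref{eq: tempered L-packet} and \eqref{eq: elliptic representaion} (the ``resp.''\ versions for $G^{\Sigma_{0}}$).

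More concretely, for part (1) I would first compute $\Jac_{(a-1)/2, \cdots, (a_{-}+3)/2} \r^{\Sigma_{0}}(\p, \e)$; by repeated application of Lemma~\ref{lemma: Jac packet full orthogonal group} this equals $\r^{\Sigma_{0}}(\p', \e')$, so Corollary~\ref{cor: existence of inclusion} (valid for $G^{\Sigma_{0}}$ by the remarks at the end of Section~\ref{sec: Jacquet modules}) gives an embedding into $\rho||^{(a-1)/2} \times \cdots \times \rho||^{(a_{-}+3)/2} \rtimes \r^{\Sigma_{0}}(\p', \e')$. Then Lemma~\ref{lemma: B} and Lemma~\ref{lemma: subrep GL(n)} (facts about $GL$, unchanged) force the $GL$-factor to be the segment representation $<(a-1)/2, \cdots, (a_{-}+3)/2>$, where the needed vanishing $\bar{\Jac}_{x}\cPkt{\p} = 0$ for $(\rho, 2x+1) \notin Jord(\p)$ translates into $\Jac_{x}\r^{\Sigma_{0}}(\p,\e) = 0$ via Lemma~\ref{lemma: Jac packet full orthogonal group}. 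Uniqueness of the irreducible subrepresentation follows by computing the Jacquet module of the induced representation under $\Jac_{(a-1)/2, \cdots, (a_{-}+3)/2}$ using \eqref{eq: Jac semiproduct full orthogonal group}: the $\Jac^{op}$-term must be trivial since $-(a_{-}+3)/2$ is not in the segment, and the intermediate terms vanish by \eqref{eq: Jac vanishing} transported to $G^{\Sigma_{0}}$. Part (3) is identical with $(a_{min}-1)/2, \cdots, 1/2$ in place of the segment.

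Part (2) requires slightly more care, following the proof of Proposition~\ref{prop: parabolic reduction}(2): compute $\Jac_{(a-1)/2,\cdots,(a_{-}+1)/2}\r^{\Sigma_{0}}(\p,\e) = \r^{\Sigma_{0}}(\p_{-}, \e_{-})$ by Lemma~\ref{lemma: Jac packet full orthogonal group}, then use $\Pkt{\p_{-}}^{\Sigma_{0}} = St(\rho, a_{-}) \rtimes \Pkt{\p'}^{\Sigma_{0}}$ from \eqref{eq: tempered L-packet} to embed into $<(a-1)/2,\cdots,(a_{-}+1)/2> \times <(a_{-}-1)/2,\cdots,-(a_{-}-1)/2> \rtimes \r^{\Sigma_{0}}(\p',\e')$; Lemma~\ref{lemma: B}, Theorem~\ref{thm: irreducibility for induced representation of GL(n)}, and Frobenius reciprocity (the obstruction $\Jac^{\theta}_{(a_{-}-1)/2,\cdots,-(a_{-}-1)/2}\r_{\p} = 0$ is a $GL$-statement and unchanged) identify the $GL$-factor with the full segment $<(a-1)/2,\cdots,-(a_{-}-1)/2>$. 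The key difference from Proposition~\ref{prop: parabolic reduction}(2) is that now we always have two subrepresentations $\r^{\Sigma_{0}}(\p,\e) \oplus \r^{\Sigma_{0}}(\p,\e_{1})$ — there is no ``$\bar\e_{1} = \bar\e$'' degenerate case — because over $\S{\p}^{\Sigma_{0}}$ the characters $\e$ and $\e_{1}$ are always distinct (they differ in exactly two coordinates), and correspondingly $\r^{\Sigma_{0}}(\p_{-}, \e_{-}) \oplus \r^{\Sigma_{0}}(\p_{-}, \e_{1,-}) = St(\rho, a_{-}) \rtimes \r^{\Sigma_{0}}(\p',\e')$ by the $G^{\Sigma_{0}}$-version of \eqref{eq: elliptic representaion}. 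The main obstacle is getting Lemma~\ref{lemma: Jac packet full orthogonal group} cleanly, but that is proved separately above; granting it, the present proposition is essentially a transcription, so I would simply write ``the proof is the same as that of Proposition~\ref{prop: parabolic reduction}, using Lemma~\ref{lemma: Jac packet full orthogonal group}, \eqref{eq: Jac semiproduct full orthogonal group}, and the $G^{\Sigma_{0}}$-versions of \eqref{eq: tempered L-packet} and \eqref{eq: elliptic representaion} in place of their $G$-counterparts, together with the observation that $\e \neq \e_{1}$ always holds in $\D{\S{\p}^{\Sigma_{0}}}$.''
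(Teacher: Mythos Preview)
Your proposal is correct and matches the paper's approach exactly: the paper itself simply writes ``The proof of this proposition is almost the same as Proposition~\ref{prop: parabolic reduction}, so we omit it here,'' and your detailed transcription (substituting Lemma~\ref{lemma: Jac packet full orthogonal group}, \eqref{eq: Jac semiproduct full orthogonal group}, and the $G^{\Sigma_{0}}$-versions of \eqref{eq: tempered L-packet}, \eqref{eq: elliptic representaion}, together with the observation that $\e \neq \e_{1}$ always holds in $\D{\S{\p}^{\Sigma_{0}}}$ so part (2) has no degenerate case) is precisely what that omission means. One minor remark: your opening sentence frames this as ``restriction/induction along $G \subseteq G^{\Sigma_{0}}$ as in Corollary~\ref{cor: cuspidal reducibility},'' but what you actually do (and what the paper intends) is a direct replay of the proof at the $G^{\Sigma_{0}}$-level, not a deduction from the $G$-statement via restriction --- the rest of your write-up gets this right.
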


The proof of this proposition is almost the same as Proposition~\ref{prop: parabolic reduction}, so we omit it here.


\section{Classification of discrete series}
\label{sec: classification of discrete series}

Now we want to characterize the irreducible discrete series representations of $G^{\Sigma_{0}}(F)$ in terms of their cuspidal supports. For any irreducible discrete series representation $\r^{\Sigma_{0}}(\p, \e)$ of $G^{\Sigma_{0}}(F)$, we can associate a triple $(Jord, \r_{cusp}^{\Sigma_{0}}, \Delta)$. Here $Jord = Jord(\p)$ and $\r_{cusp}^{\Sigma_{0}}$ is a supercuspidal representation of $G_{-}^{\Sigma_{0}}(F)$ which is part of the cuspidal support of $\r^{\Sigma_{0}}$. Let us assume $\r_{cusp}^{\Sigma_{0}} = \r^{\Sigma_{0}}(\p_{cusp}, \e_{cusp})$. Finally, $\Delta$ is a $\Two$-valued function defined on a subset of
\[
Jord \sqcup (Jord \times Jord),
\]
i.e., $\Delta$ is not defined on $(\rho, a) \in Jord$ with $a$ being odd and $Jord_{\rho}(\p_{cusp}) \neq \emptyset$; $\Delta$ is not defined on pairs $(\rho, a), (\rho', a') \in Jord$ with $\rho \neq \rho'$. Moreover, we require $\Delta$ to satisfy the following properties:
\begin{enumerate}

\item $\Delta(\rho, a) \Delta(\rho, a')^{-1} = \Delta(\rho, a; \rho, a')$,

\item $\Delta(\rho, a; \rho, a') \Delta(\rho, a'; \rho, a'') = \Delta(\rho, a; \rho, a'')$,

\item $\Delta(\rho, a; \rho, a') = \Delta(\rho, a'; \rho, a)$.

\end{enumerate}
In our case, we can define 
\[
\Delta(\rho, a) = \e(\rho, a)
\]
for $(\rho, a) \in Jord$ with $a$ being even or $Jord_{\rho}(\p_{cusp}) = \emptyset$; and 
\[
\Delta(\rho, a; \rho', a') = \e(\rho, a) \e(\rho', a')^{-1}
\] 
for $(\rho, a), (\rho', a') \in Jord$ with $\rho = \rho'$; otherwise $\Delta$ is not defined. 

In view of Theorem~\ref{thm: supercuspidal parametrization} and Proposition~\ref{prop: parabolic reduction full orthogonal group}, $(\p_{cusp}, \e_{cusp})$ can be constructed from $(\p, \e)$ as follows. First we take a maximal sequence of parameters $\p_{i}$ for $1 \leqslant i \leqslant k$ such that $\p_{1} = \p$ and $\p_{i+1}$ is obtained from $\p_{i}$ by removing $(\rho, a)$ and $(\rho, a_{-})$, where $a_{-}$ is the biggest positive integer smaller than $a$ in $Jord_{\rho}(\p_{i})$ and $\e(\rho ,a) = \e(\rho, a_{-})$. Secondly, we remove all $(\rho, a) \in Jord(\p_{k})$, where $a = min \, Jord_{\rho}(\p_{k})$ is even and $\e(\rho, a) = 1$. We denote the resulting parameter by $\p_{k+1}$ and index $Jord_{\rho}(\p_{k+1}) = \{a_{j}\}$ for $j \geqslant 1$ such that $a_{j+1} > a_{j}$. Then we can identify $Jord(\p_{k+1})$ with $Jord(\p_{cusp})$ by sending $(\rho, a_{j})$ to $(\rho, 2j - 1)$ if $a_{j}$ is odd, or $(\rho, 2j)$ if $a_{j}$ is even. Let $\e_{cusp}$ be the restriction of $\e$.


In general, we can consider all triples $(Jord, \r_{cusp}^{\Sigma_{0}}, \Delta)$ such that $Jord = Jord(\p)$ for some $\p \in \cPdt{G}$, $\r_{cusp}^{\Sigma_{0}}$ is some supercuspidal representation of $G_{-}^{\Sigma_{0}}(F)$ which is of the same type as $G^{\Sigma_{0}}(F)$, and $\Delta$ satisfies the property that we have mentioned above. Let $Jord_{\rho} = Jord_{\rho}(\p)$. Next we will introduce the concept of {\bf admissibility} for such pairs. Let 
\[
Jord^{+}_{\rho}(\p_{cusp}) = \begin{cases}
                                              Jord_{\rho}(\p_{cusp}) \cup \{0\}, & \text{ if $a_{min} = min \, Jord_{\rho}$ is even and $\Delta(\rho, a_{min}) = 1$}, \\
                                              Jord_{\rho}(\p_{cusp}), & \text{ otherwise }.
                                              \end{cases}
\]
Then $(Jord, \r_{cusp}^{\Sigma_{0}}, \Delta)$ is called an {\bf admissible triple of alternated type} if 
\begin{enumerate}

\item $\Delta(\rho, a; \rho, a_{-}) = -1$, if $a_{-}$ is the biggest positive integer smaller than $a$ in $Jord_{\rho}$.

\item $|Jord^{+}_{\rho}(\p_{cusp})| = |Jord_{\rho}|$.

\end{enumerate}
We say $(Jord', \r_{cusp}^{\Sigma_{0}}, \Delta')$ is subordinated to $(Jord, \r_{cusp}^{\Sigma_{0}}, \Delta)$ if $Jord_{\rho}' = Jord_{\rho} \backslash \{a, a_{-}\}$, where $\Delta(\rho, a; \rho, a_{-}) = 1$, and $\Delta'$ is the restriction of $\Delta$. Then $(Jord, \r_{cusp}^{\Sigma_{0}}, \Delta)$ is called an {\bf admissible triple} if there exists a sequence of triples $(Jord_{i}, \r_{cusp}^{\Sigma_{0}}, \Delta_{i})$ for $1 \leqslant i \leqslant k$ such that 
\begin{enumerate}

\item $(Jord, \r_{cusp}^{\Sigma_{0}}, \Delta) = (Jord_{1}, \r_{cusp}^{\Sigma_{0}}, \Delta_{1})$,

\item $(Jord_{i+1}, \r_{cusp}^{\Sigma_{0}}, \Delta_{i+1})$ is subordinated to $(Jord_{i}, \r_{cusp}^{\Sigma_{0}}, \Delta_{i})$ for $1 \leqslant i \leqslant k-1$,

\item $(Jord_{k}, \r_{cusp}^{\Sigma_{0}}, \Delta_{k})$ is an admissible triple of alternated type.

\end{enumerate}
Comparing this definition with our construction of $(\p_{cusp}, \e_{cusp})$ from $(\p, \e)$, it is easy to see that the triples we associate with irreducible discrete series representations are admissible. On the other hand, from any admissible triple $(Jord, \r_{cusp}^{\Sigma_{0}}, \Delta)$ with $Jord = Jord(\p)$ for some $\p \in \cPdt{G}$ and $\r_{cusp}^{\Sigma_{0}} = \r^{\Sigma_{0}}(\p_{cusp}, \e_{cusp})$, we can always extend $\e_{cusp}(\cdot)$ in a unique way to $\e(\cdot) \in \D{\S{\p}^{\Sigma_{0}}}$ such that the triple is associated with $\r^{\Sigma_{0}}(\p, \e)$. Therefore we have shown the following theorem.

\begin{theorem}[M{\oe}glin-Tadi{\'c}]
\label{thm: classification of discrete series}
There is a one to one correspondence between irreducible discrete series representations of $G^{\Sigma_{0}}(F)$ and admissible triples $(Jord, \r_{cusp}^{\Sigma_{0}}, \Delta)$.
\end{theorem}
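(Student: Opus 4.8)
The plan is to show that the map sending an irreducible discrete series $\r^{\Sigma_0}(\p,\e)$ to the triple $(Jord(\p),\r_{cusp}^{\Sigma_0},\Delta)$ is a well-defined injection whose image is exactly the set of admissible triples, and then exhibit the inverse. By Theorem~\ref{thm: L-packet} (and its $\Sigma_0$-variant, Theorem~\ref{thm: L-packet full orthogonal group}), every irreducible discrete series of $G^{\Sigma_0}(F)$ is of the form $\r^{\Sigma_0}(\p,\e)$ for a unique $\p\in\cPdt{G}$ and $\e\in\D{\S{\p}^{\Sigma_0}}$. So the theorem reduces to a bijection between such pairs $(\p,\e)$ and admissible triples. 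First I would verify well-definedness: given $(\p,\e)$, the component $Jord=Jord(\p)$ is intrinsic, and applying Proposition~\ref{prop: parabolic reduction full orthogonal group} repeatedly (decreasing Jordan blocks via cases (1), (2), (3)) produces a chain down to a supercuspidal $\r^{\Sigma_0}(\p_{cusp},\e_{cusp})$, which by Theorem~\ref{thm: B-Z} is the unique element of the cuspidal support of $\r^{\Sigma_0}(\p,\e)$ lying in a group of the same type; the function $\Delta$ is then read off from $\e$ by the explicit formulas $\Delta(\rho,a)=\e(\rho,a)$ and $\Delta(\rho,a;\rho',a')=\e(\rho,a)\e(\rho',a')^{-1}$, and the three cocycle-type identities for $\Delta$ are immediate.

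The substantive direction is that the triple produced this way is admissible, and conversely that every admissible triple arises. For admissibility: by Theorem~\ref{thm: supercuspidal parametrization}, the supercuspidal $\r^{\Sigma_0}(\p_{cusp},\e_{cusp})$ forces $Jord_\rho(\p_{cusp})$ to be a ``staircase'' (condition (1) there) with the alternating sign condition (2) and the $(\rho,2)$-condition (3); translating these through the definition of $Jord^+_\rho(\p_{cusp})$ gives exactly conditions (1) and (2) of alternated type at the bottom of the chain, and the mixed-type structure records precisely which steps of the chain used case (2) of Proposition~\ref{prop: parabolic reduction full orthogonal group} (where $\e(\rho,a)\e(\rho,a_-)=1$ and two blocks are removed simultaneously). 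The key point is that the inductive reduction in Proposition~\ref{prop: parabolic reduction full orthogonal group} is reversible step-by-step: from $(\p',\e')$ one recovers $(\p,\e)$ uniquely because the embedding \eqref{eq: parabolic reduction 1 full orthogonal group} (resp. the pair in \eqref{eq: parabolic reduction 2 full orthogonal group}) identifies $\r^{\Sigma_0}(\p,\e)$ as the unique (resp. one of exactly two, distinguished by $\Delta$ on the two reinserted blocks) irreducible subrepresentation. For the converse, starting from an admissible triple I would run the chain $(Jord_k,\dots,Jord_1)$ backwards from the alternated-type bottom: the bottom triple is admissible of alternated type, so by Theorem~\ref{thm: supercuspidal parametrization} it names a genuine supercuspidal $\r^{\Sigma_0}(\p_{cusp},\e_{cusp})$ with $\e_{cusp}$ determined by $\Delta$; then each subordination step, read in reverse, extends the parameter by one or two Jordan blocks and extends $\e_{cusp}$ uniquely, using Proposition~\ref{prop: parabolic reduction full orthogonal group} to guarantee the extended representation is again discrete series with the prescribed $\Delta$.

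The main obstacle I anticipate is bookkeeping the two-valued ambiguity in case (2) of Proposition~\ref{prop: parabolic reduction full orthogonal group}: when $\e(\rho,a)\e(\rho,a_-)=1$, the induced module \eqref{eq: parabolic reduction 2 full orthogonal group} has two irreducible subrepresentations $\r^{\Sigma_0}(\p,\e)$ and $\r^{\Sigma_0}(\p,\e_1)$, so the chain down to the cuspidal support is not literally canonical as a sequence of representations — only the associated combinatorial data $(Jord_i,\Delta_i)$ is canonical. I would resolve this by arguing that $\Delta$ restricted to the reinserted pair $\{(\rho,a),(\rho,a_-)\}$ is exactly the datum needed to pin down which of the two extensions $\e$ or $\e_1$ one is in (since $\e(\rho,a)=-\e_1(\rho,a)$), so the combinatorial triple, not the chain of representations, carries all the information, and the correspondence is genuinely bijective. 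A secondary technical point is the degenerate cases when $G_-=1$ or $n=d_\rho$ in the even orthogonal setting (the $1^{\theta_0}\ncong 1$ convention and the doubling phenomena in the Jacquet module formulas of Section~\ref{sec: Jacquet modules}), which need to be checked separately but follow the same pattern as in the proof of Lemma~\ref{lemma: Jac packet full orthogonal group}. Since the heavy lifting has already been done in Theorem~\ref{thm: supercuspidal parametrization} and Proposition~\ref{prop: parabolic reduction full orthogonal group}, the remaining argument is essentially a formal matching of two recursively-defined sets, which is why the authors can say the proof follows from those results.
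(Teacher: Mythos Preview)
Your proposal is correct and follows essentially the same route as the paper. The paper's own proof is the single paragraph immediately preceding the theorem: the triple attached to $\r^{\Sigma_0}(\p,\e)$ is admissible by Theorem~\ref{thm: supercuspidal parametrization} and Proposition~\ref{prop: parabolic reduction full orthogonal group}, and conversely any admissible triple determines a unique extension of $\e_{cusp}$ to $\e\in\D{\S{\p}^{\Sigma_0}}$; your proposal is a faithful expansion of exactly this argument, with the chain of subordinations made explicit.

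One small imprecision worth tightening: in your discussion of the case~(2) ambiguity you say ``$\Delta$ restricted to the reinserted pair $\{(\rho,a),(\rho,a_-)\}$'' distinguishes $\e$ from $\e_1$. Strictly speaking $\Delta(\rho,a;\rho,a_-)=1$ for both, so the pair value alone does not separate them; what does is either $\Delta(\rho,a)$ when it is defined, or $\Delta(\rho,a;\rho,a'')$ for some $a''\in Jord_\rho\setminus\{a,a_-\}$ when it is not (and such an $a''$ exists precisely because $Jord_\rho(\p_{cusp})\neq\emptyset$ forces the alternated bottom to retain at least one block of type $\rho$). Your parenthetical ``since $\e(\rho,a)=-\e_1(\rho,a)$'' shows you have the right mechanism in mind; just be careful that it is the full $\Delta$ together with $\e_{cusp}$, not its restriction to the two new blocks alone, that pins down $\e$.
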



One can also see how to construct irreducible discrete series representations from admissible triples according to Proposition~\ref{prop: parabolic reduction full orthogonal group}. If $(Jord, \r_{cusp}^{\Sigma_{0}}, \Delta)$ is an admissible triple of alternated type, let 
\[
l_{\rho}:  Jord_{\rho} \longrightarrow Jord^{+}_{\rho}(\p_{cusp})
\]
be the monotone bijection. Then the corresponding irreducible discrete series representation $\r^{\Sigma_{0}}$ can be viewed as the unique irreducible subrepresentation of 
\[
\Big(\prod_{\rho} (\prod_{a \in Jord_{\rho}} <(a -1)/2, \cdots, (l_{\rho}(a) + 1)/2>)\Big) \rtimes \r_{cusp}^{\Sigma_{0}},
\]
where the product over $Jord_{\rho}$ is in the increasing order. If $(Jord, \r_{cusp}^{\Sigma_{0}}, \Delta)$ is an admissible triple, we can assume $(Jord', \r_{cusp}^{\Sigma_{0}}, \Delta')$ is subordinated to $(Jord, \r_{cusp}^{\Sigma_{0}}, \Delta)$, where $Jord'_{\rho} = Jord_{\rho} \backslash \{a, a_{-}\}$. Suppose $\r'^{\Sigma_{0}}$ corresponds to $(Jord', \r_{cusp}^{\Sigma_{0}}, \Delta')$, then 
\[
<(a-1)/2, \cdots, -(a_{-}-1)/2> \rtimes \r'^{\Sigma_{0}}
\]
has two irreducible subrepresentations, and one will correspond to $\r^{\Sigma_{0}}$ while the other corresponds to the other extension of $\Delta'$ to $Jord$.


\section{Remarks on the original approach of M{\oe}glin and Tadi{\'c}}
\label{sec: original approach}

The original approach of M{\oe}glin and Tadi{\'c} to Theorem~\ref{thm: classification of discrete series} does not depend on Arthur's theory, i.e., Theorem~\ref{thm: L-packet}, ~\ref{thm: L-packet full orthogonal group}, ~\ref{thm: character relation} and ~\ref{thm: character relation full orthogonal group}. So the first immediate question becomes how to associate a set of Jordan blocks to every irreducible discrete series representation of $G^{\Sigma_{0}}(F)$ without assuming Arthur's theory. The answer can be motivated by the following result due to Arthur. It follows from the computation of the $R$-group defined by parameters and the fact that they are isomorphic to the representation theoretic $R$-group (see \cite[Section 2.4 and Section 6.6]{Arthur:2013}).

\begin{theorem}
\label{thm: tempered reducibility}
Suppose $\r^{\Sigma_{0}}$ is an irreducible discrete series representation of $G^{\Sigma_{0}}(F)$, and $\r^{\Sigma_{0}} \in \Pkt{\p}^{\Sigma_{0}}$ for some $\p \in \cPdt{G}$. 
Then for any self-dual irreducible supercuspidal representation $\rho$ of $GL(d_{\rho}, F)$ and positive integer $a$, $(\rho, a) \in Jord(\p)$ if and only if $(\rho, a)$ is of the same type as $\D{G}$, and
\begin{align}
\label{eq: tempered reducibility}
St(\rho, a) \rtimes \r^{\Sigma_{0}}
\end{align}
is irreducible.

\end{theorem}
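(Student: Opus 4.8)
The strategy is to reduce the statement about $G^{\Sigma_0}(F)$ to the corresponding statement for $G(F)$, which in turn is governed by the endoscopic character identity \eqref{eq: character relation} together with the reducibility results already established. First I would record the key compatibility: for an irreducible discrete series $\r^{\Sigma_0}$ of $G^{\Sigma_0}(F)$ with restriction $\r$ to $G(F)$, the induced representation $St(\rho,a)\rtimes\r^{\Sigma_0}$ is irreducible if and only if $St(\rho,a)\rtimes\r$ is irreducible \emph{and} $(St(\rho,a)\rtimes\r)^{\theta_0}\cong St(\rho,a)\rtimes\r$, exactly in the style of the two bullet points used in the proof of Corollary~\ref{cor: cuspidal reducibility}. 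Thus the problem splits into a reducibility question for $G(F)$ and a $\theta_0$-invariance question.

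For the reducibility statement over $G(F)$, write $[\r]=\r(\p,\bar\e)$ for $\p\in\cPdt{G}$. I would argue that $St(\rho,a)\rtimes\r$ is irreducible precisely when $(\rho,a)\notin Jord(\p)$. One direction: if $(\rho,a)\in Jord(\p)$, then one can remove it from $\p$, i.e.\ there is $\p_-\in\cPbd{G_-}$ with $Jord(\p_-)=Jord(\p)\setminus\{(\rho,a)\}$ (obtained as before by successive $\Jac_x$ for $x=(a-1)/2,\dots,-(a-1)/2$ using Lemma~\ref{lemma: Jac packet}), together with an inclusion $\r(\p,\bar\e)\hookrightarrow St(\rho,a)\rtimes\r(\p_-,\bar\e_-)$ via Corollary~\ref{cor: existence of inclusion}, and the same Jacquet-module bookkeeping as in the proof of Proposition~\ref{prop: parabolic reduction}(2) shows this induced module is not irreducible. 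Conversely, if $(\rho,a)\notin Jord(\p)$, then for every $x$ occurring in the segment of $St(\rho,a)$ one has $(\rho,2x+1)\notin Jord(\p)$ for at least the endpoint, so repeated application of \eqref{eq: Jac vanishing} and Lemma~\ref{lemma: switching} forces irreducibility — this is essentially the $R$-group computation of Arthur, where the reducibility of $St(\rho,a)\rtimes\r$ is detected by whether the parameter of $St(\rho,a)$ appears in $Jord(\p)$. The type condition ``$(\rho,a)$ of the same type as $\D{G}$'' enters because $Jord(\p)$ for $\p\in\cPdt{G}$ only contains Jordan blocks $(\rho,a)$ of the appropriate parity (the parity remark in Section~\ref{sec: parameters of supercuspidal representations}): if $(\rho,a)$ has the wrong type it can never lie in $Jord(\p)$, so the equivalence $(\rho,a)\in Jord(\p)\Leftrightarrow St(\rho,a)\rtimes\r$ reducible automatically incorporates the type constraint.

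For the $\theta_0$-invariance in the special even orthogonal case, I would show $(St(\rho,a)\rtimes\r)^{\theta_0}\cong St(\rho,a)\rtimes\r$ always holds when $(\rho,a)\in Jord(\p)$, by the same device used at the end of the proof of Corollary~\ref{cor: cuspidal reducibility}: enlarging $\p$ by $(\rho,a)$-type blocks forces $\S{\p}^{\Sigma_0}\neq\S{\p}$, hence the relevant constituents are $\theta_0$-fixed; and conversely when $(\rho,a)\notin Jord(\p)$ the induced representation is already irreducible so its $\theta_0$-invariance is irrelevant to the stated equivalence. Combining the two halves via the bullet-point dictionary yields: $St(\rho,a)\rtimes\r^{\Sigma_0}$ is irreducible $\iff$ $St(\rho,a)\rtimes\r$ is reducible (equivalently $(\rho,a)\in Jord(\p)$) \emph{and} $(\rho,a)$ is of the same type as $\D{G}$ — wait, I must be careful about the direction, since irreducibility of $St(\rho,a)\rtimes\r^{\Sigma_0}$ corresponds to reducibility of the $G(F)$-induction; the final statement of the theorem indeed reads ``irreducible'', so I would double-check the orientation by testing the supercuspidal case against Corollary~\ref{cor: cuspidal reducibility}.

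\textbf{Main obstacle.} The delicate point is the clean proof that $(\rho,a)\notin Jord(\p)$ forces $St(\rho,a)\rtimes\r$ irreducible (not merely that $\r(\p,\bar\e)$ embeds as the unique submodule): the Jacquet-module argument of Proposition~\ref{prop: parabolic reduction} shows uniqueness of the irreducible submodule, but genuine irreducibility of the whole induced module requires either invoking Arthur's identification of representation-theoretic $R$-groups with parameter $R$-groups, or a separate argument bounding the length. I expect to lean on the $R$-group theorem (cited in Section~\ref{sec: parameters of supercuspidal representations}) for precisely this step, and the bookkeeping to make the $\Sigma_0$-equivariant version go through without gaps in the $n=d_\rho$ boundary case.
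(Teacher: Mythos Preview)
The paper does not actually prove this theorem: it is stated in Section~\ref{sec: original approach} as ``a result due to Arthur'' and used only as motivation for M{\oe}glin--Tadi{\'c}'s definition of $Jord(\r^{\Sigma_0})$. So there is no ``paper's own proof'' to compare with, and your task was really to supply an argument from scratch.

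Your attempt has a fundamental orientation error. Take $G$ symplectic or odd special orthogonal, so $\Sigma_0$ is trivial and $G^{\Sigma_0}=G$. Then the theorem reads: $(\rho,a)\in Jord(\p)$ if and only if $(\rho,a)$ has the type of $\D{G}$ \emph{and} $St(\rho,a)\rtimes\r$ is \emph{irreducible}. You assert the opposite, namely that $St(\rho,a)\rtimes\r$ is irreducible precisely when $(\rho,a)\notin Jord(\p)$. A single example shows this is wrong: for $G=Sp(2)$, $\p=[3]$ (Steinberg parameter), and $(\rho,a)=(1,1)$ of orthogonal type but not in $Jord(\p)$, the induced parameter is $2[1]\oplus[3]$ with $\S{\p_+}\cong\Two$, so $1\rtimes\r$ has two constituents. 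Conversely $(1,3)\in Jord(\p)$ gives $\p_+=3[3]$ with $S_{\p_+}=SO(3)$ connected, hence $St(1,3)\rtimes\r$ is irreducible. This is the general mechanism: when $(\rho,a)\in Jord(\p)$ the isotypic block has odd multiplicity in $\p_+$, the orthogonal centralizer on that block is $O(2k+1)$ whose non-identity component is absorbed by the determinant condition or by $SO(2k+1)$ already containing $-I$, and the $R$-group is trivial; when $(\rho,a)\notin Jord(\p)$ the block has multiplicity $2$ and the $O(2)$ produces a nontrivial $R$-group.

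Your ``one direction'' argument also proves the wrong thing: you construct an embedding $\r(\p,\bar\e)\hookrightarrow St(\rho,a)\rtimes\r(\p_-,\bar\e_-)$ and conclude that the right-hand side is reducible. But the right-hand side is $St(\rho,a)\rtimes\r(\p_-,\cdot)$ with $(\rho,a)\notin Jord(\p_-)$, not $St(\rho,a)\rtimes\r(\p,\cdot)$; so you have (correctly) shown reducibility in the case $(\rho,a)\notin Jord$, which is the direction you did not intend. Finally, the $\Sigma_0$-dictionary from Corollary~\ref{cor: cuspidal reducibility} does not exchange irreducibility for reducibility in the blanket way you invoke at the end; the two bullet points are asymmetric and conditioned on whether $\r\cong\r^{\theta_0}$, and neither yields ``$G^{\Sigma_0}$-irreducible $\Leftrightarrow$ $G$-reducible''. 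A correct proof goes through Arthur's identification of the representation-theoretic $R$-group with $W_{\p_+}/W^0_{\p_+}$ and the centralizer computation sketched above, together with \eqref{eq: elliptic representaion} for the decomposition; for $G^{\Sigma_0}$ one uses the $\Sigma_0$-version of that formula.
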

It is clear from this theorem that we can associate every irreducible discrete series representation $\r^{\Sigma_{0}}$ of $G^{\Sigma_{0}}(F)$ with a set $Jord(\r^{\Sigma_{0}})$ of Jordan blocks as follows,
\[
Jord(\r^{\Sigma_{0}}) := \{(\rho, a) \text{ of the same type as $\D{G}$ }: \rho \text{ is self-dual supercuspidal, $a \in \mathbb{Z}_{>0}$ and \eqref{eq: tempered reducibility} is irreducible} \}.
\]
The next question is about the construction of $\Two$-valued function $\Delta$ (see Section~\ref{sec: classification of discrete series}). In \cite{Moeglin:2002}, M{\oe}glin defines $\Delta$ over a subset of 
\[
Jord(\r^{\Sigma_{0}}) \sqcup (Jord(\r^{\Sigma_{0}}) \times Jord(\r^{\Sigma_{0}})),
\]
i.e., $\Delta$ is not defined on $(\rho, a) \in Jord(\r^{\Sigma_{0}})$ with $a$ being odd and $Jord_{\rho}(\r^{\Sigma_{0}}_{cusp}) \neq \emptyset$; $\Delta$ is not defined on pairs $(\rho, a), (\rho', a') \in Jord(\r^{\Sigma_{0}})$ with $\rho \neq \rho'$. Moreover, $\Delta$ satisfies those properties that we have described in Section~\ref{sec: classification of discrete series}. Here we will only mention how to define $\Delta$ for pairs $(\rho, a), (\rho, a_{-}) \in Jord(\r^{\Sigma_{0}})$, where $a_{-}$ is the biggest positive integer in $Jord_{\rho}(\r^{\Sigma_{0}})$ that is smaller than $a$, and also for $(\rho, a_{min}) \in Jord(\r^{\Sigma_{0}})$ with $a_{min} = min \, Jord_{\rho}(\r^{\Sigma_{0}})$ being even. In view of Proposition~\ref{prop: parabolic reduction full orthogonal group}, this definition is given in the reversed way, i.e.,
\begin{enumerate}

\item $\Delta(\rho, a; \rho, a_{-}) = 1$ if and only if 
\[
\r^{\Sigma_{0}} \hookrightarrow <(a-1)/2, \cdots, (a_{-}+1)/2> \rtimes \r_{-}^{\Sigma_{0}}
\]
for some irreducible representation $\r_{-}^{\Sigma_{0}}$ of $G^{\Sigma_{0}}_{-}(F)$.

\item When $a_{min}$ is even, $\Delta(\rho, a_{min}) = 1$ if and only if
\[
\r^{\Sigma_{0}} \hookrightarrow <(a_{min}-1)/2, \cdots, 1/2> \rtimes \r_{-}^{\Sigma_{0}}
\]
for some irreducible representation $\r_{-}^{\Sigma_{0}}$ of $G^{\Sigma_{0}}_{-}(F)$.

\end{enumerate}

At last, for $G(n)$ we let $N = 2n + 1$ if $G$ is symplectic, and $N = 2n$ otherwise. Then M{\oe}glin proved the following dimension equality.


\begin{theorem}[M{\oe}glin \cite{Moeglin:2014}]
\label{conj: dimension equality}
Suppose $\r^{\Sigma_{0}}$ is a discrete series representation of $G^{\Sigma_{0}}(n, F)$, then 
\[
\sum_{(\rho, a) \in Jord(\r^{\Sigma_{0}})} a d_{\rho} = N.
\]
\end{theorem}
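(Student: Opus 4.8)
\textbf{Proof proposal for Theorem~\ref{conj: dimension equality}.}

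The plan is to reduce the statement, via the classification of discrete series (Theorem~\ref{thm: classification of discrete series}) together with the parabolic reduction results (Proposition~\ref{prop: parabolic reduction full orthogonal group}), to the already-known case of supercuspidal representations. First I would observe that if $\r^{\Sigma_{0}} = \r^{\Sigma_{0}}(\p,\e)$ with $\p \in \cPdt{G}$, then by Theorem~\ref{thm: L-packet} the dimension $N$ of the underlying self-dual representation $\r_{\p}$ of $GL(N)$ satisfies
\[
N = \dim \r_{\p} = \sum_{(\rho,a) \in Jord(\p)} a\, d_{\rho},
\]
simply because $Jord(\p)$ is multiplicity-free and encodes $\p = \bigoplus_{(\rho,a)} \rho \otimes [a]$ with $\dim(\rho\otimes[a]) = a\, d_{\rho}$. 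So the content of the theorem is precisely the identity $Jord(\r^{\Sigma_{0}}) = Jord(\p)$, i.e., that the combinatorial set of Jordan blocks attached to $\r^{\Sigma_{0}}$ through the reducibility criterion of Theorem~\ref{thm: tempered reducibility} agrees with the set read off from the Arthur parameter $\p$. But this is exactly the ``only if and if'' assertion of Theorem~\ref{thm: tempered reducibility}: $(\rho,a) \in Jord(\p)$ iff $(\rho,a)$ has the same type as $\D{G}$ and $St(\rho,a) \rtimes \r^{\Sigma_{0}}$ is irreducible. Hence the theorem follows once Theorem~\ref{thm: tempered reducibility} is in hand.

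Since the excerpt attributes Theorem~\ref{thm: tempered reducibility} to Arthur and allows me to assume earlier results, the cleanest writeup simply invokes it. However, if one wants a self-contained argument within the framework of this paper, the route is to establish $Jord(\r^{\Sigma_{0}}) = Jord(\p)$ by induction on $\sum_{(\rho,a)\in Jord(\p)} a\,d_\rho$, using Proposition~\ref{prop: parabolic reduction full orthogonal group}. In the base case $\r^{\Sigma_{0}}$ is supercuspidal, and Corollary~\ref{cor: cuspidal reducibility} (together with the standard fact that $St(\rho,a)\rtimes\r$ reducibility is governed by the cuspidal reducibility points $\rho||^{\pm(a'+1)/2}\rtimes\r$) pins down $Jord(\r^{\Sigma_{0}})$; one then checks it coincides with $Jord(\p_{cusp})$ using Theorem~\ref{thm: supercuspidal parametrization}. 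For the inductive step, one applies part (1), (2), or (3) of Proposition~\ref{prop: parabolic reduction full orthogonal group} to write $\r^{\Sigma_{0}}(\p,\e)$ as the (unique) irreducible subrepresentation of $<\cdots>\rtimes\r^{\Sigma_{0}}(\p',\e')$ with $\sum_{Jord(\p')} a\,d_\rho < \sum_{Jord(\p)} a\,d_\rho$; by the induction hypothesis $\sum_{(\rho,a)\in Jord(\r^{\Sigma_{0}}(\p',\e'))} a\,d_\rho$ equals the dimension count for $\p'$, and one tracks how the segment $<\cdots>$ changes both the $GL$-dimension and $Jord$ by exactly the prescribed $(\rho,a)\mapsto(\rho,a_-+2)$, removal of $\{(\rho,a),(\rho,a_-)\}$, or removal of $(\rho,a_{min})$, matching the change in $\sum a\, d_\rho$ in each case.

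The main obstacle is the interface between the two notions of Jordan block: the parameter-theoretic $Jord(\p)$ and the representation-theoretic $Jord(\r^{\Sigma_{0}})$ defined via irreducibility of $St(\rho,a)\rtimes\r^{\Sigma_{0}}$. Bridging these genuinely requires Theorem~\ref{thm: tempered reducibility}, whose proof (due to Arthur) rests on the explicit computation of $R$-groups and the endoscopic character identities rather than on anything proved in this paper. So in practice I would present the proof as a short deduction: apply Theorem~\ref{thm: tempered reducibility} to identify $Jord(\r^{\Sigma_{0}})$ with $Jord(\p)$, and conclude $\sum_{(\rho,a)\in Jord(\r^{\Sigma_{0}})} a\, d_\rho = \dim \r_\p = N$. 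Any inductive alternative still ultimately needs the sufficiency half of the reducibility conditions in Proposition~\ref{prop: cuspidal reducibility}, which itself depends on the tempered packet structure — so there is no avoiding Arthur's input at the base of the argument.
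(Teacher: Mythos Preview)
Your proposal is correct and matches the paper's own treatment. The paper does not give an independent proof of this theorem; it cites M{\oe}glin \cite{Moeglin:2014} and then remarks exactly what you argue: once one has Arthur's Theorem~\ref{thm: L-packet} and identifies $Jord(\r^{\Sigma_{0}}) = Jord(\p)$ via Theorem~\ref{thm: tempered reducibility}, the equality $\sum a\,d_{\rho} = N$ is immediate from the decomposition $\p = \bigoplus_{(\rho,a)} \rho \otimes [a]$. The paper also makes the same caveat you do at the end, namely that without Arthur's input this result is far from obvious --- that is precisely M{\oe}glin's contribution in the cited reference, and neither your inductive alternative nor anything else in this paper circumvents it.
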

This theorem becomes trivial if we know Theorem~\ref{thm: L-packet} and identify $Jord(\r^{\Sigma_{0}}) = Jord(\p)$ under Theorem~\ref{thm: tempered reducibility}. But without assuming all these results of Arthur, this theorem is far from being obvious. 


\appendix

\section{Local L-function}
\label{sec: local L-function}

In this appendix, we give explicit formulas for three different types of local L-functions, i.e., Rankin-Selberg L-function, symmetric square L-function and skew symmetric square L-function. Let $F$ be a $p$-adic field, and $q$ be the number of elements in the residue field of $F$. 

\subsection{Rankin-Selberg L-function}
\label{subsec: Rankin-Selberg L-function}

We follow \cite{JPSS:1983} here. Let $\r$ be an irreducible smooth representation of $GL(n, F)$ and $\sigma$ be an irreducible smooth representation of $GL(m, F)$, the local Rankin-Selberg L-function is denoted by $L(s, \r \times \sigma)$ for $s \in \C$. It satisfies $L(s, \r \times \sigma) = L(s, \sigma \times \r)$ and $L(s, \r||^{t} \times \sigma) = L(s + t, \r \times \sigma)$. 

{\bf Cuspidal case:}

Suppose both $\r$ and $\sigma$ are unitary supercuspidal representations. 

\begin{enumerate}

\item If $n \neq m$, then $L(s, \r \times \sigma) = 1$;

\item If $n = m$, then 
\[
L(s, \r \times \sigma) = \prod_{t} (1 - q^{-(s + it)})^{-1}
\] 
where the product is over all real numbers $t$ such that $\r||^{it} \cong \sigma^{\vee}$.

\end{enumerate}

{\bf Discrete series case:}

We assume $\r$ is $St(\rho, a)$ for an irreducible unitary supercuspidal representations $\rho$ and integer $a$. Similarly we assume $\sigma$ is $St(\rho', b)$. If $n \geqslant m$, then
\[
L(s, \r \times \sigma) = \prod^{b}_{i=1} L(s + \frac{a+b}{2} - i, \rho \times \rho').
\]

{\bf Tempered case:}

Suppose $\r = \r_{1} \times \cdots \times \r_{l}$ and $\sigma = \sigma_{1} \times \cdots \times \sigma_{k}$, where $\r_{i}$, $\sigma_{j}$ are discrete series representations. Then
\[
L(s, \r \times \sigma) = \prod_{i,j} L(s, \r_{i} \times \sigma_{j}).
\]

{\bf Non-tempered case:}

Let $\r$ be the Langlands quotient of the induced representation $\Pi = \r_{1}||^{u_{1}} \times \cdots \times \r_{l}||^{u_{l}}$ for tempered representation $\r_{i}$ and real numbers $u_{1} > \cdots > u_{l}$. Let $\sigma$ be the Langlands quotient of the induced representation $\Sigma = \sigma_{1}||^{v_{1}} \times \cdots \times \sigma_{k}||^{v_{k}}$ for tempered representation $\sigma_{j}$ and real numbers $v_{1} > \cdots > v_{k}$. Then
\[
L(s, \r \times \sigma) = L(s, \Pi \times \Sigma) = \prod_{i,j} L(s + u_{i} + v_{j}, \r_{i} \times \sigma_{j}).
\]

\subsection{Symmetric square and skew-symmetric square L-functions}
\label{subsec: symmetric square L-function}

We follow \cite{Shahidi:1992} here. Let $\r$ be an irreducible smooth representation of $GL(n, F)$. The symmetric square (resp. skew-symmetric square) L-function is denoted by $L(s, \r, S^{2})$ (resp. $L(s, \r, \wedge^{2})$). We have $L(s, \r \times \r) = L(s, \r, S^{2})L(s, \r, \wedge^{2})$, and $L(s, \r||^{t}, R) = L(s + 2t, \r, R)$ for $R = S^{2}$ or $\wedge^{2}$.

{\bf Cuspidal case}

Suppose $\r$ is a unitary supercuspidal representation of $GL(n, F)$.

\begin{enumerate}

\item $L(s, \r, \wedge^{2}) = 1$ unless $n$ is even and some unramified twist of $\r$ is self-dual. So let us suppose $n$ is even and $\r$ is self-dual. Let $S$ be the set of real numbers $t$ modulo $\frac{\pi}{\ln q} \mathbb{Z}$, such that
\[
\int_{Sp_{n}(F) \backslash GL_{n}(F)} f({}^{t}gw^{-1}gw)dg \neq 0,
\]
for some $f \in C^{\infty}_{c}(GL_{n}(F))$ defining a matrix coefficient of $\r||^{it}$. Here ${}^{t}g$ is the transpose of $g$ and
\begin{align}
\label{eq: anti-diagonal}
w = \begin{pmatrix}
0 &&&&& 1\\
& &&& -1& \\
&&& \iddots && \\
&1&&& & \\
-1&&&&& 0
\end{pmatrix}.
\end{align}
Then 
\[
L(s, \r, \wedge^{2}) = \prod_{t \in S} (1 - q^{-(s + 2it)})^{-1}.
\]

\item $L(s, \r, S^{2}) = 1$ unless some unramified twist of $\r$ is self-dual. So let us suppose $\r$ is self-dual.

\begin{enumerate}

\item If $n$ is odd, then 
\[
L(s, \r, S^{2}) = (1 - q^{-rs})^{-1},
\]
where $r$ is the maximal integer such that $\r \cong \r||^{2\pi i/(r \ln q)}$.

\item If $n$ is even, 
\[
L(s, \r, S^{2}) = \prod_{t \in S'} (1 - q^{-(s + 2it)})^{-1},
\]
where $S'$ is the set of real numbers $t$ modulo $\frac{\pi}{\ln q} \mathbb{Z}$ such that $\r||^{2it} \cong \r$ and for any $f \in C^{\infty}_{c}(GL_{n}(F))$ defining a matrix coefficient of $\r||^{it}$, one has 
\[
\int_{Sp_{n}(F) \backslash GL_{n}(F)} f({}^{t}gw^{-1}gw)dg = 0.
\]
Here $w$ is again given by $\eqref{eq: anti-diagonal}$ and ${}^{t}g$ is the transpose of $g$.

\end{enumerate}

\end{enumerate}

{\bf Discrete series case:}

We assume $\r$ is $St(\rho, a)$ for an irreducible unitary supercuspidal representations $\rho$ and integer $a$. Set $\r_{i} = \rho||^{(a+1)/2 - i}$ for $1 \leqslant i \leqslant a$.

\begin{enumerate}

\item Suppose $a$ is even, then

\[
L(s, \r, \wedge^{2}) = \prod_{i=1}^{a/2} L(s, \r_{i}, \wedge^{2}) L(s, \r_{i}||^{-1/2}, S^{2}),
\]

\[
L(s, \r, S^{2}) = \prod_{i=1}^{a/2} L(s, \r_{i}, S^{2}) L(s, \r_{i}||^{-1/2}, \wedge^{2}).
\]

\item Suppose $a$ is odd, then

\[
L(s, \r, \wedge^{2}) = \prod_{i=1}^{(a+1)/2} L(s, \r_{i}, \wedge^{2}) \prod_{i=1}^{(a-1)/2}L(s, \r_{i}||^{-1/2}, S^{2}),
\]

\[
L(s, \r, S^{2}) = \prod_{i=1}^{(a+1)/2} L(s, \r_{i}, S^{2}) \prod_{i=1}^{(a-1)/2}L(s, \r_{i}||^{-1/2}, \wedge^{2}).
\]

\end{enumerate}

{\bf Tempered case:}

Suppose $\r = \r_{1} \times \cdots \times \r_{l}$, where $\r_{i}$ are discrete series representations. Then

\[
L(s, \r, \wedge^{2}) = \prod_{i=1}^{l} L(s, \r_{i}, \wedge^{2}) \prod_{1 \leqslant i < j \leqslant l}L(s, \r_{i} \times \r_{j}),
\]

\[
L(s, \r, S^{2}) = \prod_{i=1}^{l} L(s, \r_{i}, S^{2}) \prod_{1 \leqslant i < j \leqslant l}L(s, \r_{i} \times \r_{j}).
\]

{\bf Non-tempered case:}

Let $\r$ be the Langlands quotient of the induced representation $\Pi = \r_{1}||^{u_{1}} \times \cdots \times \r_{l}||^{u_{l}}$ for tempered representation $\r_{i}$ and real numbers $u_{1} > \cdots > u_{l}$. Then
\[
L(s, \r, \wedge^{2}) = L(s, \Pi, \wedge^{2}) = \prod_{i=1}^{l} L(s + 2u_{i}, \r_{i}, \wedge^{2}) \prod_{1 \leqslant i < j \leqslant l}L(s + u_{i} + u_{j}, \r_{i} \times \r_{j}),
\] 

\[
L(s, \r, S^{2}) = L(s, \Pi, S^{2}) = \prod_{i=1}^{l} L(s + 2u_{i}, \r_{i}, S^{2}) \prod_{1 \leqslant i < j \leqslant l}L(s + u_{i} + u_{j}, \r_{i} \times \r_{j}).
\]

\section{Reducibility for some induced representations of GL(n)}
\label{sec: reducibility for GL(n)}

We define a {\bf segment} to be a finite length arithmetic progression of real numbers with common difference $1$ or $-1$, it is completely determined by its endpoints $x$, $y$, and hence we denote a segment by $[x, y]$ or $\{x, \cdots, y\}$. Let $F$ be a $p$-adic field and $\rho$ be a unitary irreducible supercuspidal representation of $GL(d_{\rho}, F)$. The normalized induction
\[
\rho||^{x} \times \cdots \times \rho||^{y}
\]
has a unique irreducible subrepresentation, which is denoted by $<\rho; x, \cdots, y>$ or $<x, \cdots, y>$. If $x \geqslant y$, this is called Steinberg representation; if $x < y$, this is called Speh representation. 

For any two segments $[x, y]$ and $[x', y']$ such that $(x - y)(x' - y') \geqslant 0$, we say they are {\bf linked} if as sets $[x, y] \nsubseteq [x', y']$, $[x', y'] \nsubseteq [x, y]$, and $[x, y]  \cup [x', y']$ can form a segment after imposing the same order. The following theorem is fundamental in determining the reducibility of an induced representation of $GL(n, F)$.

\begin{theorem}[Zelevinsky \cite{Zelevinsky:1980}]
\label{thm: irreducibility for induced representation of GL(n)}
For unitary irreducible supercuspidal representations $\rho, \rho'$ of general linear groups, and segments $[x, y]$, $[x', y']$ such that $(x - y)(x' - y') \geqslant 0$, 
\[
<\rho; x, \cdots, y > \times <\rho'; x', \cdots, y'>
\]
is reducible if and only if $\rho \cong \rho'$ and $[x, y], [x', y']$ are linked. In case it is reducible, it consists of the unique irreducible subrepresentations of
\[
<\rho; x, \cdots, y> \times <\rho; x', \cdots, y'> \text{ and } <\rho; x', \cdots, y'> \times <\rho; x, \cdots, y>.
\]
\end{theorem}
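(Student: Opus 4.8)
The plan is to follow Zelevinsky's original argument \cite{Zelevinsky:1980}, organized as two reductions followed by a Jacquet-module analysis. First I would reduce to the case $\rho \cong \rho'$. Since $\rho$ and $\rho'$ are unitary supercuspidal, an isomorphism $\rho||^{z} \cong \rho'||^{z'}$ with $z,z'$ real forces $\rho \cong \rho'$ (the central character of $\rho||^{t}$ is unitary only for $t = 0$); hence if $\rho \ncong \rho'$ no twist $\rho||^{z}$ occurring in the first cuspidal support equals a twist $\rho'||^{z'}$ occurring in the second, and the theorem reduces to the assertion that $<\rho;x,\cdots,y> \times <\rho';x',\cdots,y'>$ is irreducible. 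This follows from the geometric lemma for Jacquet modules of products, together with the standard fact that a product of irreducible representations of general linear groups with disjoint supercuspidal supports has its unique irreducible subrepresentation equal to its unique irreducible quotient (equivalently, the relevant standard intertwining operator is holomorphic and invertible). Second, since $(x-y)(x'-y') \geq 0$ both segments are traversed in the same direction, and I would reduce to the case that both are decreasing by applying the Zelevinsky (Aubert) involution: it commutes with parabolic induction in the Grothendieck group and sends irreducibles to irreducibles, hence preserves the composition length of a product and in particular preserves reducibility; and it interchanges the two irreducible representations attached to a segment (the Steinberg type and the Speh type) without changing the underlying segment, hence preserves the linkage relation.

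It then remains to analyze $<\Delta_{1}>\times<\Delta_{2}>$ for two decreasing segments $\Delta_{1}=[x,y]$ and $\Delta_{2}=[x',y']$ on the same cuspidal line. For the direction ``not linked $\Rightarrow$ irreducible'' I would separate the subcase where one segment contains the other from the subcase where their union fails to be a segment. In the first subcase one strips the extreme exponents off the longer segment one at a time, using the Jacquet module formulas for segment representations (equivalently the Bernstein--Zelevinsky derivatives) recorded in Section~\ref{sec: Jacquet modules}, and reduces by induction to the trivial cases $\Delta_{1}=\Delta_{2}$ or $\Delta_{2}=\emptyset$. In the second subcase the exponents are separated enough that the standard intertwining operator $<\Delta_{1}>\times<\Delta_{2}> \to <\Delta_{2}>\times<\Delta_{1}>$ is holomorphic with holomorphic inverse, so it is an isomorphism onto an irreducible. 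For the direction ``linked $\Rightarrow$ reducible'', arrange after possibly swapping the segments that $x > x'$ and $y > y'$, so $\Delta_{1}\cup\Delta_{2}=[x,y']$ is a segment and $\Delta_{1}\cap\Delta_{2}=[x',y]$ (empty when $x' > y$). Then I would show: (i) $<\Delta_{1}>\times<\Delta_{2}>$ is reducible --- the standard intertwining operator $<\Delta_{1}>\times<\Delta_{2}>\to<\Delta_{2}>\times<\Delta_{1}>$ acquires a pole precisely because the segments are linked (equivalently, by running the Zelevinsky algorithm on the underlying multisegment) --- while the length of its semisimplification is at most two, by a count of its Jacquet module along the maximal relevant parabolic; (ii) the two constituents are the unique irreducible subrepresentation of $<\Delta_{1}>\times<\Delta_{2}>$ and the unique irreducible subrepresentation of $<\Delta_{2}>\times<\Delta_{1}>$: both are subrepresentations by Frobenius reciprocity, both occur in $<\Delta_{1}>\times<\Delta_{2}>$ because $<\Delta_{1}>\times<\Delta_{2}>$ and $<\Delta_{2}>\times<\Delta_{1}>$ have the same semisimplification, and they are distinct because their Jacquet modules along the parabolic with $GL$-blocks of sizes $|\Delta_{1}\cup\Delta_{2}|\,d_{\rho}$ and $|\Delta_{1}\cap\Delta_{2}|\,d_{\rho}$ disagree (the first contains $<\Delta_{1}\cup\Delta_{2}>\otimes<\Delta_{1}\cap\Delta_{2}>$, the second does not); together with the length bound this yields the statement. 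The Jacquet-module comparison in (ii) is exactly the point where linkage enters.

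The step I expect to be the main obstacle is the bookkeeping in the linked case: establishing that the length is exactly two and correctly matching up the two composition factors requires a careful Jacquet-module computation (or an equally careful induction on $|\Delta_{1}|+|\Delta_{2}|$), and the degenerate configurations --- juxtaposed segments with $\Delta_{1}\cap\Delta_{2}=\emptyset$, and segments of length one --- must be treated separately. The ``union is not a segment'' subcase of the irreducibility direction is also delicate, since one must rule out reducibility arising from the interaction of the two segments; this is cleanest once the poles of the pertinent intertwining operators are understood, so I would either invoke the Bernstein--Zelevinsky analysis of those operators or argue inductively, reducing to the maximal-parabolic situation. The explicit derivative formulas recorded in Section~\ref{sec: Jacquet modules} supply the computational backbone throughout.
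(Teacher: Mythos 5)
The paper itself contains no proof of this statement: it is quoted as Zelevinsky's theorem, and the only argued content is the remark immediately following it, which extends Zelevinsky's original case (both segments decreasing) to the general same-direction case via the Aubert involution, since that involution commutes with products in the Grothendieck group, preserves irreducibility, and sends $<\rho; x, \cdots, y>$ to $<\rho; y, \cdots, x>$ up to sign. Your second reduction is precisely that remark, so on the one point the paper actually argues, your route and the paper's coincide; the remainder of your proposal is a reconstruction of the cited result along Zelevinsky's own lines (reduction to a single cuspidal line, Jacquet-module/derivative analysis in the unlinked case, a length-two analysis in the linked case), and that skeleton is sound. One phrasing deserves care: reducibility in the linked case does not follow merely from the standard intertwining operator ``acquiring a pole,'' and invoking the Zelevinsky algorithm is circular since that classification rests on this theorem. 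The cleaner argument is the one you already set up in your step (ii): once one shows (this is where linkage and the real bookkeeping enter) that the term $<\Delta_{1} \cup \Delta_{2}> \otimes <\Delta_{1} \cap \Delta_{2}>$ occurs, with multiplicity one, in the Jacquet module of the induced representation and lies in the socle of the decreasing-order product but not of the reversed order, then the two socles are non-isomorphic; since the two orderings have the same Jordan--H\"older content, this already forces length at least two, and combined with your length bound it yields both the reducibility and the identification of the two constituents, so the pole computation can be dropped rather than repaired.
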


\begin{remark}
In fact, Zelevinsky proved this theorem only when both $x-y \geqslant 0$ and $x'-y' \geqslant 0$. Nonetheless, the Aubert involution functor on the Grothendieck group of finite length representations of $GL(n, F)$ will send 
\[
<\rho; x, \cdots, y > \times <\rho'; x', \cdots, y'> \text{ to } <\rho; y, \cdots, x > \times <\rho'; y', \cdots, x'>
\] 
up to a sign, and it preserves irreducibility (see \cite{Aubert:1995}). So one can easily extend the original result of Zelevinsky to this theorem.
\end{remark}

It is natural to ask for the notion of ``link" for two segments $[x, y]$ and $[x', y']$ such that $(x - y)(x' - y') < 0$. To do so, we need to first generalize the notion of ``segment". We define a {\bf generalized segment} to be a matrix 
\begin{align*}
  \begin{bmatrix}
   x_{11} & \cdots & x_{1n} \\
   \vdots &  & \vdots \\
   x_{m1} & \cdots & x_{mn}
  \end{bmatrix}
\end{align*}
such that each row is a decreasing (resp. increasing) segment and each column is an increasing (resp. decreasing) segment. The normalized induction
\[
\times_{i \in [1, m]} <\rho; x_{i1}, \cdots, x_{in} >
\]
has a unique irreducible subrepresentation, and we denote it by $<\rho; \{x_{ij}\}_{m \times n}>$. Moreover, 
\[
<\rho; \{x_{ij}\}_{m \times n}> \cong <\rho; \{x_{ij}\}^{T}_{m \times n}>
\] 
where $\{x_{ij}\}^{T}_{m \times n}$ is the transpose of $\{x_{ij}\}_{m \times n}$.

For any two generalized segments $\{x_{ij}\}_{m \times n}$ and $\{y_{ij}\}_{m' \times n'}$ with the same monotone properties for the rows and columns, we say they are {\bf linked} if $[x_{m1}, x_{1n}]$, $[y_{m'1}, y_{1n'}]$ are linked, and the four sides of the rectangle formed by $\{x_{ij}\}_{m \times n}$ do not have inclusive relations with the corresponding four sides of the rectangle formed by $\{y_{ij}\}_{m' \times n'}$ (e.g., $[x_{11}, x_{1n}] \nsubseteq [y_{11}, y_{1n'}] \text{ and } [x_{11}, x_{1n}] \nsupseteq [y_{11}, y_{1n'}]$, etc). It is easy to check that if $\{x_{ij}\}_{m \times n}$ and $\{y_{ij}\}_{m' \times n'}$ are linked, then $\{x_{ij}\}^{T}_{m \times n}$ and $\{y_{ij}\}^{T}_{m' \times n'}$ are also linked. So for generalized segments $\{x_{ij}\}_{m \times n}$ and $\{y_{ij}\}_{m' \times n'}$ with different monotone properties for the rows and columns, we say they are {\bf linked} if $\{x_{ij}\}^{T}_{m \times n}$ and $\{y_{ij}\}_{m' \times n'}$ are linked, or equivalently $\{x_{ij}\}_{m \times n}$ and $\{y_{ij}\}^{T}_{m' \times n'}$ are linked. One can check this notion of ``link" is equivalent to the one in \cite{MW:1989}.

\begin{example}
For any two segments $[x, y]$ and $[x', y']$ such that $(x - y)(x' - y') < 0$, we can view them as generalized segments by taking them as rows, and note they have different monotone properties. So we take
\begin{align*}
[x, y]^{T} = 
    \begin{bmatrix}
     x \\
     \vdots \\
     y
     \end{bmatrix}
\quad \text{ and } \quad 
[x', y'] =  
   \begin{bmatrix}
    x' \cdots y' 
   \end{bmatrix}.
\end{align*}
It follows that $[x, y]$ and $[x', y']$ are linked if and only if $[y, x], [x', y']$ are linked, and $x, y \notin [x', y']$ and $x', y' \notin [x, y]$.
\end{example}



The next theorem generalizes Theorem~\ref{thm: irreducibility for induced representation of GL(n)} to the case of generalized segments.

\begin{theorem}[M{\oe}glin-Waldspurger \cite{MW:1989}]
\label{thm: generalized irreducibility for induced representation of GL(n)}
For unitary irreducible supercuspidal representations $\rho, \rho'$ of general linear groups, and generalized segments $\{x_{ij}\}_{m \times n}$, $\{y_{ij}\}_{m' \times n'}$,
\[
<\rho; \{x_{ij}\}_{m \times n}> \times <\rho'; \{y_{ij}\}_{m' \times n'}>
\]
is irreducible unless $\rho \cong \rho'$ and $\{x_{ij}\}_{m \times n}, \{y_{ij}\}_{m' \times n'}$ are linked.
\end{theorem}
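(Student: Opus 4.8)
\medskip

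\noindent\emph{Proof strategy.} The plan is to reduce everything to Zelevinsky's two-segment criterion (Theorem~\ref{thm: irreducibility for induced representation of GL(n)}) by an induction on the total number $mn + m'n'$ of entries. First, if $\rho \not\cong \rho'$ then, since both are unitary, $\rho||^{x} \cong \rho'||^{y}$ holds for no real $x, y$; hence no segment occurring in $<\rho; \{x_{ij}\}_{m \times n}>$ can be linked to a segment occurring in $<\rho'; \{y_{ij}\}_{m' \times n'}>$ --- by Theorem~\ref{thm: irreducibility for induced representation of GL(n)} linking forces $\rho \cong \rho'$ --- and the standard Bernstein--Zelevinsky criterion (no cross-linking among the constituent segments) makes the product irreducible. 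So from now on $\rho \cong \rho'$, and as the answer depends only on the multisets of exponents I may suppress $\rho$ and work with segments of real numbers.

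Next I would normalize the two generalized segments. Using the Aubert involution, which carries $<\rho; \{x_{ij}\}> \times <\rho; \{y_{ij}\}>$ to a product of the same shape up to a sign and preserves irreducibility (see \cite{Aubert:1995}), together with the transpose isomorphisms $<\rho; \{x_{ij}\}_{m \times n}> \cong <\rho; \{x_{ij}\}^{T}_{m \times n}>$, I can arrange that both matrices have decreasing rows and increasing columns. If the two original generalized segments have opposite monotonicities I first transpose one of them; by the Example preceding the theorem this does not affect whether they are linked, so the normalized problem is equivalent to the original one.

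The heart of the argument is the inductive step. Write $<\rho; \{x_{ij}\}>$ as the unique irreducible subrepresentation of $\times_{i}\, <\rho; x_{i1}, \cdots, x_{in}>$, and likewise for $\{y_{ij}\}$. Two rows of equal length that are shifts of one another are linked precisely when they overlap but neither contains the other, and one checks that the geometric condition defining ``linked'' for the two rectangles --- that their four pairs of parallel sides have no inclusion relations --- is equivalent to a combinatorial pattern among the linked row-pairs. If the generalized segments are \emph{not} linked, then either the value ranges are nested or juxtaposed in a way that lets one absorb the rows of one matrix into an enlargement of the other, or a common extreme value can be peeled off both rectangles at once; either way the problem reduces to a smaller instance handled by induction. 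Otherwise every row of $\{y_{ij}\}$ commutes past every row of $\{x_{ij}\}$ by Theorem~\ref{thm: irreducibility for induced representation of GL(n)}, so that $<\rho; \{x_{ij}\}> \times <\rho; \{y_{ij}\}> \cong <\rho; \{y_{ij}\}> \times <\rho; \{x_{ij}\}>$; since the constituent $<\rho; \{x_{ij}\} \cup \{y_{ij}\}>$ appears with multiplicity one and is simultaneously the socle of one side and the cosocle of the other, the product is irreducible. Conversely, when the generalized segments are linked I would peel ends, using the unlinked case, until reaching a minimal linked configuration --- in the extreme case a pair of single rows, where Theorem~\ref{thm: irreducibility for induced representation of GL(n)} gives reducibility directly --- and in general exhibit a constituent isomorphic to a product $<\rho; \{x'_{ij}\}> \times <\rho; \{y'_{ij}\}>$ obtained by merging and re-splitting the two rectangles, which is distinct from the Langlands quotient $<\rho; \{x_{ij}\} \cup \{y_{ij}\}>$.

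The main obstacle I anticipate is the combinatorial bookkeeping of the inductive step: keeping track of exactly which rows (or, after a transpose, columns) of one matrix are linked to which of the other, and verifying that the two peeling operations --- deleting a shared extreme value from both rectangles, and absorbing one rectangle into an enlargement of the other --- preserve the linked/unlinked dichotomy so that the induction closes. A secondary subtlety is making the mixed-monotonicity reduction fully rigorous, i.e.\ checking at every stage that transposing one generalized segment is compatible with the notion of ``linked'' recorded just before the theorem.
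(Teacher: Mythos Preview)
The paper does not prove this theorem: it is stated in Appendix~\ref{sec: reducibility for GL(n)} as a result of M\oe glin--Waldspurger \cite{MW:1989} and quoted without argument, so there is no in-paper proof to compare your proposal against.

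On your sketch itself: note first that the theorem as stated asserts only one direction --- irreducibility when $\rho \not\cong \rho'$ or the generalized segments are not linked --- so your ``Conversely'' paragraph is extraneous. More substantively, the inductive step is not yet an argument. Your case split inside ``not linked'' is (A) absorb or peel a common extreme value and reduce, or (B) every row of one rectangle commutes past every row of the other; but the negation of ``linked'' for generalized segments says only that either the enveloping segments $[x_{m1},x_{1n}]$, $[y_{m'1},y_{1n'}]$ are unlinked, or some pair of corresponding sides is nested --- and neither alternative forces all individual row pairs to be Zelevinsky-unlinked. It is easy to manufacture two rectangles that are not linked in the generalized sense, share no extreme value, are not value-nested, and yet have a linked row pair, so that neither (A) nor (B) visibly applies. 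You already flag the combinatorial bookkeeping as the obstacle; in fact that is where essentially all of the content lies, and it is not clear your peeling-and-commuting scheme can be made to close without importing the actual analysis from \cite{MW:1989}.
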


Let $a, b$ be integers, we define $Sp(St(\rho, a), b)$ to be the unique irreducible subrepresentation of 
\[
St(\rho, a)||^{-(b-1)/2} \times St(\rho, a)||^{-(b-3)/2} \times \cdots \times St(\rho, a)||^{(b-1)/2}.
\]
By the definition one can see $Sp(St(\rho, a), b)$ is given by the following generalized segment
\begin{align*}
  \begin{bmatrix}
   (a-b)/2 & \cdots & 1-(a+b)/2 \\
   \vdots &  & \vdots \\
   (a+b)/2-1 & \cdots & -(a-b)/2
  \end{bmatrix}
\end{align*}
The following result is a reinterpretation of Theorem~\ref{thm: generalized irreducibility for induced representation of GL(n)}.

\begin{corollary}
\label{cor: generalized irreducibility for induced representation of GL(n)}
For unitary irreducible supercuspidal representations $\rho, \rho'$ of general linear groups, and integers $a, b, a', b'$, and real number $s$,
\[
Sp(St(\rho, a), b)||^{s} \times Sp(St(\rho, a'), b')
\]
is irreducible unless $\rho \cong \rho'$, $(a + b + a' + b')/2 + s$ is an integer and 
\[
|(a - a')/2| + |(b-b')/2| < |s| \leqslant |(a + a' + b + b')/2| -1.
\]
\end{corollary}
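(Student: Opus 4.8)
The plan is to deduce the statement directly from the Mœglin–Waldspurger irreducibility criterion, Theorem~\ref{thm: generalized irreducibility for induced representation of GL(n)}, which reduces everything to deciding when the two generalized segments attached to $Sp(St(\rho,a),b)||^{s}$ and $Sp(St(\rho,a'),b')$ are linked. First I would write these generalized segments down explicitly: by the description preceding the corollary, $Sp(St(\rho,a),b)||^{s} = <\rho;\{x_{ij}\}_{b\times a}>$ with $x_{ij} = (a-b)/2 + (i-1) - (j-1) + s$ (the displayed matrix, shifted by $s$), whose rows are decreasing and columns increasing, and likewise $Sp(St(\rho,a'),b') = <\rho';\{y_{ij}\}_{b'\times a'}>$ with $y_{ij} = (a'-b')/2 + (i-1) - (j-1)$, of the same monotone type. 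By Theorem~\ref{thm: generalized irreducibility for induced representation of GL(n)} the product is irreducible unless $\rho\cong\rho'$ and these two generalized segments are linked, so from now on assume $\rho\cong\rho'$ and the task is to characterise linkage.

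Next I would unfold the definition of ``linked'' given in the excerpt, which has two parts: (i) the main diagonals $[x_{b,1},x_{1,a}]$ and $[y_{b',1},y_{1,a'}]$ are linked as ordinary segments, and (ii) the four sides (top and bottom rows, left and right columns) of the first rectangle have no inclusion relation with the corresponding sides of the second. A direct computation gives the diagonals as the decreasing segments $[(a+b)/2-1+s,\,1-(a+b)/2+s]$ and $[(a'+b')/2-1,\,1-(a'+b')/2]$, of centres $s$ and $0$ and half-widths $(a+b)/2-1$ and $(a'+b')/2-1$. Since two segments of common difference $1$ can be linked only if they lie in a common coset of $\mathbb{Z}$, a congruence check shows that (i) forces $(a+b+a'+b')/2 + s\in\mathbb{Z}$; I would impose this from now on, after which every pair of corresponding sides also lies in a common coset. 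Granting the congruence, the diagonals have union a segment precisely when the corresponding real intervals are non-disjoint or adjacent, i.e. $|s|\leqslant (a+a'+b+b')/2-1$, while neither diagonal contains the other precisely when $|s| > |(a-a')+(b-b')|/2$, a condition implied by the lower bound in (ii) below. For (ii) I would compute the centres and half-widths of the eight sides; e.g. the top rows have centres $(1-b)/2+s$, $(1-b')/2$ and half-widths $(a-1)/2$, $(a'-1)/2$. Using that two co-coset integer segments are free of inclusion relations iff the distance between their centres strictly exceeds the absolute difference of their half-widths, condition (ii) becomes the four inequalities
\[
|s\mp (b-b')/2| > |(a-a')/2|, \qquad |s\mp (a-a')/2| > |(b-b')/2|.
\]
Writing $\alpha=|(a-a')/2|$, $\beta=|(b-b')/2|$ and using the elementary identity $\min(|s-t|,|s+t|) = \big||s|-|t|\big|$, the conjunction of these four is equivalent to the single inequality $|s| > \alpha+\beta = |(a-a')/2|+|(b-b')/2|$.

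Combining (i) and (ii), the two generalized segments are linked if and only if $(a+b+a'+b')/2+s\in\mathbb{Z}$ and $|(a-a')/2|+|(b-b')/2| < |s| \leqslant (a+a'+b+b')/2-1$, where $(a+a'+b+b')/2 = |(a+a'+b+b')/2|$ because $a,a',b,b'\geqslant 1$; by Theorem~\ref{thm: generalized irreducibility for induced representation of GL(n)} this is exactly the asserted reducibility criterion. I expect the only real obstacle to be the bookkeeping in step (ii): fixing the orientations of the rows and columns, matching the ``corresponding sides'' correctly, and verifying that the identity $\min(|s-t|,|s+t|) = \big||s|-|t|\big|$ genuinely collapses the four side-inequalities to one, the contents of step (i) being a routine interval computation. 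One could alternatively first use $Sp(St(\rho,a),b)^{\vee}\cong Sp(St(\rho^{\vee},a),b)$ and $(\pi||^{s})^{\vee} = \pi^{\vee}||^{-s}$ to reduce to $s\geqslant 0$, but this is not needed, as the computations above are already insensitive to the sign of $s$.
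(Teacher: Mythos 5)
Your proposal is correct and follows exactly the route the paper intends: the paper gives no explicit argument, calling the corollary a ``reinterpretation'' of Theorem~\ref{thm: generalized irreducibility for induced representation of GL(n)}, and your computation simply makes that translation explicit (the generalized segments, the diagonal condition yielding the congruence and the upper bound, and the four side conditions collapsing via $\min(|s-t|,|s+t|)=\bigl||s|-|t|\bigr|$ to the lower bound). The bookkeeping checks out, including the observation that the lower bound $|s|>|(a-a')/2|+|(b-b')/2|$ subsumes the non-containment condition on the diagonals.
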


\section{Casselman's formula and Application}
\label{sec: Casselman's formula}

Let $F$ be a $p$-adic field, and $G$ be a quasisplit connected reductive group over $F$. Let $\theta$ be an $F$-automorphism of $G$ preserving an $F$-splitting, and we assume $\theta$ has order $l$. Suppose $\r$ is an irreducible smooth representation of $G(F)$ such that $\r \cong \r^{\theta}$. Let $A_{\r}(\theta)$ be an intertwining operator between $\r$ and $\r^{\theta}$, then we can define the twisted character of $\r$ to be 
\[
f_{\com{G}}(\r) :=  trace \int_{G(F)} f(g)\r(g) dg \circ A_{\r}(\theta)
\]
for $f \in C^{\infty}_{c}(G(F))$. It follows from results of Harish-Chandra \cite{H-C:1999} in the non-twisted case and Clozel \cite{Clozel:1987} in the twisted case, that there exists a locally integrable function $\Theta^{\com{G}}_{\r}$ on $G(F)$ such that 
\[
f_{\com{G}}(\r) = \int_{G(F)} f(g)\Theta^{\com{G}}_{\r}(g) dg.
\]
We also call this function the twisted character of $\r$. If $P = MN$ is a $\theta$-stable parabolic subgroup of $G$, we denote by $\r_{N}$ the unnormalized Jacquet module of $\r$ with respect to $P$, then $\Jac_{P}\r = \r_{N} \otimes \delta_{P}^{-1/2}$, where $\delta_{P}$ is the usual modulus character. Note $\r_{N} \cong \r_{N}^{\theta}$ and $A_{\r}(\theta)$ induces an intertwining operator on $\r_{N}$.

We would like to extend $\Jac_{P}$ to the space of twisted invariant distributions on $G(F)$. Let $R(G^{\theta})$ be the space of finite linear combinations of twisted characters of $G(F)$ and 
\[
R^{*}(G^{\theta}) = \Hom(R(G^{\theta}), \C).
\] 
For any $\tau \in R(G^{\theta})$ and $f \in C^{\infty}_{c}(G(F))$, we define $f_{\com{G}}(\tau)$ by linearity. So we can get a homomorphism 
\[
C^{\infty}_{c}(G(F)) \longrightarrow R^{*}(G^{\theta})
\] 
by sending $f$ to $l_{f}(\tau): = f_{\com{G}}(\tau)$. Let us denote the image of this homomorphism by $F_{tr}(G^{\theta})$. Since $C^{\infty}_{c}(G(F))$ is equipped with a direct limit topology of finite dimensional subspaces, any linear functional on this space is continuous. Moreover, we know the twisted invariant linear functionals on $C^{\infty}_{c}(G(F))$ are supported on twisted characters (see \cite[Appendix, Theorem 1]{Kazhdan:1986} and \cite[Section 5.5]{Waldspurger:2017}), so we can identify the space $\D{I}(G^{\theta})$ of twisted invariant distributions on $G(F)$ with $\D{F}_{tr}(G^{\theta}) := \Hom(F_{tr}(G^{\theta}), \C)$. Under this identification, we have an inclusion of $R(G^{\theta})$ in $\D{F}_{tr}(G^{\theta})$, which sends $\tau$ to $L_{\tau}(l_{f}) := l_{f}(\tau)$ for any $f \in C^{\infty}_{c}(G(F))$.

The Jacquet functor induces a homomorphism 
\[
\Jac_{P}: R(G^{\theta}) \longrightarrow R(M^{\theta}),
\]
whose dual is
\[
\Jac^{*}_{P}: R^{*}(M^{\theta}) \longrightarrow R^{*}(G^{\theta})
\]
(see \cite[Lemma 4.1]{Rogawski:1988}). The key step in extending $\Jac_{P}$ to the space of twisted invariant distributions is the following lemma.
\begin{lemma}
\label{lemma: Jacquet dual}
$\Jac^{*}_{P}(F_{tr}(M^{\theta})) \subseteq F_{tr}(G^{\theta})$.
\end{lemma}
This lemma was first proved in the nontwisted case (see \cite[Proposition 3.2]{B-D-K:1986}) and was later extended to the twisted case (see \cite[Proposition 7.1]{Rogawski:1988}). As a consequence, we have a homomorphism 
\[
\Jac^{*}_{P}: F_{tr}(M^{\theta}) \longrightarrow F_{tr}(G^{\theta}),
\]
whose dual is
\[
\Jac_{P}: \D{F}_{tr}(G^{\theta}) \longrightarrow \D{F}_{tr}(M^{\theta}).
\]
By our previous identifications, this gives
\begin{align}
\label{eq: Jacquet on distribution}
\Jac_{P}: \D{I}(G^{\theta}) \longrightarrow \D{I}(M^{\theta}).
\end{align}
To see this is really an extension of $\Jac_{P}$ on $R(G^{\theta})$, we have the following proposition.

\begin{proposition}
\label{prop: extend Jacquet}
The following diagram commutes.
\begin{align*}
\xymatrix{ R(G^{\theta}) \ar[d]  \ar[r]^{\Jac_{P}} & R(M^{\theta}) \ar[d] \\
                \D{I}(G^{\theta}) \ar[r]^{\Jac_{P}}  & \D{I}(M^{\theta}).}
\end{align*}

\end{proposition}

\begin{proof}
Let $\tau \in R(G^{\theta})$ and $h \in C^{\infty}_{c}(M(F))$. By definition, we have
\begin{align*}
\Jac_{P}(L_{\tau})(l_{h}) = L_{\tau}(\Jac^{*}_{P} l_{h}) = (\Jac^{*}_{P} l_{h})(\tau) = l_{h}(\Jac_{P} \tau) = L_{(\Jac_{P}\tau)}(l_{h}).
\end{align*}
This finishes the proof.

\end{proof}

Let $R(G)^{st}$ be the space of stable finite linear combinations of characters of $G(F)$ and 
\[
R^{*}(G)^{st} = \Hom(R(G)^{st}, \C).
\] 
In the same way, we have
\begin{align}
\label{eq: stable trace}
C^{\infty}_{c}(G(F)) \longrightarrow R^{*}(G)^{st}.
\end{align}
Let us denote the image by $F_{tr}(G)^{st}$. The space $\D{SI}(G)$ of stable invariant distributions on $G(F)$ are linear functionals of the space of stable orbital integrals of $C^{\infty}_{c}(G(F))$. It follows from \cite[Theorem 6.1 and Theorem 6.2]{Arthur:1996} that one can characterize the space of stable orbital integrals as 
space of functions on certain subset of $R(G)^{st}$ through \eqref{eq: stable trace}. In particular, the stable invariant distributions are also supported on characters. So we can identify $\D{SI}(G)$ with $\D{F}_{tr}(G)^{st} := \Hom(F_{tr}(G)^{st}, \C)$. By \cite[Lemma 2.3]{Hiraga:2004}, the Jacquet functor induces a homomorphism 
\[
\Jac_{P}: R(G)^{st} \longrightarrow R(M)^{st},
\]
whose dual is
\[
\Jac^{*}_{P}: R^{*}(M)^{st} \longrightarrow R^{*}(G)^{st}.
\]
The following lemma is an analogue of Lemma~\ref{lemma: Jacquet dual}.
\begin{lemma}
$\Jac^{*}_{P}(F_{tr}(M)^{st}) \subseteq F_{tr}(G)^{st}$.
\end{lemma}

\begin{proof}
By Lemma~\ref{lemma: Jacquet dual}, we have a commutative diagram
\begin{align*}
\xymatrix{  F_{tr}(M) \ar[d]  \ar[r]^{\Jac^{*}_{P}} & F_{tr}(G) \ar[d] \\
                R^{*}(M)^{st}  \ar[r]^{\Jac^{*}_{P}} & R^{*}(G)^{st}.}
\end{align*}
Then this lemma follows from the fact that $F_{tr}(M)^{st}$ and $F_{tr}(G)^{st}$ are images of $F_{tr}(M)$ and $F_{tr}(G)$ respectively.

\end{proof}

As a consequence, we have a homomorphism 
\[
\Jac^{*}_{P}: F_{tr}(M)^{st} \longrightarrow F_{tr}(G)^{st},
\]
whose dual is
\[
\Jac_{P}: \D{F}_{tr}(G)^{st} \longrightarrow \D{F}_{tr}(M)^{st}.
\]
By our previous identifications, this gives
\begin{align}
\label{eq: Jacquet preserve stability}
\Jac_{P}: \D{SI}(G) \longrightarrow \D{SI}(M).
\end{align}
The following proposition is a direct consequence of Proposition~\ref{prop: extend Jacquet}.

\begin{proposition}
The following diagram commutes.
\begin{align*}
\xymatrix{ R(G)^{st} \ar[d]  \ar[r]^{\Jac_{P}} & R(M)^{st} \ar[d] \\
                \D{SI}(G) \ar[r]^{\Jac_{P}}  & \D{SI}(M).}
\end{align*}
\end{proposition}

For a strongly $\theta$-regular $\theta$-semisimple element $g$ in $G(F)$, let $h = \prod_{i=1}^{l}\theta^{i}(g)$, and one can associate it with a $\theta$-stable parabolic subgroup $P_{h} = M_{h}N_{h}$ by the construction in \cite{Casselman:1977}, and $g \in M_{h}(F)$. Now we can state the twisted version of Casselman's formula.

\begin{theorem}
Suppose $\r$ is an irreducible smooth representation of $G(F)$ such that $\r \cong \r^{\theta}$, and $g$ is a strongly $\theta$-regular $\theta$-semisimple element in $G(F)$. Let $h = \prod_{i=1}^{l}\theta^{i}(g)$. Then
\[
\Theta^{G^{\theta}}_{\r}(g) = \Theta^{M_{h}^{\theta}}_{\r_{N_{h}}}(g).
\]
\end{theorem}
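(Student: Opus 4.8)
The plan is to transpose Casselman's proof of the non-twisted formula (\cite{Casselman:1977}, Theorem~5.2) to the $\theta$-twisted setting, carrying the intertwining operator $A_{\r}(\theta)$ through every step and exploiting the $\theta$-stability of the parabolic $P_{h}=M_{h}N_{h}$. First I would record the geometric input coming from the construction of $P_{h}$ out of $h=\prod_{i=1}^{l}\theta^{i}(g)$: since $h$ is $\theta$-invariant, $P_{h}$ is $\theta$-stable, $g\in M_{h}(F)$, and $g$ is strongly $\theta$-regular and ``$\theta$-elliptic'' in $M_{h}$; moreover twisted conjugation $n\mapsto g\,n\,\theta(g)^{-1}$ is expanding on $N_{h}(F)$ and contracting on $\bar{N}_{h}(F)$, in the sense that iterating it drives any compact subset to the identity. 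Equivalently, one may work on the non-identity component $G(F)\rtimes\theta$ of $G(F)\rtimes\langle\theta\rangle$ under twisted conjugation by $G(F)$: the operator $A_{\r}(\theta)$ defines an extension $\widetilde{\r}$ of $\r$ to $G(F)\rtimes\langle\theta\rangle$ whose ordinary character restricts to $\Theta^{G^{\theta}}_{\r}$ on the coset $G(F)\theta$, so the statement becomes a Casselman-type formula for this possibly disconnected group, localized at $g\theta$.

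Next I would localize the trace near $g$. Using local integrability of the twisted character (\cite{H-C:1999}, \cite{Lemaire:2013}) together with the twisted form of Harish-Chandra's submersion and descent, it suffices to compute the germ of $\Theta^{G^{\theta}}_{\r}$ at $g$. Fixing a compact open subgroup $K$ with an Iwahori-type factorization $K=(K\cap\bar{N}_{h})(K\cap M_{h})(K\cap N_{h})$ adapted to $P_{h}$ and a test function $f$ supported in a small twisted-conjugacy neighbourhood of $g$, the operator $\r(f)\circ A_{\r}(\theta)$ is controlled by its action on $K\cap N_{h}$-coinvariants, equipped with the natural $N_{h}$-adic filtration; the ``deep'' layers of that filtration, on which $N_{h}$ (hence $g\theta$, by the contraction property) acts topologically nilpotently, contribute nothing to the trace after the usual geometric-series estimate. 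Because $P_{h}$ is $\theta$-stable the filtration is $\theta$-stable, so $A_{\r}(\theta)$ respects it and descends to the intertwining operator on the Jacquet module $\r_{N_{h}}$ that appears on the right-hand side. What is left is the twisted character of $\r_{N_{h}}$ against the appropriate test function on $M_{h}(F)$; the Jacobian of twisted conjugation by $g$ along $N_{h}$ supplies precisely the modulus factor which makes the \emph{unnormalized} Jacquet module $\r_{N_{h}}$ (rather than $\Jac_{P_{h}}\r=\r_{N_{h}}\otimes\delta_{P_{h}}^{-1/2}$) the correct object, yielding $\Theta^{G^{\theta}}_{\r}(g)=\Theta^{M_{h}^{\theta}}_{\r_{N_{h}}}(g)$.

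I expect the main obstacle to be the analytic core of Casselman's method in the twisted case: one needs the asymptotic estimates for \emph{twisted} matrix coefficients along the appropriate contracting cone in $M_{h}$ (the twisted analogue of the Casselman--Jacquet asymptotics) in order to interchange the trace with the limit and to kill the deep layers of the filtration. A secondary point requiring care is that $A_{\r}(\theta)$, a priori only an intertwiner $\r\cong\r^{\theta}$, descends to a well-defined intertwiner $\r_{N_{h}}\cong\r_{N_{h}}^{\theta}$ with the normalization implicit in the statement, the residual scalar ambiguity being exactly the one already present in the definition of $f_{G^{\theta}}(\r)$; and that strong $\theta$-regularity of $g$ prevents any further parabolic degeneration in the descent to $M_{h}$. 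All of these ingredients are available in the literature (\cite{MW:2006}, \cite{Hiraga:2004}, \cite{Lemaire:2013}), so the role of this appendix is to assemble them; once the formula is in hand, the commutative diagrams \eqref{eq: compatible with endoscopic transfer 0} and \eqref{eq: compatible with twisted endoscopic transfer 0} follow by expressing (twisted) characters of representations through unnormalized Jacquet modules and then renormalizing by $\delta_{P}^{1/2}$.
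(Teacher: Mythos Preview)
Your proposal is sound and aligns with the paper's own treatment: the paper does not actually give a proof of this theorem, but simply records it and remarks that ``Casselman proved this theorem only for $\theta = id$, but one can extend his proof to the twisted case without difficulty.'' Your sketch is precisely such an extension --- carrying $A_{\r}(\theta)$ through Casselman's argument, using $\theta$-stability of $P_{h}$ (which follows since $h$ is $\theta$-fixed), and invoking \cite{Lemaire:2013} for the twisted local integrability --- and in fact supplies considerably more detail than the paper does. The disconnected-group viewpoint you adopt (extending $\r$ to $G(F)\rtimes\langle\theta\rangle$ and reading the twisted character as an ordinary character on the non-identity component) is a standard and clean way to organize the bookkeeping, and the points you flag as needing care (descent of $A_{\r}(\theta)$ to $\r_{N_{h}}$, the twisted asymptotics) are exactly the places where one must check that Casselman's estimates go through; the references you cite cover these.
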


Casselman proved this theorem only for $\theta = id$ in \cite[Theorem 5.2]{Casselman:1977}. It was generalized to the twisted case by Rogawski in \cite[Proposition 7.4]{Rogawski:1988}. What we are going to use is the following corollary of this theorem.

\begin{corollary}
\label{cor: Casselman's formula}
Let $P = MN$ be a $\theta$-stable parabolic subgroup of $G$. Suppose $\r$ is an irreducible smooth representation of $G(F)$ such that $\r \cong \r^{\theta}$, and $m$ is a strongly $\theta$-regular $\theta$-semisimple element in $G(F)$, which is also contained in $M(F)$. Then one can choose $z_{M} \in A_{M}(F)$ ($A_{M}$ is the maximal split component of the centre of $M$) with $|\alpha(z_{M})|$ sufficiently small (depending on $m$) for all roots $\alpha$ in $N$, such that
\[
\Theta_{\r}^{G^{\theta}}(z_{M} m) = \Theta^{M^{\theta}}_{\r_{N}}(z_{M} m).
\]
\end{corollary}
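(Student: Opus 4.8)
The plan is to deduce Corollary~\ref{cor: Casselman's formula} from the twisted Casselman formula (Theorem~\ref{cor: Casselman's formula}'s predecessor, i.e., the displayed twisted version of \cite{Casselman:1977}, Theorem~5.2) by choosing the auxiliary element $z_M$ so that the parabolic $P_h$ attached to $h = \prod_{i=1}^{l} \theta^i(z_M m)$ is exactly $P = MN$. First I would recall the geometry of Casselman's construction: for a strongly $\theta$-regular $\theta$-semisimple $g$, one sets $h = \prod_{i=1}^l \theta^i(g)$ (a genuinely semisimple element fixed by $\theta$ up to the cyclic shift), and $P_h = M_h N_h$ is characterized as the parabolic whose Lie algebra is spanned by the non-negative weight spaces for the action of $h$ via $\mathrm{Ad}$, i.e.\ $N_h$ is built from the roots $\alpha$ with $|\alpha(h)| > 1$ (in a suitable normalization), while $M_h = Z_G(A_h)$ for the split torus $A_h$ generated by the ``hyperbolic part'' of $h$.

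The key step is the choice of $z_M$. Since $m \in M(F)$ is strongly $\theta$-regular $\theta$-semisimple, its hyperbolic part lies in $A_M(F)$ already (here $A_M$ is the maximal split central torus of $M$), so without the twist by $z_M$ the associated parabolic $P_m$ would be some $\theta$-stable parabolic $\subseteq P$ but possibly strictly smaller — the roots of $M$ itself could push elements into the Levi rather than the unipotent radical. Multiplying by $z_M \in A_M(F)$ does not change the roots occurring in $N$ (since $A_M$ is central in $M$, hence $\alpha(z_M) = 1$ for all roots $\alpha$ of $M$), so the Levi part of $P_h$ remains contained in $M$; and for the roots $\alpha$ occurring in $N$ we make $|\alpha(z_M)|$ so small that $|\alpha(h)| < 1$ strictly for $h = \prod_i \theta^i(z_M m)$, which forces $N_h = N$ exactly and hence $M_h = M$. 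Here one uses that $\theta$ permutes the roots of $N$ among themselves (as $\theta$ preserves the $F$-splitting and $P$ is $\theta$-stable), so $\prod_i |\alpha(\theta^i(z_M))|$ is a product of $|\beta(z_M)|$ over an orbit of roots in $N$, still small. One should also check that $z_M m$ remains strongly $\theta$-regular $\theta$-semisimple for $z_M$ in a Zariski-dense (in particular nonempty) subset of $A_M(F)$ with $|\alpha(z_M)|$ small — this is an open dense condition, so it is compatible with the smallness requirement.

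Having arranged $P_h = P$, I would then apply the twisted Casselman formula directly: $\Theta^{G^\theta}_\r(z_M m) = \Theta^{M_h^\theta}_{\r_{N_h}}(z_M m) = \Theta^{M^\theta}_{\r_N}(z_M m)$, which is the assertion. The main obstacle I anticipate is the bookkeeping in the previous paragraph: verifying carefully that Casselman's parabolic $P_h$ is governed by the roots in $N$ in the way claimed, that twisting by a central $z_M$ of $M$ does not enlarge $M_h$ beyond $M$ (one must rule out that some root outside $M$ and outside $N$ — there are none, since $P = MN$ and $\bar N$ contributes the negatives — or some root of $\bar N$ sneaks in; but $|\alpha(z_M)|$ small makes $|\alpha(h)| > 1$ for $\alpha \in \bar N$, keeping those in the opposite radical), and handling the interaction of the cyclic product $\prod_{i=1}^l \theta^i(\cdot)$ with the root data. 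None of this is deep, but it is the step that requires genuine care; the invocation of the twisted Casselman theorem and the final substitution are then immediate.
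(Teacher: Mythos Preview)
Your approach has a genuine gap: you cannot in general arrange $P_h = P$ by choosing $z_M$. The issue is with the roots of $M$ itself. You correctly note that $\alpha(z_M) = 1$ for every root $\alpha$ of $M$ (since $z_M$ is central in $M$), but this means that for such $\alpha$ the value $|\alpha(h)|$ depends only on $m$, not on $z_M$. There is no reason that $|\alpha(h)| = 1$ for all roots of $M$: for instance, if $M$ has a $GL(2)$-factor and the component of $m$ there is $\mathrm{diag}(\varpi, 1)$ for a uniformizer $\varpi$, the root of that factor has $|\alpha(h)| \neq 1$. Consequently some root subgroups of $M$ may land in $N_h$ (or $\bar N_h$), and one obtains only $P_h \subseteq P$, not $P_h = P$. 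Your related assertion that the hyperbolic part of $m$ already lies in $A_M(F)$ fails for the same reason.

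The paper's proof repairs exactly this point. It establishes only $P \supseteq P_h$ (which your argument does yield once the overclaim is removed), and then uses transitivity of Jacquet modules together with a \emph{second} application of Casselman's formula, this time inside $M$. Writing $P_h^M = M \cap P_h$ with Levi $M_h$ and unipotent radical $N_h^M = M \cap N_h$, one has $\r_{N_h} \cong (\r_N)_{N_h^M}$, and hence
\[
\Theta_{\r}^{G^{\theta}}(g) = \Theta_{\r_{N_h}}^{M_h^{\theta}}(g) = \Theta_{(\r_N)_{N_h^M}}^{M_h^{\theta}}(g) = \Theta_{\r_N}^{M^{\theta}}(g),
\]
the last equality being Casselman's formula for the representation $\r_N$ of $M$ at the element $g = z_M m \in M(F)$, whose associated parabolic in $M$ is precisely $P_h^M$. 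This second invocation is the missing idea in your proposal.
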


\begin{proof}
Let $g = z_{M}m$, and $h = \prod_{i=1}^{l}\theta^{i}(g)$. It is not hard to check from the definition of $P_{h}$ 
that $P \supseteq P_{h}$. Let $P^{M}_{h} = M \cap P_{h}$, it is the parabolic subgroup of $M$ associated with $h$, and it has Levi component $M_{h}$. Let $N^{M}_{h} = M \cap N_{h}$. Then
\[
\Theta^{G^{\theta}}_{\r}(g) = \Theta^{M_{h}^{\theta}}_{\r_{N_{h}}}(g) = \Theta^{M_{h}^{\theta}}_{(\r_{N})_{N^{M}_{h}}}(g) = \Theta^{M^{\theta}}_{\r_{N}}(g).
\]
This finishes the proof.

\end{proof}

As an application of this corollary, we are going to establish diagrams \eqref{eq: compatible with endoscopic transfer 0} and \eqref{eq: compatible with twisted endoscopic transfer 0}. First, let us recall the general setup of these diagrams. Let $H$ be a twisted endoscopic group of $G$, and we assume there is an embedding 
\[
\xi: \L{H} \rightarrow \L{G},
\]
and $\xi(\L{H}) \subseteq \Cent(s, \L{G})$ and $\D{H} \cong \Cent(s, \D{G})^{0}$ for some semisimple $s \in \D{G} \rtimes \D{\theta}$. We fix ($\D{\theta}$-stable) $\Gal{F}$-splittings $(\mathcal{B}_{H}, \mathcal{T}_{H}, \{\mathcal{X}_{\alpha_{H}}\})$ and $(\mathcal{B}_{G}, \mathcal{T}_{G}, \{\mathcal{X}_{\alpha}\})$ for $\D{H}$ and $\D{G}$ respectively. By taking certain $\D{G}$-conjugate of $\xi$, we can assume $s \in \mathcal{T}_{G} \rtimes \D{\theta}$ and $\xi(\mathcal{T}_{H}) = (\mathcal{T}_{G}^{\D{\theta}})^{0}$ and $\xi(\mathcal{B}_{H}) \subseteq \mathcal{B}_{G}$. Let $W_{H} = W(\D{H}, \mathcal{T}_{H})$ and $W_{G^{\theta}} = W(\D{G}, \mathcal{T}_{G})^{\D{\theta}}$, then $W_{H}$ can be viewed as a subgroup of $W_{G^{\theta}}$. We also view $\L{H}$ as a subgroup of $\L{G}$ through $\xi$.

We fix a standard $\theta$-stable parabolic subgroup $P = MN$ of $G$ with standard embedding $\L{P} \hookrightarrow \L{G}$. Then there exists a torus $S \subseteq (\mathcal{T}_{G}^{\D{\theta}})^{0}$ such that $\L{M} = \Cent(S, \L{G})$. Let $W_{M^{\theta}} =  W(\D{M}, \mathcal{T}_{G})^{\D{\theta}}$. We define
\[
W_{G^{\theta}}(H, M) := \{w \in W_{G^{\theta}} | \, \Cent(w(S), \L{H}) \rightarrow W_{F} \text{ surjective }\}.
\]
For any $w \in W_{G^{\theta}}(H, M)$, let us take $g \in \D{G}$ such that $\Int(g)$ induces $w$. Since $\Cent(w(S), \L{H}) \rightarrow W_{F}$ is surjective, $g \L{P} g^{-1} \cap \L{H}$ defines a parabolic subgroup of $\L{H}$ with Levi component $g \L{M} g^{-1} \cap \L{H}$. So we can choose a standard parabolic subgroup $P'_{w} = M'_{w}N'_{w}$ of $H$ with standard embedding $\L{P'_{w}} \hookrightarrow \L{H}$ such that $\L{P'_{w}}$ (resp. $\L{M'_{w}}$) is $\D{H}$-conjugate to $g \L{P} g^{-1} \cap \L{H}$ (resp. $g \L{M} g^{-1} \cap \L{H}$). In particular, $M'_{w}$ can be viewed as a twisted endoscopic group of $M$, and the embedding $\xi_{M'_{w}}: \L{M'_{w}} \rightarrow \L{M}$ is given by the following diagram:
\[
\xymatrix{ \L{P'_{w}} \ar@{^{(}->}[d] & \L{M'_{w}} \ar[l] \ar[r]^{\xi_{M'_{w}}} & \L{M} \ar[r] & \L{P} \ar@{^{(}->}[d]  \\
\L{H} \ar[r]^{\Int(h)} & \L{H} \ar[r]^{\xi} & \L{G} & \L{G} \ar[l]_{\Int(g)} 
}
\]
where $h \in \D{H}$ induces an element in $W_{H}$. Note the choice of $h$ is unique up to $\D{M}'_{w}$-conjugation, and so is $\xi_{M'_{w}}$. If we change $g$ to $h'gm$, where $h' \in \D{H}$ induces an element in $W_{H}$ and $m \in \D{M}$ induces an element in $W_{M^{\theta}}$, then we still get $P'_{w}$, but $\xi_{M'_{w}}$ changes to $\Int(m^{-1}) \circ \xi_{M'_{w}}$ up to $\D{M}'_{w}$-conjugation. To summarize, for any element $w$ in 
\[
W_{H} \backslash W_{G^{\theta}}(H, M)/W_{M^{\theta}}
\]
we can associate a standard parabolic subgroup $P'_{w} = M'_{w} N'_{w}$ of $H$ and a $\D{M}$-conjugacy class of embedding $\xi_{M'_{w}}: \L{M'_{w}} \rightarrow \L{M}$.

After this setup, we claim the following diagram commutes.
\begin{align}
\label{eq: compatible with twisted endoscopic transfer general} 
\xymatrix{\D{SI}(H) \ar[d]_{\+_{w} \Jac_{P'_{w}}}  \ar[r] & \D{I}(G^{\theta}) \ar[d]^{\Jac_{P}} \\
                \bigoplus_{w} \D{SI}(M'_{w}) \ar[r]  & \D{I}(M^{\theta}), }
\end{align}
where the sum is over $W_{H} \backslash W_{G^{\theta}}(H, M)/W_{M^{\theta}}$, and the horizontal maps correspond to the twisted spectral endoscopic transfers with respect to $\xi$ on the top and $\xi_{M'_{w}}$ on the bottom. In fact, it is enough to show the commutativity of this diagram restricting to the subspaces of finite linear combinations of (twisted) characters. 
\begin{align}
\label{eq: compatible with twisted endoscopic transfer general character case} 
\xymatrix{R(H)^{st} \ar[d]_{\+_{w} \Jac_{P'_{w}}}  \ar[r] & R(G^{\theta}) \ar[d]^{\Jac_{P}} \\
                \bigoplus_{w} R(M'_{w})^{st} \ar[r]  & R(M^{\theta}),}
\end{align}
where the horizontal maps are given by the restrictions of twisted spectral endoscopic transfers, whose existences are due to \cite{Arthur:1996} and \cite[Appendix]{MW:2016}.

\begin{lemma}
The commutativity of the diagram \eqref{eq: compatible with twisted endoscopic transfer general character case} implies that of \eqref{eq: compatible with twisted endoscopic transfer general}. 
\end{lemma}

\begin{proof}

By taking dual of the diagram \eqref{eq: compatible with twisted endoscopic transfer general character case}, we get
\begin{align*}
\xymatrix{R^{*}(H)^{st} & R^{*}(G^{\theta}) \ar[l]  \\
                \bigoplus_{w} R^{*}(M'_{w})^{st} \ar[u]^{\+_{w} \Jac^{*}_{P'_{w}}}  & R^{*}(M^{\theta}) \ar[l]   \ar[u]_{\Jac^{*}_{P}}.}
\end{align*}
By restriction, we have
\begin{align*}
\xymatrix{F_{tr}(H)^{st} & F_{tr}(G^{\theta}) \ar[l]  \\
                \bigoplus_{w} F_{tr}(M'_{w})^{st} \ar[u]^{\+_{w} \Jac^{*}_{P'_{w}}}  & F_{tr}(M^{\theta}) \ar[l]   \ar[u]_{\Jac^{*}_{P}}.}
\end{align*}
Then the diagram \eqref{eq: compatible with twisted endoscopic transfer general} is simply its dual.

\end{proof}

As a consequence of this lemma, we only need to show \eqref{eq: compatible with twisted endoscopic transfer general character case}. To apply Corollary~\ref{cor: Casselman's formula}, we need to give another description of the twisted spectral endoscopic transfer. With respect to the embedding $\xi$, there is a map from the semisimple $H(\bar{F})$-conjugacy classes of $H(\bar{F})$ to the $\theta$-twisted semisimple $G(\bar{F})$-conjugacy classes of $G(\bar{F})$ (see \cite{KottwitzShelstad:1999}). If $\Theta^{G^{\theta}}$ is a finite linear combination of twisted characters of $G(F)$ and $\Theta^{H}$ is a stable finite linear combination of characters of $H(F)$, then we say $\Theta^{H}$ transfers to $\Theta^{G^{\theta}}$ if for any strongly $\theta$-regular $\theta$-semisimple element $\gamma_{G}$ in $G(F)$
\begin{align}
\label{eq: spectral transfer}
\Theta^{G^{\theta}}(\gamma_{G}) = \sum_{\gamma_{H} \rightarrow \gamma_{G}} \frac{D_{H}(\gamma_{H})^{2}}{D_{G^{\theta}}(\gamma_{G})^{2}} \Delta_{G, H}(\gamma_{H}, \gamma_{G}) \Theta^{H}(\gamma_{H})
\end{align}
where the sum is over $H(\bar{F})$-conjugacy classes of $\gamma_{H}$ in $H(F)$ that map to the $\theta$-twisted $G(\bar{F})$-conjugacy class of $\gamma_{G}$. In this formula, $\Delta_{G, H}( \cdot, \cdot)$ is the transfer factor (see \cite{KottwitzShelstad:1999}), and it is built into the transfer map introduced in Section~\ref{sec: endoscopy}; $D_{H}(\cdot)$ and $D_{G^{\theta}}(\cdot)$ are the (twisted) Weyl discriminants.

From now on let us fix $\gamma_{G} = \gamma_{M}$ contained in $M(F)$.

\begin{lemma}
\label{lemma: counting}

\begin{enumerate}

\item
If the $H(\bar{F})$-conjugacy class $\{\gamma_{H}\}_{H(\bar{F})}$ of $\gamma_{H}$ in $H(F)$ maps to the $\theta$-twisted $G(\bar{F})$-conjugacy class $\{\gamma_{G}\}^{\theta}_{G(\bar{F})}$ of $\gamma_{G}$ with respect to $\xi$, then there exists $w \in W_{H} \backslash W_{G^{\theta}}(H, M)/W_{M^{\theta}}$ such that some $H(\bar{F})$-conjugate $\gamma_{M'_{w}}$ of $\gamma_{H}$ is contained in $M'_{w}(F)$, and the $M'_{w}(\bar{F})$-conjugacy class $\{\gamma_{M'_{w}}\}_{M'_{w}(\bar{F})}$ of $\gamma_{M'_{w}}$ maps to the $\theta$-twisted $M(\bar{F})$-conjugacy class $\{\gamma_{M}\}^{\theta}_{M(\bar{F})}$ of $\gamma_{M}$ with respect to $\xi_{M'_{w}}$. 

\item The correspondence in (1) gives a bijection between $H(\bar{F})$-conjugacy classes $\{\gamma_{H}\}_{H(\bar{F})}$ of $\gamma_{H}$ in $H(F)$ that map to $\{\gamma_{G}\}^{\theta}_{G(\bar{F})}$, and triples $(P'_{w}, \xi_{M'_{w}}, \{\gamma_{M'_{w}}\}_{M'_{w}(\bar{F})})$ indexed by $w \in W_{H} \backslash W_{G^{\theta}}(H, M)/W_{M^{\theta}}$, where $\{\gamma_{M'_{w}}\}_{M'_{w}(\bar{F})}$ maps to $\{\gamma_{M}\}^{\theta}_{M(\bar{F})}$.

\end{enumerate}

\end{lemma}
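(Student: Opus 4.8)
The plan is to reduce both statements to the combinatorics of the classical description of endoscopic matching of semisimple conjugacy classes in terms of tori and Weyl groups. The starting point is that a $\theta$-twisted semisimple $G(\bar F)$-conjugacy class through $\gamma_G$ (and likewise a semisimple $H(\bar F)$-class through $\gamma_H$) is determined by its image in the space $(\mathcal T_G/W_{G^\theta})(\bar F)$ (resp. $(\mathcal T_H/W_H)(\bar F)$), and the map $\xi$ on dual sides induces, via $\xi(\mathcal T_H)=(\mathcal T_G^{\D\theta})^0$, a map $\mathcal T_H/W_H \to \mathcal T_G/W_{G^\theta}$ which encodes the correspondence $\gamma_H\to\gamma_G$. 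First I would fix, once and for all, admissible embeddings of the relevant maximal tori of $H$ and $G$ over $F$ so that $\gamma_H$ and $\gamma_G$ both land in a common torus $T$, compatibly with $\xi$; this is exactly the data packaged by Kottwitz--Shelstad \cite{KottwitzShelstad:1999}, and I would simply cite it rather than reprove it.

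For part (1): given $\gamma_H\in H(F)$ matching $\gamma_G=\gamma_M\in M(F)$, choose a maximal torus $T_H\subseteq H$ defined over $F$ containing $\gamma_H$, transported by an admissible embedding to a $\theta$-stable maximal torus $T\subseteq G$ contained in $M$ (such a torus exists because $\gamma_M\in M(F)$). On dual groups, the image of $\D{T_H}$ under $\xi$ is $\D G$-conjugate to $(\D T^{\D\theta})^0$; choosing $g\in\D G$ realizing this conjugacy produces an element $w\in W_{G^\theta}$. The condition that the matching is through $\xi_{M'_w}$ forces $\Cent(w(S),\L H)\to W_F$ (equivalently $\to\Gal{L/F}$ in the notation of the body) to be surjective, i.e. $w\in W_{G^\theta}(H,M)$; otherwise the centralizer would not surject and the matching could not factor through a Levi of $H$ sitting over $M$. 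The freedom in choosing $g$ up to left multiplication by $\D H$-elements inducing $W_H$ and right multiplication by $\D M$-elements inducing $W_{M^\theta}$ is precisely what makes the double coset $W_H\backslash W_{G^\theta}(H,M)/W_{M^\theta}$ the correct invariant, and it is exactly the indexing already set up before the statement. Then $M'_w$ is the standard Levi of $H$ attached to that double coset, and the admissible embedding $T_H\hookrightarrow M'_w$ puts an $H(\bar F)$-conjugate $\gamma_{M'_w}$ of $\gamma_H$ into $M'_w(F)$; chasing the diagram defining $\xi_{M'_w}$ shows $\{\gamma_{M'_w}\}_{M'_w(\bar F)}$ matches $\{\gamma_M\}^\theta_{M(\bar F)}$ through $\xi_{M'_w}$.

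For part (2): I would construct the inverse map by taking a triple $(P'_w,\xi_{M'_w},\{\gamma_{M'_w}\}_{M'_w(\bar F)})$ with $\gamma_{M'_w}$ matching $\gamma_M$, and using the inclusion $M'_w\hookrightarrow H$ to get an $H(\bar F)$-class $\{\gamma_H\}_{H(\bar F)}$; commutativity of the diagram defining $\xi_{M'_w}$ (whose bottom row composes $\xi_{M'_w}$ with $\xi$ up to the $\Int(g)$, $\Int(h)$ twists) guarantees this class matches $\{\gamma_G\}^\theta_{G(\bar F)}$ through $\xi$. That these two assignments are mutually inverse comes down to: (a) an $H(\bar F)$-class meeting $M'_w(F)$ meets it in a single $M'_w(\bar F)$-class once $w$ is fixed — this is the standard fact that $W(M'_w)\backslash W(H)$ parametrizes the ways an $H$-class meets a given Levi, combined with the stable (geometric) nature of the classes here; and (b) two distinct double cosets $w\neq w'$ give $M'_w$-classes lying in distinct $H(\bar F)$-classes, which is again the torus/Weyl-group bookkeeping. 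The counting is then a bijection of finite sets indexed by $W_H\backslash W_{G^\theta}(H,M)/W_{M^\theta}$ together with the matching data downstairs.

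The main obstacle I expect is bookkeeping rather than conceptual: keeping the admissible torus embeddings, the elements $g\in\D G$, $h\in\D H$, and the various inner twists mutually compatible so that ``matches through $\xi$'' upstairs translates cleanly into ``matches through $\xi_{M'_w}$'' downstairs, and in particular verifying the surjectivity condition $w\in W_{G^\theta}(H,M)$ is equivalent to the $H$-class actually meeting a Levi lying over $M$. All of this is implicit in \cite{KottwitzShelstad:1999} and in Casselman-type descent arguments, so I would carry it out by reducing to tori, where everything becomes a statement about $\Gal{F}$-modules and Weyl-group double cosets, and cite the general transfer-factor descent compatibility for the normalizations rather than recompute transfer factors.
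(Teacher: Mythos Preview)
Your approach matches the paper's: reduce to maximal tori via admissible embeddings, extract a Weyl-group double coset $w$ from the dual-side torus comparison, and build the inverse directly from the inclusion $M'_w \hookrightarrow H$. Two places in your argument need tightening, and the paper addresses both explicitly.

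First, your justification that $w \in W_{G^\theta}(H,M)$ is circular. You say the surjectivity of $\Cent(w(S),\L{H}) \to W_F$ is forced because ``the matching is through $\xi_{M'_w}$'', but at this stage $M'_w$ and $\xi_{M'_w}$ have not yet been constructed --- you are trying to produce them. The paper's argument is direct: since the admissible embedding $\iota: T_H \to T_\theta$ is defined over $F$ and the isomorphism $(\D{T}^{\D\theta})^0 \to (\mathcal{T}_M^{\D\theta})^0$ is by $M$-conjugation, the image of $S$ in $\D{T}_H$ is $\Gal{F}$-invariant, which is exactly the surjectivity condition.

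Second, well-definedness of the forward map in part (2) is more than ``bookkeeping''. If one replaces $\gamma_H$ by an $H(\bar F)$-conjugate $\gamma'_H \in H(F)$ with a different admissible embedding $\iota'$, one must check that the resulting $w$ and $\{\gamma_{M'_w}\}_{M'_w(\bar F)}$ are unchanged. The paper shows this by proving $\iota^{-1} \circ \iota'$ is realized by $H$-conjugation; this in turn requires showing that a certain element $w' \in W(G,T)^\theta$ fixing $N_\theta(\lif{\gamma}_G)$ is actually trivial, and that step uses the hypothesis that $\gamma_G$ is \emph{strongly $\theta$-regular}, which you never invoke. Your claim (a), that an $H(\bar F)$-class meeting $M'_w(F)$ meets it in a single $M'_w(\bar F)$-class once $w$ is fixed, is not the right formulation and is false in general for Levi intersections; the correct statement is that the specific torus construction produces a unique class, and strong $\theta$-regularity is what pins it down. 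The paper also cites \cite{Kottwitz:1982}, Corollary~2.2, to arrange the homomorphism $\eta: T_H \to M'_w$ to be defined over $F$, a rationality step you would need as well.
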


\begin{proof}

We fix a $\theta$-stable pair $(B_{M}, T_{M})$ in $M$, where $B_{M}$ is a Borel subgroup of $M$ defined over $\bar{F}$ and $T_{M} \subseteq B_{M}$ is a maximal torus defined over $F$, such that there exists $\lif{\gamma}_{M} = m^{-1}\gamma_{M} \theta(m) \in T_{M}$ for $m \in M(\bar{F})$ satisfying $N_{\theta}(\lif{\gamma}_{M}) \in (T_{M})_{\theta}(F)$, where $N_{\theta}: T_{M} \rightarrow (T_{M})_{\theta}$. Let $B = B_{M}N$ and $T = T_{M}$, then $(B, T)$ is a $\theta$-stable pair in $G$, and we fix $\lif{\gamma}_{G} = \lif{\gamma}_{M}$. 

Suppose $\{\gamma_{H}\}_{H(\bar{F})}$ maps to $\{\gamma_{G}\}^{\theta}_{G(\bar{F})}$, let $T_{H} = \Cent(\gamma_{H}, H)$, then there exists an admissible embedding $\iota: T_{H} \rightarrow T_{\theta}$ defined over $F$, where $T_{\theta}$ is the $\theta$-coinvariant group of $T$, such that it sends $\gamma_{H}$ to $N_{\theta}(\lif{\gamma}_{G})$. The $\D{\theta}$-stable $\Gal{F}$-splitting of $\D{G}$ induces a $\D{\theta}$-stable $\Gal{F}$-splitting $(\mathcal{B}_{M}, \mathcal{T}_{M}, \{\mathcal{X}_{\alpha_{M}}\})$ of $\D{M}$. Since $T = T_{M}$ is contained in $M$, we can choose isomorphism $(\D{T}^{\D{\theta}})^{0} \rightarrow (\mathcal{T}_{M}^{\D{\theta}})^{0}  = (\mathcal{T}_{G}^{\D{\theta}})^{0}$ up to $W_{M^{\theta}}$-conjugation. Then we get an element $w \in W_{G^{\theta}}$ defined by the following diagram 
\[
\xymatrix{ \D{T}_{H} \ar[d] \ar[r] & (\D{T}^{\D{\theta}})^{0} \ar[d] \ar[r] & (\mathcal{T}_{M}^{\D{\theta}})^{0} \ar@{=}[d] \\
\mathcal{T}_{H} \ar[r]^{\xi} & (\mathcal{T}_{G}^{\D{\theta}})^{0}  & (\mathcal{T}_{G}^{\D{\theta}})^{0} \ar[l]_{w}. 
}
\]
Recall $\L{M} = \Cent(S, \L{G})$ for $S \subseteq (\mathcal{T}_{G}^{\D{\theta}})^{0}$, so $S$ is $\Gal{F}$-invariant. Since $(\D{T}^{\D{\theta}})^{0} \rightarrow  (\mathcal{T}_{M}^{\D{\theta}})^{0}$ is defined by conjugation in $M$, the image of $S$ in $(\D{T}^{\D{\theta}})^{0}$ is also $\Gal{F}$-invariant. Note $T_{H} \rightarrow T_{\theta}$ is defined over $F$, so the image of $S$ in $\D{T}_{H}$ is $\Gal{F}$-invariant. It follows $\Cent(w(S), \L{H}) \rightarrow W_{F}$ is surjective, and hence $w \in W_{G^{\theta}}(H, M)$. The $\Gal{F}$-splitting of $\D{H}$ induces a $\Gal{F}$-splitting $(\mathcal{B}_{M'_{w}}, \mathcal{T}_{M'_{w}}, \{\mathcal{X}_{\alpha_{M'_{w}}}\})$ of $\D{M}'_{w}$. Then we have the following diagram
\[
\xymatrix{ \mathcal{T}_{M'_{w}} \ar[d] \ar[r]^{\xi_{M'_{w}}} & (\mathcal{T}_{M}^{\D{\theta}})^{0} \ar[d]^{w} \\
\mathcal{T}_{H} \ar[r]^{\xi} & (\mathcal{T}_{G}^{\D{\theta}})^{0}. 
}
\]
The composition
\[
\D{T}_{H} \rightarrow \mathcal{T}_{H} \rightarrow \mathcal{T}_{M'_{w}}
\]
induces a homomorphism $\eta: T_{H} \rightarrow M'_{w}$ by $H$-conjugation, which is determined up to $M'_{w}$-conjugation. Since the image $S'$ of $S$ in $\mathcal{T}_{M'_{w}}$ is $\Gal{F}$-invariant and $\L{M'_{w}} = \Cent(S', \L{H})$, one can show for any $\sigma \in \Gal{F}$, there exists $m' \in M'_{w}(\bar{F})$ such that $\sigma(\eta) \circ \eta^{-1} = \Int(m')$. By \cite[Corollary 2.2]{Kottwitz:1982}, one can choose $\eta$ to be defined over $F$. Let $T_{M'_{w}} = \eta(T_{H})$ and $\gamma_{M'_{w}} = \eta(\gamma_{H})$. It follows from the following commutative diagram
\[
\xymatrix{ \D{T}_{M'_{w}} \ar[r] \ar[d] & \mathcal{T}_{M'_{w}} \ar[d] \ar[r]^{\xi_{M'_{w}}} & (\mathcal{T}_{M}^{\D{\theta}})^{0} \ar[d]^{w} \ar[r] & (\D{T}_{M}^{\D{\theta}})^{0} \ar@{=}[d] \\
\D{T}_{H} \ar[r] & \mathcal{T}_{H} \ar[r]^{\xi} & (\mathcal{T}_{G}^{\D{\theta}})^{0} \ar[r] & (\D{T}^{\D{\theta}})^{0} 
}
\]
that
\[
\xymatrix{
T_{M'_{w}} \ar[r]^{\eta^{-1}} & T_{H} \ar[r]^{\iota \quad \quad } & T_{\theta} = (T_{M})_{\theta}
}
\] 
is an admissible embedding with respect to $\xi_{M'_{w}}$. So $\{\gamma_{M'_{w}}\}_{M'_{w}(\bar{F})}$ maps to $\{\gamma_{M}\}_{M(\bar{F})}^{\theta}$. This proves part (1) of the lemma.

To prove part (2), we first want to show the map obtained above by sending $\{\gamma_{H}\}_{H(\bar{F})}$ to the triple $(P'_{w}, \xi_{M'_{w}}, \{\gamma_{M'_{w}}\}_{M'_{w}(\bar{F})})$ is well-defined. If we fix the admissible embedding $T_{H} \rightarrow T_{\theta}$, but change $\D{T}_{H} \rightarrow \mathcal{T}_{H}$ by $W_{H}$-conjugation and $(\D{T}^{\D{\theta}})^{0} \rightarrow (\mathcal{T}_{M}^{\D{\theta}})^{0}$ by $W_{M^{\theta}}$-conjugation, one can see $w$ is well-defined in $W_{H} \backslash W_{G^{\theta}}(H, M)/W_{M^{\theta}}$. Moreover, $\{\gamma_{M'_{w}}\}_{M'_{w}(\bar{F})}$ is uniquely determined. Now let us vary the admissible embedding $\iota$ by choosing $\gamma'_{H} \in \{\gamma_{H}\}_{H(\bar{F})} \cap H(F)$ and $\iota': T'_{H} \rightarrow T_{\theta}$ defined over $F$, such that $\gamma'_{H} \in T'_{H}(F)$ is again sent to $N_{\theta}(\lif{\gamma}_{G})$. Suppose $\gamma'_{H} = h\gamma_{H}h^{-1}$ for $h \in H(\bar{F})$. We claim 
\[
\iota^{-1} \circ \iota' : T'_{H} \rightarrow T_{H}
\]
can be given by conjugation in $H$. To see this, we consider the following commutative diagram
\[
\xymatrix{ T_{H} \ar[r]  \ar[d]^{\iota} & T'_{H} \ar[r]^{\Int(h)^{-1}} \ar[d]^{\iota'} & T_{H} \ar[d]^{\iota} \\
T_{\theta} \ar@{=}[r] & T_{\theta} \ar[r]^{w'} & T_{\theta}.
}
\]
Here $w'$ can be viewed as in $W(G, T)^{\theta}$. Since $\theta$ preserves a splitting of $G$, we can choose $g \in G(\bar{F})$ such that $\theta(g) = g$ and $\Int(g) = w'$. Since $w'$ fixes $N_{\theta}(\lif{\gamma}_{G})$, there exists $t \in T(\bar{F})$ such that 
\[
g \lif{\gamma}_{G}g^{-1} = \lif{\gamma}_{G}t^{-1}\theta(t).
\]
Then
\[
(tg)\lif{\gamma}_{G}\theta(tg)^{-1} = \lif{\gamma}_{G}. 
\]
Since $\gamma_{G}$ is strongly $\theta$-regular, $tg \in T(\bar{F})$ and hence $g \in T(\bar{F})$. It follows $w' = 1$, and we have $\iota^{-1} \circ \iota' = \Int(h)^{-1}$.

As a consequence of our claim, we have
\[
\xymatrix{ 
\D{T}'_{H} \ar[d] \ar[r] & \D{T}_{H} \ar[d] \ar[r] & (\D{T}^{\D{\theta}})^{0} \ar[d] \ar[r] & (\mathcal{T}_{M}^{\D{\theta}})^{0} \ar@{=}[d] \\
\mathcal{T}_{H} \ar@{=}[r] & \mathcal{T}_{H} \ar[r]^{\xi} & (\mathcal{T}_{G}^{\D{\theta}})^{0}  & (\mathcal{T}_{G}^{\D{\theta}})^{0} \ar[l]_{w} 
}
\]
This means $w$ is unchanged in $W_{H} \backslash W_{G^{\theta}}(H, M)/W_{M^{\theta}}$. Moreover, we have 
\[
\xymatrix{ 
\D{T}_{M'_{w}} \ar@{=}[r] \ar[d] & \D{T}_{M'_{w}} \ar[r] \ar[d] & \mathcal{T}_{M'_{w}} \ar[d] \ar[r]^{\xi_{M'_{w}}} & (\mathcal{T}_{M}^{\D{\theta}})^{0} \ar[d]^{w} \ar[r] & (\D{T}_{M}^{\D{\theta}})^{0} \ar@{=}[d] \\
\D{T}'_{H} \ar[r] & \D{T}_{H} \ar[r] & \mathcal{T}_{H} \ar[r]^{\xi} & (\mathcal{T}_{G}^{\D{\theta}})^{0} \ar[r] & (\D{T}^{\D{\theta}})^{0} 
}
\]
It is easy to see that one still gets the same $\{\gamma_{M'_{w}}\}_{M'_{w}(\bar{F})}$.

Next we want to construct the inverse. For any triple $(P'_{w}, \xi_{M'_{w}}, \{\gamma_{M'_{w}}\}_{M'_{w}(\bar{F})})$, let $T_{M'_{w}} = \Cent(\gamma_{M'_{w}}, M'_{w})$. Since $\{\gamma_{M'_{w}}\}_{M'_{w}(\bar{F})}$ maps to $\{\gamma_{M}\}^{\theta}_{M(\bar{F})}$, we have an admissible embedding $T_{M'_{w}} \rightarrow (T_{M})_{\theta}$ with respect to $\xi_{M'_{w}}$, which sends $\gamma_{M'_{w}}$ to $\lif{\gamma}_{M}$. Let $\gamma_{H} = \gamma_{M'_{w}}$ and $T_{H} = T_{M'_{w}}$. It follows from the following diagram 
\[
\xymatrix{ 
\D{T}_{M'_{w}} \ar[r] \ar@{=}[d] & \mathcal{T}_{M'_{w}} \ar[d] \ar[r]^{\xi_{M'_{w}}} & (\mathcal{T}_{M}^{\D{\theta}})^{0} \ar[d]^{w} \ar[r] & (\D{T}_{M}^{\D{\theta}})^{0} \ar@{=}[d] \\
\D{T}_{H} \ar[r] & \mathcal{T}_{H} \ar[r]^{\xi} & (\mathcal{T}_{G}^{\D{\theta}})^{0} \ar[r] & (\D{T}^{\D{\theta}})^{0} 
}
\]
that $T_{H} \rightarrow T_{\theta}$ is an admissible embedding with respect to $\xi$. So $\{\gamma_{H}\}_{H(\bar{F})}$ maps to $\{\gamma_{G}\}_{G(\bar{F})}^{\theta}$. In this way, we send $(P'_{w}, \xi_{M'_{w}}, \{\gamma_{M'_{w}}\}_{M'_{w}(\bar{F})})$ to $\{\gamma_{H}\}_{H(\bar{F})}$, where $\gamma_{H} = \gamma_{M'_{w}}$. Finally, it is easy to check that this map does give the inverse.

\end{proof}

As a consequence of this lemma, we can rewrite the right hand side of \eqref{eq: spectral transfer} as 
\[
\sum_{w} \sum_{\gamma_{M'_{w}} \rightarrow \gamma_{M}} \frac{D_{H}(\gamma_{M'_{w}})^{2}}{D_{G^{\theta}}(\gamma_{M})^{2}} \Delta_{G, H}(\gamma_{M'_{w}}, \gamma_{M}) \Theta^{H}(\gamma_{M'_{w}}).
\]
So the next step is to write the summands in terms of $M$ and $M'_{w}$. First, it is easy to check from the definition of transfer factors that
\[
\frac{D_{H}(\gamma_{M'_{w}})}{D_{G^{\theta}}(\gamma_{M})} \Delta_{G, H}(\gamma_{M'_{w}}, \gamma_{M}) = \frac{D_{M'_{w}}(\gamma_{M'_{w}})}{D_{M^{\theta}}(\gamma_{M})} \Delta_{M, M'_{w}}(\gamma_{M'_{w}}, \gamma_{M})
\]
Secondly, there is a natural homomorphism from $A_{M}$ to $A_{M'_{w}}$ with respect to $\xi_{M'_{w}}$ (see \cite{KottwitzShelstad:1999}). For $z_{M} \in A_{M}(F)$, let $z_{M'_{w}}$ be its image in $A_{M'_{w}}(F)$, then the $M'_{w}(\bar{F})$-conjugacy class of $z_{M'_{w}}\gamma_{M'_{w}}$ maps to the $M(\bar{F})$-conjugacy class of $z_{M}\gamma_{M}$. Moreover, $sup|\alpha'(z_{M'_{w}})|$ for roots $\a'$ in $N'_{w}$ is less than $sup|\prod_{i=1}^{l}\theta^{i}(\alpha)(z_{M})|$ for roots $\alpha$ in $N$.

Let
\[
\Theta^{M^{\theta}}  = \sum_{i} c_{i} \cdot \Theta^{M^{\theta}}_{\r_{i}}
\]
\[
(\text{ resp. } \Theta^{M'_{w}} = \sum_{j} d_{w, j} \cdot \Theta^{M'_{w}}_{\r'_{w, j}})
\]
be the (resp. stable) finite linear combination of (twisted) characters of $M$ (resp. $M'_{w}$) obtained from $\Theta^{G^{\theta}}$ (resp. $\Theta^{H}$) by taking the unnormalized Jacquet modules. If we take $z_{M} \in A_{M}(F)$ so that $sup|\alpha(z_{M})|$ is sufficiently small for all roots $\alpha$ in $N$, then by Corollary~\ref{cor: Casselman's formula}, 
\[
\Theta^{G^{\theta}}(z_{M}\gamma_{M}) = \Theta^{M^{\theta}}(z_{M}\gamma_{M}),
\]
and
\[
\Theta^{H}(z_{M'_{w}}\gamma_{M'_{w}}) = \Theta^{M'_{w}}(z_{M'_{w}}\gamma_{M'_{w}}).
\]
Besides, it is easy to verify
\[
D_{G^{\theta}}(z_{M}\gamma_{M}) = \delta_{P}(z_{M}\gamma_{M})^{-1/2} \cdot D_{M^{\theta}}(z_{M}\gamma_{M}),
\]
and
\[
D_{H}(z_{M'_{w}}\gamma_{M'_{w}}) = \delta_{P'}(z_{M'_{w}}\gamma_{M'_{w}})^{-1/2} \cdot D_{M'_{w}}(z_{M'_{w}}\gamma_{M'_{w}}).
\]
Putting all these together, we get
\begin{align*}
&\delta_{P}(z_{M}\gamma_{M})^{-1/2} \Theta^{M^{\theta}}(z_{M}\gamma_{M}) \\
= & \sum_{w} \sum_{\gamma_{M'_{w}} \rightarrow \gamma_{M}} \frac{D_{M'_{w}}(z_{M'_{w}}\gamma_{M'_{w}})^{2}}{D_{M^{\theta}}(z_{M}\gamma_{M})^{2}} \Delta_{M, M'_{w}}(z_{M'_{w}}\gamma_{M'_{w}}, z_{M}\gamma_{M}) \delta_{P'_{w}}(z_{M'_{w}}\gamma_{M'_{w}})^{-1/2} \Theta^{M'_{w}}(z_{M'_{w}}\gamma_{M'_{w}}).
\end{align*}
Since $z_{M}, z_{M'_{w}}$ are in the centres of $M$ and $M'_{w}$ respectively, we have 
\begin{align*}
\Delta_{M, M'_{w}}(z_{M'_{w}} \gamma_{M'_{w}}, z_{M} \gamma_{M}) & = \chi'_{w}(z_{M}) \Delta_{M, M'_{w}}(\gamma_{M'_{w}}, \gamma_{M}), \\
\delta_{P}(z_{M}\gamma_{M})^{-1/2} \Theta^{M^{\theta}}_{\r_{i}}(z_{M}\gamma_{M})  & = \zeta_{M, i}(z_{M}) \delta_{P}(\gamma_{M})^{-1/2} \Theta^{M^{\theta}}_{\r_{i}}(\gamma_{M}), \\
\delta_{P'_{w}}(z_{M'_{w}}\gamma_{M'_{w}})^{-1/2} \Theta^{M'_{w}}_{\r'_{w, j}}(z_{M'_{w}}\gamma_{M'_{w}}) & = \zeta_{M'_{w}, j}(z_{M'_{w}}) \delta_{P'_{w}}(\gamma_{M'_{w}})^{-1/2} \Theta^{M'_{w}}_{\r'_{w, j}}(\gamma_{M'_{w}}), \\
D_{M^{\theta}}(z_{M}\gamma_{M}) & = D_{M^{\theta}}(\gamma_{M}), \\
D_{M'_{w}}(z_{M'_{w}}\gamma_{M'_{w}}) & = D_{M'_{w}}(\gamma_{M'_{w}}),
\end{align*}
where $\chi'_{w}, \zeta_{M, i}, \zeta_{M'_{w}, j}$ are the corresponding central characters. Hence
\begin{align*}
& \sum_{i} (c_{i} \cdot \delta_{P}(\gamma_{M})^{-1/2}  \Theta^{M^{\theta}}_{\r_{i}}(\gamma_{M})) \cdot \zeta_{M, i}(z_{M}) \\
= & \sum_{w}  \sum_{j}  \big(  d_{w, j} \sum_{\gamma_{M'_{w}} \rightarrow \gamma_{M}} \frac{D_{M'_{w}}(\gamma_{M'_{w}})^{2}}{D_{M^{\theta}}(\gamma_{M})^{2}} \Delta_{M, M'_{w}}(\gamma_{M'_{w}}, \gamma_{M})  \delta_{P'_{w}}(\gamma_{M'_{w}})^{-1/2}  \Theta^{M'_{w}}_{\r'_{w, j}}(\gamma_{M'_{w}})\big) \cdot \chi'_{w}(z_{M}) \zeta_{M'_{w}, j}(z_{M'_{w}}).
\end{align*}
Let 
\[
a_{M, i} = c_{i} \cdot \delta_{P}(\gamma_{M})^{-1/2} \Theta^{M^{\theta}}_{\r_{i}}(\gamma_{M}),
\]
and
\[
b_{w, j} = d_{w, j} \sum_{\gamma_{M'_{w}} \rightarrow \gamma_{M}} \frac{D_{M'_{w}}(\gamma_{M'_{w}})^{2}}{D_{M^{\theta}}(\gamma_{M})^{2}} \Delta_{M, M'_{w}}(\gamma_{M'_{w}}, \gamma_{M}) \delta_{P'_{w}}(\gamma_{M'_{w}})^{-1/2}  \Theta^{M'_{w}}_{\r'_{w, j}}(\gamma_{M'_{w}}).
\]
We also write $\chi_{M, i}(z_{M}) = \zeta_{M, i}(z_{M})$ and $\chi_{w, j}(z_{M}) = \chi'_{w}(z_{M}) \zeta_{M'_{w}, j}(z_{M'_{w}})$. Then we can get a short expression
\begin{align}
\label{eq: central character identity}
\sum_{i} a_{M, i} \cdot \chi_{M, i}(z_{M}) = \sum_{w} \sum_{j} b_{w, j} \cdot \chi_{w, j}(z_{M}),
\end{align}
and it suffices for us to show this holds when $z_{M} = 1$, i.e.,
\[
\sum_{i} a_{M, i} = \sum_{w} \sum_{j} b_{w, j}.
\] 
In fact, we can choose $z_{M} \in F^{\times} \hookrightarrow A_{M}(F)$ such that \eqref{eq: central character identity} holds provided $|z_{M}| < q_{F}^{-k}$ for some positive integer $k$, where $q_{F}$ is the order of the residue field of $F$. Then it is enough to have the following lemma.

\begin{lemma}
For quasicharacters $\chi_{i}$ of $F^{\times}$ and complex numbers $a_{i}$, if 
\[
\sum_{i=1}^{r}a_{i} \chi_{i} (z) = 0
\]
provided $|z|< q_{F}^{-k}$ for some positive integer $k$, then 
\[
\sum_{i = 1}^{r} a_{i} = 0.
\]
\end{lemma}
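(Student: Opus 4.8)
The plan is to reduce everything to the linear independence of characters, exploiting the decomposition $F^{\times}\cong\varpi^{\mathbb{Z}}\times\mathcal{O}_{F}^{\times}$, where $\varpi$ is a fixed uniformizer and $\mathcal{O}_{F}$ the ring of integers of $F$. First I would dispose of possible repetitions among the $\chi_{i}$: grouping together the terms attached to a common quasicharacter, write $\sum_{i}a_{i}\chi_{i}=\sum_{\ell}b_{\ell}\psi_{\ell}$ with the $\psi_{\ell}$ pairwise distinct and $b_{\ell}$ the corresponding partial sum of the $a_{i}$. Since $\sum_{i}a_{i}=\sum_{\ell}b_{\ell}$, it suffices to prove the following sharper statement: if $\chi_{1},\dots,\chi_{r}$ are \emph{pairwise distinct} quasicharacters of $F^{\times}$ and $\sum_{i}a_{i}\chi_{i}(z)=0$ for all $z$ with $|z|<q_{F}^{-k}$, then $a_{i}=0$ for every $i$.

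The only real subtlety is that a quasicharacter need not be locally constant near $0$, so one cannot simply evaluate on a small ball. Instead I would restrict $z$ to the ``spheres'' of small radius, where the valuation is frozen: for each $u\in\mathcal{O}_{F}^{\times}$ and each integer $n>k$, set $z=u\varpi^{n}$, so $|z|=q_{F}^{-n}<q_{F}^{-k}$ and the hypothesis becomes
\[
\sum_{i}a_{i}\,\chi_{i}(u)\,t_{i}^{\,n}=0,\qquad t_{i}:=\chi_{i}(\varpi).
\]
Fix $u$ and collect the indices according to the value of $t_{i}$: let $T$ be the finite set of distinct values of $\chi_{i}(\varpi)$ and $c_{t,u}:=\sum_{i:\,t_{i}=t}a_{i}\chi_{i}(u)$, so that $\sum_{t\in T}c_{t,u}\,t^{\,n}=0$ for every $n>k$. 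Each $t\in T$ is nonzero, so evaluating at the $|T|$ consecutive exponents $n=k+1,\dots,k+|T|$ and factoring out $t^{k+1}$ yields a Vandermonde system in the distinct numbers $t\in T$; its invertibility forces $c_{t,u}=0$ for every $t\in T$.

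Thus $\sum_{i:\,t_{i}=t}a_{i}\chi_{i}(u)=0$ for all $u\in\mathcal{O}_{F}^{\times}$ and all $t\in T$. For a fixed $t$, the restrictions $\chi_{i}|_{\mathcal{O}_{F}^{\times}}$ over the indices with $\chi_{i}(\varpi)=t$ are still pairwise distinct, because a character of $F^{\times}$ is determined by its value on $\varpi$ together with its restriction to $\mathcal{O}_{F}^{\times}$ and the $\chi_{i}$ were assumed distinct. By the linear independence of distinct characters of the group $\mathcal{O}_{F}^{\times}$, it follows that $a_{i}=0$ for every $i$ with $\chi_{i}(\varpi)=t$; letting $t$ range over $T$ kills all the $a_{i}$, and in particular $\sum_{i}a_{i}=0$. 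I do not expect any serious obstacle: the argument is elementary, and the one point requiring care is precisely the passage to fixed valuation described above, which replaces the failure of local constancy near $0$ by an exponential-sum identity amenable to Vandermonde.
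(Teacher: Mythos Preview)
Your proof is correct and in fact cleaner than the paper's. Both arguments first reduce to the case of pairwise distinct $\chi_{i}$ and then show that all $a_{i}$ vanish, but the strategies diverge from there. The paper proceeds by induction on $r$: when all $\chi_{i}$ are unramified it evaluates at powers of a single well-chosen $z_{0}$ and inverts a Vandermonde system; when some $\chi_{i}/\chi_{1}$ is ramified it picks a unit $u$ on which that ratio is nontrivial, subtracts $\chi_{1}(u)\sum a_{i}\chi_{i}(z)$ from $\sum a_{i}\chi_{i}(uz)$ to eliminate one term, and invokes the inductive hypothesis twice. Your argument instead exploits the decomposition $F^{\times}\cong\varpi^{\mathbb{Z}}\times\mathcal{O}_{F}^{\times}$ in one shot: freezing $u$ and varying $n$ separates the indices by $\chi_{i}(\varpi)$ via Vandermonde, and then Artin's linear independence of characters on the compact group $\mathcal{O}_{F}^{\times}$ finishes each block. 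This avoids the induction and the ad hoc ramified/unramified dichotomy, and it makes transparent why the identity on a single coset $\{|z|=q_{F}^{-n}\}$ for infinitely many $n$ already suffices. The paper's approach, on the other hand, is slightly more hands-on and does not appeal to the general independence-of-characters theorem.
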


\begin{proof}
Suppose $\chi_{i}$ are distinct, we claim $a_{i} = 0$ for $1 \leqslant i \leqslant r$. It is clear that this lemma will follow from our claim. So next we will show the claim by induction on $r$. When $r = 1$, there is nothing to show. In general, let us first assume all $\chi_{i}$ are unramified. We choose $z_{0} \in F^{\times}$ such that $|z_{0}| < q_{F}^{-k}$, and denote $\chi_{i}(z_{0})$ by $C_{i}$. Then 
\[
\sum_{i}a_{i} C^{j}_{i} = \sum_{i} a_{i} \chi_{i}(z_{0}^{j}) = 0
\]
for any positive integer $j$. In particular, $\{a_{i}\}$ forms a solution of the linear system of equations defined by the matrix $\{C_{i}^{j}\}^{T}_{r \times r}$, where we let $1 \leqslant j \leqslant r$. Since $|\det(\{C_{i}^{j}\}_{r \times r})| = |\prod_{i}C_{i}| \cdot \prod_{i \leqslant j} |C_{i} - C_{j}| \neq 0$, then $a_{i}$ have to be all zero. Now suppose some $\chi_{i}$ is ramified, we can replace $\chi_{i}$ by $\chi'_{i} := \chi_{i} / \chi_{1}$ for all $i$. If $\chi'_{i}$ are all unramified, then we are back to the previous case. If $\chi'_{i_{0}}$ is ramified for some $i_{0} > 1$, then we can choose some unit of the ring of integers of $F$ such that $\chi'_{i_{0}}(u) \neq 1$. By subtracting $\chi_{1}(u)\sum_{i}a_{i}\chi_{i}(z)$ from $\sum_{i}a_{i}\chi_{i}(uz)$, we get
\[
\sum_{i > 1}a_{i}(\chi_{i}(u) - \chi_{1}(u))\chi_{i}(z) = 0
\]
provided $|z|< q_{F}^{-k}$. By induction, we have $a_{i}(\chi_{i}(u) - \chi_{1}(u)) = 0$ for $i > 1$. Since $\chi_{i_{0}}(u) - \chi_{1}(u) \neq 0$ by our assumption,  this implies $a_{i_{0}} = 0$. Hence 
\[
\sum_{i \neq i_{0}} a_{i}\chi_{i}(z) = 0
\] 
provided $|z|< q_{F}^{-k}$. By induction again, we have $a_{i} = 0$ for $i \neq i_{0}$. This finishes the proof of the claim.

\end{proof}

\bibliographystyle{amsalpha}

\bibliography{reps}

\end{document}